\newtheorem{thm}{Theorem}[section]
\newtheorem{cor}[thm]{Corollary}
\newtheorem{lem}[thm]{Lemma}
\newtheorem{prop}[thm]{Proposition}
\theoremstyle{definition}
\newtheorem{defn}[thm]{Definition}
\newtheorem{rem}[thm]{Remark}
\numberwithin{equation}{section}
\newcommand{\norm}[1]{\Vert#1\Vert}
\newcommand{\Norm}[1]{\left\Vert#1\right\Vert}
\newcommand{\na}{\nabla}
\newcommand{\pa}{\partial}
\newcommand{\lec}{\lesssim}
\newcommand{\td}{\tilde}
\renewcommand{\div}{\operatorname{div}}
\newcommand\al{\alpha}
\newcommand\be{\beta}
\newcommand\de{\delta}
\newcommand\De{\Delta}
\newcommand{\ga}{\gamma}
\newcommand\Ga{\Gamma}
\newcommand\e {\varepsilon}
\newcommand\ka{\kappa}
\newcommand{\la}{\lambda}
\newcommand{\La}{\Lambda}
\newcommand\ph{\varphi}
\newcommand{\ta}{\tau}
\renewcommand{\th}{\theta}
\newcommand{\T}{\mathbb{T}}
\newcommand{\R}{\mathbb{R}}
\newcommand{\Z}{\mathbb{Z}}
\newcommand{\N}{\mathbb{N}}
\newcommand{\cF}{\mathcal{F}}
\newcommand{\cT}{\mathcal{T}}
\newcommand{\cP}{\mathcal{P}}
\newcommand{\cR}{\mathcal{R}}
\newcommand{\crF}{\mathscr{F}}
\newcommand{\dis}{\operatorname{dist}}
\newcommand{\supp}{\operatorname{supp}}
\newcommand{\tr} {\mathop{\mathrm{tr}}}
\newcommand{\I}{\textrm{Id}}
\newcommand{\tri}{\triangle}
\newcommand{\idv}[1]{\mathcal{R}\left( #1\right)}
\newcommand{\as}[1]{\accentset{*}#1}
\renewcommand{\dot}[1]{\accentset{\circ}#1}
\newcommand{\asR}{\accentset{*}{R}}
\renewcommand{\b}[1]{\underline{#1}}
\DeclarePairedDelimiter{\ceil}{\lceil}{\rceil}
\def\dint{\,\ThisStyle{\ensurestackMath{%
  \stackinset{c}{.2\LMpt}{c}{.5\LMpt}{\SavedStyle-}{\SavedStyle\phantom{\int}}}%
  \setbox0=\hbox{$\SavedStyle\int\,$}\kern-\wd0}\int}
\begin{document}

\title{On Non-uniqueness of H\"{o}lder continuous globally dissipative Euler flows}

\author{Camillo De Lellis, Hyunju Kwon}
\address{\parbox{\linewidth}{
Camillo De Lellis \\
School of Mathematics, Institute for Advanced Study and Universit\"{a}t Z\"{u}rich\\
1 Einstein Dr., Princeton NJ 08540, USA\\
E-mail address: camillo.delellis@math.ias.edu \\
\\
Hyunju Kwon \\
School of Mathematics, Institute for Advanced Study\\
1 Einstein Dr., Princeton NJ 08540, USA\\
E-mail address: hkwon@math.ias.edu
 }
} 

\begin{abstract} 
We show that for any $\al<\frac 17$ there exist $\al$-H\"older continuous weak solutions of the three-dimensional incompressible Euler equation, which satisfy the local energy inequality and strictly dissipate the total kinetic energy. The proof relies on the convex integration scheme and the main building blocks of the solution are various Mikado flows with disjoint supports in space and time.
\end{abstract}

\maketitle

\section{Introduction} 
In this work, we consider the Cauchy problem for the incompressible Euler equations on the spatially periodic domain $[0,T] \times \T^3$,
\begin{equation}\label{eqn.E}
\begin{cases}
\pa_t v + \na \cdot (v\otimes v) + \na p = 0\\
\na \cdot v =0.
\end{cases}
\end{equation}
where $\T^3=[-\pi,\pi]^3$ and $0<T<\infty$. The Euler equations describe the motion of an ideal volume-preserving fluid: $v:[0,T] \times \T^3\to \R^3$ represents the velocity of the fluid and $p:[0,T] \times \T^3\to \R$ the pressure.

A distributional solution of \eqref{eqn.E} is a solenoidal vector field $v\in L^2 ([0, T]\times \T^3; \R^3)$ for which the first equation (the momentum equation) holds distributionally
(i.e. the distributional curl of $\pa_t v + \na \cdot (v\otimes v)$ vanishes). The pressure is the unique (up to a time-dependent constant) solution of $\Delta p = \sum_{ij} \partial^2_{ij} (v_iv_j)$.
A {\it globally dissipative} Euler flow is a distributional solution which belongs $L^3([0,T]\times \T^3)$ and satisfies additionally the {\it local energy inequality} 
\begin{equation}\label{eqn.EI}
\pa_t\left( \frac {|v|^2}2\right) + \na \cdot \left(\left( \frac {|v|^2}2 + p\right) v \right) \le 0
\end{equation}
in the sense of distributions (note that, by the classical Calderon-Zygmund inequality, $p\in L^{3/2} ([0,T]\times \T^3)$ and hence $p|v|^2$ is well defined). Integrating the latter inequality in space, we derive that the global kinetic energy of the solution is nondecreasing:
\begin{equation}\label{eqn.EI2}
\frac{d}{dt} \int \frac{|v|^2}{2} (t,x)\, dx \leq 0\, .
\end{equation}
In order to motivate the energy inequality, we recall the well-known fact that smooth solutions satisfy the {\it local energy equality}, namely
\begin{align}\label{LEE}
\pa_t\left( \frac {|v|^2}2\right) + \na \cdot \left(\left( \frac {|v|^2}2 + p\right) v \right) = 0\, ,
\end{align}
which can be derived by scalar multiplying the momentum equation and making some standard calculus manipulations. Correspondingly, smooth solutions preserve the total kinetic energy, namely
\begin{equation}\label{eqn.EE2}
\int \frac{|v|^2}{2} (x,t)\,dx \equiv \mathrm{const}\, .
\end{equation}
Next, consider ``suitable'' weak solutions of the Navier-Stokes equation
\begin{equation}\label{eqn.NS}
\begin{cases}
\pa_t v + \na \cdot (v\otimes v) + \na p = \varepsilon \Delta v\\
\na \cdot v =0\, ,
\end{cases}
\end{equation}
as defined in the celebrated work of Caffarelli, Kohn and Nirenberg \cite{CKN}.
The latter are distributional solutions in $L^\infty ([0, T]; L^2 (\T^3)) \cap L^2 ([0, T]; W^{1,2} (\T^3))$ which satisfy a corresponding local energy inequality
\[
\pa_t\left( \frac {|v|^2}2\right) + \na \cdot \left(\left( \frac {|v|^2}2 + p\right) v \right) \le \varepsilon \left(\Delta \frac{|v|^2}{2} - |Dv|^2\right)\, 
\] 
and hence a corresponding integrated form
\begin{equation}\label{eqn.LH}
\frac{d}{dt} \int \frac{|v|^2}{2}  \leq - \varepsilon \int |Dv|^2\, .  
\end{equation}
Such suitable weak solutions can be proven to exist for any given $L^2$ (divergence-free) initial and, thanks to the theory developed in \cite{CKN}, are regular outside of a compact set with zero Hausdorff one-dimensional measure. If, as $\varepsilon \downarrow 0$, they were to converge strongly in $L^3$ to a solution of the Euler equation, the latter would be globally dissipative. 

After a series of developments in the field, see \cite{DLSz2013,DlSzJEMS,Is2013, Bu2014,BuDLeSz2013,Bu2015,BuDLeIsSz2015,BuDLeSz2016,DaSz2016,IsOh2016}, Isett solved in \cite{Is2016} a famous Conjecture of Lars Onsager in the theory of fully developed turbulence (cf. \cite{On1949}) showing that for every $\alpha < \frac{1}{3}$ there are $\alpha$-H\"older solutions of \eqref{eqn.E} for which \eqref{eqn.EE2} fails. Slightly after, Isett's result was improved in \cite{BDLSV2020} by Buckmaster, the first author, Sz\'ekelyhidi and Vicol, who showed the existence of $\alpha$-\"Holder solutions in the Onsager range for which \eqref{eqn.EI2} holds with a strict inequality. As a rigorous mathematical validation of the classical Kolmogorov's theory of turbulence, it would be rather interesting if one could show that some of these dissipative solutions can be recovered as strong limits of suitable weak solutions $v_k$ of the Navier-Stokes equations with vanishing $\varepsilon_k$, since such a sequence would display anomalous dissipation, i.e.
\[
\liminf_{k\uparrow \infty} \varepsilon_k \int_0^T \int |Dv_k|^2 (t,x)\, dx\, dt > 0
\]
for some finite time $T>0$. However, as observed by Isett in \cite{Is17}, a strong limit of suitable weak solutions of \eqref{eqn.NS} would necessarily satisfy the local energy inequality (at least if the convergence were to be in the $L^3$ topology). This naturally motivates a stronger version of the Onsager conjecture, namely the existence of $\alpha$-H\"older globally dissipative solutions of the Euler equations (i.e. satisfying \eqref{eqn.E} and \eqref{eqn.EI}) for which
\begin{equation}\label{eqn.Onsager}
\int \frac{|v|^2}{2} (T,x)\, dx < \int \frac{|v|^2}{2} (0,x)\, dx \, .
\end{equation}
The first author and L\'aszl\'o Sz\'ekelyhidi produced the first bounded examples in \cite{DLeSz2010}, while Isett in \cite{Is17} has recently provided the first H\"older examples. In this paper we improve upon the regularity obtained by Isett, even though we are still relatively far from the conjectural threshold $\frac{1}{3}$:  

\begin{thm}\label{thm.onsager}
For any $0\leq \beta < \frac17$ there are globally dissipative weak solutions $v$ to the Euler equation \eqref{eqn.E} in $C^\be([0, T]\times \T^3)$ for which \eqref{eqn.Onsager} holds.
\end{thm}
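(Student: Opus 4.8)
The plan is to implement a Nash--type convex integration iteration producing a sequence $(v_q, p_q, \mathring R_q)$ solving the Euler--Reynolds system $\pa_t v_q + \div(v_q\otimes v_q) + \na p_q = \div \mathring R_q$, with $\|\mathring R_q\|_0 \lesssim \de_{q+1}\la_q^{-\ga}$ along a superexponential frequency sequence $\la_q = \lceil a^{(b^q)}\rceil$ and amplitudes $\de_q = \la_q^{-2\be}$, together with the \emph{crucial additional inductive bookkeeping} of the energy: one tracks not just the size of $\mathring R_q$ but the Reynolds stress \emph{together} with the energy profile $e_q(t) := \int \frac{|v_q|^2}{2}$, ensuring that at every step the local energy inequality \eqref{eqn.EI} holds for the new velocity up to an error that is itself absorbed in the next iteration. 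The key point distinguishing this from the non-dissipative Onsager constructions (e.g. \cite{Is2016}) is that the perturbation $w_{q+1} = v_{q+1} - v_q$ must at each stage \emph{strictly decrease} the kinetic energy while remaining compatible with \eqref{eqn.EI}; this forces the use of Mikado flows that are not merely spatially but also \emph{temporally} localized, so that the error in the local energy balance generated by the interaction of the perturbation with itself has a sign, or can be corrected by a lower--order term with a sign.

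The steps, in order, are as follows. First, set up the precise inductive proposition: given $(v_q, p_q, \mathring R_q)$ with the quantitative estimates on $\|v_q\|_{N}$, $\|\mathring R_q\|_N$, and the energy gap, and with $v_q$ satisfying a local energy \emph{inequality} with a controlled error density $\kappa_q$, construct $(v_{q+1}, p_{q+1}, \mathring R_{q+1})$ with the estimates at level $q+1$. Second, perform the gluing step (as in \cite{BDLSV2020, Is2016}): partition $[0,T]$ into intervals of length $\sim \la_q^{-1}\de_q^{-1/2}$, solve the exact Euler equations on each with the restriction of $v_q$ as datum, and glue the resulting exact solutions to obtain $\bar v_q$ whose Reynolds stress $\mathring{\bar R}_q$ is supported in a disjoint union of short time intervals and enjoys better material-derivative bounds; crucially one must check that this gluing does not destroy the local energy inequality, which holds because on each subinterval the flow is an exact (smooth) Euler flow satisfying \eqref{LEE}. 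Third, perform the perturbation step: on each of the (now disjoint in time) supports of $\mathring{\bar R}_q$, write $-\mathring{\bar R}_q$ as a sum of rank-one tensors $\sum_i a_i^2(x,t)\, \xi_i\otimes\xi_i$ using the geometric lemma, and add the corresponding Mikado-flow building blocks oscillating at frequency $\la_{q+1}$; the principal part of the new stress $\div \mathring R_{q+1}$ comes from the transport error, the Nash error, and the oscillation error, all estimated by the standard stationary-phase / inverse-divergence machinery to be $\lesssim \de_{q+2}\la_{q+1}^{-\ga}$.

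The decisive fourth step is the \emph{energy computation}: the perturbation $w_{q+1}$ changes the energy by $\int |w_{q+1}|^2 = \sum_i \int a_i^2 |W_{\xi_i}|^2 + (\text{oscillatory, small})$, and since $\sum_i a_i^2\xi_i\otimes\xi_i = -\mathring{\bar R}_q$ has trace $-\tr \mathring{\bar R}_q$ which need not have a sign, one cannot conclude decay of energy from this term alone. Instead one designs the Mikado amplitudes so that $\sum_i a_i^2 |W_{\xi_i}|^2$, integrated over the short time-supports, contributes a controlled \emph{negative} amount to $\frac{d}{dt}e_{q+1}$, using the freedom in the decomposition and an extra scalar "energy-pumping" term; one also verifies the local inequality \eqref{eqn.EI} pointwise for $v_{q+1}$ by checking that the new current $(\frac{|v_{q+1}|^2}{2}+p_{q+1})v_{q+1}$ differs from the glued one by terms whose divergence is dominated by a nonpositive principal term plus an error of size $\kappa_{q+1}\to 0$. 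I expect this energy/local-inequality bookkeeping — reconciling the \emph{strict} global dissipation \eqref{eqn.Onsager}, the pointwise differential inequality \eqref{eqn.EI}, and the convex-integration error estimates simultaneously, all while keeping $\be$ as large as possible — to be the main obstacle; the constraint $\be<\frac17$ rather than $\frac13$ arises precisely because the temporal localization needed to control the sign of the energy error is more expensive (in powers of $\la_{q+1}$) than in the non-dissipative setting, and the budget has to be split among the gluing length, the Mikado concentration, and the energy-correction term. Finally, one lets $q\to\infty$: the $v_q$ converge in $C^\be$ (for every $\be<\frac17$, by interpolating the $C^0$ and $C^1$ bounds) to a weak solution $v$, the limiting $\mathring R_q\to 0$ gives \eqref{eqn.E} distributionally, the limiting $\kappa_q\to 0$ gives \eqref{eqn.EI}, and the telescoped energy estimates give the strict inequality \eqref{eqn.Onsager}.
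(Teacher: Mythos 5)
Your overall shape --- iterate an Euler--Reynolds system while tracking the energy balance --- is right, but the mechanism you propose for controlling the local energy inequality is not the one that works, and as sketched it has a genuine gap. First, the paper does not use the gluing step of \cite{BDLSV2020,Is2016}. Your claim that gluing preserves \eqref{eqn.EI} ``because on each subinterval the flow is an exact Euler flow satisfying \eqref{LEE}'' overlooks exactly the transition regions, where the glued field solves neither the momentum equation nor the energy equation and where a non-trivial \emph{vector-valued} error in the energy flux appears; your $\kappa_q$ is a scalar density and cannot record that error. The paper instead tracks a full tuple $(v_q,p_q,R_q,\kappa_q,\ph_q)$ satisfying a \emph{relaxed local energy equality} (Isett's ``dissipative Euler--Reynolds'' framework \cite{Is17}), where $\ph_q$ is a vector field --- the ``unsolved current'' --- and this is the unavoidable extra unknown. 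Moreover the paper fixes the energy dissipation rate in advance as a function $E(t)$ with $E(0)=0,\ E'\le 0$, so the iteration only has to preserve the equality $\partial_t\frac{|v|^2}{2}+\nabla\cdot((\frac{|v|^2}{2}+p)v) = E' + (\text{errors}\to 0)$; one is never trying to \emph{produce} a sign from the oscillatory perturbation.

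Second, and more seriously, your ``decisive fourth step'' cannot work as described. You want the self-interaction of the Mikado perturbation to contribute ``a controlled negative amount'' to the local energy balance, with a possible lower-order energy-pumping correction. But the cubic term $\tfrac12|w_{q+1}|^2 w_{q+1}$ has no pointwise sign, and a scalar energy pump only fixes the \emph{global} budget, not the PDE \eqref{eqn.EI}. The mechanism the paper actually uses is cancellation, not positivity: the Mikado directions are split into two families $\cF^{j,R}$ and $\cF^{j,\ph}$, and the profiles $\psi_f$ for $f\in \cF^{j,\ph}$ are required to satisfy a \emph{cubic} moment condition $\langle\psi_f^3\rangle=1$ (together with a second geometric lemma decomposing vectors, not tensors). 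This makes the low-frequency part of $\tfrac12|w_o|^2 w_o$ equal to $-\ph_\ell$ exactly, just as the quadratic moment condition makes the low-frequency part of $w_o\otimes w_o$ cancel $R_\ell$. Without this cubic cancellation device and the associated vector-valued bookkeeping there is no way to close the induction for the local energy inequality. Finally, the restriction to $\beta<\frac17$ does not come from ``temporal localization to control the sign of the energy error''; it arises because the perturbation has to be discretized in \emph{both} space and time (with a combinatorial choice of shifts) so that the transported supports of the many Mikado tubes stay pairwise disjoint --- a constraint that is purely geometric and already present before one ever looks at the energy.
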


As it is the case of \cite{Is17} (and in fact of any ``convex integration'' arguments starting from \cite{DLSz2012,DLSz2013}) a byproduct of the construction is that neither \eqref{eqn.EI} nor \eqref{LEE} are enough to restore uniqueness of weak solutions. 

\begin{thm}\label{thm}
For any $0\le \be< \frac 17$, we can find infinitely many time-global weak solutions $v$ to the Euler equation \eqref{eqn.E} in $C^\be([0, T]\times \T^3)$ which have zero mean, satisfy the local energy equality \eqref{LEE} and share the same initial data.  
\end{thm}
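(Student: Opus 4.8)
The plan is to run the same convex integration scheme that proves Theorem~\ref{thm.onsager}, but to exploit the freedom in the iteration to produce infinitely many distinct solutions sharing prescribed initial data, while simultaneously upgrading the local energy \emph{inequality} to the local energy \emph{equality}. Recall that in the scheme one builds a sequence $(v_q, p_q, \mathring R_q)$ solving the Euler--Reynolds system with $\|\mathring R_q\|_0 \to 0$ and $v_{q+1} = v_q + w_{q+1}$, where $w_{q+1}$ is a sum of Mikado flows oscillating at frequency $\lambda_{q+1}$; the limit $v = \lim_q v_q$ is an $\alpha$-H\"older solution of \eqref{eqn.E}. The first step is to observe that, in addition to making $\mathring R_q\to 0$, one can arrange that the energy increments are transported so that the associated energy flux density $(\tfrac{|v_q|^2}{2}+p_q)v_q$ converges and that the defect in \eqref{LEE} at level $q$, namely $\pa_t(\tfrac{|v_q|^2}{2}) + \na\cdot((\tfrac{|v_q|^2}{2}+p_q)v_q)$, tends to zero in the sense of distributions. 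This is where the ``disjoint supports in space and time'' of the Mikado building blocks, emphasized in the abstract, is crucial: since at each time slice only one family of Mikado flows is active and they do not interact, the quadratic error terms that would otherwise spoil the local energy balance are controlled, and in fact the construction can be carried out so that each $v_q$ itself satisfies \eqref{LEE} up to an error vanishing in the limit, yielding \eqref{LEE} exactly for $v$.

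Next I would fix the initial data. Start the iteration from $v_0 \equiv 0$ with an appropriate smooth, compactly-in-time-supported Reynolds stress $\mathring R_0$ that vanishes near $t=0$ (and one may as well keep it vanishing near $t=T$ as well, so that the solution is time-global with zero mean); then every $v_q$, and hence $v$, vanishes identically in a neighborhood of $t=0$, so all solutions produced share the zero initial datum. Zero mean is preserved throughout because each Mikado corrector has zero average on $\T^3$ and one adds the usual mean-zero correction term. To produce infinitely many distinct solutions, introduce at the first nontrivial stage (say stage $q=1$) a free parameter: the perturbation $w_1$ can be multiplied by a cutoff supported on a subinterval $[a,b]\subset(0,T)$ chosen from a countable family of disjoint intervals, or its phase/amplitude can be modulated by one of countably many distinct choices; each choice seeds a different $\mathring R_1$, and the remainder of the iteration proceeds identically. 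Since the resulting solutions differ on an open set in spacetime (the support of the differing $w_1$ is not cancelled at later stages, as subsequent corrections have amplitude $o(1)$ and high frequency), they are pairwise distinct, giving infinitely many solutions.

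The technical heart of the argument, and the step I expect to be the main obstacle, is verifying the local energy \emph{equality} rather than merely the inequality, with quantitative control at every stage. In \cite{BDLSV2020} and in the proof of Theorem~\ref{thm.onsager} one only needs the inequality, which is softer: one can afford to dump a definite-sign error into \eqref{eqn.EI}. For \eqref{LEE} one must instead introduce, alongside the Reynolds stress $\mathring R_q$, an \emph{energy-current corrector} — a vector field $\kappa_q$ measuring the defect $\pa_t(\tfrac{|v_q|^2}{2}) + \na\cdot((\tfrac{|v_q|^2}{2}+p_q)v_q + \kappa_q)=0$ — and show that the same Mikado-based perturbation that cancels $\mathring R_q$ also drives $\kappa_q \to 0$. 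This requires a careful bookkeeping of the cubic interaction terms $|w_{q+1}|^2 w_{q+1}$ and the cross terms $w_{q+1}\cdot v_q\, w_{q+1}$ against the low-frequency part of $(\tfrac{|v_q|^2}{2}+p_q)$, estimating their H\"older norms and oscillation gains exactly as for the stress, so that the parameter inequalities (the relations between $\lambda_q$, the amplitude $\delta_q$, and the H\"older exponent) still close for every $\beta<\tfrac17$. Once this quantitative local-energy iteration is in place, the limiting solution satisfies \eqref{LEE} identically, and combined with the initial-data normalization and the infinitely-many-choices construction above, Theorem~\ref{thm} follows. In fact, since \eqref{LEE} is stronger than \eqref{eqn.EI}, this construction also re-proves the non-dissipative part of Theorem~\ref{thm.onsager}'s companion statement; the strict dissipation \eqref{eqn.Onsager} is obtained by additionally prescribing a strictly decreasing target energy profile $e(t)$, which the same scheme can match.
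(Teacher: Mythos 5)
Your overall architecture is in the right spirit — run the dissipative Euler--Reynolds iteration (with extra unknowns, which the paper calls $\kappa_q$ and $\varphi_q$, to drive the defect in \eqref{LEE} to zero), fix the initial data to zero, and seed the iteration with different choices at one stage to branch into infinitely many solutions. However, there are two genuine gaps in the branching mechanism that the paper's Proposition~\ref{p:ind_technical} is designed precisely to fill, and one of your proposed mechanisms would actually break the scheme.

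First, multiplying $w_1$ by a cutoff in time, or modulating its phase or amplitude freely, would destroy the algebraic cancellation on which the iteration rests. The main term of $w_{q+1}$ is engineered so that the low-frequency part of $w_{q+1}\otimes w_{q+1}$ equals $\delta_{q+1}\mathrm{Id}-R_\ell$ (and similarly for $|w_{q+1}|^2 w_{q+1}$ versus $\varphi_\ell$); if you cut $w_{q+1}$ off on part of its temporal support, the new Reynolds stress in that region is of size $\delta_{q+1}$, not $\delta_{q+2}$, and the inductive estimates fail. The paper instead \emph{flips the sign} of the weights $\gamma_I$ for $I\in\mathscr I_R$ on the support of a single $\theta_{m_0}$: since $(-\gamma_I)^2=\gamma_I^2$ and $|\tilde f_I\psi_I|^2$ is unchanged, the quadratic cancellation survives verbatim, while the \emph{linear} object $w_{q+1}$ changes. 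This is the key trick, and ``some free parameter'' is not a substitute for it.

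Second, ``subsequent corrections have amplitude $o(1)$'' does not prove the limiting solutions differ, nor that they share initial data. The later corrections $v_{q+1}-v_q$ and $\tilde v_{q+1}-\tilde v_q$ are not equal, and their cumulative difference $\sum_{q>\bar q}\bigl(\|v_{q+1}-v_q\|_0+\|\tilde v_{q+1}-\tilde v_q\|_0\bigr)\lesssim \sum_{q>\bar q}\delta_{q+1}^{1/2}$ must be shown to be strictly smaller than the initial separation $\|v_{\bar q}-\tilde v_{\bar q}\|_{C^0L^2}\geq\delta_{\bar q}^{1/2}$; this requires both a quantitative $L^2$ lower bound at the branching stage (which the sign-flip plus the moment condition $\langle\psi_f^2\rangle=1$ deliver) and a choice of $\lambda_0$ large. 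Likewise, the temporal support of $v_q-\tilde v_q$ does \emph{not} stay confined to the interval where you branched: mollification along the coarse flow and the partition of unity spread it by roughly $(\lambda_q\delta_q^{1/2})^{-1}$ at each step. The paper tracks this via \eqref{e:second_support} and sums the geometric tail to conclude the support stays inside $[9,T]$, hence the initial data agree. Without both of these quantitative controls, the ``infinitely many distinct solutions with the same initial datum'' claim is unproved.
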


While the statement above imposes \eqref{LEE}, which is clearly stronger than \eqref{eqn.EI}, it is not difficult to modify our arguments to produce an analogous example of infinitely many distinct globally dissipative weak solutions with the same initial data and such that \eqref{eqn.Onsager} holds. 

\section{Outline of the proof}

We construct globally dissipative Euler flows approximated by sequences of dissipative Euler-Reynolds flows, introduced in \cite{Is17}. 

\begin{defn}[Dissipative Euler-Reynolds flows] A tuple of smooth tensors $(v,p,R,\ka,\ph)$ is a {\it dissipative Euler-Reynolds flow} {with {\em global energy loss} $E (t)$} if 
$\ka = \frac{1}{2} \tr R$ and the tuple solves the Euler-Reynolds system with the Reynold-stress $R + \frac{E}{3}\, \I$ and the relaxed local energy equality:
\begin{equation}\label{app.eq}
\begin{cases}
\pa_t v + \na \cdot (v\otimes v) + \na p = \na \cdot \left(R + \frac{E}{3} \I\right) = \na \cdot R\\
\na \cdot v =0\\
\pa_t \left(\frac 12 |v|^2 \right) + \na \cdot \left(\left(\frac 12|v|^2 + p\right) v\right) =  D_t \left(\ka + \frac{E}{2}\right) + \na \cdot (Rv) + \na \cdot \ph,
\end{cases}
\end{equation}
where the advective derivative $D_t$ is $D_t = \pa_t+ (v\cdot\na)$ {and $(\na \cdot R)_j = \pa_i{R_{ij}}$. }Even though it is not really essential for our arguments, to be consistent with the term dissipative, we assume that $E'\leq 0$. Note moreover that the addition of a constant to $E$. We will therefore impose
\begin{equation}\label{e:E(0)=0}
E (0) =0\, 
\end{equation}
and observe that, as a consequence,
\begin{equation}\label{e:deficit}
0\geq E (t) = \frac{1}{2} \int |v|^2 (x,t)\, dx - \frac{1}{2} \int |v|^2 (x,0)\, dx\, 
\end{equation}
(which thus justifies the term {\em energy loss}).
\end{defn}
It is obvious that when $R$, $\ka$, $\ph$ are all zero, $(v,p)$ becomes a globally dissipative Euler flow (and the requirement $E'\leq 0$ is used only in this simple conclusion). Denoting an average in space of a quantity $w$ by $\overline{w}$, for a given globally dissipative Euler flow $(v,p)$, the averaged pair $(\bar v, \bar p)$ becomes a dissipative Euler-Reynolds flow with Reynolds stress ${R + \frac E3 \I}= \bar v \otimes \bar v- \overline{v\otimes v}$, unsolved flux density ${\ka+\frac E2 = \frac12 |\bar v|^2- \frac12\overline{|v|^2}= \frac 12 \tr(R+\frac E3\I)}$, unsolved flux current $\ph = \left(\frac 12 |\bar v|^2 +\bar p\right) \bar v - \overline{\left(\frac12 |v|^2 + p\right) v} -\left( \frac 12|\bar v|^2 - \frac12 \overline{|v|^2}\right)\bar v -R\bar v$. This motivates the relation $\ka = \frac 12 \tr(R)$ on the approximate solutions.

\subsection{Induction scheme}
The proof of the main theorem is based on an iterative procedure, similarly to all the literature which came out after the works \cite{DLSz2013,DlSzJEMS}, which introduced the ``Euler-Reynolds flows'', namely the system of PDEs consisting of the first two equations in \eqref{app.eq}. The idea to handle the local energy inequality by adding the third equation and the unknown $\ph$ is instead a notable contribution of \cite{Is17}. In the inductive procedure we assume to have a tuple $(v_q, p_q, R_q, \ka_q, \ph_q)$ solving \eqref{app.eq} for which the ``error $(R_q, \ka_q,\ph_q)$'' is suitably small. At the step $q+1$ we aim at finding a new dissipative Euler-Reynolds flow with reduced error is substantially reduced compared to that of step $q$. This is accomplished by adding a suitable correction $(w_{q+1},q_{q+1})$ to the velocity and pressure $(v_q, p_q)$, namely defining $ v_{q+1} = v_q+w_{q+1}$ and $p_{q+1} = p_q +q_{q+1}$ so that the new error $(R_{q+1},\ka_{q+1},\ph_{q+1})$ (which roughly speaking is determined by the equations) is sensibly smaller than $(R_q,\ka_q,\ph_q)$. The precise statement is given in Proposition \ref{ind.hyp}.

First of all for $q\in \N$ (where we use the convention that $0\in \N$) we introduce the frequency $\la_q$ and the amplitude $\de_q^\frac 12$ of the velocity $v_q$, which have the form
\begin{align*}
\la_q = \ceil{\la_0 ^{(b^q)}}, \quad \de_q = \la_q^{-2\al}\, ,
\end{align*}
where $\alpha$ is a positive parameter smaller than $1$ and $b$ and $\lambda_0$ are real parameters larger than $1$ (however, while $b$ will be typically chosen close to $1$, $\lambda_0$ will be typically chosen very large). In particular $\delta_q^{\frac{1}{2}} \lambda_q$ is a monotone increasing sequence.
 
 In the induction hypothesis, we will assume several estimates on the tuple $(v_q, p_q, R_q, \ka_q, \ph_q)$. For technical reasons, the domains of definition of the tuples is changing at each step and it is given by $[-\tau_{q-1}, T+\tau_{q-1}]\times \mathbb T^3$, where $\tau_{-1} = \infty$ and for $q\geq 0$ the parameter $\tau_q$ is defined by 
\[
\tau_q = \left(C_0 M \la_q^\frac12\la_{q+1}^\frac12 \de_q^\frac14\de_{q+1}^\frac14 \right)^{-1}
\] 
for some geometric constant $C_0$ and $M$ (which will specified later in \eqref{mu.tau} and Proposition \ref{ind.hyp}, respectively). Note the important fact that $\tau_q$ is decreasing in $q$. In order to shorten our formulas, it is convenient to introduce the following notation:
\begin{itemize}
\item $\cal{I} + \sigma$ is the concentric enlarged interval $(a-\sigma, b+\sigma)$ when $\cal{I} = [a,b]$;
\item  $\norm{F_q}_N$ is the $C^0_t C^N_x$ norm of $F_q$ on its domain of definition, namely 
\[
\norm{F_q}_N := \norm{F_q}_{C^0([0, T]+ \tau_{q-1};C^N(\T^3))}\, .
\]
\end{itemize}
We are now ready to detail the inductive estimates:
\begin{align}
\norm{v_q}_0 \leq 1-\de_q^\frac12, \quad \norm{v_q}_N &\leq M\la_q^N \de_q^\frac 12, \quad \norm{p_q}_N \leq \la_q^N \de_q, \quad N=1,2, \label{est.vp}
\end{align}
and
\begin{align}
&\qquad\qquad\ka_q = \frac 12 \tr(R_q),\label{ka} \\
&\qquad\qquad\qquad\quad\;\norm{D_t p_q}_{N-1}\leq \delta_q^{\frac{3}{2}} \lambda_q^N,\label{e:ad-pressure}\\
\norm{R_q}_N &\leq \la_q^{N-3\ga}\de_{q+1}, \quad \norm{D_t R_q}_{N-1} \leq \la_q^{N-3\ga} \de_q^\frac 12 \de_{q+1}, \qquad N = 0,1,2\label{est.R}\\
\norm{\ph_q}_N &\leq \la_q^{N-3\ga} \de_{q+1}^\frac 32, \quad
\norm{D_t \ph_q}_{N-1} \leq \la_q^{N-3\ga}\de_q^\frac12 \de_{q+1}^\frac 32, \qquad N = 0,1,2
 \label{est.ph}
\end{align}
where $D_t = \pa_t + v_q\cdot \na $ and $\ga = (b-1)^2$.

\begin{rem}\label{r:estimates_material_derivative}
Throughout the rest of the paper we will typically estimate $\|F \|_N$ and $\|D_t F\|_{N-1}$ for several functions, vectors and tensors $F$.  However, in a writing like \eqref{est.R} and \eqref{est.ph}, for $N=0$ we are {\em not claiming any negative Sobolev estimate} on $D_t F$: the reader should just consider the advective derivative estimate to be an empty statement when $N=0$. The reason for this convention is just to make the notation easier, as we do not have to state in a separate line the estimate for $\|F\|_0$ in many future statements. 
\end{rem} 

It seems natural to impose that $\frac{E}{3} \I$ has a size comparable to $R_0$ and require
\begin{align*}
\|E\|_0 &\leq \delta_1\\
\|E'\|_0 &\leq { \delta_0^{\frac12}\delta_1}\, ,
\end{align*}
{where we abuse notation and write $\norm{F}_0 =\norm{F}_{C^0(\R)}$ for a time function $F$. As we already discussed the dissipative Euler-Reynolds system is invariant under addition to $E$ of a constant and we adopt the normalization condition $E(0)=0$. We will therefore assume
\begin{equation}\label{e:assumption_on_E}
E(0) =0\, , \quad E' \leq 0 \quad \mbox{and} \quad {\|E^{(n)}\|_0 \leq \de_0^\frac n2\delta_1\, \quad \forall n=0,1.}
\end{equation}
Under this setting, the core inductive proposition is given as follows. 
\begin{prop}[Inductive proposition]\label{ind.hyp} There exists a geometric constant $M{>1}$ and functions $\bar{b} (\alpha) >1$ and $\Lambda_0 (\alpha, b,{M}) >0$ such that the following property holds. Let $\alpha \in (0, \frac{1}{7})$, $b \in (1, \bar b (\alpha))$ and $\la_0\geq \La_0 (\alpha , b,{M})$ and assume that a tuple of tensors $(v_q,p_q,R_q,\ka_q,\ph_q)$ is a dissipative Euler-Reynolds flow defined on the time interval $[0,T]+ \tau_{q-1}$ satisfying \eqref{est.vp}-\eqref{est.ph} for an energy loss $E(t)$ satisfying \eqref{e:assumption_on_E}. Then, we can find a corrected dissipative Euler-Reynolds flow $({v}_{q+1},  p_{q+1},  R_{q+1}, \ka_{q+1}, \ph_{q+1})$ on the time interval $[0,T]+\tau_q$ for the same energy loss $E(t)$ which satisfies \eqref{est.vp}-\eqref{est.ph} for $q+1$ and 
\begin{align}\label{cauchy}
\norm{v_{q+1}-v_q}_0 + \frac 1{\la_{q+1}}\norm{v_{q+1}-v_q}_1 \leq  M \de_{q+1}^\frac 12.  
\end{align}
\end{prop}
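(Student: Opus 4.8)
The plan is to prove Proposition~\ref{ind.hyp} by the usual convex integration scheme adapted to the dissipative setting, where the novelty compared to the Onsager-regularity constructions of \cite{BDLSV2020,Is2016} is that the perturbation must be built out of \emph{Mikado flows supported on disjoint space-time regions}, so that the quadratic self-interaction of the correction is controlled \emph{pointwise} (not only after averaging) and hence the local energy balance — the third equation in \eqref{app.eq} with the new unknown $\ph_{q+1}$ — can be closed. Concretely, I would first perform the standard preliminary reductions: mollify $(v_q,R_q,\ph_q)$ at a length scale $\ell \sim \la_q^{-1}\la_{q+1}^{-1}\de_q^{-1/4}\de_{q+1}^{-1/4}$ (up to $\ga$-losses) in space and, using the material derivative, transport-mollify in time, obtaining $(v_\ell,R_\ell,\ph_\ell)$ satisfying the same estimates \eqref{est.vp}–\eqref{est.ph} with $\la_q$ replaced by $\ell^{-1}$ for the higher derivatives, plus the crucial commutator bound controlling $(v_q-v_\ell)$ and the mollification error in the Euler–Reynolds and energy equations by $\de_{q+1}^{3/2}$-type quantities. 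This is where one also introduces the partition of unity $\{\chi_i\}$ in time adapted to the scale $\tau_q$, which both localizes the construction so that the transport flow of $v_\ell$ stays well-behaved (the Lagrangian map $\Phi_i$ has bounded derivatives on each interval of length $\tau_q$) and, together with a spatial partition, furnishes the disjoint-support structure.

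Next I would define the principal perturbation as a superposition $w_o = \sum_i \sum_{\xi} a_{i,\xi}(x,t)\, W_\xi(\la_{q+1}\Phi_i(x,t))$ where the $W_\xi$ are (intermittent) Mikado flows with pairwise disjoint supports, the amplitude functions $a_{i,\xi}$ are built from $\sqrt{\de_{q+1}}$ times a suitable function of $\Id - R_\ell/(\de_{q+1}\rho)$ via the geometric (stationary-phase / bilinear) lemma so that $\sum a_{i,\xi}^2 \fint W_\xi\otimes W_\xi = \rho\,\Id - R_\ell$ on the relevant region, and the time cutoffs $\chi_i$ arrange that at each $(x,t)$ at most one $i$ is active. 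The density $\rho$ is chosen of size $\sim \de_{q+1}\la_q^{-3\ga}$ and, importantly, \emph{not} forced to match a prescribed energy profile — instead $E(t)$ is a free input and $\rho$ need only dominate $R_\ell$ pointwise; this is what makes the dissipative statement flexible and is why \eqref{e:assumption_on_E} suffices. One then adds the usual corrector $w_c$ to enforce $\div(w_o+w_c)=0$ exactly, sets $w_{q+1}=w_o+w_c$, $v_{q+1}=v_\ell+w_{q+1}$, and reads off \eqref{cauchy} and the $C^0$/$C^1$/$C^2$ estimates in \eqref{est.vp} from the amplitude bounds $\|a_{i,\xi}\|_N \lesssim \de_{q+1}^{1/2}\ell^{-N}$, the phase derivative bounds $\|D\Phi_i\|_0\lesssim 1$, and standard estimates on fast-oscillating functions; the pressure correction $q_{q+1}$ is chosen to cancel the worst part of the new stress and one checks \eqref{e:ad-pressure} using that $D_t$ hits only the slow variables.

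The new Reynolds stress $R_{q+1}$ is then assembled, as usual, from: (i) the \emph{oscillation error} $w_o\otimes w_o - (\rho\Id - R_\ell)$ plus the gradient/pressure terms, which by the disjoint supports contains \emph{no low-frequency self-interaction between distinct $W_\xi$} and is handled by the improved stationary-phase estimate applied to each $a_{i,\xi}^2(W_\xi\otimes W_\xi - \fint W_\xi\otimes W_\xi)$ together with the inverse-divergence operator $\mathcal R$; (ii) the \emph{transport error} $\pa_t w_o + v_\ell\cdot\na w_o$, which is small because $D_t$ annihilates $W_\xi(\la_{q+1}\Phi_i)$ by the defining property of the Lagrangian phase; (iii) the \emph{Nash error} $w_{q+1}\cdot\na v_\ell$; (iv) the corrector and mollification errors. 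Each is put in divergence form via $\mathcal R$ (gaining a factor $\la_{q+1}^{-1}$) and bounded by $\la_{q+1}^{-3\ga}\de_{q+2}$ after optimizing $b,\la_0$ — this is the step that fixes $\bar b(\alpha)$ and $\La_0$ and where the threshold $\alpha<\frac17$ enters, since the intermittent Mikado flows used to gain control of the local energy current cost extra powers of $\la_{q+1}$ in the higher norms, tightening the admissible range of $\alpha$ compared with $\frac13$. In parallel one must produce $\ph_{q+1}$ by solving the third equation in \eqref{app.eq}: expanding $D_t(\tfrac12|v_{q+1}|^2)$ and using the equations for $v_{q+1}$ one identifies the new flux current as $Rv + \mathcal R(\text{energy oscillation terms})$; the key point, and what I expect to be the main obstacle, is showing $\|\ph_{q+1}\|_0$ and $\|D_t\ph_{q+1}\|_0$ satisfy \eqref{est.ph} at level $q+1$: this is exactly where the \emph{disjoint space-time supports} of the Mikado building blocks are indispensable, because the dangerous term $|w_o|^2 w_o$ and the cross terms $(w_o\cdot v_\ell)w_o$ would otherwise have low-frequency parts of size $\de_{q+1}^{3/2}$ — too large — whereas disjointness forces these to be genuinely high-frequency (or to vanish), so that $\mathcal R$ can extract the needed gain. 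Balancing the energy-current estimate against the Reynolds-stress estimate is the delicate optimization that forces $\alpha<\frac17$, and I would set this up as the final lemma before collecting all bounds to conclude the Proposition.
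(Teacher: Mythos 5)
Your proposal has the right scaffolding, but it misses the two ideas that make the \emph{dissipative} induction close, and one of them is fatal as you have stated it.

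First, and most importantly, your mechanism for controlling $\ph_{q+1}$ is wrong. You argue that disjointness of the Mikado supports forces $|w_o|^2 w_o$ to be ``genuinely high-frequency (or vanish)'' so that $\mathcal R$ gains $\la_{q+1}^{-1}$. Disjointness does make the triple products diagonal, so $|w_o|^2 w_o = \sum_I |\ga_I \td f_I|^2 \ga_I \td f_I\, w_I^3$, but the low-frequency part of $w_I^3$ is $\th_I^3\chi_I^3(\xi_I)\langle\psi_{f_I}^3\rangle$, which vanishes \emph{only} if $\langle \psi_{f_I}^3\rangle = 0$. If you impose this on all Mikado profiles, the low-frequency part of $\frac12|w_o|^2 w_o$ is indeed zero, but then the term $\ph_\ell$ in $\nabla\cdot\big(\frac12|w_o|^2 w_o + \ph_\ell\big)$ is \emph{not cancelled at all} and must be inherited by $\ph_{q+1}$. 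Since $\|\ph_\ell\|_0 \sim \la_q^{-3\ga}\de_{q+1}^{3/2}$ is far larger than the target $\la_{q+1}^{-3\ga}\de_{q+2}^{3/2}$ (the ratio fails to be $\le 1$ precisely because $\ga=(b-1)^2$ is tiny), the induction cannot close. The paper therefore uses \emph{two} families of building blocks with different moment conditions \eqref{con.psi2}: the ``Reynolds'' family with $\langle\psi_f^2\rangle=1$, $\langle\psi_f^3\rangle=0$, whose weights $\ga_I$ are chosen (via Lemma \ref{lem:geo1}) so the low-frequency part of $w_o\otimes w_o$ equals $\de_{q+1}\I - R_\ell$, \emph{and} a ``current'' family with $\langle\psi_f^3\rangle=1$, whose weights (via the affine Lemma \ref{lem:geo2}) are chosen so that the low-frequency part of $\frac12|w_o|^2 w_o$ equals $-\ph_\ell$ exactly, cf.\ \eqref{can.ph}. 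This genuine cancellation of $\ph_\ell$, not a frequency-support argument, is what makes the new current small. Your proposal contains no device that produces this cancellation.

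Second, you treat the disjointness of supports as an automatic consequence of the space-time partition of unity (``the time cutoffs $\chi_i$ arrange that at each $(x,t)$ at most one $i$ is active''). That is not so: the cutoffs $\th_m,\chi_n$ overlap smoothly, and the Mikado tubes associated to a given time slab are transported by the backward flow $\xi_m$, so a tube active on $[t_m,t_{m+1}]$ can be carried, near $t_{m+1}$, into the region occupied by tubes from the next slab. The actual disjointness in the paper is an \emph{inductive combinatorial construction of shifts} $z_{m,n}$ (Proposition \ref{p:supports}), hinging on the compactness argument of Lemma \ref{lem:no.int.line}: one uses that over a time scale $\tau_q$ the transported tubes remain $\frac{\eta}{4\la}$-close to translated straight lines, and one must still solve a genuine shift-selection problem to keep $270$ families of periodic geodesics pairwise $\eta$-separated from the previous generation. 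Without this step the whole premise ``the Mikado flows have disjoint supports,'' which you invoke repeatedly, is unjustified, and the pointwise algebraic identities for $w_o\otimes w_o$ and $|w_o|^2 w_o$ fail. Two smaller discrepancies: the paper's Mikado flows are not intermittent (the H\"older loss to $1/7$ comes from the $\tau_q,\mu_q$ space-time discretization, not from intermittency), and the paper takes $p_{q+1}=p_q$ with no pressure correction; the bound \eqref{e:ad-pressure} at level $q+1$ is checked simply by writing $\as D_t p_q = D_t p_q + w\cdot\nabla p_q$.
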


While the latter proposition would be enough to prove Theorem \ref{thm.onsager}, we will indeed need a technical refinement in order to show Theorem \ref{thm}. We have decided to state such refinement separately in order to improve the readability of our paper. In its statement we will need the following convention:
\begin{itemize}
\item given a function $f$ on $[0, T]\times \T^3$, $\supp_t (f)$ will denote its temporal support, namely
\[
\supp_t (f) := \{t: \exists \ x \;\mbox{with}\; f (x) \neq 0\}\, .
\] 
\item given an open interval $\cal{I} = [a,b]$, $|\cal{I}|$ will denote its length $(b-a)$ and $\cal{I} + \sigma$ will denote the concentric enlarged interval $(a-\sigma, b+\sigma)$. 
\end{itemize}

\begin{prop}[Bifurcating inductive proposition]\label{p:ind_technical}
Let the geometric constant $M {>1}$, the functions  $\bar b$, $\Lambda_0$, the parameters $\alpha$, $b$, $\lambda_0$ and the tuple $(v_q,p_q,R_q,\ka_q,\ph_q)$ be as in the statement of Proposition \ref{ind.hyp}. For any time interval $\cal{I}\subset (0, T)$ with $|\cal{I}|\geq 3\tau_q$
we can produce a first tuple $({v}_{q+1},  p_{q+1},  R_{q+1}, \ka_{q+1}, \ph_{q+1})$ and a second one $(\td v_{q+1},  \td p_{q+1}, \td R_{q+1}, \td\ka_{q+1}, \td\ph_{q+1})$ which share the same initial data, satisfy the same conclusions of Proposition \ref{ind.hyp} and additionally
\begin{align}\label{e:distance_and_support}
\norm{v_{q+1}-\td v_{q+1}}_{C^0([0, T];L^2(\T^3)) }\geq \de_{q+1}^\frac 12, \quad
\supp_t(v_{q+1}-\td v_{q+1}) \subset \cal{I}. 
\end{align}
Furthermore, if we are given two tuples $(v_q,p_q,R_q,\ka_q,\ph_q)$ and $(\td v_q, \td p_q, \td R_q, \td\ka_q, \td\ph_q)$ satisfying \eqref{est.vp}-\eqref{est.ph} and
\[
\supp_t(v_q-\td v_q, p_q-\td p_q, R_q-\td R_q, \ka_q-\td \ka_q, \ph_q-\td \ph_q)\subset \cal{J} 
\]
for some interval $\cal{J}\subset (0, T)$, we can exhibit corrected counterparts $({v}_{q+1},  p_{q+1},  R_{q+1}, \ka_{q+1}, \ph_{q+1})$ and $(\td v_{q+1},  \td p_{q+1}, \td R_{q+1}, \td\ka_{q+1}, \td\ph_{q+1})$ again satisfying the same conclusions of Proposition \ref{ind.hyp} together with the following control on the support of their difference:
\begin{equation}\label{e:second_support}
\supp_t(v_{q+1}-\td v_{q+1}, p_{q+1}-\td p_{q+1}, R_{q+1}-\td R_{q+1}, \ka_{q+1}-\td \ka_{q+1}, \ph_{q+1}-\td \ph_{q+1})\subset \cal{J} + (\la_q\de_q^\frac12)^{-1}.
\end{equation}
\end{prop}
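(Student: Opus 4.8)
The plan is to build the "bifurcating" scheme on top of the convex integration machinery of Proposition \ref{ind.hyp}, exploiting the fact that in that construction the velocity increment $w_{q+1}$ is a superposition of Mikado building blocks whose spatial and temporal supports are \emph{localized} and can be turned on and off. Concretely, the perturbation at step $q+1$ is organized along a partition of the time axis into intervals of length $\sim \tau_q$, and on each such interval one adds a Mikado flow supported in a temporal sub-window. To produce the pair $(v_{q+1},\td v_{q+1})$ for the first claim, I would run \emph{the same} construction twice, agreeing on all time windows except on a block of windows contained in $\cal I$; the hypothesis $|\cal I|\ge 3\tau_q$ guarantees that at least one full window (with its cutoff) fits strictly inside $\cal I$, so the difference $v_{q+1}-\td v_{q+1}$ is supported in $\cal I$ and both tuples inherit the same initial data (the first window touching $t=0$ is left untouched). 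For that single differing window I would, in one of the two solutions, replace the chosen Mikado direction/phase by a different admissible one (the geometric lemma underlying the construction always offers a nontrivial family of choices summing to the same stress), so that the two increments are genuinely different there; since each Mikado block contributes $\|w_{q+1}\|_{L^2}\sim \de_{q+1}^{1/2}$ on its window and distinct blocks are $L^2$-orthogonal up to the usual convex-integration error, one gets $\|v_{q+1}-\td v_{q+1}\|_{C^0_tL^2_x}\ge \de_{q+1}^{1/2}$ after absorbing errors into the (large) free constant — this is \eqref{e:distance_and_support}.

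For the second, propagation, claim I would track how the construction localizes in time. Every object built at step $q+1$ — the mollified velocity $v_\ell$, the cutoffs, the amplitudes $a_{q+1}$ of the Mikado flows (which depend on $R_q$, $\ka_q$, $\ph_q$ through the stress to be cancelled), the gluing/flow maps $\Phi$ used to transport the building blocks, and finally $w_{q+1}$, $p_{q+1}$ and the new errors $R_{q+1},\ka_{q+1},\ph_{q+1}$ — is obtained from $(v_q,p_q,R_q,\ka_q,\ph_q)$ by operations that only enlarge the temporal support by a controlled amount: spatial mollification does nothing in time, temporal mollification at scale $\ell_t$ enlarges it by $\le \ell_t$, and solving transport equations along the flow of $v_\ell$ over a time $\le \tau_q$ moves supports by at most the trajectory displacement, which is $\le \|v_\ell\|_0\,\tau_q \lesssim \tau_q$. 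Since $\tau_q = (C_0M\la_q^{1/2}\la_{q+1}^{1/2}\de_q^{1/4}\de_{q+1}^{1/4})^{-1}$ and the microscopic mollification parameters are chosen $\ll \tau_q \le (\la_q\de_q^{1/2})^{-1}$ (as $\la_{q+1}\gg\la_q$ and $\de_{q+1}\le\de_q$), the total enlargement is bounded by $(\la_q\de_q^{1/2})^{-1}$, which is exactly the margin allowed in \eqref{e:second_support}. Outside $\cal J + (\la_q\de_q^{1/2})^{-1}$ the two input tuples coincide, hence so do \emph{all} intermediate quantities, hence so do the two output tuples, giving \eqref{e:second_support}; on the first claim the same locality argument applied in reverse confirms $\supp_t(v_{q+1}-\td v_{q+1})\subset\cal I$ because the differing window together with its cutoff and the flow displacement still sits inside $\cal I$, using again the $3\tau_q$ room.

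I would carry this out in the following order: (i) quote from the proof of Proposition \ref{ind.hyp} the exact list of steps (mollification in space and time, gluing, choice of time windows, Mikado superposition, definition of the new errors via the inverse divergence/current operators), and record for each a "temporal support enlargement" bound, assembling them into a single lemma of the form "$\supp_t(\text{new data})\subset \supp_t(\text{old data}) + C(\ell_t + \tau_q + \text{flow displacement})$"; (ii) choose all parameters so that this $C(\dots)$ is $\le (\la_q\de_q^{1/2})^{-1}$ — this is a compatibility check with the parameter inequalities already fixed in Proposition \ref{ind.hyp}, requiring only that $b$ be taken slightly closer to $1$ and $\la_0$ slightly larger if needed; (iii) for the first part, specify the two runs of the construction differing on one interior window of $\cal I$, verify equality of initial data and the support inclusion via step (i), and prove the $L^2$ lower bound by the orthogonality-plus-error estimate; (iv) for the second part, apply the locality lemma directly. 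The main obstacle is step (i): the inverse-divergence and inverse-flux-current operators used to define $R_{q+1}$ and $\ph_{q+1}$ are nonlocal in space, and the gluing step replaces $v_q$ by an exact Euler flow on each subinterval, so one must check carefully that \emph{temporal} locality is nonetheless preserved — it is, because these operators act at fixed time and the gluing is performed interval-by-interval with partition-of-unity cutoffs in time, but making this airtight (especially tracking the displacement of the Lagrangian flow maps and ensuring the cutoffs do not leak support beyond the claimed margin) is where the real work lies.
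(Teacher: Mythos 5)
Your high-level strategy coincides with the paper's: run the construction twice, agreeing everywhere except on a single temporal window $m_0$ with $\supp\theta_{m_0}(\tau_q^{-1}\cdot)\subset\cal I$, and for the second claim propagate temporal locality through each step of the construction. However, the way the paper realizes the modification on that window is both cleaner and, importantly, safer than what you suggest. You propose to ``replace the chosen Mikado direction/phase by a different admissible one.'' If you literally change a direction $f$ or a shift $z_I$, you must re-run the disjoint-support argument of Proposition \ref{p:supports}, which is a delicate combinatorial statement tailored to the fixed family $\mathcal{F}$ and the geometric constants $d_0,\eta$ — it does not come for free. The paper instead exploits the one genuine degree of freedom offered by Lemma \ref{lem:geo1}: since the functions $\Gamma_{f_i}$ enter only through their squares in \eqref{eq.Ga1}, one may flip the sign of $\Ga_I$ for $I\in\mathscr{I}_R$ in that window. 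This changes nothing about supports, flows, or estimates, yet makes $w_o-\td w_o = 2\sum_{I\in\mathscr{I}_R, m_I=m_0}\theta_I\chi_I\gamma_I\td f_I\psi_I(\lambda\xi_I)$, whose $L^2$ size is then computed exactly via \eqref{can.R}-type cancellations and the stationary-phase Lemma \ref{phase} to give $\geq 2\de_{q+1}^{1/2}$, absorbing the small $w_c$ contribution to reach the (not free) constant $1$ in \eqref{e:distance_and_support}. Your ``absorbing errors into the large free constant'' is loose here — there is no free constant, and one must actually verify that the main term dominates.

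Two smaller misconceptions: this paper has no gluing step (it uses partitions of unity in time, not exact-Euler subintervals), so the worry about gluing and temporal locality is moot; and the ``trajectory displacement $\leq\|v_\ell\|_0\tau_q$'' you cite is a \emph{spatial} displacement — it does not widen temporal supports. The only genuine temporal enlargements are $\ell_t$ from $\rho_{\ell_t}*_\Phi$ and $O(\tau_q)$ from the width of the $\theta$-windows and the domain of the flows $\xi_m$, with $\tau_q\ll\ell_t\leq(\la_q\de_q^{1/2})^{-1}$, which is exactly what the paper uses for \eqref{e:second_support}.
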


\subsection{ Proof of Theorem \ref{thm}} In this argument we assume $T\geq 20$.
Fix $\beta< \frac 17$ and $\al\in (\be, \frac  17)$. {Then, choose $b$ and $\la_0$ in the range suggested in Proposition \ref{ind.hyp}.} We take $E\equiv 0$ and we start with 
\[
(v_0,p_0,R_0,\ka_0, \ph_0)= (0,0,0,0,0)\, .
\] 
It is easy to see that it solves \eqref{app.eq} and satisfies \eqref{est.vp}-\eqref{est.ph}. Now, the conclusion of Proposition \ref{ind.hyp} would be trivial in this case, since we could simply define $(v_1, p_1, R_1, \ka_1, \varphi_1)$ to be identically $0$. Nonetheless, consider any sequence of solutions $(v_q, p_q, R_q,\ka_q, \ph_q)$ to \eqref{app.eq} and satisfying \eqref{est.vp}-\eqref{est.ph} {and \eqref{cauchy}}. Since the sequence $\{v_q\}$ also satisfies \eqref{cauchy}, it is Cauchy in $C^0([0, T];C^{\al}(\T^3))$. Indeed, for any $q<q'$, we have
the following estimates\footnote{Here and in the rest of the note, given two quantities $A_q$ and $B_q$ depending on the induction parameter $q$ we will use the notation $A\lec B$ meaning that $A\leq CB$ for some constant $C$ which is independent of $q$. In some situations we will need to be more specific and then we will explicitely the depndence of $C$ on the various parameters involved in our arguments.}
\begin{align*}
\norm{v_{q'} - v_q}_{C^0([0, T];C^\al(\T^3))}
&\leq \sum_{l=1}^{q'-q}\norm{v_{q+l} - v_{q+l-1}}_{C^0([0, T];C^\be(\T^3))}\\
&\lec \sum_{l=1}^{q'-q}
\norm{v_{q+l} - v_{q+l-1}}_{0}^{1-\be}
\norm{v_{q+l} - v_{q+l-1}}_{1}^{\be}\\
&\lec \sum_{l=1}^{q'-q} 
\la_{q+1}^{\be} \de_{q+1}^{\frac12} = \sum_{l=1}^{q'-q} 
\la_{q+1}^{\be-\al} \to 0, 
\end{align*}
as $q$ goes to infinity because of $\be-\al<0$. Therefore, we obtain its limit $v$ in $ C^0([0, T];C^\al( \T^3))$. Also, we note that $p_q$ solves $\De p_q = \div\div (R_q - v_q\otimes v_q)$. By using the convergence of $v_q$ in $C^0([0, T];C^\alpha ( \T^3))$ to $v$ (and that of $R_q$ to $0$ in $C^0([0, T]; C^\alpha ( \T^3))$: note that an analogous interpolation argument can be used for $R_q$ as well), we can obtain a mean-zero pressure $p$ as the limit of $p_q$ in $C^0([0, T]; C^\alpha ( \T^3))$, by Schauder estimates. Since $(R_q, \ka_q, \ph_q)$ converges to $0$ in $C^0([0, T]\times \T^3)$, the limit $(v,p)$ solves the Euler equation \eqref{eqn.E} and satisfies the local energy equality \eqref{LEE} in the distributional distribution. Estimating 
\[
\|\partial_t v_q\|_0 \leq \|v_q\|_0 \|D v_q\|_0 + \|Dp_q\|_0 + \|D R_q\|_0\, ,
\] 
the time regularity of $v$ can be concluded with an analogous interpolation argument. Alternatively it follows from the general results of \cite{Isett-time} (see also \cite{CoDR} for a different argument). Hence $v\in C^\alpha ([0, T]\times \T^3)$. We mention in passing that the pressure can be shown to belong to $C^{2\alpha}$ (in time and space): this can be concluded again by interpolation as done above or it can be inferred from the general results of \cite{Isett-time,CoDR}. 

On the other hand, fix $\bar q\in \N$ satisfying $b^{\bar  q}\geq \bar q$. At the $\bar q$th step using Proposition \ref{p:ind_technical} we can produce two distinct tuples, one which we keep denoting as above and the other which we denote by $(\td v_q, \td p_q, \td R_q, \td\ka_q, \td \ph_q)$ and satisfies \eqref{e:distance_and_support}, namely
\begin{align*}
\norm{\td v_{\bar q} - v_{\bar q}}_{C^0([0, T];L^2(\T^3))} \geq  \de_q^\frac 12, \quad \supp_t (v_q-\td v_q) \subset \cal I\, ,  
\end{align*}
with $\cal I  = (10, 10+3\tau_{\bar q-1})$.
Applying now the Proposition \ref{ind.hyp} iteratively, we can build a new sequence $(\td v_q, \td p_q, \td R_q, \td\ka_q, \td \ph_q)$ of approximate solutions which satisfy \eqref{est.vp}-\eqref{cauchy} and \eqref{e:second_support}, inductively. Arguing as above, this second sequence converges to a solution $(\td v, \td p)$ to the Euler equation \eqref{eqn.E} satisfying \eqref{LEE}. Indeed, $\td v\in C^\beta ([0, T]\times \T^3)$. We remark that for any $q\geq \bar q$,
\[
\supp_t(v_{q} -\td v_{q}) \subset \cal I + \sum_{q=\bar q}^\infty (\la_{q}\de_{q}^\frac12)^{-1} \subset[9,T],
\]
(by adjusting $\la_0$ to be even larger than chosen above, if necessary), and hence $\td v_{ {q}}$ shares initial data with $v_{{q}}$ { for all $q$. As a result, two solutions $\td v_q$ and $v_q$ have the same initial data.} However, the new solution $\td v$ differs from $v$ because
\begin{align*}
\norm{v - \td v}_{C^0([0, T];L^2(\T^3))}
&\geq \norm{v_{\bar q} -\td v_{\bar q}}_{C^0([0,T];L^2(\T^3))}
-\sum_{q=\bar q}^\infty \norm{v_{q+1}-v_q - (\td v_{q+1} - \td v_q)}_{C^0([0,T];L^2(\T^3))} \\
&\geq \norm{v_{\bar q} - \td v_{\bar q}}_{C^0([0,T];L^2(\T^3))}
-(2\pi)^\frac32 \sum_{q=\bar q+1}^\infty (\norm{v_{q+1}-v_q}_0 + \norm{\td v_{q+1}-\td v_q}_0)\\
&\geq   \de_{\bar q}^\frac 12  - 2(2\pi)^\frac32M \sum_{q=\bar q}^\infty\de_{q+1}^\frac12>0.
\end{align*} 
The last inequality follows from adjusting $\la_0$ to a larger one if necessary. By changing the choice of time interval $\cal I$ and the choice of $\bar{q}$, we can easily generate infinitely many solutions. 

\subsection{Proof of Theorem \ref{thm.onsager}} Fix $\beta < \frac{1}{7}$ and as above choose $\alpha \in (\beta, \frac{1}{7})$. In order to prove Theorem \ref{thm.onsager} it suffices to produce a nonzero $E$ which satisfies \eqref{e:assumption_on_E} and a starting tuple $(v_0, p_0, R_0, \kappa_0, \varphi_0)$ which satisfies \eqref{app.eq}-\eqref{est.ph}.
In fact, arguing as in the previous section we can use inductively Proposition \ref{ind.hyp} to produce a sequence $(v_q, p_q)$ which is converging uniformly to a $C^\alpha$ solution $(v,p)$ of the Euler equations on [0,T] for which, additionally, the identity
\[
\partial_t \frac{|v|^2}{2} + \nabla \cdot \left(\left(\frac{|v|^2}{2}+p\right) v\right) = E'\, 
\]
holds.
Since any time-dependent function $\chi \in C^\infty_c ((0,T))$ would be an admissible test function, we conclude
\[
- \int_0^\infty \partial_t \chi \int_{\mathbb T^3} \frac{|v|^2}{2} (t,x)\, dx\, dt = \int_0^\infty E'(t) \chi (t)\, dt\, . 
\] 
Given that $E$ is $C^1$, the latter implies that the total kinetic energy is a $C^1$ function and it in facts coincides with $E$ up to a constant, namely
\[
\int_{\T^3} \frac{|v|^2}{2} (T,x)\, dx - \int_{\T^3} \frac{|v|^2}{2} (0,x)\, dx = E(T) - E(0). 
\] 
On the other hand $E(0)=0$, $E'\leq 0$ and $E$ is not identically $0$. In particular ${E(T) - E(0)} <0$. 

We are thus left with the task of finding a suitable $E$ and a starting tuple. $E$ will be assumed to be smooth. We fix an integer parameter $\bar\lambda>0$ (which will be chosen appropriately later) and define 
\begin{align*}
p_0 &=\kappa_0 = 0\, ,\\
v_0 &= (1-2 \delta_0^\frac{1}{2} + E(t))^{\frac{1}{2}} (\cos \bar \lambda x_3, \sin \bar \lambda x_3, 0)\, ,\\
\ph_0&=0
\end{align*}
and 
\[
R_0 := \bar \lambda^{-1} \frac{d}{dt} (1-2 \delta_0^{\frac{1}{2}} + E(t))^{\frac{1}{2}}
\left(\begin{array}{lll}
0 & 0 & \sin \bar\lambda x_3\\
0 & 0 & - \cos \bar\lambda x_3\\
\sin \bar\lambda x_3 & - \cos \bar\lambda x_3 & 0
\end{array}\right)\, .
\]
Note that, by assuming $\lambda_0$ large enough, $0 \leq \delta_0 - {\de_1} \leq 2\delta_0^\frac{1}{2} - {E(t)} \leq 3\delta_0^\frac{1}{2} \leq \frac{1}{2}$ 
and thus $v_0$ and $R_0$ are well defined and smooth, since
\begin{equation}
1 - 2\delta_0^{\frac{1}{2}} + E(t) \geq \frac{1}{2}\, .
\end{equation}
Moreover, it is easy to check that the tuple satisfies \eqref{app.eq}, \eqref{ka}, and \eqref{est.ph}. Our task is therefore to show that we can choose a nontrivial $E$ and a $\bar\lambda$ so to satisfy all the estimates \eqref{est.vp} and \eqref{est.R}. 

We start recalling that $E\leq 0$ and estimating
\[
\|v_0\|_N \leq (1-2\delta_0^\frac{1}{2})^{\frac{1}{2}} \bar\lambda^N\, ,
\]
which implies that \eqref{est.vp} is satisfied as soon as
\begin{equation}\label{e:barlambda_1}
\bar\lambda \leq  \lambda_0\delta_0^{\frac{1}{2}}\, .
\end{equation}
As for \eqref{est.R} we estimate
\begin{align*}
\|R_0\|_N  &\leq C \bar \lambda^{N-1} \|E'\|_0\\
 \|D_t R_0\|_N &=\|\partial_t R_0\|_N \leq C \bar \lambda^{N-1} (\|E''\|_0 + \|E'\|_0^2)\, ,
\end{align*}
where $C$ is a geometric constant.
Considering that by \eqref{e:barlambda_1} we already have $\bar \lambda \leq \lambda_0$ and that $\|E'\|_0 \leq {\de_0^\frac12}\delta_1$ by assumption (and hence
$\|E'\|_0^2 \leq {\de_0} \delta_1^2 \leq {\delta_0\delta_1}$), it suffices to impose
$\|E''\|_0 \leq {\de_0}\delta_1$ and 
\begin{equation}\label{e:barlambda_2}
\bar \lambda\geq C \lambda_0^{3\gamma} {\de_0^\frac12}\, .
\end{equation} 
Since the existence of a nontrivial smooth function $E$ with $E (0) =0$, $E'\leq 0$, and {$\|E^{(n)}\|_0 \leq\de_0^\frac n2 \delta_1$ for $n=0,1,2$} is obvious (choose for example $E(t) = -\de_1(1-\exp(-\de_0^\frac12 t))$), we just need to check that the requirements \eqref{e:barlambda_1} are mutually compatible for some choice of the parameters satisfying the assumptions of Proposition \ref{ind.hyp}: taking into account that $\bar\lambda$ must be an integer, the requirement amounts to the inequality ${\delta_0^{\frac{1}{2}}( \lambda_0 - C \lambda_0^{3\gamma})} \geq 1$. 
Recall that 
$3\gamma = 3(b-1)^2$ and 
the restrictions on $b$ given by Proposition \ref{ind.hyp} certainly allows us to choose $b$ so that ${3}(b-1)^2<\frac{1}{2}$. The compatibility is then satisfied if {$\lambda_0 >C \lambda_0^{\frac{1}{2}} +1$.} Since $C$ is a geometric constant, we just need a sufficiently large $\lambda_0$, which is again a restriction compatible with the requirements of Proposition \ref{ind.hyp}.

\section{Construction of the velocity correction}\label{sec:correction.const}

In this section we detail the construction of the correction $w:= v_{q+1}-v_q$. As in the literature  which started from the paper \cite{DLSz2013}, the perturbation $w$ is, in first approximation, obtained from a family of highly oscillatory stationary solution of the incompressible Euler equation, which are modulated by the errors $R_q$ and $\varphi_q$ and transported along the ``course grain flow'' of the vector field $v_q$. There are several choices of stationary solutions that one could use. In  this paper our choice falls on what has proved to be the most efficient ones found so far, called Mikado flows and first introduced in \cite{DaSz2016}. In order to define them, consider a function $\vartheta$ on $\mathbb R^2$ and let $\bar U (x) = e_3 \vartheta (x_1, x_2)$, where $e_3 = (0,0,1)$. Then it can be readily checked that $\bar U$ is a stationary solution. We can now apply a stretching factor $s>0$, a general rotation $O$ and a translation by a vector $p$ to define 
\[
U (x) = s O \bar U (O^{-1} (x-p))\, .
\] 
Observe that the periodization of this function defines a stationary solution on $\mathbb T^3$ when $O e_3$ belongs to $a \mathbb Q^3$ for some $a>0$. From now on, with a slight abuse of our terminology, the word Mikado flow will always refer to such periodization. Moreover the vector $f = s O e_3$ will be, without loss of generality assumed to belong to $\mathbb Z^3$ and will be called the {\em direction} of the Mikado flow, while $p$ will be called  its {\em shift}. 

In this paper $f$ will vary in a finite fixed set of directions $\mathcal{F}$ (which in fact has cardinality ${270}$) and for each $f$ we will specify an appropriate choice of $\vartheta$, which will be smooth and compactly supported in a disk $B (0, \frac{d_0}{4})$. The precise choice will be specified later. $\vartheta$ will not depend on the shift $p$ and we will denote by $U_f$ the corresponding Mikado flows when $p=0$. Observe that $U_f = f \psi_f$ for some smooth $\psi_f\in C^\infty_c (\mathbb R^3)$ with $f\cdot \nabla \psi_f =0$. One key point, which is used since the pioneering work \cite{DaSz2016} is the following elementary lemma:

\begin{lem}\label{l:Mikado}
For each $f\in \mathcal{F}$, let $p(f) \in \mathbb R^3$, $\gamma_f \in \mathbb R$ and $\lambda \in \mathbb N\setminus \{0\}$. Assume that the supports of the maps $U_f (\cdot - p (f))$ are pairwise disjoint. Then
\[
\sum_{f\in \mathcal{F}} \gamma_f U_f (\lambda (x-p(f)))
\]
is a stationary solution of the incompressible Euler equations on $\mathbb T^3$. 
\end{lem}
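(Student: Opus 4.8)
The plan is to verify directly that each summand $\gamma_f U_f(\lambda(x-p(f)))$ is a stationary solution and that the sum of the advective terms vanishes because of the disjointness hypothesis. Recall from the construction that $U_f(x) = f\,\psi_f(x)$ with $f\cdot\nabla\psi_f = 0$ and $\psi_f$ smooth and compactly supported; rescaling by $\lambda$ and shifting by $p(f)$ preserves both the divergence-free condition and the relation $f\cdot\nabla(\psi_f(\lambda(\cdot - p(f)))) = 0$, since the rescaled gradient is $\lambda(\nabla\psi_f)(\lambda(x-p(f)))$ and $f$ is unchanged. So writing $V_f(x) := \gamma_f U_f(\lambda(x-p(f))) = \gamma_f f\, \phi_f(x)$ with $\phi_f(x) := \psi_f(\lambda(x-p(f)))$, we have $\nabla\cdot V_f = \gamma_f f\cdot\nabla\phi_f = 0$, hence the full sum $V := \sum_f V_f$ is divergence-free on $\mathbb T^3$.

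Next I would compute the nonlinear term $\nabla\cdot(V\otimes V)$. Expanding, $\nabla\cdot(V\otimes V) = \sum_{f,g}\nabla\cdot(V_f\otimes V_g)$. The key observation is that for $f\neq g$ the supports of $\phi_f$ and $\phi_g$ are disjoint: by hypothesis the supports of $U_f(\cdot - p(f))$ are pairwise disjoint, and since $\psi_f$ is supported in $B(0,d_0/4)$ the rescaled-and-shifted supports $\{x : \lambda(x-p(f)) \in \supp\psi_f\}$ remain pairwise disjoint on $\mathbb T^3$ (this is where one should be slightly careful about periodization, but it follows from the assumption stated in the lemma, which is exactly the disjointness of the periodized supports). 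Therefore $V_f \otimes V_g \equiv 0$ for $f\neq g$, and only the diagonal terms survive: $\nabla\cdot(V\otimes V) = \sum_f \nabla\cdot(V_f\otimes V_f)$.

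It then remains to check that each diagonal term is a gradient, so that it can be absorbed into the pressure. For a single Mikado direction, $V_f\otimes V_f = \gamma_f^2\, \phi_f^2\, (f\otimes f)$, so $\nabla\cdot(V_f\otimes V_f) = \gamma_f^2\, (f\cdot\nabla \phi_f^2)\, f = 2\gamma_f^2 \phi_f (f\cdot\nabla\phi_f)\, f = 0$ again by $f\cdot\nabla\phi_f = 0$. Hence $\nabla\cdot(V\otimes V) = 0$ identically, and one can take pressure $\equiv 0$ (or any constant); the pair $(V,0)$ solves the stationary Euler system. Actually each $V_f$ by itself is already a stationary solution with zero pressure, and the disjoint-support hypothesis is precisely what kills all the cross terms, so the sum inherits the property.

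The main obstacle — really the only subtlety — is the bookkeeping around periodization: one must make sure that ``pairwise disjoint supports of $U_f(\cdot-p(f))$'' is understood on $\mathbb T^3$ and that the $\lambda$-rescaling, which wraps the disk $B(0,d_0/4)$ into many small disks on the torus, still produces disjoint supports for distinct $f$; this is guaranteed by the hypothesis as stated (the lemma already assumes disjointness of the relevant periodized maps) together with the smallness of $d_0$. Everything else is the elementary identity $f\cdot\nabla\psi_f = 0$ propagated through rescaling and translation, plus the fact that $\nabla\cdot(a\otimes a) = a\,(\nabla\cdot a) + (a\cdot\nabla)a$ collapses for $a = \phi f$ with $f$ constant and $f\cdot\nabla\phi=0$.
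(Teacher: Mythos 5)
Your proof follows the standard elementary argument (the paper leaves this lemma to the reader): each summand $V_f = \gamma_f f\, \phi_f$ is divergence-free because $f\cdot\nabla\psi_f = 0$ survives the rescaling and translation, the cross terms in $\nabla\cdot(V\otimes V)$ vanish by disjoint supports, and the diagonal terms vanish because $\nabla\cdot(\phi_f^2\, f\otimes f) = 2\phi_f(f\cdot\nabla\phi_f)\,f = 0$, so one may take the pressure to be zero. That structure is exactly right.

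However, the point you flagged and then waved away is a genuine gap: disjointness of $\supp U_f(\cdot - p(f))$ does \emph{not} by itself imply disjointness of $\supp U_f(\lambda(\cdot - p(f)))$ on $\mathbb T^3$, and the assertion ``it follows from the assumption stated in the lemma'' is not correct as written. Indeed $U_f(\lambda(x - p(f))) = U_f(\lambda x - \lambda p(f))$, and since multiplication by $\lambda\in\mathbb N\setminus\{0\}$ is surjective on $\mathbb T^3$, the preimages $\{x:\lambda x \in \supp U_f + \lambda p(f)\}$ are pairwise disjoint iff the sets $\supp U_f + \lambda p(f)$ are; the relevant shifts are $\lambda p(f)$, not $p(f)$. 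Concretely, take $f=e_3$, $g=e_1$, $p(f)=0$, $p(g)=(0,\pi,0)$, $\lambda=2$: then $\supp U_{e_3}$ and $\supp U_{e_1}(\cdot-(0,\pi,0))$ are disjoint tubes, yet $U_{e_3}(2\cdot)$ and $U_{e_1}(2(\cdot-(0,\pi,0))) = U_{e_1}(2\cdot-(0,2\pi,0)) = U_{e_1}(2\cdot)$ both have a thin tube of support passing through the origin. The intended reading, consistent with the paper's later shorthand $\psi_I(\lambda\xi_I) = \psi_{f_I}(\lambda\xi_I - z_I)$, is that the shift is applied \emph{after} the rescaling, i.e., the summand is $U_f(\lambda x - p(f))$; under that convention, surjectivity of $\lambda$-multiplication on $\mathbb T^3$ shows the rescaled supports $\{x:\lambda x\in\supp U_f + p(f)\}$ are pairwise disjoint precisely when the $\supp U_f(\cdot - p(f))$ are, which is the stated hypothesis, and then your argument closes with no further work. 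You should make this interpretation explicit rather than assert the implication holds for the formula as literally written.
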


Note that the supports of the functions $U_f (\cdot - p)$ and $\psi_f (\cdot - p)$ are contained in a $\frac{d_0}{4}$-neighborhood of 
\begin{equation}
l_f+p := \left\{x\in \mathbb T^3: \left(x -\sigma f - p\right) \in 2\pi \mathbb Z^3 \quad \mbox{for some $\sigma \in \mathbb R$}\right\}\, .
\end{equation} 
If $d_0$ is sufficiently small, depending on $f$, the latter is a ``thin tube'' winding around the torus a finite number of time (this inspired the authors of \cite{DaSz2016} to call $U_f$ a Mikado flow, inspired by the classical game originating in Hungary). 
In a first approximation we wish to define our perturbation $v_{q+1} - v_q$ as
\[
\sum_{f\in \mathcal{F}} \gamma_f (R_q(t,x), \varphi_q (t,x)) U_f (\lambda (x-p(f))
\]
where the coefficients $\gamma_f$ are appropriately chosen smooth functions (later on called ``weights''), $\lambda$ is a very large parameter and the $p(f)$ are appropriately chosen shifts to ensure the disjoint support condition of Lemma \ref{l:Mikado} (namely that the $\frac{d_0}{4}$-neighborhoods of $l_{f} + p(f)$ are pairwise disjoint). As already pointed out such Ansatz must be corrected and we need to modify the perturbation 
so that it is approximately advected by the velocity $v_q$. Note that on large time-scales the flow of the velocity $v_q$ does not satisfy good estimates, while it satisfies good estimates on a sufficiently small scale $\tau_q$. Following \cite{BuDLeIsSz2015} and \cite{Is2013} this issue is solved by introducing a partition of unity in time and ``restarting' the flow at a discretized set of times, roughly spaced according to the parameter $\tau_q$. However, unlike \cite{BuDLeIsSz2015} and \cite{Is2013} one has to face the delicate problem of keeping the supports of the various Mikado flows disjoint. This is done by discretizing the construction in space too, taking advantage of the fact that for sufficiently small space and time scales, the supports of the transported Mikado flows remain roughly straight thin tubes: the argument requires then a subtle combinatorial choice of the ``shifts''. As in \cite{DLSz2013} the introduction of the space-time discretization deteriorates the estimates and accounts for the H\"older threshold $\frac{1}{7}$. This is certainly better than the exponent achieved by Isett in \cite{Is17}, however it comes short of the conjectural exponent $\frac{1}{3}$ precisely because of the several additional errors introduced by the discretization.

We break down the exact description of the perturbation in the following steps:
\begin{itemize}
\item In Section \ref{ss:directions} we describe the choice of the directions $\mathcal{F}$ and the respective functions $\psi_f$;
\item In Section \ref{ss:regularization} we describe an appropriate regularization of the $(v_q, p_q, R_q, \kappa_q, \varphi_q)$ and we introduce the drift of the regularized velocity;
\item In Section \ref{ss:discretization} we describe the main part of the velocity perturbation, which involves the space-time discretization, the combinatorial choice of the shifts and the drifted Mikado flows;
\item In Section \ref{ss:weights} we detail the choice of the weight of each Mikado flow;
\item In Section \ref{ss:correction} we specify a further correction of the main velocity perturbation, which ensures that $v_{q+1}$ is solenoidal. 
\end{itemize}

\subsection{Mikado directions}\label{ss:directions} To determine a set of suitable directions $f$, we recall two geometric lemmas. In the first we denote by $\mathbb{S}$ the subset of $\mathbb R^{3\times 3}$ of all symmetric matrices, let $\I$ be the identity matrix and set $|K|_\infty := |(k_{lm})|_\infty = \max_{l,m} |k_{lm}|$ for $K\in \mathbb R^{3\times 3}$.
\begin{lem}[Geometric Lemma I]\label{lem:geo1} 
Let $\cF= \{f_i\}_{i=1}^6$ be a set of vectors in $\Z^{3}$ and $C$ a positive constant such that
\begin{equation}\label{con.FIR}
\sum_{i=1}^6 f_i \otimes f_i = C\I, \quad\mbox{and}\quad
\{f_i\otimes f_i\}_{i=1}^6 \text{ forms a basis of }\mathbb{S}\, .
\end{equation}
Then, there exists a positive constant $N_0 = N_0(\cF)$ such that for any $N\leq N_0$, we can find functions $\{\Ga_{f_i}\}_{i=1}^6\subset C^\infty(S_{N}; (0,\infty))$, with domain $S_{N}:= \{\I-K: K \text{ is symmetric, } |K|_\infty\leq N\}$,
satisfying 
\begin{align*}
\I -K = \sum_{i=1}^6 \Ga_{f_i}^2(\I - K) (f_i\otimes f_i), 
\quad\forall (\I-K) \in S_{N}\, .
\end{align*}
\end{lem}

\begin{proof}
Since $f_i\otimes f_i$ is a basis for $\mathbb{S}$, there are unique linear maps $L_i : \mathbb{S} \to \mathbb R$ such that
\[
A = \sum_i L_i (A)\, f_i\otimes f_i \qquad \forall A\in \mathbb{S}\, . 
\]
On the other hand by \eqref{con.FIR} and the uniqueness of such maps, $L_i (\I) = \frac{1}{C}$ for all $i$. Choose now $N_0$ so that $L_i (A)\geq \frac{1}{2C}$ for all $A\in S_{N_0}$. It thus suffices to set $\Ga_{f_i} (A) := \sqrt{L_i (A)}$ for all $A\in S_{N_0}$ to find the desired functions.
\end{proof}

\begin{lem}[Geometric Lemma II] \label{lem:geo2}
Suppose that 
\begin{equation}\label{con.FIph}
\{f_1, f_2, f_3\}\subset \Z^3\setminus\{0\} \text { is an orthogonal frame and } 
f_4=-(f_1+ f_2+f_3).
\end{equation}
Then, for any $N_0>0$, there are affine functions $\{\Ga_{f_k}\}_{1\leq k \leq 4} \subset C^\infty(\cal{V}_{N_0}; [N_0, \infty))$ with domain $\cal{V}_{N_0} := \{u\in \R^3: |u| \leq N_0\}$ such that
\[
u = \sum_{k=1}^4 \Ga_{f_k} (u) f_k  \qquad\forall u \in \cal{V}_{N_0}\, .
\] 
\end{lem}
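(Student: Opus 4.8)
Geometric Lemma II is an elementary linear-algebra statement, so I plan a direct constructive proof. Here is my plan.

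\medskip

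\textit{Plan.} The key observation is that $\{f_1,f_2,f_3\}$, being an orthogonal frame in $\R^3\setminus\{0\}$, is in particular a basis of $\R^3$. Hence every $u\in\R^3$ has a unique expansion $u=\sum_{k=1}^3 c_k(u)\,f_k$ with $c_k(u) = \frac{u\cdot f_k}{|f_k|^2}$, which are genuine \emph{linear} functions of $u$. The trouble is that these coefficients can be negative or zero, whereas the lemma demands coefficients bounded below by $N_0$ (and, implicitly, defined only on the ball $\cal V_{N_0}$). This is exactly what the fourth direction $f_4 = -(f_1+f_2+f_3)$ is for: adding a large multiple of $f_4$ lets us shift all three coefficients up simultaneously. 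Concretely, for any constant $A$ we can write
\[
u = \sum_{k=1}^3 c_k(u)\,f_k = \sum_{k=1}^3\bigl(c_k(u)+A\bigr) f_k + A f_4\, ,
\]
using $f_1+f_2+f_3 = -f_4$. So I would set $\Ga_{f_k}(u) := c_k(u) + A$ for $k=1,2,3$ and $\Ga_{f_4}(u) := A$, where $A$ is chosen large enough (depending only on $N_0$ and the fixed frame $f_1,f_2,f_3$) that $c_k(u)+A\geq N_0$ for all $u\in\cal V_{N_0}$ and all $k$; since $|c_k(u)|\leq |u|/|f_k|\leq N_0/|f_k|$ on the ball, it suffices to take $A\geq N_0 + N_0\max_k |f_k|^{-1}$, and we also need $A\geq N_0$ to control $\Ga_{f_4}$. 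Each of the four functions is then affine (three of them genuinely affine in $u$, the last constant), smooth, takes values in $[N_0,\infty)$, and the decomposition identity holds on all of $\cal V_{N_0}$.

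\medskip

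\textit{Main obstacle.} There isn't a serious obstacle here — the content is just the remark that the "defect" vector $f_4$ absorbs a uniform positive shift of the barycentric-type coefficients. The only points requiring a little care are: (i) checking that the bound $|c_k(u)|\le N_0/|f_k|$ used to pin down $A$ is uniform over the ball $\cal V_{N_0}$, which is immediate from Cauchy--Schwarz and orthogonality; and (ii) noting the affineness claim is sharp — the $\Ga_{f_k}$ are affine but not linear, precisely because of the constant shift $A$, which is consistent with the statement of the lemma. No compactness or continuity-of-coefficients argument (as was needed in Geometric Lemma I to get the lower bound $\tfrac{1}{2C}$) is required, because here the lower bound is built in by fiat through the choice of $A$.
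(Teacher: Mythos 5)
Your proof is correct and follows essentially the same route as the paper: expand $u$ in the orthogonal basis $\{f_1,f_2,f_3\}$, shift all three linear coefficients by a common constant $A$, and absorb the shift using $f_4 = -(f_1+f_2+f_3)$; the paper simply fixes $A = 2N_0$ directly by observing that $f_k\in\Z^3\setminus\{0\}$ forces $|f_k|\geq 1$, which makes your bound $N_0 + N_0\max_k|f_k|^{-1}$ at most $2N_0$.
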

\begin{proof} Since $\{f_k\}_{i=1}^3$ is orthogonal, any vector $u$ in $\R^3$ can be written as $u=\sum_{i=1}^3 \frac{u\cdot f_k}{|f_k|^2}f_k$. Define
\[
\Ga_{f_k} (u) 
=\begin{cases} 
2N_0+\frac{u\cdot f_k}{|f_k|^2},  &k=1,2,3\\
2N_0					&k=4.
\end{cases}	
\]
Clearly the functions are affine (and thus smooth), whereas a direct computation gives $u = \sum_{k=1}^4 \Ga_{f_k} (u) f_k$. {If $u\in \cal{V}_{N_0}$, using $|u| \leq N_0$ and $|f_k|\geq 1$, we get $|\Ga_{f_k}(u)|\geq 2N_0 -\frac{|u|}{|f_k|} \geq N_0$ when $k=1,2,3$. Therefore, it is obvious to have $|\Ga_{f_k} (u) |\geq N_0$ for any $u\in \cal{V}_{N_0}$.}
\end{proof}

Based on these lemmas, we choose 27 pairwise disjoint families $\cF^j$ indexed by $j\in \mathbb{Z}_3^3$, where each $\cF^j$ consists further of two (disjoint) subfamilies $\cF^{j,R}\cup\cF^{j,\ph} $ with cardinalities $|\cF^{j,R}|=6$ and  $|\cF^{j,\ph}|=4$, chosen so that $\cF^{j,R}$ and $\cF^{j,\ph}$ satisfy \eqref{con.FIR} and \eqref{con.FIph}, respectively. For example, for $j= (0,0,0)$ we can choose
\begin{align*}
\cF^{j,R} = \{(1,\pm 1, 0), (1, 0, \pm 1), (0, 1, \pm1)\}, \quad
\cF^{j,\ph} = \{(1,2, 0), (-2, 1, 0), (0, 0, 1), (1,-3,-1)\}
\end{align*}
and then we can apply $26$ suitable rotations (and rescalings). 
Next, the function $\vartheta$ will be chosen for each $f$ in two different ways, depending on whether $f\in \cF^{j,R}$ or $f\in \cF^{j,\ph}$. Introducing the shorthand notation
$\langle u \rangle = \dint_{\T^3} u (x)\, dx$, we impose the moment conditions
\begin{equation}\label{con.psi2}
\begin{split}
\langle \psi_f \rangle = \langle \psi_f^3 \rangle = 0, \quad \langle \psi_f^2\rangle =1  \qquad \forall f\in \cF^{j,R},\\
\langle \psi_f\rangle = 0, \quad \langle \psi_f^3 \rangle=1
\qquad \forall f\in \cF^{j,\ph}.
\end{split}
\end{equation} 
The main point is that the Mikado directed along $f\in \cF^{j,R}$ will be used to ``cancel the error $R_q$'', while the ones directed along $f\in \cF^{j, \ph}$ will be used to ``cancel the error $\ph_q$'' and the different moment conditions will play a major role. In both cases we assume also that
\begin{equation}\label{e:eta}
\supp (\psi_f) \subset B \left(l_f, \frac{\eta}{10}\right)
{ := \{x\in \R^3: |x-y|<\textstyle{\frac {\eta}{10}} \text{ for some } y\in l_f\} }
\, , 
\end{equation}
where $\eta$ is a geometric constant which will be specified later, cf. Proposition \ref{p:supports}. 

\subsection{Regularization and drift}\label{ss:regularization} We start by suitably smoothing the tuple $(v_q, p_q, R_q, \kappa_q, \varphi_q)$. To this aim we first introduce the parameters $\ell$ and $\ell_t$, defined by
\[
\ell = \frac 1{\la_q^\frac34\la_{q+1}^\frac14} \left(\frac{\de_{q+1}}{\de_q}\right)^\frac38, \quad
\ell_t = \frac 1{\la_q^{\frac12-3\ga} \la_{q+1}^\frac12 \de_q^\frac14\de_{q+1}^\frac14 }\, .
\]
The space regularizations of $v_q$ and $p_q$ are defined by applying a ``low-pass filter'' which roughly speaking eliminates all the waves larger than $\ell^{-1}$. 
In order to do so we first introduce some suitable notation. First of all, for a function $f$ in the Schwartz space $\cal{S}(\R^3)$, the Fourier transform of $f$ and its inverse on $\R^3$ are denoted by 
\[
\widehat{f} (\xi) = \frac 1{(2\pi)^3}\int_{\R^3} f(x) e^{-ix \cdot \xi} dx,\quad
\widecheck{f} (x) = \int_{\R^3} f(\xi) e^{ix \cdot \xi} d\xi. 
\]   
As usual, we understand the Fourier transform on more general functions as extended by duality to $\mathcal{S}' (\R^3)$. Since practically all the objects considered in this note are functions, vectors and tensors defined on $I\times \mathbb T^3$ for some time domain $I\subset \mathbb R$, regarding them as spatially periodic functions on $I\times \mathbb R^3$, we will consider their Fourier transform as time-dependent elements of $\mathcal{S}' (\R^3)$. We then follow the standard convention on Littlewood-Paley operators. We let $m(\xi)$ be a radial smooth function supported in ${B(0,2)}$ which is identically $1$ on $\overline{B(0,1)}$. For any number $j\in \Z$ and distribution $f$ in $\R^3$, we set
\begin{align*}
\widehat{P_{\leq 2^j} f}(\xi) &:=m\left(\frac{\xi}{2^j}\right) \hat{f}(\xi),\quad
\widehat{P_{> 2^j} f}(\xi) := \left(1-m\left(\frac{\xi}{2^j}\right)\right) \hat{f}(\xi), 
\end{align*}
and for $j\in \Z$
\begin{align*}
\widehat{P_{2^j}f}(\xi) :=\left( m\left(\frac{\xi}{2^j}\right)
-m\left(\frac{\xi}{2^{j-1}}\right)\right) \hat{f}(\xi).
\end{align*}
For a positive real number $S$, we finally let $P_{\leq S}$ equal the operator $P_{\leq 2^J}$ for the largest $J$ such that $2^J\leq S$. 
We are thus ready to introduce the coarse scale velocity $v_\ell$ and pressure $p_\ell$ defined by 
\begin{align}\label{def.vl}
v_\ell = P_{\le \ell^{-1}}v_{{q}}, \quad p_\ell = P_{\le \ell^{-1}} p_{{q}}\, .
\end{align}
Note that, regarding $v$ as a spatially periodic function on $I\times \mathbb T^3$, $P_{\leq \ell^{-1}} v$ can be written as the space convolution of $v$ with the kernel $2^{3J} \widecheck{m} (2^J \cdot)$, which belongs to $\mathcal{S} (\R^3)$. In particular $v_\ell$ is also spatially periodic and will be in fact regarded as a function on $I\times \mathbb T^3$. Similar remarks apply to several other situations in the rest of this note.

The regularization of the errors $R_q, \kappa_q$ and $\varphi_q$ is more laborious and follows the intuition that, while we need to regularize them in time and space, we want such regularization to give good estimates on their advective derivatives along $v_\ell$, for which we introduce the ad hoc notation
\[
D_{t, \ell} := \pa_t + v_\ell \cdot \na\, .
\]
First of all we let $\Phi(\tau,x;t)$ be the forward flow map with the drift velocity $v_\ell$ defined on some time interval $[a,b]$ starting at the initial time $t\in [a,b)$:
\begin{align}\label{e:fflow_first_instance}
\begin{cases}
\pa_\tau \Phi(\tau, x;t)= v_\ell (\tau, \Phi(\tau, x;t))\\
\Phi(t, x;t) = x\, .
\end{cases}
\end{align}

\begin{rem}\label{r:periodicity}
Strictly speaking the map above is defined on $[a,b] \times \mathbb R^3$. Note however that the periodicity of $v_\ell$ implies that $\Phi$ induces a well-defined map from $[a,b]\times \mathbb T^3$ into $\mathbb T^3$. From now on we will implicitly identify both maps.
\end{rem}

We then take a standard mollifier $\rho$ on $\R$, namely a nonnegative smooth bump function satisfying $\norm{\rho}_{L^1(\R)}=1$ and $\supp\rho\subset (-1,1)$. As usual we set $\rho_\delta (s) = \delta^{-1} \rho( \delta^{-1} s)$ for any $\de>0$. We can thus introduce the mollification along the trajectory
\begin{align*}
(\rho_\delta \ast_{\Phi} F) (t,x)
= \int_{\R} F(t+s,\Phi(t+s,x;t))  \rho_\delta (s) ds. 
\end{align*}
(Note that if $F$ and $v_\ell$ are defined on some time interval $[a,b]$, then $\rho_\delta \ast_{\Phi} F$ is defined on $[a,b]-\delta$.)
This mollification can be found in \cite{Is2013}
and is designed to satisfy
\begin{align*}
D_{t,\ell} (\rho_\delta \ast_{\Phi} F)(t,x)
= \int (D_{t,\ell}F)(t+s, \Phi(t+s, x;t)) \rho_\delta (s) ds
= - \int F(t+s, \Phi(t+s, x;t)) \rho'_\delta (s) ds\, .
\end{align*}
The regularized errors are then given by
\begin{align}\label{def.me}
R_\ell = \rho_{\ell_t} \ast_{\Phi} P_{\leq \ell^{-1}} R_{{q}}, \quad
\ph_\ell = \rho_{\ell_t} \ast_{\Phi} P_{\leq \ell^{-1}}\ph_{{q}}, {\quad \ka_\ell = \tr(R_\ell).}
\end{align}
These errors can be defined on $[0,T]+2\tau_q$ by the choice of sufficiently large $\la_0$. We will need later quite detailed estimates on the difference between the original tuple and the regularized one and on higher derivative of the latter. Such estimates are in fact collected in Section \ref{s:estimates_mollification}.

\subsection{Partition of unity and shifts}\label{ss:discretization}  We first introduce nonnegative smooth functions $\{\chi_n\}_{n\in\Z^3}$ and $\{\th_m\}_{m\in \Z}$ whose sixtth powers give suitable partitions of unity in space $\R^3$ and in time $\R$, respectively:
\[
\sum_{n\in \Z^3} \chi_n^6(x) =1, \quad
\sum_{m\in \Z} \th_m^6(t) =1.   
\]
Here, $\chi_n(x) = \chi_0(x-{2}\pi n)$ where $\chi_0$ is a non-negative smooth function supported in $Q(0, {9/8}\pi)$ satisfying $\chi_0 = 1$ on $\overline{Q(0, {7/8}\pi)}$, where from now on $Q (x, r)$ will denote the cube $\{y: |y-x|_\infty < r\}$ (with $|z|_\infty := \max \{|z_i|\}$). Similarly, $\th_m(t) = \th_0(t-m)$ where $\th_0\in C_c^\infty(\R)$ satisfies $\th_0 = 1$ on $[1/8, 7/8]$ and $\th_0 =0$ on $(-1/8,9/8)^c$. Then, we divide the integer lattice $\Z^3$ into 27 equivalent families $[j]$ with $j \in \mathbb{Z}_3^3$ via the usual equivalence relation
\[
n=(n_1,n_2,n_3) \sim \td n = (\td n_1,\td n_2,\td n_3) 
\iff n_i \equiv \td n_i\; \mod 3\quad \text{for all }i=1,2,3.
\]
We use these classes to define the set of indices 
\[
\mathscr{I} := \{(m,n, f): (m,n)\in \mathbb Z\times \mathbb Z^3 \quad\mbox{and}\quad f\in \mathcal{F}^{[n]}\}\, .
\]
For each $I$ we denote by $f_I$ the third component of the index and we further subdivide $\mathscr{I}$ into $\mathscr{I}_R\cup \mathscr{I}_\varphi$ depending on whether $f_I\in \mathcal{F}^{[n],R}$ or $f_I\in \mathcal{F}^{[n], \varphi}$. 
Next we introduce the parameters $\tau = \tau_q$ and $\mu = \mu_q$ with $\tau_q^{-1}>0$ and $\mu_q^{-1}\in \mathbb N \setminus \{0\}$, which are explicitly given by
\begin{align}\label{mu.tau}
\mu_q^{-1} = 3 \ceil{\la_q^{\frac12}\la_{q+1}^{\frac12} \de_q^\frac14\de_{q+1}^{-\frac14} /3}, \quad 
\ta_q^{-1} = 40\pi {M}\eta^{-1}\cdot {\la_q^\frac12\la_{q+1}^\frac12 \de_q^\frac14\de_{q+1}^\frac14 },
\end{align}
(note that $M$ in $\tau_q$ is required in order to satisfy the last condition in \eqref{con.par}).
we define 
\begin{align*}
\th_I(t) = 
\begin{cases}
\th_m^3(\tau^{-1}t), \quad I\in \mathscr{I}_R\\
\th_m^2(\tau^{-1}t), \quad I\in \mathscr{I}_\ph,
\end{cases}
\quad
\chi_I(x)=
\begin{cases}
\chi_n^3(\mu^{-1}x), \quad I\in \mathscr{I}_R\\
\chi_n^2(\mu^{-1}x), \quad I\in \mathscr{I}_\ph\, .
\end{cases}
\end{align*}
Next, for each $I$ let $U_{f_I}$ be the corresponding Mikado flow. Moreover, given $I= (m,n,f)$, denote by $t_m$ the time $t_m = m\tau$ and let $\xi_I = \xi_m$ be the solution of the following PDE (which we understand as a map on $\mathbb  R \times \mathbb T^3$ taking values in $\mathbb T^3$, cf. Remark \ref{r:periodicity}):  
\begin{align}\label{def.bflow}
\begin{cases}
\pa_t \xi_m + (v_\ell \cdot \na)\xi_m =0\\
\xi_m(t_m,x) =x 
\end{cases}
\end{align}
In the rest of the paper $\nabla \xi_I$ will denote the Jacobi Matrix of the partial derivatives of the components of the vector map $\xi_I$ and we will use the shorthand notations $\nabla \xi_I^\top$, $\nabla \xi_I^{-1}$ and $\nabla \xi_I^{-\top}$ for, respectively, its transpose, inverse and transport of the inverse. Moreover, for any vector $f\in \mathbb R^3$ and any matrix $A\in \mathbb R^{3\times s}$ the notation $\nabla \xi_I f$ and $\nabla \xi_I A$ (resp. $\nabla \xi_I^{-1} f$, etc.) will be used for the usual matrix product, regarding $f$ as a column vector (i.e. a $\mathbb R^{3\times 1}$-matrix). 
 
For each $I = (m,n,f)$ we will also choose a shift 
\[
z_I= z_{m,n} + \bar p_f \in \mathbb R^3
\] 
and, setting $\lambda= \lambda_{q+1}$, we are finally able to introduce the main part of our perturbation, which is achieved using the following ``master function''
\begin{equation}\label{e:master}
W (R, \varphi, t,x) := \sum_{I\in \mathscr{I}} \theta_I (t) \chi_I (\xi_I (t,x)) {\gamma}_I (R, \varphi,t,x) \nabla \xi_I^{-1} (t,x) U_{f_I} (\lambda (\xi_I (t,x) - z_I))\, ,
\end{equation}
where the $\gamma_I$'s are smooth scalar functions (the ``weights'') whose choice will be specified in the next section.
In order to simplify our notation we will use $U_I$ for $U_{f_I} (\cdot - z_I)$, $\psi_I$ for $\psi_{f_I} (\cdot - z_I)$ and $\tilde{f}_I$ for $\nabla \xi_I^{-1} f_I$. We therefore have the writing
\begin{equation}\label{e:master2}
W := \sum_{I\in \mathscr{I}} \theta_I \chi_I (\xi_I) {\gamma}_I \tilde{f}_I \psi_I (\lambda \xi_I)\, .
\end{equation}
Note that, since we want $W$ to be a periodic function of $x$, we will impose 
\begin{equation}\label{e:periodicity}
z_{m,n} = z_{m, n'} \qquad \mbox{if ${\mu}(n-n')\in 2\pi \mathbb Z^3$.}
\end{equation}
Finally, the main part of the correction $v_{q+1}-v_q$ will take the form
\begin{equation}\label{e:wo}
w_o (t,x) := W (R_\ell (t,x), \varphi_\ell (t,x), t,x)\, 
\end{equation}
which is well-defined on $[0,T]+ 2\tau_q$. (Indeed, it is possible to have $[0,T]+ 3\tau_q\subset [0,T]+ \tau_{q-1}$ by the choice of sufficiently large $\la_0$). In the rest of Section \ref{sec:correction.const}, without mentioning, our analysis is done in the time interval $[0,T]+ 2\tau_q$.
Given the complexity of several formulas and future computations, it is convenient to break down the functions $ W$ and $w_o$ in more elementary pieces. To this aim we introduce the scalar maps
\[
w_I (t,x) := \theta_I (t) \chi_I (\xi_I (t,x)) \psi_I (\lambda (\xi_I (t,x)))\, ,
\]
using which we can write
\[
W (R, \varphi, t, x) = \sum_{I\in \mathscr{I}} \gamma_I (R, \varphi,t,x) \nabla \xi_I (t,x)^{-1} f_I w_I (t,x) 
= \sum_{I\in \mathscr{I}} \gamma_I (R, \varphi, t,x) \tilde{f}_I (t,x) w_I (t,x)\, 
\]
and
\[
w_o = \sum_{I\in \mathscr{I}} \gamma_I  \nabla \xi_I^{-1} f_I w_I =  \sum_{I\in \mathscr{I}} \gamma_I  \tilde{f}_I w_I\, .
\]
The crucial point in our construction is the following proposition, whose proof will be given in Section \ref{p:supports}.

\begin{prop}\label{p:supports}
There is a constant $\eta = \eta (\mathcal{F})$ in \eqref{e:eta} such that it allows a choice of the shifts $z_I= z_{m,n} + \bar p_f$ which ensure that {for each $(\mu_q, \tau_q, \la_{q+1})$,} the conditions $\supp (w_I) \cap \supp (w_J)=\emptyset$ for every $I\neq J$ and that \eqref{e:periodicity} for every $m,n$ and $n'$. 
\end{prop}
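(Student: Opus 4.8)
The plan is to treat Proposition \ref{p:supports} as a combinatorial statement about disjointness of thin transported tubes, decoupled into a spatial problem and a temporal problem. Recall that $w_I$ is supported in $\{t : \theta_I(t)\neq 0\}\times \{x : \chi_I(\xi_I(t,x))\neq 0 \text{ and } \psi_I(\lambda\xi_I(t,x))\neq 0\}$. The temporal cutoffs $\theta_m(\tau^{-1}\cdot)$ are supported in intervals of length $O(\tau)$ centered at $t_m = m\tau$, so $\supp_t(w_I)\cap \supp_t(w_J) = \emptyset$ unless $m$ and $m'$ differ by at most a fixed constant. Hence the real content is: for $I=(m,n,f)$ and $J=(m',n',f')$ with $|m-m'|$ bounded, if $(m,n,f)\neq (m',n',f')$ then the spatial supports are disjoint. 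Since $\xi_I(t,x)$ is (for $t$ in the relevant window) a near-isometry close to the identity shifted by the drift of $v_\ell$, the set $\{x: \psi_I(\lambda \xi_I(t,x))\neq 0\}$ is, up to a small error controlled by $\ell$, $\tau$ and $\mu$, a $\tfrac{\eta}{10\lambda}$-neighborhood of $\xi_I(t,\cdot)^{-1}$ applied to the periodized line $\tfrac 1\lambda l_{f}+z_I$. The strategy is to choose $\eta$ small and the shifts $z_{m,n}$ so that these tube-neighborhoods are pairwise disjoint on the relevant (finite, by periodicity \eqref{e:periodicity}) set of indices.

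First I would fix attention on a single time slab of width comparable to $\tau$, say $t\in [(m-2)\tau,(m+2)\tau]$, and use the estimates on $v_\ell$ (from the regularization section, with $\|v_\ell\|_1\lesssim \delta_q^{1/2}\lambda_q$ and $\ell_t$ chosen accordingly) to show that $\nabla\xi_I(t,\cdot)$ stays within a fixed small factor of the identity and $\xi_I(t,x)-x$ stays within distance $O(\mu)$ of the straight drift, uniformly over the slab. This reduces the disjointness of $\supp(w_I)$ and $\supp(w_J)$ to disjointness of straight tubes of radius $\sim \eta/\lambda$ around $\tfrac 1\lambda l_f + z_{m,n}+\bar p_f$, perturbed by $O(\mu)$: since $\mu^{-1}$ is essentially $\lambda_q^{1/2}\lambda_{q+1}^{1/2}(\delta_q/\delta_{q+1})^{1/4}$ while $\lambda = \lambda_{q+1}$, we have $\mu\lambda \to \infty$, so the $O(\mu)$-perturbation of a $\tfrac{\eta}{10\lambda}$-tube is still contained in, say, a $\tfrac{\eta}{5\lambda}$-tube once $\lambda_0$ is large; this is where the geometric constant $\eta=\eta(\mathcal F)$ is pinned down, and it is chosen small enough that the $\tfrac{\eta}{5\lambda}$-neighborhoods of the $270$ distinct directed lines $l_f$, $f\in\mathcal F$, are pairwise disjoint on the unit-cell scale $Q(0,\pi)$ after rescaling by $\tfrac 1\lambda$.

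Next I would carry out the actual choice of shifts. Within one time slice there are two sources of collision: (i) $n=n'$, $f\neq f'$, which is handled purely by the choice of the base shifts $\bar p_f$ — since there are only $270$ directions and $\mu\lambda\gg 1$ there is room inside a single $\mu$-cube to translate the $270$ thin tubes so that they are mutually disjoint, which is exactly the Mikado-game picture; (ii) $n\neq n'$ (possibly within the same equivalence class $[n]=[n']$, which forces $f,f'$ to come from the same family $\mathcal F^{[n]}$), where collisions can occur between tubes in neighboring $\mu$-cubes that wind around the torus. For (ii) I would use the freedom in $z_{m,n}$: the key observation is that the tube through cube $n$ travels in direction $f\in\mathcal F^{[n]}\subset a\mathbb Q^3$, so after traversing the torus it reenters at a predictable lattice pattern, and by the classical argument (as in \cite{DaSz2016} and its successors) one can choose $z_{m,n}$ depending only on the residue class and on the "column" of cubes parallel to the relevant directions so as to offset these windings; the periodicity constraint \eqref{e:periodicity} is automatically respected because $z_{m,n}$ is taken to be a function of $n\bmod(\text{period})$. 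Finally, the different values of $m$ in the bounded overlap window: because the partition-of-unity index $n$ is tied to the spatial cube via $\chi_n(\mu^{-1}\xi_I(t,x))$ and different $m$ correspond to restarting the flow at different base times $t_m$, I would argue that across consecutive time slabs one can either reuse a shift pattern that remains disjoint (since the drift displacement $O(\mu)$ between consecutive restarts is again absorbed into the $\eta$-budget) or, more robustly, stagger the shifts in $m$ as well — there are only finitely many relevant $m$ modulo the overlap length, so this is a finite combinatorial adjustment.

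The main obstacle is step (ii) combined with the multi-$m$ overlap: ensuring that tubes belonging to cubes $n\neq n'$ in the same residue class, transported over a full period of the torus and across a bounded number of consecutive time restarts, never intersect. This requires a careful bookkeeping of how the periodized lines $l_f$ for $f\in\mathcal F^{[n]}$ thread the cube lattice and a choice of $z_{m,n}$ that is simultaneously consistent with \eqref{e:periodicity}, with the disjointness within a single cube from (i), and with the temporal staggering; the quantitative input that makes it work is precisely the separation of scales $\mu\lambda\to\infty$, which gives enough transverse room, together with the rationality $\mathcal F\subset\mathbb Z^3$, which makes the winding pattern periodic and hence the combinatorial choice finite. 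Everything else — the near-identity estimates on $\xi_I$, the reduction to straight tubes, and the final disjointness — is routine once $\eta$ is fixed and $\lambda_0$ is taken large.
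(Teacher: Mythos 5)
You correctly identify the general structure: reduce to disjointness of thin straight tubes by controlling the flow over one time slab, and observe that the hard case is the interaction between tubes from consecutive generations $m$ and $m+1$. However, your proposed resolution of that hard case — ``there are only finitely many relevant $m$ modulo the overlap length, so this is a finite combinatorial adjustment'' — is the point where the argument breaks down, and it is precisely here that the paper's key idea enters. There is no periodicity in $m$: the coarse-grain flow $\Phi_m$ drifts the tube pattern by an amount of order $\tau_q\|v_\ell\|_0$ between restarts, and this drift \emph{accumulates} with $m$ and bears no lattice-compatible structure. Consequently the set of tube configurations arising at successive $m$ does not repeat, and the choice of $z_{m+1,n}$ must genuinely depend on where the $m$-generation tubes have been transported to, not just on some finite residue class. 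Your appeal to the rationality of $\mathcal F$ only controls the spatial winding pattern of a single tube, not the time-accumulated offsets between generations. Likewise, ``the drift displacement $O(\mu)$ between consecutive restarts'' is not correct: the absolute drift over one slab is $\sim\tau_q\gg\mu_q$ (one has $\tau_q/\mu_q\sim\delta_{q+1}^{-1/2}\to\infty$); what is small is only the \emph{differential} drift across a $\mu$-cube, which is why the paper subtracts a ``frozen'' pure-translation flow $\Psi$ before comparing.

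The paper resolves the inductive step $m\to m+1$ with a nonconstructive compactness lemma (Lemma \ref{lem:no.int.line}): for any two families of $2d$-separated closed geodesics in $\T^3$ along a fixed finite set $\mathcal F$ of directions, there is a single small shift $z$ making the two families $\eta(\mathcal F,d)$-separated, with $\eta$ independent of the particular configurations. This is exactly the tool needed to choose $z_{m+1,n}$ given the arbitrary positions of the $m$-generation tubes, and it sidesteps any explicit bookkeeping of windings. A secondary structural point you miss is that within a fixed $m$, disjointness requires no cleverness at all: the base shifts $\bar p_f$ already $2d_0$-separate the $270$ directions, and the constraint $|z_{m,n}|\leq d_0/4$ alone suffices (since overlapping $\chi_n,\chi_{n'}$ with $n\neq n'$ forces $[n]\neq[n']$ and hence $f\neq g$). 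So all of the combinatorial content is concentrated in the $m\to m+1$ transition, and the compactness lemma is the ingredient your proposal lacks.
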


\subsection{Choice of the weights}\label{ss:weights} We next detail the choice of the functions $\gamma_I$, subdividing it into two cases.

\subsubsection{Energy weights} The weights $\ga_I$ for $I\in \mathscr{S}_\ph$ will be chosen so that the low frequency part of $\frac 12|w_o|^2w_o$ makes a cancellation with the mollified unsolved current $\ph_\ell$. 
Because of Proposition \ref{p:supports}, we have
\begin{align*}
|w_o|^2 w_o  =&\ \sum_{I\in \mathscr{I}}  \th_I^3 \chi_I^3(\xi_I) \ga_I^3 \psi_I^3(\la_{q+1}\xi_I) |\na \xi_I^{-1} f_I|^2\na \xi_I^{-1}  f_I\\
=&\underbrace{\sum_{I\in \mathscr{I}}  \th_I^3 \chi_I^3(\xi_I) \ga_I^3 \langle \psi_f^3\rangle   |\td f_I|^2\td f_I}_{=: (|w_o|^2 w_o)_L}
+\underbrace{\sum_{I\in \mathscr{I}}  \th_I^3 \chi_I^3(\xi_I) \ga_I^3 \left(\psi_I^3(\la_{q+1}\xi_I)-\langle \psi_I^3\rangle
\right) |\td f_I|^2\td f_I}_{=:(|w_o|^2 w_o)_H}\, .
\end{align*}
In order to find the desired $\ga_I$, we introduce the notation $\mathscr{I}_{m,n,\ph}$ for $\{I\in \mathscr{I}_\ph : I = (m,n,f)\}$ and we observe that, by \eqref{con.psi2}, it suffices to achieve
\begin{align}\label{W3L}
(|w_o|^2w_o)_L
=\sum_{m,n} \th_m^6\left(\frac t{\tau_q}\right) \chi_n^6\left(\frac{\xi_m}{\mu_q}\right)\sum_{I \in \mathscr{I}_{m,n,\ph}} \ga_I^3 |\tilde{f}_I|^2 \tilde{f}_I\, .
\end{align}
Next we look for our coefficients in the following form:
\[
\ga_I =\frac{\la_q^{-\ga}\de_{q+1}^\frac12 \Ga_I}{|\tilde{f}_I|^\frac23}\, ,
\]
where $\Gamma_I$ will be specified in a moment.

Recall that $\xi_I$ is a solution to \eqref{def.bflow} and satisfies $\na \xi_I |_{t=t_m} = \I$ and 
\[
\na \xi_I^{-1} (t,x) = \na \Phi_m (t, \xi_I(t,x))\, ,
\] 
where $\Phi_m$ is the ``forward flow''
$\Phi (t,x;t_m)$ introduced in \eqref{e:fflow_first_instance} and thus solves
\begin{equation}\label{eqn.fflow}
\begin{cases}
\pa_t \Phi_m(t,x) =v_\ell (t,\Phi_m(t,x))\\
\Phi_m(t_m, x) = x. 
\end{cases}
\end{equation}
This implies that  
\begin{align*}
\norm{\na \xi_I}_{C^0(\cal{I}_m\times \R^3)} &\leq  \exp(2\tau_q \norm{\na v_q}_0)
\leq  \exp(2{M}\tau_q\la_q\de_q^\frac12),
\end{align*}
\begin{equation}\begin{split}
\norm{\I - \na \xi_I^{-1}}_{C^0(\cal{I}_m\times \R^3)} 
&{= \norm{\I - \na \Phi_m}_{C^0(\cal{I}_m\times \R^3)}}\\
&\le 2\tau_q \norm{\na \Phi_m}_{C^0(\cal{I}_m\times \R^3)}\norm{\na v_q}_0\\
&\leq 2M\tau_q\la_q\de_q^\frac12\exp(2M\tau_q \la_q\de_q^\frac12 ) \label{bf.Id}
\end{split}\end{equation}
for the time interval $\cal{I}_m = [t_m -\frac 12\tau_q, t_m + \frac 32\tau_q]  \cap  [0,T]+2\tau_q$. 
Therefore, for sufficiently large $\la_0$, we have 
\begin{align*}
|\tilde{f}_I| = |\na \xi_I^{-1}f_I|&\geq \frac 34\\
\norm{2\la_q^{3\ga}\de_{q+1}^{-\frac32}(\na\xi_I)\ph_\ell}_{C^0_x}&\leq 3C_1
\end{align*} 
on the support of $\th_I$ for some $C_1$.  
Since $\{f_I : I \in \mathscr{I}_{m,n, \ph}\}= \mathcal{F}^{[n], \ph}$ satisfies \eqref{con.FIph}, we can apply Lemma \ref{lem:geo2} with $N_0= 3C_1$ to solve
\begin{align*}
-2\ph_\ell
=\sum_{I\in \mathscr{I}_{m,n,\ph}} \ga_I^3|\tilde{f}_I|^2 \tilde{f}_I
\iff 
-2\la_q^{3\ga}\de_{q+1}^{-\frac32}(\na\xi_I)\ph _\ell = \sum_{I\in \mathscr{I}_{m,n\ph}}\Ga_I^3 f_I
\end{align*}
on each support of $\th_I$ (observe that we have crucially used that $\xi_I=\xi_m$ is independent of $f_I$ for $I\in \mathscr{S}_{m,n,\ph}$). We are thus in the position to apply Lemma \ref{lem:geo2} to the set $\mathcal{F}^{[n],\ph} = \{f_I: I \in \mathscr{I}_{m,n,\ph}\}$ and we let $\Gamma_{f_I}$, $I\in \mathscr{I}_{m,n,\ph}$ be the corresponding functions. As a result, we can set
\begin{align}\label{Ga.ph}
\Ga_I(t,x) 
={\Ga}_{f_I}^{\frac{1}{3}}
({-2
{\la_q^{3\ga}\de_{q+1}^{-\frac 32}(\na\xi_I)\ph_\ell}} 
)\,.
\end{align}
Note that the smoothness of the selected functions $\Gamma_{f_I}$ depends only on $C_1$ and that in fact Lemma \ref{lem:geo2} is just applied $27$ times, taking into consideration that $[n]\in \mathbb{Z}_3^3$. For later use we record here the important ``cancellation property'' that the choice of our weights achieves:
\begin{align}\label{can.ph}
\frac 12 (|w_o|^2w_o)_L
=- \ph_\ell.
\end{align}

\subsubsection{Reynolds weights}\label{subsec:R}
Similarly to the previous section we decompose $w_o\otimes w_o$ into the low and high frequency parts,
\begin{align*} 
w_o \otimes w_o  
=&\ \sum_I \th_I^2 \chi_I^2(\xi_I) \ga_I^2 \psi_I^2(\la_{q+1}\xi_I)
\tilde f_I\otimes \tilde f_I\\
=&\underbrace{\sum_I \th_I^2 \chi_I^2(\xi_I) \ga_I^2 \langle\psi_I^2\rangle \tilde f_I\otimes \tilde f_I}_{=: (w_o\otimes w_0)_L}
+\underbrace{\sum_I \th_I^2 \chi_I^2(\xi_I) \ga_I^2 \left(\psi_I^2(\la_{q+1}\xi_I)-\langle\psi_I^2\rangle \right) \tilde f_I\otimes \tilde f_I}_{(w_o\otimes w_o)_H}.
\end{align*}
Since the weights for $I\in \mathscr{I}_\ph$ have already been established, for each fixed $(m,n)$ we denote by $I (m,n)$ the sets of indices $(m'n')$ such that $\max \{|m-m'|_\infty, |n-n'|_\infty\}\leq 1$ (where $|u|_\infty := \max \{|u_1|, |u_2|, |u_3|\}$ for any $u\in \mathbb R^3$) and {rewrite
\begin{align*}
& (w_o\otimes w_o)_L
= \sum_{m,n} \th^6_m\left(\frac t{\tau_q}\right) \chi^6_n\left(\frac{\xi_m}{\mu_q}\right) 
\sum_{I\in \mathscr{I}_{m,n,R}} \ga_I^2 \td f_I \otimes \td f_I
+\sum_{J\in  \mathscr{I}_{\ph}} \th_J^2 \chi_J^2(\xi_I) \ga_J^2 \langle\psi_J^2\rangle \tilde f_J\otimes \tilde f_J\\
=& \sum_{m,n} \th^6_m\left(\frac t{\tau_q}\right) \chi^6_n\left(\frac{\xi_m}{\mu_q}\right) \left[
\sum_{I\in \mathscr{I}_{m,n,R}} \ga_I^2 \td f_I \otimes \td f_I
+\sum_{\substack{J\in  \mathscr{I}_{m',n',\ph}\\ (m',n')\in I(m,n)}}\th_J^2 \chi_J^2(\xi_I) \ga_J^2 \langle\psi_J^2\rangle \tilde f_J\otimes \tilde f_J\right].
\end{align*}}
To make $w_o\otimes w_o$ cancel out $\de_{q+1}\I - R_l$, we recall that $\tilde{f}_I\otimes \tilde{f}_I = \nabla \xi_I^{-1} (f_I\otimes f_I) \nabla \xi_I^{-\top}$ and set
\begin{align}\label{eq.Ga}
&\sum_{I\in \mathscr{I}_{m,n,R}} \ga_I^2 f_I \otimes f_I
 = \na \xi_I 
\Bigg[\de_{q+1} \I  -R_\ell -\sum_{(m'n') \in I (m,n)} \sum_{J\in \mathscr{I}_{m',n',\ph}} \theta_J^2 \chi_J^2 \ga_{J}^2 (\xi_J) \langle\psi_{J}^2\rangle  \tilde{f}_J \otimes  \tilde{f}_J \Bigg]
\na \xi_I^{\top}\, . 
\end{align}
We now define $\mathcal{M}_I$ as 
\begin{align*}
\mathcal{M}_I &= \de_{q+1} [\na \xi_I\na \xi_I^{\top}-\I ] -
\na \xi_I R_\ell \na \xi_I^{\top}  \\
&\quad
-\na\xi_I \left[\sum_{(m'n') \in I (m,n)} \sum_{J\in \mathscr{I}_{m',n',\ph}}\th_{J}^2\chi_{J}^2(\xi_{J}) \ga_{J}^2 \langle \psi_{J}^2\rangle  \tilde f_J \otimes  \td f_J \right] \na\xi_I^{\top}
\end{align*}
and $\gamma_I = \delta_{q+1}^{\frac{1}{2}} \Gamma_I$ (for $I\in \mathscr{I}_{m,n,R}$) and impose
\begin{equation} \label{eq.Ga1}
\sum_{I\in \mathscr{I}_{m,n,R}} \Ga_I^2 f_I  \otimes  f_I
= \I + \de_{q+1}^{-1} \mathcal{M}_I
\end{equation}
In order to show that such a choice is possible observe that we can make $\norm{\de_{q+1}^{-1}\mathcal{M}_I}_{C^0(\supp(\th_I)\times \R^3)}$ sufficiently small, provided that $\la_0$ is sufficiently large, because of \eqref{bf.Id}, $\norm{\de_{q+1}^{-1}R_\ell}_0\lec \la_q^{-3\ga}$, and, $\norm{\de_{q+1}^{-1}\ga_J^2}_0\lec \la_q^{-2\ga}$ when $J \in \mathscr{I}_{m'n',\ph}$. We can thus apply Lemma \ref{lem:geo1} to $\{f_I : I \in \mathscr{I}_{m,n,R}\}= \mathcal{F}^{[n],R}$ and, denoting by $\Gamma_{f_I}$ the corresponding functions, we just need to set 
\[
\Ga_I=\Ga_{f_I} ( \I - \de_{q+1}^{-1} \mathcal{M}_I)\, .
\] 
Observe once again that this means applying Lemma \ref{lem:geo1} just $27$ times, given that there are $27$ different families $\mathcal{F}^{[n], R}$. 
We finally record the desired ``cancellation property'' that the choice of the weights achieves:
\begin{equation}\begin{split} \label{can.R}
( w_o\otimes w_o)_L
&=\sum_{m,n}\th_m^6\left(\frac t{\tau_q}\right) \chi_n^6\left(\frac{\xi_m}{\mu_q}\right)  (\de_{q+1}\I-R_\ell)
=\de_{q+1}\I-R_\ell.
\end{split}\end{equation}

\subsection{Fourier expansion in fast variables and corrector $w_c$}\label{ss:correction} In the rest of this article, we use a representation of $w_o$, $w_o\otimes w_o$, and $\frac12 |w_o|^2w_o$ based on the Fourier series of $\psi_I$, $\psi_I^2$ and $\psi_I^3$.
Indeed, since $\psi_I$ is a smooth function on $\T^3$ with zero-mean, we have
\begin{equation}\label{rep.psi}
\psi_I (x) = \sum_{k\in \Z^3\setminus \{0\}} \dot{b}_{I,k}  e^{ i k\cdot x},\quad
\psi_I^2 (x) = \dot c_{I,0} 
 +\sum_{k\in \Z^3\setminus \{0\}} \dot c_{I,k}   e^{ i k\cdot x},\quad
\psi_I^3 (x) = \dot d_{I,0}  + 
\sum_{k\in \Z^3\setminus \{0\}} \dot d_{I,k}   e^{ i k\cdot x}
\end{equation}
In particular, 
\[
\dot c_{I,0} =\langle \psi_I^2\rangle, \quad
\dot d_{I,0} = \langle \psi_I^3\rangle.
\]
Since $\psi_I$ is in $C^\infty(\T^3)$, we have  
\begin{equation}\label{est.k}
\sum_{k\in \Z^3} |k|^{n_0+2}|\dot{b_{I,k}}|
+\sum_{k\in \Z^3} |k|^{n_0+2}|\dot{c_{I,k}}|
+\sum_{k\in \Z^3} |k|^{n_0+2}|\dot{d_{I,k}}|
 \lec 1 , \quad 
\sum_{k\in\Z^3} |\dot{c}_{I,k}|^2\lec 1. 
\end{equation}
for $n_0 = \ceil{\frac{2b(2+\al)}{(b-1)(1-\al)}}$. Also,  it follows from $f_I \cdot \na \psi_I = f_I \cdot \na \psi_I^2=f_I \cdot \na \psi_I^3=0 $ that
\begin{align}\label{coe.van}
\dot b_{I,k} (f_I\cdot k )=\dot c_{I,k} (f_I\cdot k )=\dot d_{I,k} (f_I\cdot k )=0.
\end{align}
Next, as a consequence of \eqref{can.ph}, \eqref{can.R}, and \eqref{rep.psi}, we have 
\begin{align}
&w_o = \sum_{m} \sum_{k\in \Z^3\setminus \{0\}} \de_{q+1}^\frac 12b_{m,k} e^{i\la_{q+1} k\cdot \xi_I} \label{rep.W}\\
&w_o\otimes w_o
=\de_{q+1}\I -R_\ell
+\sum_{m} \sum_{k\in \Z^3\setminus \{0\}}  \de_{q+1} c_{m,k} e^{ i\la_{q+1} k\cdot \xi_I} \label{alg.eq}\\
&\frac12 |w_o|^2 w_o =-\ph_\ell
+\frac12\sum_{m} \sum_{k\in \Z^3\setminus \{0\}}  \de_{q+1}^\frac 32 d_{m,k} e^{ i\la_{q+1} k\cdot \xi_I}\label{e:rep3}\\
&\frac12|w_o|^2
=-\ka_\ell +\frac32 \de_{q+1}  
+\frac 12\sum_{m} \sum_{k\in \Z^3\setminus \{0\}}  \de_{q+1} \tr(c_{m,k}) e^{ i\la_{q+1} k\cdot \xi_I}.\label{e:rep4}
\end{align}
where the relevant coefficients are defined as follows:
\begin{equation}\begin{split}\label{coef.def}
&b_{m,k} = \sum_{I: m_I=m} \th_I\chi_I(\xi_I) \de_{q+1}^{-\frac 12}\ga_I \dot b_{I,k} \tilde f_I =: \sum_{I:m_I =m} B_{I,k} \tilde{f}_I,\\
&c_{m,k}
=
\sum_{I: m_I=m} \th_I^2\chi_I^2(\xi_I)
\de_{q+1}^{-1}\ga_I^2 \dot c_{I,k}
\tilde f_I\otimes \tilde f_I,\\
&d_{m,k}
= \sum_{I: m_I=m} \th_I^3\chi_I^3(\xi_I)
\de_{q+1}^{-\frac 32}\ga_I^3 \dot d_{I,k}
|\tilde f_I|^2 \tilde f_I.
\end{split}\end{equation}
Observe that,  by the choice of $\th_I$, if $|m-m'|> 1$, then
\begin{align*}
\supp_{t,x} (b_{m,k})\cap \supp_{t,x} (b_{m',k'})
&=\supp_{t,x} (c_{m,k})\cap \supp_{t,x} (c_{m',k'})\\
&=\supp_{t,x} (d_{m,k})\cap \supp_{t,x} (d_{m',k'})
 = \emptyset
\end{align*}
for any $k, k'\in \Z^3\setminus\{0\}$.  

We next prescribe an additional correction $w_c$ to make $w=w_o+w_c$ divergence-free. 
Since we have \eqref{coe.van} and the identity $\na \times (\na \xi_I^{\top}U(\xi_I)) = \na \xi_I^{-1}(\na \times U)(\xi_I)$ for any smooth function $U$ (see for example \cite{DaSz2016}), we have 
\[
\frac 1{\la_{q+1}}\na \times \left( \dot{b}_{I,k}\na\xi_I^{\top} \frac{ik \times f_I}{|k|^2}  e^{i\la_{q+1} k\cdot {\xi_I}}\right) =\dot{b}_{I,k} \na \xi_I^{-1}f_I e^{i\la_{q+1} k\cdot \xi_I}.
\]
Using this, the ``preponderant part'' $w_o$ of the velocity correction can be written as
\begin{align*}
w_o 
&= \sum_{\substack{m\in \Z\\k\in \Z^3\setminus \{0\}}}  \de_{q+1}^\frac 12 \sum_{I:m_I=m} B_{I,k} \na \xi_I^{-1}f_I e^{i\la_{q+1} k\cdot \xi_I}\\
& =  \sum_{\substack{m\in \Z\\k\in \Z^3\setminus \{0\}}}  \frac{ \de_{q+1}^\frac 12}{\la_{q+1}}\sum_{I:m_I=m}  
B_{I,k} \na \times\left( \na \xi_I^{\top}\frac{ik \times f_I}{|k|^2} e^{i\la_{q+1} k\cdot \xi_I}\right).
\end{align*}
Therefore, we define
\begin{align}\label{def.Wc}
w_c = \frac {\de_{q+1}^\frac 12 } {\la_{q+1}} \sum_{\substack{m\in \Z\\k\in \Z^3\setminus\{0\}}} \sum_{m_I =m} \na B_{I,k} \times \left(\na\xi_I^{\top} \frac{ik \times f_I}{|k|^2} \right) e^{i\la_{q+1} k\cdot \xi_I}
=: \frac {\de_{q+1}^\frac 12}{\la_{q+1}\mu_q}\sum_{m,k} e_{m,k} e^{i\la_{q+1} k\cdot \xi_I}
\end{align}
where
\begin{align}\label{def.e}
e_{m,k} 
= {\mu_q}\sum_{I:m_I=m} \na(\th_I\chi_I(\xi_I) \de_{q+1}^{-\frac 12}\ga_I \dot b_{I,k}) \times  \left((\na\xi_I)^{\top} \frac{ik \times f_I}{|k|^2} \right).
\end{align}
In this way, the final velocity correction $v_{q+1}-v_q =: w= w_o+w_c$ can be written as
\begin{align*}
w = \na\times \left( \frac {\de_{q+1}^\frac12}{\la_{q+1}}
\sum_{I,k}  
B_{I,k} \na\xi_I^{\top} \frac{ik \times f_I}{|k|^2}e^{i\la_{q+1} k\cdot \xi_I} \right),
\end{align*}
and hence it is divergence-free. For later use, we remark that if $|m-m'| {>} 1$, $\supp_{t,x} (e_{m,k})\cap \supp_{t,x} (e_{m',k'})
 = \emptyset$ holds
for any $k, k'\in \Z^3\setminus\{0\}$.
 Also, by its definition, the correction $w$ has the representation
\begin{align}\label{def.w}
w= \sum_{m\in \Z} \sum_{k\in \Z^3\setminus \{0\}} \de_{q+1}^\frac 12 (b_{m,k} + (\la_{q+1}\mu_q)^{-1} e_{m,k} )e^{i\la_{q+1} k\cdot \xi_I}.
\end{align}

\section{Definition of the new errors}

\subsection{Preliminaries}
To define the new triple $(\kappa_q, R_q, \ph_q)$ we need to ``invert the divergence'' of vector fields and tensors. For this purpose, we recall the inverse divergence operator introduced in \cite{DLSz2013}.

\begin{defn}[Inverse divergence operator]\label{idv.defn}
For any $f\in C^\infty(\T^3;\R^3)$, the inverse divergence operator is defined by
\[
(\cal{R}f)_{ij}= \cal{R}_{ijk} f_k 
= -\frac 12 \De^{-2} \pa_{ijk} f_k + \frac 12\De^{-1} \pa_k f_k \de_{ij} - \De^{-1}\pa_i f_j - \De^{-1}\pa_jf_i. 
\]
\end{defn}
\begin{rem} The image of the divergence free operator $\cal{R}f(x)$ is designed to be a trace-free symmetric matrix at each point $x$ and to solve 
\[
\div(\cal{R}f) = f - \langle f \rangle\, .
\]
\end{rem}
To define unsolved currents, we also need an inverse divergence operator which maps a mean-zero scalar function to a mean-zero vector-valued one. To this end, we abuse the notation and define 
\[
(\cR g)_i = \De^{-1} \pa_i g. 
\]
Indeed, $\div \cR g =g - \langle g \rangle$. 

\subsection{New Reynolds stress}
Having defined the correction $w$ of the velocity as in the previous steps and after setting $0$ for the correction on the pressure (namely $p_{q+1} = p_q$), 
we can reorganize the Euler-Reynolds system and the relaxed local energy equality as the equations for the new Reynolds stress $R_{q+1}$ and for the unsolved current $\ph_{q+1}$, respectively (while we simply impose $\kappa_{q+1} = \frac{1}{2} \textrm{tr}\, R_{q+1}$.
  
We first define $R_{q+1}$. Using the given relation $\pa_t v_q + \na\cdot(v_q\otimes v_q) + \na p_q = \na\cdot R_q$ (note that we are dropping $\frac{E(t)}{3} \I$ from the equation as the latter, being just a function of time, disappears after we apply the divergence) , we can write the equation for $R_{q+1}$ as 
\begin{align*}
\na \cdot R_{q+1}
&= \pa_t v_{q+1} + \na \cdot (v_{q+1} \otimes v_{q+1}) + \na p_{q+1} \\
&=\underbrace{(\pa_t+v _\ell\cdot \na) w}_{=\na \cdot \as R_T} + \underbrace{w\cdot \na v_\ell}_{=:\na\cdot \as R_N} 
+ \underbrace{\na \cdot (w\otimes w + R_\ell)}_{=:\na \cdot \as R_O} \\
&\quad+ \underbrace{\na \cdot ((v_q-v_\ell)\otimes w + w\otimes (v_q-v_\ell) + R_q-R_\ell) }_{=:\na\cdot \as R_M},
\end{align*}
and $\asR_O$ can be decomposed further as 
\[
\na\cdot \asR_O=\underbrace{\na \cdot (w_o\otimes w_o+ R_\ell )}_{=\na \cdot \asR_{O1}}
+ \underbrace{\na \cdot (w_o \otimes w_c + w_c \otimes w_o + w_c\otimes w_c) }_{=\na \cdot \asR_{O2}}.
\]
In fact we will define $R_{q+1}$ as
\begin{equation}\label{e:splitting_of_Reynolds}
R_{q+1} = \as R_T + \as R_N + \as R_{O1} +\as R_{O2}+\as R_M + \frac{2}{3} \varrho (t) \I\, .
\end{equation}
Note that the last summand is not impacting the divergence of $R_{q+1}$, since it is a function of time only. The reason for introducing this extra term, which at the moment is not of any relevance, will be clear once we get to the definition of the new current.

First, our choice of $\asR_{O2}$ and $\as R_M$ are
\begin{equation}\begin{split}\label{def.RO2}
\asR_{O2} =  w_o \otimes w_c + w_c \otimes w_o + w_c\otimes w_c\, .
\end{split}\end{equation}
and
\begin{equation}\begin{split}\label{def.RM}
\asR_M = \underbrace{R_q-R_\ell}_{=\asR_{M1}}+\underbrace{(v_q-v_\ell)\otimes w + w\otimes (v_q-v_\ell)}_{=\asR_{M2}},
\end{split}\end{equation}
which are the only two Reynolds stress errors which might have nonzero trace. For the other errors, we solve the divergence equation by using the inverse divergence operator $\cal{R}$ in Definition \ref{idv.defn} to get trace-free errors, namely we set
\begin{align*}
\asR_{O1} &= \mathcal{R} (\nabla \cdot (w_o\otimes w_o+ R_\ell ))\\
\asR_{N} &= \mathcal{R} (w\cdot \nabla v_\ell)\\
\asR_{T} &= \mathcal{R} (\pa_t w + v_\ell \cdot \nabla w)\, . 
\end{align*}
Observe that all the tensors to which we apply the operator $\mathcal{R}$ have zero average, because $w\cdot \nabla v_\ell = \nabla \cdot (w\otimes v_\ell)$, $v_\ell \cdot \nabla w = \nabla \cdot (v_\ell \otimes w)$ and $w$ (and therefore $\pa_t w$) have zero average.

As a result, we also have
\[
\tr R_{q+1} = \tr (\as R_{O2} + \as R_{M}) + 2 \varrho\, ,
\]
which gives 
\begin{equation}\label{askR}
\kappa_{q+1} =\frac{1}{2} \tr R_{q+1}= \frac 12  \tr(\as R_{O2} + \as R_{M}) + \varrho. 
\end{equation}
In fact, for later use we will split the function $\varrho$ into the sum of three functions, $\varrho_0 + \varrho_1+\varrho_2$ and we thus have
\begin{align*}
\kappa_{q+1} &= \frac 12  \tr(\as R_{O2} + \as R_{M}) + \varrho_0 + \varrho_1+\varrho_2\, .
\end{align*}

\subsection{New current}
Applying the frequency cut-off $P_{\leq \ell^{-1}}$ to the Euler-Reynolds system, we have
\[
\pa_t v_\ell + \na \cdot (v_\ell\otimes v_\ell) + \na p_\ell
= \na \cdot P_{\le \ell^{-1}} R_q +  Q(v_q,v_q).
\]
where, upon setting $(v_q\otimes v_q)_\ell = P_{\le \ell^{-1}}(v_q\otimes v_q)$, the term $Q(v_q, v_q)$ is the following commutator:
\begin{equation}\label{e:quadratic_form_Q}
Q(v_q,v_q) := \na \cdot (v_\ell \otimes v_\ell - (v_q\otimes v_q)_\ell)\, .
\end{equation}
Also, we recall that the tuple $(v_q,p_q,R_q,\ka_q,\ph_q)$ solves 
\[
\pa_t \left(\frac 12 |v_q|^2 \right) + \na \cdot \left(\left(\frac 12| v_q|^2 +  p_q\right)  v_q\right)
= (\pa_t + v_q\cdot \na) \ka_q + {\textstyle{\frac{1}{2}}} E'(t) + \na \cdot (R_q v_q) + \na \cdot \ph_q. 
\]
Using these equations, the relaxed local energy equality for $(v_{q+1},p_{q+1},R_{q+1},\ka_{q+1},\ph_{q+1})$ can be reorganized as 
\begin{align*}
\pa_t &\left(\frac 12 |v_{q+1}|^2 \right) + \na \cdot \left(\left(\frac 12|v_{q+1}|^2 + p_{q+1}\right) v_{q+1}\right)\\
=& {\textstyle{\frac{1}{2}}} E' (t) + \
\underbrace{(\pa_t+ v_q\cdot \na) \left(\frac 12 |w|^2 + \ka_q+ (v_q-v_\ell)\cdot w \right)}_{=: (\pa_t + v_{q+1}\cdot \nabla) (\ka_{q+1}-\varrho_1-\varrho_2) + \na \cdot \as\ph_T} 
+ \underbrace{\na\cdot \left(\left(\frac 12|w|^2 \right) w + \ph_\ell \right)}_{=\na\cdot\as\ph_O} \\
& + \div (R_{q+1} v_{q+1}) \underbrace{-\div(R_{q+1} w)}_{= \na\cdot\as\ph_R}    \\
&+ \underbrace{\na \cdot \left(\left( \frac12|v_q-v_\ell|^2+ (p_q-p_\ell)\right)w\right)+ \na \cdot (\ph_q-\ph_\ell) 
+ \na\cdot  ((w\otimes w+R_q -R_{q+1}) (v_q-v_\ell))}_{=\na\cdot\as\ph_M} 
\\
&+\underbrace{(\div P_{\le \ell^{-1}}R_q + Q(v_q,v_q)) \cdot w}_{=\nabla \cdot \as\ph_{H1} + \pa_t \varrho_1}\\
&+ \underbrace{\left(w\otimes w-\de_{q+1}\I+R_q-R_{q+1} + (v_q-v_\ell)\otimes w+ w\otimes (v_q-v_\ell) \right): \na v_\ell^\top}_{= \pa_t \varrho_2 + \na\cdot\as\ph_{H2}}\, . 
\end{align*}
The functions {$\varrho_0$}, $\varrho_1$, and $\varrho_2$ will be defined so to ensure that the divergence equations can be solved. Indeed observe that in order to solve $\nabla \cdot z = f$ on the periodic torus, it is necessary and sufficient that the average of $f$ equals $0$. In fact under such assumption a solution is provided by the operator $\mathcal{R}$ introduced above. More precisely, first of all we observe that we can set
\begin{align}
\as\varphi_O &:= \as\varphi_{O1} + \as\varphi_{O2} := \mathcal{R} \left(\na \cdot \left(\left({\textstyle{\frac 12}}|w_o|^2 \right) w_o 
+ \ph_\ell \right)\right) 
+\mathcal{R} \left(\na \cdot \left({\textstyle{\frac 12}} (|w|^2w - |w_o|^2 w_o) \right)\right)\\
\as\varphi_R &:= (R_{q+1}-{\textstyle{\frac23}}\varrho \I) w\\
\begin{split}
\as\varphi_M &:=  \left( {\textstyle{\frac12}}|v_q-v_\ell|^2+ (p_q-p_\ell)\right) w + \ph_q-\ph_\ell\\
&\quad+ (w\otimes w {-\de_{q+1}\I}+R_q -R_{q+1}+ {\textstyle{\frac23}}\varrho \I) (v_q-v_\ell)\, .
\end{split}
\end{align}
Next, recall that $\kappa_{q+1} = \frac 12  \tr(\as R_{O2} + \as R_{M}) + \varrho$ and that
\begin{equation}\begin{split}\label{reo.tra}
\frac 12 |w|^2 + \ka_q + (v_q-v_\ell) \cdot w
&= \frac 32 \de_{q+1}  + \frac 12 \tr(\asR_{O2} + \asR_{M})+ 
\frac 12\tr\left(  w_o\otimes w_o - \de_{q+1} \I + R_\ell \right)
\\
&=\frac 32 \de_{q+1} + \ka_{q+1} -\varrho + \frac 12\tr\left(  w_o\otimes w_o - \de_{q+1}\I + R_\ell \right).
\end{split}\end{equation}
In particular, the equation for $\as\ph_T$ becomes:
\begin{align*}
\nabla \cdot \as\ph_T +\varrho _0' = & \div (-(\ka_{q+1} - \varrho) w+ {\textstyle{\frac12}} \tr\left( w_o\otimes w_o -\de_{q+1}I+ R_\ell \right)(v_q-v_\ell)
\\ &
+ {\textstyle{\frac12}} D_{t,\ell}  \tr\left( w_o\otimes w_o -\de_{q+1}I+ R_\ell \right) \, 
\end{align*}
(where we have used that $\nabla \cdot w =0$).
Hence we can define $\as\ph_T = \as\ph_{T1} + \as\ph_{T2}$ and $\varrho_0$ as 
\begin{align}
\as\ph_{T1} &= -(\ka_{q+1} - \varrho) w+ \frac12 \tr\left( w_o\otimes w_o -\de_{q+1}I+ R_\ell \right)(v_q-v_\ell)\label{e:phi_T1}\\
\varrho_0 (t) &= \int_0^t \langle D_{t,\ell} {\textstyle{\frac12}} \tr\left( w_o\otimes w_o -\de_{q+1}I+ R_\ell \right)\rangle (s)\, ds\\
\as\ph_{T2} &= \mathcal{R} \left(D_{t,\ell} {\textstyle{\frac12}} \tr\left( w_o\otimes w_o -\de_{q+1}I+ R_\ell\right)\right)\, .
\end{align}
Observe that $\varrho_0$ is defined in such a way that $\div \as\ph_{T2} + \varrho_0' = D_{t,\ell} {\textstyle{\frac12}} \tr\left( w_o\otimes w_o -\de_{q+1}I+ R_\ell\right)$. 
Similarly, we set
\begin{align}
\varrho_1 (t) &:= \int_0^t \langle (\div P_{\le \ell^{-1}}R_q + Q(v_q,v_q)) \cdot w \rangle (s)\, ds\\
\varrho_2 (t) &:= \int_0^t \langle\left(w\otimes w-\de_{q+1}\I+R_q-R_{q+1} + (v_q-v_\ell)\otimes w+ w\otimes (v_q-v_\ell) \right): \na v_\ell^\top\rangle (s)\, ds\label{e:define_varrho_2}
\end{align}
and
\begin{align*}
\as\ph_{H1} &:= \mathcal{R} \left((\div P_{\le \ell^{-1}}R_q + Q(v_q,v_q)) \cdot w - \partial_t \varrho_1\right)\\
\as\ph_{H2} &:=\mathcal{R} \left(\left(w\otimes w-\de_{q+1}\I+R_q-R_{q+1} + (v_q-v_\ell)\otimes w+ w\otimes (v_q-v_\ell) \right): \na v_\ell^\top - \partial_t \varrho_2 \right)\, .
\end{align*}
Observe that, while $R_{q+1}$ has been defined in terms of $\varrho$, we have
\begin{align*}
R_{q+1} : \nabla v_\ell^\top &= (\asR_T + \asR_N + \asR_{O1} +\asR_{O2}+\asR_M + \frac{2}{3} \varrho (t) \I) : \nabla v_\ell^\top\\
&= (\asR_T + \asR_N + \asR_{O1} +\asR_{O2}+\asR_M ) : \nabla v_\ell^\top
\end{align*}
since $\I : \nabla v_\ell^\top = \nabla \cdot v_\ell = 0$. In particular the right hand side of \eqref{e:define_varrho_2} is independent of $\varrho_2$.

\section{Preliminary estimates}\label{s:estimates_mollification}

We now start detailing the estimates which will lead to the proof of the inductive propositions. In this section, we set $\norm{\cdot}_N = \norm{\cdot}_{C^0([0,T]+\tau_q; C^N(\T^3))}$.

\subsection{Regularization} First of all we address a series of a-priori estimates on the regularized tuple and on their differences with the original one.
By its construction, we can easily see that
\begin{align*}
&\norm{v_\ell}_N \lec_N \ell^{-N} \de_{q}^\frac12, \quad
\norm{p_\ell}_N \lec_N \ell^{-N}\de_q, \quad\qquad\qquad \forall N\geq 1,\\
\norm{D_{t,\ell}^s R_\ell}_0 &\lec_{s} \ell_t^{-s}\la_q^{-3\ga} \de_{q+1}, \quad
\norm{D_{t,\ell}^s \ph_\ell}_0 \lec_{s} \ell_t^{-s}  \la_q^{-3\ga} \de_{q+1}^\frac32, \quad \forall s\geq 0. 
\end{align*}
{Also, there exists $\bar{b}(\al)>1$ such that
for any $b\in (1,\bar b(\al))$ we can find $\La_0=\La_0(\al,b,M)$ with the following property: if $\la_0\geq \La_0$, then $|\na^{N+1} \Phi(t+s,x;t)|\lec_M  \ell^{-N}$ holds for $N\geq 0$ and $s\in [-\ell_t,\ell_t]$. This implies}
\begin{align}
\ell_t^s\norm{D_{t,\ell}^s R_\ell}_N {\lec_{s,N,M}} \ \ell^{-N} \la_q^{-3\ga} \de_{q+1}
\label{est.mR}
\\
\ell_t^s\norm{D_{t,\ell}^s \ph_\ell}_N 
{\lec_{s,N,M}} \ \ell^{-N} \la_q^{-3\ga} \de_{q+1}^\frac32
\label{est.mph}.
\end{align}
For the detailed computation, see \cite[Section18]{Is2013}. 

On the other hand, the differences between the regularized objects $(v_\ell, p_\ell, R_\ell, \ph_\ell)$ and their original counterparts satisfy the following estimates.
 
\begin{lem}{There exists $\bar{b}(\al)>1$ such that
for any $b\in (1,\bar b(\al))$ we can find $\La_0(\al,b,M)$ with the following property. If $\la_0\geq \La_0$}
and $N\in \{0,1,2\}$ (recall that we follow the notational convention explained in Remark \ref{r:estimates_material_derivative}) then:
\begin{align}
&\norm{v_q-v_\ell}_N 
+ \de_q^{-\frac12}\norm{D_{t,\ell}(v_q-v_\ell)}_{N-1}  
\lec \ell^{2-N} \la_q^2 \de_q^\frac12, \label{est.v.dif} \\
&\norm{p_q-p_\ell}_N
+ \de_q^{-\frac12}\norm{D_{t,\ell}(p_q-p_\ell)}_{N-1} 
\lec \ell^{2-N}\la_q^2 \de_q,
 \label{est.p.dif}  \\ 
&\norm{R_q-R_\ell}_N+ 
\de_{q+1}^{-\frac12}\norm{D_{t,\ell} (R_q-R_\ell)}_{N-1} 
\lec
\la_{q+1}^N {\la_q^\frac12}{\la_{q+1}^{-\frac12}}
\de_q^\frac14 \de_{q+1}^\frac34
\label{est.R.dif} \\
&\norm{\ph_q-\ph_\ell}_N+\de_{q+1}^{-\frac12}\norm{D_{t,\ell} (\ph_q-\ph_\ell)}_{N-1}
 \lec  
\la_{q+1}^N{\la_q^\frac12}{\la_{q+1}^{-\frac12}}\de_q^\frac14 \de_{q+1}^\frac54.\label{est.ph.dif} 
\end{align}
{Here, we allow the implicit constants to be depending on $M$.}
\end{lem}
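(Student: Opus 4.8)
The four bounds fall into two groups: \eqref{est.v.dif}--\eqref{est.p.dif} control a purely spatial regularization (Littlewood--Paley truncation) error, whereas \eqref{est.R.dif}--\eqref{est.ph.dif} control, in addition, a temporal mollification-along-the-flow error; I would treat the two groups separately. The whole argument reduces to three ingredients: (i) the elementary estimate $\norm{P_{>\ell^{-1}}f}_N\lec\ell^{2-N}\norm{f}_2$ for $N\in\{0,1,2\}$; (ii) the standard Constantin--E--Titi commutator estimate $\norm{[P_{\leq\ell^{-1}},v\cdot\na]f}_N\lec\norm{v}_1\norm{f}_1+\ell\left(\norm{v}_2\norm{f}_1+\norm{v}_1\norm{f}_2\right)$ (as used in \cite{DLSz2013}); and (iii) the inductive estimates \eqref{est.vp}--\eqref{est.ph}. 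One uses throughout that $\ell\la_q\ll1$ and $\de_{q+1}/\de_q<1$ for the stated $\ell,\ell_t$ once $\la_0$ is large (and that $\ell^{-1}\ll\la_{q+1}$, so the factors $\la_{q+1}^N$ on the right-hand sides are generous); this is precisely where the constraints defining $\bar b(\al)$ and $\La_0$ come in.

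Since $v_q-v_\ell=P_{>\ell^{-1}}v_q$ and $p_q-p_\ell=P_{>\ell^{-1}}p_q$, the $C^N$ parts of \eqref{est.v.dif}--\eqref{est.p.dif} follow at once from (i) and \eqref{est.vp}. For the material-derivative parts I would not differentiate in time directly, but substitute the momentum equation of \eqref{app.eq} (discarding the divergence-free term $\frac{E}{3}\I$) in the form $(\pa_t+v_q\cdot\na)v_q=-\na p_q+\na\cdot R_q$, and then, after a short manipulation that reorganizes the would-be-too-large term $v_\ell\cdot\na v_q$ into a commutator, obtain the identity
\[
D_{t,\ell}(v_q-v_\ell)=-\na(p_q-p_\ell)+\na\cdot(R_q-P_{\leq\ell^{-1}}R_q)+[P_{\leq\ell^{-1}},v_\ell\cdot\na]v_q-P_{>\ell^{-1}}\big((v_q-v_\ell)\cdot\na v_q\big).
\]
Its four terms are controlled, respectively, by \eqref{est.p.dif}, by (i) applied to $R_q$, by (ii) with $v=v_\ell$ and $f=v_q$, and by (i) combined with the already-established $C^0$ bound on $v_q-v_\ell$. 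The estimate for $p_q-p_\ell$ is entirely parallel, with \eqref{e:ad-pressure} used to control $(\pa_t+v_q\cdot\na)p_q$.

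For \eqref{est.R.dif} I would split $R_q-R_\ell=P_{>\ell^{-1}}R_q+\big(P_{\leq\ell^{-1}}R_q-\rho_{\ell_t}\ast_\Phi P_{\leq\ell^{-1}}R_q\big)$. The first summand obeys $\norm{P_{>\ell^{-1}}R_q}_N\lec\ell^{2-N}\la_q^{2-3\ga}\de_{q+1}$ by (i) and \eqref{est.R}, which, after inserting the definitions of $\ell$ and $\ell_t$ and using $\la_{q+1}\gg\la_q$, is comfortably below the claimed bound. For the second summand, the fundamental theorem of calculus along trajectories of $v_\ell$ bounds it pointwise by $\ell_t$ times the supremum over $|s|\le\ell_t$ of $|D_{t,\ell}P_{\leq\ell^{-1}}R_q|$ composed with the flow; the Jacobian estimate $|\na^{N+1}\Phi|\lec_M\ell^{-N}$ recorded in Section~\ref{ss:regularization} then turns the $C^N$ estimate into a bound by $\ell_t\ell^{-N}\norm{D_{t,\ell}P_{\leq\ell^{-1}}R_q}_0$ up to lower-order terms. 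Writing $D_{t,\ell}P_{\leq\ell^{-1}}R_q=P_{\leq\ell^{-1}}(\pa_t+v_q\cdot\na)R_q-[P_{\leq\ell^{-1}},v_\ell\cdot\na]R_q-P_{\leq\ell^{-1}}\big((v_q-v_\ell)\cdot\na R_q\big)$ and estimating the right-hand side by $\lec\la_q^{1-3\ga}\de_q^\frac12\de_{q+1}$ via \eqref{est.R} (with $N=1$), (ii) and (i), the bound closes, since $\ell_t$ is defined so that $\ell_t\la_q^{1-3\ga}\de_q^\frac12\de_{q+1}$ is exactly the $N=0$ right-hand side. The material-derivative half follows by applying one further $D_{t,\ell}$ and using the identity $D_{t,\ell}(\rho_{\ell_t}\ast_\Phi F)=-\rho'_{\ell_t}\ast_\Phi F$ from Section~\ref{ss:regularization}, which turns the extra derivative into an $\ell_t^{-1}$ loss. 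The estimates \eqref{est.ph.dif} for $\ph_q-\ph_\ell$ are obtained in exactly the same way, with $\de_{q+1}$ replaced by $\de_{q+1}^\frac32$ and \eqref{est.R} by \eqref{est.ph}.

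The one point I would expect to require genuine care is the top-order ($N=2$) material-derivative bound for $v_q-v_\ell$: estimating $v_\ell\cdot\na(v_q-v_\ell)$ in $C^1$ naively produces a factor $\la_q^2\de_q^\frac12$, which is \emph{larger} than the target $\la_q^2\de_q$ because $\de_q<1$. The commutator reorganization in the displayed identity is exactly what repairs this: replacing $v_\ell\cdot\na v_q$ by $[P_{\leq\ell^{-1}},v_\ell\cdot\na]v_q$ forces one derivative onto $v_\ell$ (contributing only $\norm{v_\ell}_1\lec\la_q\de_q^\frac12$) while the partnering factor is $\norm{v_q}_1\lec\la_q\de_q^\frac12$, so the correct power $\de_q$ appears. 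Checking that this bookkeeping goes through cleanly, and that every remaining auxiliary term really carries a spare factor of $\ell\la_q$ or of $\de_{q+1}/\de_q$, is the bulk of the work; the individual computations are routine and follow the pattern of \cite[Section~18]{Is2013}.
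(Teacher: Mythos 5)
Your strategy is the same as the paper's: Bernstein on $P_{>\ell^{-1}}$ for the spatial truncation error, a commutator reorganization combined with the equation for the material derivatives of $v$ and $p$, and the split into $P_{>\ell^{-1}}$ plus temporal mollification error for $R$ and $\varphi$. Indeed, the displayed identity for $D_{t,\ell}(v_q-v_\ell)$ is exactly the decomposition the paper uses (the paper writes $D_{t,\ell}P_{>\ell^{-1}}v_q = P_{>\ell^{-1}} D_{t,\ell}v_q + [v_\ell\cdot\nabla,P_{>\ell^{-1}}]v_q$, which after substituting the equation and noting $[v_\ell\cdot\nabla,P_{>\ell^{-1}}] = [P_{\leq\ell^{-1}},v_\ell\cdot\nabla]$ is precisely your four-term identity). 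Your final remark pinpointing the $N=2$ material-derivative estimate as the place where the commutator is essential is exactly right and shows you understand the mechanism.

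However, the commutator estimate you quote as ingredient (ii) is not correct as stated and, used literally, the argument does not close at $N=1$. You wrote $\norm{[P_{\leq\ell^{-1}},v\cdot\nabla]f}_N \lec \norm{v}_1\norm{f}_1 + \ell(\norm{v}_2\norm{f}_1 + \norm{v}_1\norm{f}_2)$; with $v = v_\ell$, $f = v_q$ and $N-1 = 0$ this produces $\lec (\lambda_q\delta_q^{1/2})^2 = \lambda_q^2\delta_q$, whereas the target for $\norm{D_{t,\ell}(v_q-v_\ell)}_0$ is $\ell\,\lambda_q^2\delta_q$ — you are short by exactly a factor of $\ell$. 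The correct Constantin--E--Titi estimate gains this $\ell$: writing the commutator as $\int (v_\ell(x-y)-v_\ell(x))\cdot\nabla f(x-y)\widecheck{m}_\ell(y)\,dy$ and using the mean value theorem plus $\norm{|y|\widecheck{m}_\ell}_{L^1}\lec\ell$ gives $\norm{[P_{\leq\ell^{-1}},v_\ell\cdot\nabla]f}_0 \lec \ell\norm{\nabla v_\ell}_0\norm{\nabla f}_0$, and more generally $\norm{[v_\ell\cdot\nabla,P_{\leq\ell^{-1}}]F}_N \lec \ell^{1-N}\norm{\nabla v}_0\norm{\nabla F}_0$ (Lemma \ref{lem:com2} in the appendix, \eqref{est.com1}). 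With this corrected estimate every term in your identity closes with the stated right-hand side for $N=1,2$, and the rest of your argument (including the $\rho_{\ell_t}'$ manipulation for the advective derivative of the temporal mollification error) matches the paper.
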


\begin{proof} Set $P_{>\ell^{-1}}F := F- P_{\leq \ell^{-1}} F = P_{>2^J}F$, where $J\in \N$ is the largest natural number such that $2^J\le \ell^{-1}$. By Bernstein's inequality, we have $\norm{F-P_{\le \ell^{-1}}F}_0 = \norm{P_{> \ell^{-1}}F}_0 \lec \ell^{j}\norm{\na^j F}_0$ for any $F\in C^j(\T^3)$. Using \eqref{est.vp} we then get
\begin{align*}
&\norm{v_q-v_\ell}_N 
\lec \ell ^{2-N}\norm{\na^2 v}_0
\lec {\ell^{2-N}}\la_q^2 \de_q^\frac12, \\
&\norm{p_q-p_\ell}_N 
\lec  \ell^{2-N}\norm{\na^2 p}_0
 \lec {\ell^{2-N}}\la_q^2 \de_q.
\end{align*}
Also, we have
\begin{align*}
(F- \rho_{\ell_t} \ast_{\Phi} F)(t,x)
&=  \int_{\R} (F(t+s,\Phi(t+s,x;t)) - F(t,x)) \rho_{\ell_t}(s) ds\\
&= \int_{\R} \int_0^s D_{t,\ell} F(t+\tau, \Phi(t+ \tau,x;t)) d\tau  \rho_{\ell_t}(s) ds, 
\end{align*}
from which we conclude $ \norm{F- \rho_{\ell_t} \ast_{\Phi} F}_{C^0([a,b]\times \T^3)} \lec \ell_t \norm{D_{t,\ell} F}_{C^0([a,b]+\ell_t\times \T^3)}$ because of $\supp(\rho_{\ell_t}) \subset (-\ell_t,\ell_t)$. 
In addition, we have a following decompositions,
\begin{align}
F-\rho_{\ell_t} \ast_\Phi P_{\le \ell^{-1}} F &= (F-P_{\le \ell^{-1}}F) + (P_{\le \ell^{-1}}F-\rho_{\ell_t} \ast_\Phi P_{\le \ell^{-1}} F),  \label{rep.dif}\\
D_{t,\ell}P_{\le \ell^{-1}} F 
&= P_{\le \ell^{-1}}D_{t,\ell} F + [v_\ell\cdot \na, P_{\le \ell^{-1}}]F\, ,
\label{rep.DtF}
\end{align}
where as usual $[A,B]$ denotes the commutator $AB - BA$ of the two operators $A$ and $B$.
Note that $D_{t,\ell} F$ can be further decomposed as  $D_{t,\ell} F = D_t F + (v_q-v_\ell)\cdot \na F$. Then, using \eqref{est.R}, \eqref{est.ph}, and \eqref{est.com1}, we obtain
\begin{align*}
\norm{R_q-R_\ell}_0 
&\lec \norm{P_{> \ell^{-1}} R_q}_0
+ \ell_t\norm{D_{t,\ell}  P_{\le \ell^{-1}}R_q}_{C(\mathcal{I}^q \times \T^3)}\\
&\lec \ell^2\norm{R_q}_2 +\ell_t(  \norm{D_{t,\ell}  R_q}_{C(\mathcal{I}^q \times \T^3)} + \ell \norm{\na v_q}_{C(\mathcal{I}^q \times \T^3)}\norm{\na R_q}_{C(\mathcal{I}^q \times \T^3)})\\
&\lec  ( (\ell\la_q)^2 + \ell_t \la_q\de_q^\frac12)\la_q^{-3\ga} \de_{q+1}
\lec  {\la_q^\frac12}{\la_{q+1}^{-\frac12}}\de_q^\frac14 \de_{q+1}^\frac34,
\end{align*}
and
\begin{align*}
\norm{\ph_q-\ph_\ell}_0
&\lec
\norm{P_{> \ell^{-1}} \ph_q}_0 + \ell_t \norm{D_{t,\ell}P_{\le \ell^{-1}}\ph_q}_{C(\mathcal{I}^q \times \T^3)} \\
&\lec   \ell^2\norm{\ph_q}_2 
+ \ell_t(\norm{D_{t,\ell}\ph_q}_{C(\mathcal{I}^q \times \T^3)} + \ell^{1} \norm{\na v_q}_{C(\mathcal{I}^q \times \T^3)} \norm{\na \ph_q}_{C(\mathcal{I}^q \times \T^3)}) \\
&\lec 
((\ell\la_q)^2 + \ell_t \la_q\de_q^\frac12)
\la_q^{-3\ga} \de_{q+1}^\frac32
\lec  {\la_q^\frac12}{\la_{q+1}^{-\frac12}}\de_q^\frac14 \de_{q+1}^\frac54,
\end{align*}
where $\mathcal{I}^q = [0,T]+\tau_{q-1}$.
Furthermore, we use $|\na^{N} \Phi(t+s,x;t)|\lec_M  \la_q^{N-1}$ for $N=1,2$, $s\in [-\ell_t,\ell_t]$, $t\ge 0$, and combine it with  \eqref{est.R}, \eqref{est.ph}, and \eqref{chain}, to obtain for $N=1,2$,
\begin{align*}
\norm{R_q-R_\ell}_N 
&\lec \norm{P_{> \ell^{-1}} R_q}_N
+ {\norm{R_q}_N + \norm{R_\ell}_N }
\lec (\la_{q+1}^N  (\ell\la_q)^2 + {\la_q^N} )\la_q^{-3\ga} \de_{q+1}\\
&\lec \la_{q+1}^N {\la_q^\frac12}{\la_{q+1}^{-\frac12}}\de_q^\frac14 \de_{q+1}^\frac34
\end{align*}
and
\begin{align*}
\norm{\ph_q-\ph_\ell}_N
&\lec
\norm{P_{> \ell^{-1}} \ph_q}_N 
+ {\norm{\ph_q}_N + \norm{\ph_\ell}_N} 
\lec (\la_{q+1}^N  
(\ell\la_q)^2 +{\la_q^N})
\la_q^{-3\ga} \de_{q+1}^\frac32\\
&\lec  \la_{q+1}^N {\la_q^\frac12}{\la_{q+1}^{-\frac12}}\de_q^\frac14 \de_{q+1}^\frac54,
\end{align*}
provided that sufficiently small $b-1>0$ and sufficiently large $\la_0$.

Now, we consider the advective derivatives. We remark that for $F_\ell= P_{\le l^{-1}} F$, we can write
\begin{align*}
D_{t,\ell} (F-F_\ell) 
= D_{t,\ell} P_{> \ell^{-1}}F 
=  P_{> \ell^{-1}} D_{t,\ell} F + [v_\ell \cdot \na,  P_{> \ell^{-1}}] F.
\end{align*}
Then, we apply this to $F=v$ and $F=p$ and use \eqref{est.com2} to obtain
\begin{align*}
\norm{D_{t,\ell} (v_q-v_\ell)}_{N-1}
&\lec \norm{P_{>\ell^{-1}}D_{t,\ell} v_q}_{N-1} + \norm{[v_\ell \cdot \na,  P_{> \ell^{-1}}] v_q}_{N-1}\\
&\lec {\ell\norm{D_t v_q}_{N} + \ell\norm{(v-v_\ell)\cdot \na v}_{N}} + \ell^{2-N} \norm{\na v_q}_0^2
\lec \ell^{2-N} (\la_q \de_q^\frac 12)^2 
\end{align*}
and
\begin{align*}
\norm{D_{t,\ell} (p_q-p_\ell)}_{N-1}
&\lec \norm{P_{>\ell^{-1}}D_{t,\ell} p_q}_{N-1} + \norm{[v_\ell \cdot \na,  P_{> \ell^{-1}}] p_q}_{N-1}\\
&\lec {\ell\norm{D_t p_q}_{N} + \ell\norm{(v_q-v_\ell)\cdot \na p_q}_{N}} + \ell^{2-N} \norm{\na v}_0\norm{\na p_q}_0
\lec \ell^{2-N} (\la_q \de_q^\frac 12)^2 \de_q^\frac 12.
\end{align*} 
In a similar way, we have
\begin{equation}\begin{split}\label{est.hDt}
\norm{D_{t,\ell} P_{> \ell^{-1}}R_q}_{N-1}
\lec \norm{D_{t,\ell} R_q}_{N-1} + \ell^{2-N}\norm{\na v_q}_0 \norm{\na R_q}_0 
\lec {\la_{q+1}^{N-1} \la_q} \de_q^\frac12 \la_q^{-3\ga} \de_{q+1}\\
\norm{D_{t,\ell} P_{> \ell^{-1}}\ph}_{N-1}
\lec \norm{D_{t,\ell} \ph_q}_{N-1} 
+ \ell^{2-N}\norm{\na v_q}_0 \norm{\na \ph_q}_0 
\lec {\la_{q+1}^{N-1} \la_q} \de_q^\frac12 \la_q^{-3\ga} \de_{q+1}^\frac32.
\end{split}\end{equation}
Simply applying the triangle inequality, it can be easily shown that
\begin{equation}\begin{split}\label{est.Dtdif}
\norm{D_{t,\ell}[P_{\le \ell^{-1}} R_q -\rho_{\ell_t}\ast_{\Phi} P_{\le \ell^{-1}} R_q]}_{N-1}
\le 2\norm{D_{t,\ell} P_{\le \ell^{-1}} R_q}_{N-1} 
\lec \la_{q+1}^{N-1} \la_q\de_q^\frac12 \la_q^{-3\ga}\de_{q+1}\\
\norm{D_{t,\ell}[P_{\le \ell^{-1}} \ph_q -\rho_{\ell_t}\ast_{\Phi} P_{\le \ell^{-1}} \ph_q]}_{N-1}
\le 2\norm{D_{t,\ell} P_{\le \ell^{-1}} \ph_q}_{N-1} 
\lec \la_{q+1}^{N-1} \la_q\de_q^\frac12 \la_q^{-3\ga}\de_{q+1}^\frac32.
\end{split}\end{equation}
Combining \eqref{rep.dif}, \eqref{est.hDt}, and \eqref{est.Dtdif}, it follows that
\begin{align*}
\norm{D_{t,\ell}(R_q-R _\ell)}_{N-1} &\lec  \la_{q+1}^N\de_{q+1}^\frac12\cdot {\la_q^\frac12}{\la_{q+1}^{-\frac12}}\de_q^\frac14 \de_{q+1}^\frac34\\
\norm{D_{t,\ell}(\ph_q-\ph _\ell)}_{N-1} &\lec  \la_{q+1}^N\de_{q+1}^\frac12 \cdot{\la_q^\frac12}{\la_{q+1}^{-\frac12}}\de_q^\frac14 \de_{q+1}^\frac54.
\end{align*}
\end{proof}
 
 \subsection{Quadratic commutator} We next deal with a quadratic commutator estimate, which is a version of the estimate in \cite{CoETi1994} leading to the proof of the positive part of the Onsager conjecture.
 
\begin{lem}\label{lem:est.Qvv} For any $N\geq 0$, $Q(v,v) = \na \cdot (v _\ell\otimes v _\ell - (v\otimes v) _\ell) $ satisfies 
\begin{align}\label{est.Qvv}
\norm{Q(v_q,v_q)}_N\lec \ell^{1-N} (\la_q\de_q^\frac12)^2, \quad
\norm{D_{t,\ell} Q(v_q,v_q)}_{N} \lec \ell^{-N} \de_q^{\frac12}(\la_q\de_q^\frac12)^2. 
\end{align}
{Here, we allow the implicit constants to be depending on $M$.}
\end{lem}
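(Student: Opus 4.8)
The estimates in \eqref{est.Qvv} are instances of the classical commutator estimate of Constantin--E--Titi (cf. \cite{CoETi1994}), adapted to the setting where one also needs control on advective derivatives. The plan is to first establish the $C^N$ bounds on $Q(v_q,v_q)$ and then differentiate along $D_{t,\ell}$, tracking the extra factor $\de_q^{1/2}$ that comes from the transport velocity scale. Throughout I will abbreviate $v=v_q$, $v_\ell = P_{\le\ell^{-1}} v_q$, and write $f_\ell = P_{\le\ell^{-1}} f$ for the mollified quantity in general.

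For the first bound I would use the standard mollification identity: writing $\varphi_\ell$ for the (rescaled) convolution kernel $2^{3J}\widecheck m(2^J\cdot)$ associated with $P_{\le\ell^{-1}}$ — which is Schwartz, has integral $1$, and has vanishing first moments because $m\equiv 1$ near the origin forces $\int y\,\varphi_\ell(y)\,dy=0$ — one has the pointwise commutator formula
\begin{align*}
v_\ell\otimes v_\ell - (v\otimes v)_\ell (x)
&= \int \varphi_\ell(y)\,\bigl(v(x-y)-v(x)\bigr)\otimes\bigl(v(x-y)-v(x)\bigr)\,dy \\
&\quad - \bigl(v_\ell(x)-v(x)\bigr)\otimes\bigl(v_\ell(x)-v(x)\bigr)\, .
\end{align*}
Each term on the right is quadratic in a ``second–order increment'' of $v$: by Taylor expansion and the vanishing first moment of $\varphi_\ell$, $|v(x-y)-v(x)|$ contributes a factor controlled by $\min\{|y|\,\|v\|_1, \, |y|^2\|v\|_2\}$ after using the moment cancellation, so integrating against the rapidly decaying $\varphi_\ell$ (which concentrates at scale $\ell$) yields $\|v_\ell\otimes v_\ell-(v\otimes v)_\ell\|_0 \lec \ell^2 \|v\|_1\|v\|_2 \lec \ell^2\la_q^2\de_q$ using \eqref{est.vp}. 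Taking one more spatial derivative to pass from the tensor to $Q=\na\cdot(\cdot)$ costs a factor $\ell^{-1}$ (the kernel $\na\varphi_\ell$ still has one vanishing moment and scales like $\ell^{-1}$), and each further derivative for the general $N$ likewise costs $\ell^{-1}$; this gives $\|Q(v_q,v_q)\|_N\lec \ell^{1-N}(\la_q\de_q^{1/2})^2$ as claimed. Here I am freely using the Schauder/Bernstein-type bounds on the Littlewood--Paley pieces and the estimates \eqref{est.vp} on $v_q$.

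For the advective–derivative estimate I would commute $D_{t,\ell}=\pa_t+v_\ell\cdot\na$ through the mollification. The key structural point is the transport identity: if $D_t = \pa_t+v\cdot\na$ is the material derivative along the \emph{full} velocity $v_q$, then $v\otimes v$ is transported in a controlled way by the Euler--Reynolds equation, and one has the commutator decomposition
\[
D_{t,\ell}\,Q(v,v) = \bigl[D_{t,\ell}, (\cdot)_\ell \bigr](\text{quadratic terms}) + (\text{mollified } D_t\text{ of quadratic terms}) + (v_\ell-v)\cdot\na(\cdots)\, .
\]
The commutator $[v_\ell\cdot\na, P_{\le\ell^{-1}}]$ applied to $v\otimes v$ is estimated by the standard commutator lemma (the analogue of \eqref{est.com1}--\eqref{est.com2} used elsewhere in the paper), producing $\lec \ell^{-N}\|\na v_\ell\|_0\|v\otimes v\|_{\le 1}\lec \ell^{-N}\la_q\de_q^{1/2}\cdot\de_q$; the term $P_{\le\ell^{-1}}D_t(v\otimes v)$ is handled by expanding $D_t(v\otimes v)$ via the momentum equation $\pa_t v+\na\cdot(v\otimes v)+\na p=\na\cdot R_q$ — so $D_t v = -\na p + \na\cdot R_q - v\cdot\na v + (v\cdot\na)v$ simplifies and one is left with $\na p$, $\na\cdot R_q$ contributions, all controlled by \eqref{est.vp}, \eqref{est.R}, with the dominant scaling $\de_q^{1/2}(\la_q\de_q^{1/2})^2$; and the correction $(v-v_\ell)\cdot\na(\cdots)$ is smaller using $\|v-v_\ell\|_0\lec\ell^2\la_q^2\de_q^{1/2}$ from the previous lemma. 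Collecting these, the worst term is of size $\ell^{-N}\de_q^{1/2}(\la_q\de_q^{1/2})^2$, which is the asserted bound.

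The main obstacle — and the place requiring the most care — is the advective–derivative estimate, specifically ensuring that \emph{no} term carries the naively expected extra factor of $\ell^{-1}$ relative to the stated bound: at first sight $D_{t,\ell}$ of something at spatial frequency $\ell^{-1}$ should cost $\|v_\ell\|_1\sim\la_q\de_q^{1/2}$, which is \emph{larger} than $\ell^{-1}\de_q^{1/2}$ is small. The resolution is that the bad frequency $\ell^{-1}$ only appears paired with the \emph{gain} $\ell$ hidden in the quadratic commutator structure (each increment of $v$ is an increment, not a full derivative), so the transport derivative acts on the ``slow'' part and the net scaling is governed by $\de_q^{1/2}$ times the undifferentiated size $\ell^{-N}(\la_q\de_q^{1/2})^2$. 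Making this bookkeeping precise — i.e. verifying that $D_{t,\ell}$ commutes favorably with the increment representation above, rather than just with the raw Littlewood--Paley projection — is the technical heart of the proof, and it is exactly the computation carried out in \cite[Section 18]{Is2013} that the paper cites for the analogous regularization estimates.
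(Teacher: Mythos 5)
Your overall plan — express the commutator in Constantin--E--Titi increment form for the $C^N$ bound, and then commute $D_{t,\ell}$ through the representation using estimates on $D_t v$ coming from the momentum equation — is indeed the route the paper takes. However there are two concrete problems with the proposal as written.

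First, the $C^0$ bound you give for $v_\ell \otimes v_\ell - (v\otimes v)_\ell$ misattributes the mechanism of the gain and carries an arithmetic slip. The quantity $\ell^2 \|v\|_1 \|v\|_2$ you write equals $\ell^2 \lambda_q^3 \delta_q$ by \eqref{est.vp}, not $\ell^2 \lambda_q^2 \delta_q$, so the displayed chain of inequalities is off by a factor $\lambda_q$. The correct scaling is $\ell^2 \|v\|_1^2 = \ell^2 \lambda_q^2 \delta_q$, and this comes purely from the \emph{quadratic} structure of the increment: each factor $v(x-y)-v(x)$ is bounded by $|y|\,\|\nabla v\|_0$ with no moment cancellation whatsoever, and integrating the product against $\widecheck{m}_\ell$ (which concentrates at scale $\ell$) gives $\ell^2 \|\nabla v\|_0^2$. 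The vanishing first moment of the kernel is irrelevant here; it is what gives the separate estimate $\|v - v_\ell\|_0 \lec \ell^2 \|v\|_2$, which is a different inequality. In the CET commutator, $\|v\|_2$ plays no role. This is exactly what the paper's Lemma A.3 (\eqref{est.com}) asserts and uses.

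Second, and more importantly, the advective-derivative estimate — which you yourself identify as the technical heart — is not actually carried out. Your schematic decomposition of $D_{t,\ell} Q$ into ``commutator $+$ mollified $D_t$ of quadratic terms $+$ $(v_\ell - v)\cdot\nabla$ correction'' is not a precise identity and it is not clear how each piece would be bounded. The paper first rewrites, using $\nabla\cdot v = \nabla\cdot v_\ell = 0$,
\[
Q(v,v) = (v_\ell - v)\cdot\nabla v_\ell + [v\cdot\nabla, P_{\le \ell^{-1}}] v,
\]
then represents the commutator as the integral $\int \big((v(x)-v(x-y))\cdot\nabla\big)v(x-y)\,\widecheck{m}_\ell(y)\,dy$, computes $D_{t,\ell}$ of this explicitly (four terms arise from the product/chain rule), and bounds each using \eqref{est.vp}, \eqref{est.v.dif}, the moment bounds $\||y|^n\widecheck{m}_\ell\|_{L^1}\lec\ell^n$, and the estimate $\|\nabla D_t v\|_0 \lec (\lambda_q\delta_q^{1/2})^2$ that comes (as you correctly observe) from the Euler--Reynolds equation. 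The higher-$N$ bounds then follow by Bernstein, since $D_{t,\ell}Q$ is frequency-localized to $\lec \ell^{-1}$. None of this computation appears in your proposal, so as it stands the second half of the lemma is not proven. To complete the argument you would need to produce either the decomposition above or an equivalent one, write out the $D_{t,\ell}$ of the increment integral, and verify the bookkeeping term by term.
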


\begin{proof} In order to simplify our notation we drop the subscript $q$ fom $v_q$.
The estimate for $\norm{Q(v,v)}_N$ easily follows from \eqref{est.com}. To estimate $D_{t,\ell}Q(v,v)$, we first decompose $Q(v,v)$ into
\begin{align*}
Q(v,v)
=  (v _\ell-v)\cdot \na v _\ell + [v\cdot \na, P_{\le \ell^{-1}}] v.
\end{align*}
Recall that $\widehat{P_{\le \ell^{-1}} f}(\xi) =\widehat{P_{\le 2^J} f}(\xi) =  m \left(\frac{\xi}{2^J}\right) \widehat{f}(\xi)$ for some radial function $m \in \cal{S}$, where $J\in \N$ is the maximum number  satisfying $2^J\le \ell^{-1}$. For the convenience, we set $\widecheck{m}_{\ell}(x) = 2^{3J}\widecheck{m}(2^Jx)$. Then, by Poison summation formula, $P_{\le \ell^{-1}} f(x) = \int_{\R^3} f(x-y) \widecheck{m} _\ell(y)dy$ holds.  
Using this, the advective derivative of the commutator term can be written as follows,
\begin{align*}
D_{t,\ell}[v\cdot \na, P_{\le \ell^{-1}}] v
&=  (\pa_t +v _\ell(x)\cdot \na) \int ((v(x)-v(x-y))\cdot \na)v(x-y) \widecheck{m} _\ell(y) dy\\
&=  \int  ((D_{t,\ell}v(x)-D_{t,\ell}v(x-y)) \cdot  \na )v(x-y) \widecheck{m} _\ell(y) dy\\
&\quad-\int (v _\ell(x)-v _\ell(x-y))_a\na_a v_b(x-y)  \na_b v(x-y) \widecheck{m} _\ell(y) dy\\
&\quad+ \int ((v(x)-v(x-y))\cdot D_{t,\ell}\na) v(x-y) \widecheck{m} _\ell(y) dy\\
&\quad + \int (v(x)-v(x-y))_a (v _\ell(x)-v _\ell(x-y))_b (\pa_{ab}v)(x-y) \widecheck{m} _\ell(y) dy. 
\end{align*} 
Based on the decompositions, we use \eqref{est.vp}, \eqref{est.v.dif}, and $\norm{|y|^n\widecheck{m} _\ell}_{L^1(\R^3)}\lec \ell^n$, $n\geq 0$, to get
\begin{align*}
\norm{D_{t,\ell}Q(v,v)}_0
&\lec \norm{D_{t,\ell}(v-v _\ell)}_0  \norm{\na v_\ell}_0 + \norm{v-v _\ell}_0\norm{D_{t,\ell}\na v _\ell}_0 \\
&\quad+ \ell\norm{\na D_{t,\ell} v}_0\norm{\na v}_0
+\ell\norm{\na v}_0^3 + \ell \norm{\na v}_0 \norm{D_{t,\ell} \na v}_0 + \ell^2 \norm{\na v}_0^2 \norm{\na^2 v}_0 
\lec \ell (\la_q\de_q^\frac12)^3
\end{align*}
Here, we use $\norm{\na D_{t,\ell} v _\ell}_0 \leq \norm{\na D_{t,\ell} (v _\ell-v)}_0+\norm{\na D_{t} v}_0 +\norm{\na (((v-v _\ell)\cdot \na) v _\ell)}_0\lec (\la_q\de_q^\frac12)^2$, and
\begin{align*}
\norm{D_{t,\ell} \na v _\ell }_0 \leq \norm{\na D_{t,\ell} v _\ell}_0 + \norm{\na v}_0^2 \lec (\la_q\de_q^\frac12)^2.
\end{align*}
In the case of $N\geq 1$, we remark that $D_{t,\ell}Q(v,v)$ has frequency localized to $\lec \ell^{-1}$, so that the remaining estimates follows from the Bernstein inequality. Similarly, we also have
\begin{align}\label{est.Dtvl}
\norm{D_{t,\ell} \na v _\ell }_N \lec \ell^{-N} (\la_q\de_q^\frac12)^2.
\end{align} 
\end{proof}

\subsection{Estimates on the backward flow} Finally we address the estimates on the backward flow $\xi_I$.

\begin{lem}\label{lem:est.flow} {For every $b>1$ there exists $\La_0=\La_0(b)$ such that for $\la_0\geq \La_0$} the backward flow map $\xi_I$ satisfies the following estimates on 
the time interval $\cal{I}_m = [t_m - \frac 12 \tau_q , t_m + \frac 32\tau_q]\cap [0,T]+2\tau_q$
\begin{align}
&\norm{\I - \na \xi_I}_{C^0(\cal{I}_m \times \R^3)} \leq \frac 15 \label{est.flow1}\\
&\norm{D_{t,\ell}^s \na \xi_I}_{C^0(\cal{I}_m; C^N(\R^3))}\lec_{N,M} \ell^{-N} (\la_q \de_q^\frac12)^s \label{est.flow2} \\
&\norm{D_{t,\ell}^s (\na \xi_I)^{-1}}_{C^0(\cal{I}_m; C^N(\R^3))} 
\lec_{N,M} \ell^{-N} (\la_q \de_q^\frac12)^s, \label{est.flow3}
\end{align}
for any $N\geq 0$ and $s=0,1,2$. Note that the implicit constants in the inequalities are independent of the index $I=(m,n,f)$. {In particular,
\begin{align}\label{est.flow.indM}
\norm{\na \xi_I}_{C^0(\cal{I}_m; C^N(\R^3))} 
+ \norm{(\na \xi_I)^{-1}}_{C^0(\cal{I}_m; C^N(\R^3))}
\lec_N \ell^{-N}.
\end{align}
The implicit constant in this inequality is also independent of $M$.}
\end{lem}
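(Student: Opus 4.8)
The plan is to derive all four statements from the linear matrix transport equation satisfied by $A_I:=\na\xi_I$. Differentiating \eqref{def.bflow} in space gives $D_{t,\ell}A_I=-(\na v_\ell)\,A_I$ (matrix product) with $A_I|_{t=t_m}=\I$, so along any trajectory of the $v_\ell$-flow the matrix $A_I$ solves an ODE whose coefficient matrix is $-(\na v_\ell)$. The one elementary fact driving everything is that, by the definition of $\tau_q$ in \eqref{mu.tau} and the monotonicity of $q\mapsto\la_q\de_q^\frac12$,
\[
\tau_q\,\la_q\de_q^\frac12 \;=\; \frac1{C_0M}\Big(\frac{\la_q\de_q^\frac12}{\la_{q+1}\de_{q+1}^\frac12}\Big)^\frac12 \;\leq\; \frac1{C_0M}\, ,
\]
so that the factor $M$ appearing in the regularization bounds $\norm{\na v_\ell}_N\lec_{N,M}\ell^{-N}\la_q\de_q^\frac12$ is always absorbed by the $M^{-1}$ carried by $\tau_q$; this is exactly the source of the $M$-independence claimed in \eqref{est.flow.indM}. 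Since $|t-t_m|\leq 2\tau_q$ on $\cal I_m$, writing $A_I-\I=\int_{t_m}^{t}\big(-(\na v_\ell)A_I\big)$ along the flow and applying Gr\"onwall gives $\norm{A_I-\I}_{C^0(\cal I_m\times\R^3)}\leq \exp(2\tau_q\norm{\na v_\ell}_0)-1\lec C_0^{-1}$, which is $\leq\frac15$ once $\la_0$ is large enough (depending on $b$, since the ratio above is a negative power of $\la_q$). This proves \eqref{est.flow1}, and then the Neumann series $A_I^{-1}=\sum_{k\geq0}(\I-A_I)^k$ converges with $\norm{A_I^{-1}}_{C^0}\leq\frac54$.

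Next I would prove the $M$-free bounds \eqref{est.flow.indM} by induction on $N$. Differentiating the PDE $\pa_tA_I+v_\ell\cdot\na A_I=-(\na v_\ell)A_I$ a total of $N$ times in space, one obtains an equation of the form $D_{t,\ell}(\na^NA_I)=(\text{zeroth-order operator with }C^0\text{-coefficients}\lec_N\norm{\na v_\ell}_0)(\na^NA_I)+(\text{forcing})$, where the operator collects the commutator and Leibniz terms in which $\na$ hits $v_\ell$ exactly once, and the forcing is a finite sum of products of higher derivatives of $v_\ell$ with derivatives of $A_I$ of order $\leq N-1$. Using the inductive hypothesis on $\norm{A_I}_{C^{k}}$ ($k<N$), the regularization bounds, and the facts $\la_q\ell\leq1$ and $\tau_q M\la_q\de_q^\frac12\lec C_0^{-1}$, every forcing summand has time integral over $\cal I_m$ of size $\lec_NC_0^{-1}\ell^{-N}$, with no leftover power of $M$ or of $\ell^{-1}$; a final Gr\"onwall step (whose exponent $\lec_N\tau_q\norm{\na v_\ell}_0\lec C_0^{-1}$ is bounded) closes the induction for $\na^NA_I$. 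Differentiating $A_I^{-1}A_I=\I$ repeatedly and using $\norm{A_I^{-1}}_{C^0}\leq\frac54$ together with the bound just obtained gives the same estimate for $\na^NA_I^{-1}$, completing \eqref{est.flow.indM}.

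Finally, for the material-derivative estimates \eqref{est.flow2}--\eqref{est.flow3} with $s=1,2$ I would iterate $D_{t,\ell}$ on $D_{t,\ell}A_I=-(\na v_\ell)A_I$, now \emph{inserting} the $M$-free spatial bounds \eqref{est.flow.indM}: for $s=1$ the Leibniz rule gives $\norm{D_{t,\ell}A_I}_{C^0_tC^N_x}\lec_N\sum_{j}\norm{\na v_\ell}_{C^j}\norm{A_I}_{C^{N-j}}\lec_{N,M}\ell^{-N}\la_q\de_q^\frac12$, and for $s=2$ one more application of $D_{t,\ell}$ additionally brings in $\norm{D_{t,\ell}\na v_\ell}_{C^N}\lec_{N,M}\ell^{-N}(\la_q\de_q^\frac12)^2$ from \eqref{est.Dtvl}, which yields the stated $(\la_q\de_q^\frac12)^2$ scaling; for the inverse one uses $D_{t,\ell}A_I^{-1}=-A_I^{-1}(D_{t,\ell}A_I)A_I^{-1}$ together with the preceding bounds. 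The only point requiring genuine care — but it is entirely routine, and the analogous computation is carried out in detail in \cite[Section~18]{Is2013} — is the bookkeeping of the commutator and Leibniz terms in the induction of the second paragraph, namely verifying that each auxiliary factor one picks up is benign (bounded by a geometric constant via $\la_q\ell\leq1$ or $\tau_qM\la_q\de_q^\frac12\lec C_0^{-1}$), so that neither a power of $M$ nor a power of $\ell^{-1}$ is lost.
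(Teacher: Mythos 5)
Your proposal is correct and follows essentially the same route as the paper: you exploit the transport identity $D_{t,\ell}\na\xi_I=-(\na\xi_I)(\na v_\ell)$ together with Gr\"onwall, and you correctly identify the crucial cancellation $\tau_q\la_q\de_q^{1/2}\le (C_0M)^{-1}$ which absorbs the factor of $M$ from the $\|\na v_\ell\|$ bounds and produces the $M$-independence in \eqref{est.flow.indM}. The paper compresses your inductive-in-$N$ Gr\"onwall step into the single line $\norm{\na\xi_I}_{C^0(\cal I_m;C^N)}\lec_N 1+\tau_q\norm{\na v_\ell}_N\lec\ell^{-N}$ and then, exactly as you do, obtains the $s=1,2$ cases by writing out $D_{t,\ell}\na\xi_I$, $D_{t,\ell}^2\na\xi_I$, $D_{t,\ell}(\na\xi_I)^{-1}$, $D_{t,\ell}^2(\na\xi_I)^{-1}$ explicitly and inserting the $s=0$ bounds together with \eqref{est.Dtvl}.
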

\begin{proof} {First, we can find $\La_0(b)$ such that for any $\la_0\geq \La_0(b)$, $\tau_q\norm{\na v}_0\leq \frac 1{10}$ holds.} Then, \eqref{est.flow1} easily follows from \eqref{bf.Id}. Also, 
\begin{align}
&\norm{\na \xi_I}_{C^0(\cal{I}_m; C^N(\R^3))} \lec_N 1 + \tau_q
\norm{\na v_\ell}_N \lec 1+ \ell^{-N} \lec \ell^{-N} \label{est.flow01}
\end{align}
which follows $\norm{(\na \xi_I)^{-1}}_{C^0(\cal{I}_m; C^N(\R^3))} \lec_N \ell^{-N}$. 
Since we have
\begin{align*}
D_{t,\ell} \na \xi_I = -(\na\xi_I)(\na v_\ell), 
\quad D_{t,\ell}^2 \na \xi_I = (\na\xi_I)(\na v_\ell)^2-(\na \xi_I)D_{t,\ell}\na v_\ell,
\end{align*}
using \eqref{est.Dtvl} and \eqref{est.flow01}, $\norm{D_{t,\ell}^s \na \xi_I}_{C^0(\cal{I}_m; C^N(\R^3))} \lec_{N,M} \ell^{-N}(\la_q\de_q^\frac12)^s $ easily follows. Lastly, we have 
\begin{align*}
D_{t,\ell}(\na\xi_I)^{-1}
= \na v_\ell (\na \xi_I)^{-1}, \quad
D_{t,\ell}^2 (\na\xi_I)^{-1}
= D_{t,\ell}\na v_\ell (\na \xi_I)^{-1} + (\na v_\ell)^2 (\na\xi_I)^{-1}.
\end{align*}
Therefore, \eqref{est.flow3} can be obtained similarly. 
\end{proof}

\section{Choice of shifts: proof of Proposition \ref{p:supports}}

This section is perhaps the most crucial in our note, as it ensures the key property in the construction of $w_o$, namely the disjointness of the supports of the single blocks $w_I$ in its definition. 

\subsection{An elementary geometric observation} The basic tool is an elementary fact about closed geodesics in the three-dimensional torus. In order to state it efficiently we introduce the following notation. Given a vector $f\in \mathbb Z^3 \setminus \{0\}$ and a point $p\in \mathbb R^3$, we consider the line $\{\lambda f +p: \lambda \in \mathbb R\}\subset \mathbb R^3$. With a slight abuse of notation, we then denote by 
$l_{f,p}$ the ``periodization'' of such line, namely
\begin{equation}\label{e:periodization}
l_{f,p}:= \{\lambda f +p: \lambda\in \mathbb R\} + 2\pi \mathbb Z^3\, ,
\end{equation}
and the corresponding closed geodesic in $\mathbb T^3$. Next, given two closed geodesics $s$ and $\sigma$ in the torus (or, equivalently, the periodizations of the corresponding lines in $\R^3$), we define
\[
\dis (s, \sigma):= \min \{|x-y|: x\in s, y\in \sigma\}
\]

 \begin{lem}\label{lem:no.int.line} 
Let $\cF$ be a given family of vectors in $\Z^3$ with a finite cardinality and $d$ be a given positive real number. 
Then, we can find $\eta=\eta(\cF, d)>0$ such that the following holds.
For any two sets of closed geodesics $\{s_f\} = \{l_{f,p_f}\}_{f\in \cF}$ and $\{\tilde{s}_f\} = \{l_{f,q_f}\}_{f\in \cF}$ satisfying
\begin{equation}\label{d:assumption}
\dis(s_f,s_{g}) \geq 2d, \quad \dis(\td s_f,\td s_{g}) \geq 2d, \quad
\forall f\neq g\in \cF,
\end{equation} 
we can always find $z\in \mathbb R^3$ with $|z|\le \frac 14 d$ such that the shifted geodesics $\{\bar{s}_f\}=\{l_{f, q_f+z}\}$ satisfy
\begin{equation}\label{e:eta-estimate}
\min \{\dis (s_f, \bar{s}_{g}) : f, g\in \cF\} \geq \eta.
\end{equation}
\end{lem}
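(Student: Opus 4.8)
The plan is to reduce \eqref{e:eta-estimate} to a measure-theoretic pigeonhole inside the ball $\{|z|\le \tfrac14 d\}$. For every ordered pair $(f,g)\in\cF\times\cF$ introduce the set of ``bad'' shifts
\[
\mathcal B_{f,g}:=\{z\in\R^3:\ \dis(s_f,\, l_{g,q_g+z})<\eta\}\, ,
\]
so that any $z$ with $|z|\le\tfrac14 d$ lying in none of the $\mathcal B_{f,g}$ proves the claim. It therefore suffices to bound the Lebesgue measure of each $\mathcal B_{f,g}$ by a quantity that is small with $\eta$, uniformly in the positions $p_f,q_g$, and then count.

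First I would record a distance formula. Regarding a periodized line as the subset $l_{f,p}=\R f+p+2\pi\Z^3\subset\R^3$ and using that $\R f+\R g$ is a subgroup, one obtains
\[
\dis(l_{f,p},\, l_{g,q})=\dis\big(0,\ (q-p)+V_{f,g}+2\pi\Z^3\big),\qquad V_{f,g}:=\sp\{f,g\}\, ,
\]
whence $\mathcal B_{f,g}$ is a translate of the open $\eta$-neighbourhood $N_\eta(V_{f,g}+2\pi\Z^3)$. The crucial arithmetic input is that $f,g\in\Z^3$ makes $V_{f,g}$ a rational subspace: therefore $2\pi\Z^3\cap V_{f,g}$ has full rank in $V_{f,g}$ and the image of $2\pi\Z^3$ in $\R^3/V_{f,g}$ is discrete, so $V_{f,g}+2\pi\Z^3$ is a closed set — a union of equally spaced parallel planes, or, when $f$ and $g$ are parallel, of equally spaced parallel lines — with spacing bounded below by a constant $c_0(\cF)>0$. (Irrational directions would make this set dense and the scheme collapse; this is exactly where the arithmetic enters.)

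Next I would estimate $\mathcal B_{f,g}\cap B(0,\rho)$. For $\eta<\tfrac12 c_0(\cF)$ the neighbourhood $N_\eta(V_{f,g}+2\pi\Z^3)$ is a disjoint union of slabs of width $2\eta$ (resp.\ of tubes of radius $\eta$), of which at most $\lec_\cF (1+\rho)$ (resp.\ $\lec_\cF(1+\rho)^2$) meet $B(0,\rho)$; hence $|\mathcal B_{f,g}\cap B(0,\rho)|\le C(\cF)\,\eta\,(1+\rho)^3$, with a constant independent of $p_f,q_g$. Summing over the at most $|\cF|^2$ ordered pairs and taking $\rho=\tfrac14 d$, the union $\bigcup_{f,g}\mathcal B_{f,g}$ meets $B(0,\tfrac14 d)$ in a set of measure at most $|\cF|^2 C(\cF)\,\eta\,(1+\tfrac14 d)^3$, which is $<|B(0,\tfrac14 d)|=\tfrac{\pi}{48}d^3$ once $\eta=\eta(\cF,d)$ is chosen small enough (and $\le\min\{\tfrac12 c_0(\cF),2d\}$). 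Thus some $z$ with $|z|<\tfrac14 d$ avoids every $\mathcal B_{f,g}$, which is precisely \eqref{e:eta-estimate}. The hypotheses \eqref{d:assumption} enter only to guarantee that the translated family needs no extra care among its own members, since $\dis(\bar s_f,\bar s_g)=\dis(\td s_f,\td s_g)\ge 2d\ge\eta$ automatically.

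The one genuinely substantive step is the uniform volume estimate on $\mathcal B_{f,g}$: the lattice bookkeeping showing that for $f,g\in\Z^3$ the set $V_{f,g}+2\pi\Z^3$ is a finite-spacing union of parallel affine subspaces, with spacing bounded below purely in terms of $\cF$, together with the correct treatment of the degenerate pairs for which $f\parallel g$. The distance formula and the final volume pigeonhole are then routine.
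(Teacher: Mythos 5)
Your proof is correct and takes a genuinely different route from the paper's. The paper argues by contradiction and compactness: it assumes the conclusion fails for every $\eta=1/k$, extracts convergent subsequences of the two families of geodesics as $k\to\infty$, and then, for the limiting configuration, identifies the finite collection $\mathcal T$ of intersecting pairs $(f,g)$ and picks a direction $\zeta$ transverse to every plane $\sp\{f,g\}$ with $(f,g)\in\mathcal T$, so that an infinitesimal shift along $\zeta$ breaks all intersections at once; this produces no explicit $\eta$ and invokes hypothesis \eqref{d:assumption} to control the combinatorics of the limiting intersections. Your argument is direct and quantitative: you identify each bad set $\mathcal B_{f,g}$ as a translate of the $\eta$-neighbourhood of $V_{f,g}+2\pi\Z^3$, use the integrality of $\cF\subset\Z^3$ to see that this set is a locally finite union of uniformly spaced parallel affine subspaces (the projection of $2\pi\Z^3$ to $\R^3/V_{f,g}$ is a full-rank lattice because $V_{f,g}$ is rational, and $c_0(\cF)$ is the minimum over the finitely many pairs of the resulting shortest nonzero vector), bound $|\mathcal B_{f,g}\cap B(0,\tfrac14 d)|\lec_\cF \eta(1+d)^3$ by counting slabs or tubes, and finish with a volume pigeonhole over the $|\cF|^2$ ordered pairs. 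This buys you an explicit $\eta(\cF,d)$, makes the arithmetic input transparent (for irrational directions the set $V_{f,g}+2\pi\Z^3$ would be dense and the estimate, along with the whole closed-geodesic picture, would collapse), and, as you observe, never actually uses \eqref{d:assumption}, so it establishes a slightly stronger version of the lemma. The one step worth writing out fully is the uniform spacing bound $c_0(\cF)$ and the case distinction between $\dim V_{f,g}=2$ (slabs, measure $\lec\eta$) and $\dim V_{f,g}=1$ for parallel or equal $f,g$ (tubes, measure $\lec\eta^2$), both of which you have sketched correctly.
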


\begin{proof}
The proof is based on a contradiction argument. Suppose that there exist a family $\cF$ with a finite cardinality and a positive number $d$ for which the statement fails no matter how small $\eta>0$ is chosen. Considering then for each $\eta=\frac{1}{k}$ with $k\in \mathbb N\setminus \{0\}$ a pair of families of geodesics which contradict the statement. We then achieve $2|\cF|$ sequences of closed geodesics 
$\{s_f^{(k)}\}$ and $\{\td s_g^{(k)}\}$, $f,g\in \cF$, $k\in \N$ such that:
\begin{itemize}
\item[(a)] For any $f\in \cF$, $s_f^{(k)}$ and $\td s_f^{(k)}$ are given by $l_{s, p_f (k)}, l_{s,q_f (k)}$ for some choice of vectors $p_f (k), q_f (k)$, which without loss of generality we can assume to satisfy the bounds $|p_f (k)|_\infty, |q_f (k)|_\infty\leq \pi$.
\item[(b)] The geodesics satisfy the bound
\[
\dis(s_f^{(k)}, s_{g}^{(k)}) \geq 2d, \quad \dis(\td s_f^{(k)}, \td s_{g}^{(k)}) \geq 2d, \quad
\forall f\neq g\in \cF\, .
\]
\item[(c)] For each $k\in \N$ 
\[
\max_{|z|\leq \frac{1}{4}}\, \left[ \min \left\{\dis(l_{f, p_f (k)+z}, l_{g, q_k (g)}) : f,g\in \cF\right\}\right] \leq \frac{1}{k}.
\]
\end{itemize}
Clearly, by extraction of a subsequence we can assume that all the sequences $\{p_f (k)\}$, $\{q_f (k)\}$ converge to some limits $p_f$ and $q_f$. We thus can consider the corresponding geodesics $s_f = l_{f, p_f}$ and $\td{s}_f = l_{f, q_f}$. The simple inequalities $\dis(s_f^{(k)}, s_f) \leq |p_f (k)-p_f|$ and $\dis (\tilde{s}_f^{(k)}, \tilde{s}_f)\leq |q_f^{(k)} - q_f|$ imply
\[
\lim_{k\to\infty} \left[\max_{f\in \cF}\dis(s_f^{(k)}, s_f) + \max_{f\in \cF}\dis(\td s_f^{(k)}, \td s_f)\right] = 0\, .   
\]
This implies that
\begin{align}
&\min_{f,g\in \cF} \dis(l_{f, p_f +z}, l_{g, q_g}) = 0, \qquad\forall |z|\leq \frac 14d,\label{e:contradiction}\\
\dis(l_{f,p_f}, l_{g,p_g}) &\geq 2d, \quad \dis(l_{f,q_f}, l_{g, q_g}) \geq 2d,  \quad
\forall f\neq g \in \cF. \label{mutual.dis}
\end{align}
Denote by $\cT$ the collection of $(f,g)\in \cF\times \cF$ such that $l_{f,p_f}\cap l_{g, q_g} \neq\emptyset$ and set
\begin{align*}
\de = \min \{ \dis (l_{f,p_f}, l_{g,q_g}): (f,g) \not \in \cT\} > 0\, .
\end{align*} 
Clearly, as long as $|z|<\frac {\de}2$, we have 
\begin{align}\label{no.T0}
\dis(l_{f,p_f+z}, l_{g, q_g})\geq \frac{\de}2 >0 \qquad \forall (f,g)\not \in \cT\, .
\end{align}
Consider next that, by \eqref{mutual.dis}, if $p\in l_{f, p_f} \cap l_{g,q_g}$, then $p$ cannot belong to any other geodesic $l_{f',p_f'}$ or $l_{g',q_g'}$. {Furthermore, for any $|z|\leq d/4$, we have $l_{f,p_f+z}\cap  l_{g', q_g'} =\emptyset$ for all $g'\neq g \in \cF$ and $ l_{f',p_f'+z}\cap l_{g, q_g}= \emptyset$ for all $f'\neq f \in \cF$.} Consider that, for $(f,g)\in \mathcal{T}$, either $l_{f,p_f}\cap l_{g,q_g}$ has finite cardinality (which occurs when $f$ and $g$ are not colinear) or else $l_{f, p_f} = l_{g,q_g}$ (which occurs when $f$ and $g$ are colinear). In both cases, let $L (f,g)$ be the linear space spanned by $\{f,g\}$ and observe that, if $z\in \mathbb S^2$ is any vector such that $\zeta\not\in L (f,g)$, then there is $\delta (f,g,\zeta)>0$ such that
\[
l_{f,p_f+\tau \zeta}\cap l_{g,q_g} = \emptyset \qquad \forall \tau\in (0, \delta (f,g,\zeta))\, .
\]
Since $\{L(f,g): (f,g)\in \mathcal{T}\}$ is a set with finite cardinality, it is clear that we can choose a vector $\zeta\in \mathbb S^2$ such that
\[
\zeta\not \in \bigcup_{(f,g)\in \mathcal{T}} L (f,g)\, . 
\]
Having fixed such a $\zeta$, if $z=\tau \zeta$ and $\tau$ is a sufficiently small positive number, we conclude from the considerations above that $|z|\leq \frac 14 d$ and
\[
l_{f,p_f+z} \cap l_{g,q_g} = \emptyset \qquad \forall f,g\in \mathcal{F}\, .
\]
Given that $\mathcal{F}$ is a finite set, the latter statement clearly contradicts \eqref{e:contradiction}. 
\end{proof}

\subsection{Proof of Proposition \ref{p:supports}: Set up} First of all we wish to determine the constant $\eta$ of the Proposition. Recall that a family $\mathcal{F} {= \cup_{j\in \Z^3_3} \cF^j}\subset \mathbb Z^3\setminus\{0\}$ has been fixed in Section \ref{ss:directions} and it consists of 270, pairwise noncolinear, elements. We first notice that we can choose a finite family $\{\bar{p}_f\}_{f\in \mathcal{F}}$ of shifts with the property that 
\[
l_{f,\bar{p}_f} \cap l_{g,\bar{p}_g} = \emptyset \qquad \forall f\neq g \in \mathcal{F}\, . 
\]
Hence we denote by $d_0$ the positive number 
\[
3 d_0 := \min \left\{\dis (l_{f,\bar p_f}, l_{g,\bar p_g}): f\neq g\in \mathcal{F}\right\} 
\]
and we apply Lemma \ref{lem:no.int.line} to $\mathcal{F}$ and $d=d_0$ to get the corresponding $\eta (\mathcal{F}, d_0)$. The resulting positive constant $\eta$ of Proposition \ref{p:supports} is then 
$\eta = \min \{\eta (\mathcal{F}, d_0),d_0/2 \}$. Therefore we will now proceed to prove the claim of the Proposition. 

In order to simplify our notation we will use $\mu, \tau$ and $\lambda$ in place of $\mu_q, \tau_q$ and $\lambda_{q+1}$ and $v$ in place of $v_\ell$. 
We recall the following consequences of our choice of the parameters, which will play a fundamental role in the proof:
\begin{align}\label{con.par}
\mu^{-1} \ll \la \in \N, \quad \tau\norm{\na v}_{0} \leq \frac{1}{10}, \quad\mu\tau \norm{\na v}_{0}  \leq  \frac {\eta} {10\pi\la}.
\end{align}
(Here, $\norm{\cdot}_0 = \norm{\cdot}_{C([0,T]+\tau_{q-1}\times \T^3)}$.)
Next, the choice of the ${z}_{m,n}$ will be made inductively in the time discretization parameter $m$, so that
\begin{equation}\label{e:small_shift} 
|z_{m,n}|\leq \frac{d_0}{4}\, .
\end{equation}
Before coming to the specific choice, we argue that the condition \eqref{e:small_shift} guarantees 
\[
\supp (w_I) \cap \supp (w_J) = \emptyset \qquad \mbox{for all $I\neq J$ with $m_I = m_J = m$.}
\]
Indeed, observe that the last claim is implied by the disjointness of the supports of the functions $\chi_I (\xi_I) \psi_I (\lambda \xi_I)$
and $\chi_J (\xi_J) \psi_J (\lambda\xi_J)$. However $\xi_I=\xi_J = \xi_m$ and thus it suffices to show the disjointness of the supports
of $\chi_I (\cdot) \psi_I(\la \cdot)$, which depends only on the $x$ variables. Moreover observe that $\chi_I \chi_J=0$ if $|n_I -n_J|_\infty >1$. Hence it suffices to show 
\begin{equation}\label{e:disjoint_space_neighbors}
\supp (\psi_I) \cap \supp (\psi_J) = \emptyset \qquad \mbox{for all $I=(m,n,f)\neq (m,n',g)=J$ when $|n-n'|_\infty \leq 1$}.
\end{equation}  
Under such assumption, by \eqref{e:eta} 
\begin{align*}
\supp (\psi_I) &\subset B (l_{f,\bar{p}_f+z_{m,n}},\eta/10)\\
\supp (\psi_J) &\subset B(l_{g,\bar{p}_g+z_{m,n'}},\eta/10)
\end{align*}
However, since $I\neq J$ and either $[n]\neq [n']$ or $n=n'$, we necessarily have $f\neq g$. This means that $\dis (l_{f,\bar{p}_f}, l_{g, \bar{p}_g})\geq 2 d_0$ and thus that
\[
\dis (l_{f,\bar{p}_f} + z_{m,n}, l_{g, \bar{p}_g}+z_{m,n'}) \geq \frac{3}{2} d_0\, .
\]
Since $\eta\leq \frac{d_0}{2}$, the latter inequality ensures 
\eqref{e:disjoint_space_neighbors}.

We are now left with the inductive specification of the extra shifts $z_{m,n}$. At the initial step $m=-2$, we just set $z_{m,n}=0$. Next note that $\supp (\theta_I) \cap \supp (\theta_J) = \emptyset $ when $|m_I - m_J|> 1$. Hence in the inductive step we fix $m$ and, assuming to have chosen $z_{m',n}$ for all $m'\leq m$ and all $n$, we wish to choose the ``next generation'' of $z_{m+1,n}$ such that
\[
\supp (\theta_I \chi_I (\xi_I) \psi_I (\lambda \xi_I)) \cap \supp (\theta_J \chi_J (\xi_J) \psi_J (\lambda \xi_J)) = \emptyset \quad \forall I=(m+1,n,f), \forall J=(m,n',g)\, .
\]  
We will not deal with condition \eqref{e:periodicity}, as it will be clear from the algorithm for choosing $z_{m+1,n}$ that it will be automatically satisfied. Recall next that $\supp (\theta_I) \cap \supp (\theta_J) \subset (\tau (m+1) - \frac{\tau}{8}, \tau (m+1)+\frac{\tau}{8}) = (t_{m+1}-\frac{\tau}{8}, t_{m+1} + \frac{\tau}{8})$, while $\xi_I = \xi_{m+1}$ and $\xi_J=\xi_m$ (defined in \eqref{def.bflow}). The above condition thus implied by
\begin{align}
&\supp (\chi_I (\xi_{m+1} (t, \cdot)) \psi_I (\lambda \xi_{m+1} (t, \cdot))) \cap \supp (\chi_J (\xi_{m} (t, \cdot)) \psi_J (\lambda \xi_{m} (t, \cdot)) = \emptyset \nonumber\\ 
&\quad \mbox{for all $t_{m+1}-\frac{\tau}{8} < t < t_{m+1} + \frac{\tau}{8}$, $I=(m+1,n,f)$ and $J=(m,n',g)$.} \label{e:inductive}
\end{align}
Moreover, the choice of each $z_{m+1,n}$ will be independent of the choice of other $z_{m+1,\bar n}$ except for the condition $z_{m+1,n}= z_{m+1, \bar n}$ when $\mu (n-\bar n) \in 2\pi\mathbb Z^3$, which will be enforced by the fact that we will only specify the choice when $n\in [0
, 2\pi \mu^{-1}]^3$. 

\subsection{Proof of Proposition \ref{p:supports}: conclusion} 

Let $m_{e}$ be the smallest integer satisfying $T\leq t_{m_e}$. From now on $n\in \Z^3$ and $m\in  \{-1,\cdots, m_e\} $ are thus fixed and we wish to show that for a suitable choice of $z_{m,n}$ satisfying \eqref{e:small_shift}, condition \eqref{e:inductive} holds.
We recall the flow map $\Phi_m$ introduced in \eqref{eqn.fflow} and observe that $\xi_m (t, \cdot) = [\Phi_m (t, \cdot)]^{-1}$. Moreover, by the semigroup property of flows, 
\begin{align}\label{rel.phi.m}
\Phi_{m+1} (s, \Phi_m (t_{m+1}, x)) = \Phi_m (s,x)\, .
\end{align}
and note that the latter can be equivalently written as 
\begin{align}\label{rel.phi.m2}
\Phi_{m+1} (s, y) = \Phi_m (s,\xi_{m} (t_{m+1}, y))\, . 
\end{align}
These relations imply that 
\begin{align}
\supp (\chi_J (\xi_m (t, \cdot))\psi_J (\lambda \xi_m (t, \cdot))) &= \Phi_m (t, \supp (\chi_J(\cdot)\psi_J (\lambda \cdot))\label{e:transport_support_1}\\
\supp (\chi_I (\xi_{m+1} (t, \cdot))\psi_{I} (\lambda \xi_{m+1} (t, \cdot))) &= \Phi_m (t, \xi_{m} (t_{m+1}, \supp (\chi_I(\cdot)\psi_I (\lambda\cdot)))\, .\label{e:transport_support_2}
\end{align}
In particular, \eqref{e:inductive} is reduced to show
\begin{align}
&\supp (\chi_J (\cdot) \psi_J (\lambda \cdot)) \cap \xi_{m} (t_{m+1}, \supp (\chi_I (\cdot) \psi_I (\lambda\cdot)) = \emptyset\nonumber\\
&\mbox{for all $I=(m+1,n,f)$ and $J=(m,n',g)$}.\label{e:inductive2}
\end{align}
Consider now $x_{m+1}:= \xi_{m} (t_{m+1},{2} \pi \mu n)$ and choose $\b n$ such that $x_{m+1}\in \overline{Q}_{\b n} = Q({2} \pi \mu \b n, \pi\mu)$. We claim that 
$\xi_{m} (t_{m+1}, \supp (\chi_I))$ cannot intersect $\supp (\chi_J)$ if $|n_J - \b n|>1$, which follows from the fact that $\Phi_m (t, \cdot)$ is a diffeomorphism for every $t$ and the claim that 
\begin{equation}\label{e:inclusion}
\xi_{m} (t_{m+1}, \supp (\chi_I)) \subset \xi_{m} (t_{m+1}, Q ({2} \pi \mu n, {\textstyle{\frac{{9} \pi\mu}{8}}})) \subset Q (x_{m+1}, {\textstyle{\frac{{13}\pi \mu}{8}}})\, .
\end{equation}
The first inclusion is obvious because by definition $\supp \chi_I \subset Q ({2} \pi \mu n, {\textstyle{\frac{{9} \pi\mu}{8}}})$. As for the second inclusion, recall the estimates
(cf. \eqref{bf.Id})
\begin{align*}
\|\nabla \xi_{m} (t_{m+1}, \cdot)\|_0 &\leq \exp (\tau_q \|\nabla v_q\|_0)\\
\|\nabla \xi_{m} (t_{m+1}, \cdot) - \I\|_0 &\leq \tau_q\|\nabla v_q\|_0 \exp (\tau_q\|\nabla v_q\|_0)\leq \frac{1}{5}\, .
\end{align*}
Thus, for every $x\in Q ({2} \pi \mu n, {\textstyle{\frac{{9} \pi\mu }{8}}})$ we can estimate
\begin{align*}
\xi_{m} (t_{m+1}, x) -x_{m+1} &= \xi_{m} (t_{m+1}, x) - \xi_{m} (t_{m+1}, {2} \pi \mu n)\\
&= \int_0^1 \nabla \xi_{m} (t_{m+1}, \lambda x + (1-\lambda) {2} \pi \mu n) \cdot (x-{2} \pi \mu n)\, d\lambda\\
&=x - {2} \pi \mu n + \int_0^1 (\nabla \xi_{m} (t_{m+1}, \lambda x + (1-\lambda) {2} \pi \mu n) - \I)\cdot  (x-{2} \pi \mu n)\, d\lambda\\
&=: x - {2} \pi \mu n + E\, .
\end{align*}
Hence, in order to show the second inclusion in \eqref{e:inclusion} it suffices to estimate
\[
|E|\leq \|\nabla \xi_{m} (t_{m+1}, \cdot) - \I\|_0 |x - {2} \pi \mu_n|\leq \frac{{9} \sqrt{3}}{40} \pi\mu \leq \frac{\pi\mu }{{2} }\, .  
\]
Given the argument above, \eqref{e:transport_support_1} and \eqref{e:transport_support_2} imply that
\[
\supp (\chi_J (\cdot)) \cap \supp \xi_{m+1} (t_m, \supp (\chi_I))=\emptyset
\]
for every $I=(m+1,n,f)$ and $J=(m,n',f)$ with $|n'-\b n|>1$. Therefore, in order to show 
\eqref{e:inductive2}, we will focus on the following remaining cases: 
\begin{align}
&\supp (\psi_J (\lambda, \cdot)) \cap \xi_{m} (t_{m+1}, \chi_I (\cdot) \psi_I (\lambda\cdot))=\emptyset\nonumber\\
&\mbox{for all $I=(m+1,n,f)$ and $J=(m,n',g)$ with $|n'-\bar n|_\infty\leq 1$.} \label{e:inductive2.5}
\end{align}
In particular note that $\{n':|n'-\bar n|_\infty\leq 1\}$ consists of 27 points in the integer lattice $\mathbb Z^3$, containing exactly one element for each equivalence class in $\mathbb Z_3^3$.

In order to deal with these last 270 cases of indices for $J$ together with the $10$ cases of possibility for $I$, observe first that, inserting $s=t_m$ in \eqref{rel.phi.m2}, we have $\Phi_{m+1} (t_m, \cdot) = \xi_m (t_{m+1}, \cdot)$ and thus \eqref{e:inductive2} becomes in fact
\begin{equation}\label{e:inductive3}
\supp (\psi_J (\lambda, \cdot)) \cap \Phi_{m+1} (t_{m}, \chi_I (\cdot) \psi_I (\lambda\cdot))=\emptyset
\end{equation}
Introduce now the ``frozen flow'' $\Psi$ given by
\begin{align*}
\begin{cases}
\pa_t \Psi (t,x) = v (t,\Phi_{m+1} (t, {2} \pi\mu n ))\\
\Psi (t_{m+1}, x) = x.
\end{cases}
\end{align*}
Observe that $\Psi (t,x)$ translates $x$ by some vector depending on time and so
\begin{align}\label{rep.Psi}
\Psi (t,x) = x + \int_{t_{m+1}}^t v (s,\Phi_{m+1} (s,{2} \pi\mu n)) ds= x +u (t).
\end{align}
Moreover, by definition $\Psi (t_m, {2} \pi \mu n) = x_{m+1}$, which means that, upon introducing $\b x := x_{m+1} - {2} \pi \mu n$, 
\begin{equation}\label{e:frozen=shift}
\Psi (t_m,x ) = x + \b x\, .
\end{equation}
Observe next that $\Phi_{m+1} (t_m, {2} \pi n \mu) = \Psi (t_m,{2}  \pi n \mu)$. Hence for $x\in 
Q({2} \pi \mu n, \frac {{9} \pi \mu}{8})\supset \supp (\chi_I)$ we can estimate
\begin{equation}\label{frozen.flow}
\begin{split}
|\Phi_{m+1} (t_{m},x) - \Psi (t_{m}, x) |_\infty
&\leq \int_{t_m}^{t_{m+1}} |\pa_s\Phi_{m+1} (s,x)- \pa_s\Psi (s,x)|_\infty ds\\
&\leq \int_{t_m}^{t_{m+1}} |v (s,\Phi_{m+1} (s,x))- v (s,\Phi_{m+1} (s,{2} \pi\mu n )) |_\infty ds \\
&\leq  \tau\norm{\na v}_0 \norm{\na \Phi_{m+1}}_{C^0([t_m,t_{m+1}]\times \T^3)} |x-{2} \pi\mu n |_\infty\\
&\leq \frac{{9}\pi\mu  \tau }{4} \|\na v\|_0 \leq \frac {\eta}{{4} \la}. 
\end{split}
\end{equation}
In particular we conclude that 
\[
|\Phi_{m+1} (t_m, x) - (x+ \b x)| \leq  \frac {\eta}{{4} \la}\, .
\]
If we introduce $\bar x := \lambda \b x$, we conclude that
\[
 \Phi_{m+1} (t_m, \supp (\chi_I (\cdot)\psi_I (\lambda \cdot) \subset 
B (\supp (\psi_I (\lambda \cdot - \bar x)), \eta/({4} \lambda) )
\, .
\]
Hence, 
\eqref{e:inductive3} is satisfied if we have
\begin{equation}\label{e:inductive4} 
\supp (\psi_J) \cap B (\supp (\psi_I (\cdot - \bar x)), \eta/{4} ) =\emptyset
\end{equation} 
for the set 270 indices $J=(m,n', g)$ with $|\b n - n'|_\infty \leq 1$ and for the $10$ indices $I=(m,n,g)$. Observe now that, by the construction of the $\psi_J$'s, we know that:
\begin{itemize}
\item The map $\{J= (m,n', g): |n'-\b n|_\infty \leq 1, \}\ni J \mapsto g\in \mathcal{F}$ is a one-to-one map and
the map $\{I=(m+1,n,f)\}\ni I\to f\in \mathcal{F}$ is injective. 
\item We have that for each $g\in \mathcal{F}$ there is a point $\tilde{p}_g$ such that
\begin{equation}\label{e:support_10}
\supp (\psi_J) \subset B (l_{g,\tilde p_g}, \eta/10) 
\end{equation}
while for each $I= (m+1,n,f)$, if we let $p_f:= \bar{p}_f+\bar x$, then
\begin{equation}\label{e:support_11}
B  (\supp (\psi_I (\cdot - \bar x)), \eta/{4} ) \subset B (l_{f, p_f+z_{m+1,n}}, {7} \eta/{20} ).
\end{equation}
\item Finally
\begin{equation}\label{e:distant_lines}
\dis (l_{f,p_f}, l_{f',p_{f'}}) \geq 2 d_0 \qquad \dis (l_{g,\tilde p_g}, l_{g',\tilde p_{g'}}) \geq 2d_0 \qquad \forall f\neq f', g\neq g'\, .
\end{equation}
\end{itemize}
In particular, we are in the position to apply Lemma \ref{lem:no.int.line} and thus find a shift $z_{m+1,n}$ with $|z_{m+1,n}|\leq \frac{d_0}{4}$ such that
\begin{equation}\label{e:support_12}
\dis (l_{f,p_f+z_{m+1,n}}, l_{g, \tilde p_g}) \geq \eta \qquad \forall f,g\, .
\end{equation}
Clearly, \eqref{e:support_10}, \eqref{e:support_11} and \eqref{e:support_12} imply \eqref{e:inductive4} and thus completes the proof of the proposition.

\section{Estimates in the velocity correction}

The main point of this section is to get the estimates on the velocity correction. {In this section, we set $\norm{\cdot}_N = \norm{\cdot}_{C^0([0,T]+\tau_q; C^N(\T^3))}$.}

The following proposition provides the estimates for the perturbation $w$. 
\begin{prop}\label{p:velocity_correction_estimates} For $N=0,1,2$ and $s=0,1,2$, the following estimates hold for $w_o$, $w_c$, and $w=w_o+w_c$:
\begin{align}
& \tau_q^s \norm{D_{t,\ell}^s w_o}_{N}
{\lec_M} \la_{q+1}^{N} \de_{q+1}^\frac12 	\label{est.W}	\\
&\tau_q^s\norm{D_{t,\ell}^s w_c}_{N}
 {\lec_M} \la_{q+1}^{N}\frac { \de_{q+1}^\frac12}{\la_{q+1}\mu_q} 		\label{est.Wc} \\
&\tau_q^s\norm{D_{t,\ell}^s w}_{N}
 {\lec_M} \la_{q+1}^{N}\de_{q+1}^\frac12, \label{est.w}
\end{align}
where the implicit constants are independent of $s$, $N$, and $q$ in $w= w_{q+1}$. {Moreover, 
\begin{align}\label{est.w.indM}
\norm{w}_N \lec \la_{q+1}^N \de_{q+1}^\frac12,
\end{align}
where the implicit constant is additionally independent of $M$. }
\end{prop}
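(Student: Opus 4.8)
The plan is to reduce the three estimates to bounds on the scalar amplitudes in the Fourier representations \eqref{rep.W}, \eqref{def.Wc} and \eqref{def.w}, exploiting the fact that the fast phases $e^{i\la_{q+1}k\cdot\xi_I}$ are transported by $v_\ell$. The key observation is that each $\xi_I=\xi_m$ solves \eqref{def.bflow}, i.e. $D_{t,\ell}\xi_I=\pa_t\xi_I+v_\ell\cdot\na\xi_I=0$, so $D_{t,\ell}$ annihilates every function of $\xi_I$ alone; in particular $D_{t,\ell}(\chi_I(\xi_I))=0$ and $D_{t,\ell}(e^{i\la_{q+1}k\cdot\xi_I})=0$. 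Hence, in \eqref{rep.W}--\eqref{def.w}, $D_{t,\ell}$ acts only on the temporal cutoff $\theta_I(t)$ --- contributing a factor $\tau_q^{-1}$ per derivative, and killing the spatial dependence --- and on the weights $\gamma_I$, which via \eqref{Ga.ph} and \eqref{eq.Ga1} depend on $(t,x)$ only through $\na\xi_I$, $R_\ell$ and $\ph_\ell$.

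The core of the argument is a bound on the amplitudes in \eqref{coef.def} and \eqref{def.e}. Using Lemma \ref{lem:est.flow}, the mollification estimates \eqref{est.mR}--\eqref{est.mph}, the Fa\`a di Bruno formula for the compositions $\Ga_{f_I}(\cdots)$ defining the weights, and $|\tilde f_I|\ge\frac34$ on $\supp\theta_I$ (cf. \eqref{bf.Id}), one first shows $\norm{D_{t,\ell}^s\na^N(\de_{q+1}^{-\frac12}\gamma_I)}_0\lec_M\tau_q^{-s}\mu_q^{-N}$ for $s,N\le2$: here every $D_{t,\ell}$ falling on $R_\ell,\ph_\ell$ costs $\ell_t^{-1}\lec\tau_q^{-1}$ and one falling on $\na\xi_I$ costs $\la_q\de_q^{\frac12}\lec\tau_q^{-1}$ (both inequalities follow from \eqref{mu.tau} and the monotonicity of $q\mapsto\de_q^{\frac12}\la_q$), each spatial derivative costs at most $\mu_q^{-1}$, and the commutators $[D_{t,\ell},\na]=-(\na v_\ell)\cdot\na$ are of lower order. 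Combining this with $\norm{\na^N(\chi_I(\xi_I))}_0\lec_N\mu_q^{-N}$ (also from Lemma \ref{lem:est.flow}), $\norm{D_{t,\ell}^s\theta_I}_0\lec\tau_q^{-s}$, and the rapid decay \eqref{est.k} of $\dot b_{I,k}$ (which depends on $I$ only through $f_I\in\mathcal F$), one obtains, uniformly in $I$ with $m_I=m$,
\[
\norm{D_{t,\ell}^s\na^N B_{I,k}}_0+\norm{D_{t,\ell}^s\na^N e_{m,k}}_0\lec_M\tau_q^{-s}\mu_q^{-N}c_k,\qquad \sum_k|k|^2c_k\lec1,
\]
where for $e_{m,k}$ one also uses $\mu_q\ell^{-1}\le1$, so that the prefactor $\mu_q$ in \eqref{def.e} compensates the derivative defining it. Since $\{\theta_m^6\}$ and $\{\chi_n^6\}$ are partitions of unity with finite overlap and each $\mathcal F^{[n]}$ is finite, for every $(t,x)$ only $O(1)$ indices with $m_I=m$ contribute to $b_{m,k}$, and the same bound holds for $b_{m,k}$.

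Finally I would differentiate \eqref{rep.W}, \eqref{def.Wc} and \eqref{def.w} term by term. Applying $\na^ND_{t,\ell}^s$ to $\de_{q+1}^{\frac12}b_{m,k}e^{i\la_{q+1}k\cdot\xi_m}$: every $D_{t,\ell}$ necessarily lands on $b_{m,k}$ (cost $\tau_q^{-1}$), every $\na$ lands either on $b_{m,k}$ (cost $\mu_q^{-1}\lec\la_{q+1}$) or on the phase (cost $\la_{q+1}|k|\norm{\na\xi_m}_0\lec\la_{q+1}|k|$, the higher derivatives of $\xi_m$ being of lower order since $\ell^{-1}\ll\la_{q+1}$), and $[D_{t,\ell},\na]$ only lowers orders; summing over $k$ via $\sum_k|k|^2c_k\lec1$ and over $(m,n,f)$ via the finite overlap gives \eqref{est.W}. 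The identical computation on \eqref{def.Wc}, carrying the extra factor $\de_{q+1}^{\frac12}/(\la_{q+1}\mu_q)$, gives \eqref{est.Wc}, and \eqref{est.w} follows from the triangle inequality and $\mu_q^{-1}\lec\la_{q+1}$. For the $M$-free bound \eqref{est.w.indM} one repeats the $s=0$ computation using only the $M$-independent flow bounds \eqref{est.flow.indM}; the spatial derivatives of $R_\ell,\ph_\ell$ and of the weights are then $M$-independent as well, because the $M$ appearing in $\tau_q$ (hence in $\ell,\ell_t,\mu_q$) cancels the one in $\norm{\na v_q}_0\le M\la_q\de_q^{\frac12}$, keeping $\norm{\na^{N+1}\Phi_m}_0\lec_N\ell^{-N}$ free of $M$.

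The main obstacle is organizational rather than conceptual: one must carry out the Fa\`a di Bruno bookkeeping for the composed weights $\gamma_I$ (which, for $I\in\mathscr I_R$, feed on the already-constructed $\ph$-weights $\gamma_J$ through \eqref{eq.Ga1}) and keep track of the $[D_{t,\ell},\na]$ commutators, checking at each step that every slow factor ($\ell^{-1}$, $\mu_q^{-1}$, $\ell_t^{-1}$, $\la_q\de_q^{\frac12}$) is dominated by the fast scales $\la_{q+1}$ in space and $\tau_q^{-1}$ in time --- which is precisely where the choices \eqref{mu.tau} of $\mu_q,\tau_q$ and the definitions of $\ell,\ell_t$ get used.
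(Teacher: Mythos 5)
Your argument is essentially the paper's own, just with Lemma~\ref{lem:est.coe} inlined rather than invoked: the paper estimates the coefficients $b_{m,k}$, $e_{m,k}$ in that lemma and then reads off \eqref{est.W}--\eqref{est.w} from the Fourier representations \eqref{rep.W}, \eqref{def.Wc}, \eqref{def.w} together with $D_{t,\ell}\xi_I=0$, which is exactly what you do. One small inaccuracy in your last paragraph: the parameter $M$ enters only through $\tau_q$ (cf.\ \eqref{mu.tau}), not through $\ell$, $\ell_t$ or $\mu_q$, which are $M$-free by definition; the cancellation that makes \eqref{est.w.indM} $M$-independent is that $\tau_q\propto M^{-1}$ balances $\|\nabla v_q\|_0\le M\lambda_q\delta_q^{1/2}$ in the flow bound \eqref{est.flow.indM}, exactly as the paper records. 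This does not affect the validity of the conclusion.
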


The latter estimates are in fact a simple consequence of estimates on the functions $b,c,d$ and $e$ defined in \eqref{coef.def} and \eqref{def.e}

\begin{lem}\label{lem:est.coe} For any $N\geq 0$ and $s=0,1,2$, the coefficients $b_{m,k}$, $c_{m,k}$, $d_{m,k}$, and $e_{m,k}$ defined by \eqref{coef.def} and \eqref{def.e} satisfy the following,
\begin{align}
\ta_q^s\norm{D_{t,\ell}^s b_{m,k}}_N
&{\lec_{N,M}} \ \mu_q^{-N}\max_I |\dot{b}_{I,k}| \label{est.b}\\
\ta_q^s\norm{D_{t,\ell}^s c_{m,k}}_N
&{\lec_{N,M}} \ \mu_q^{-N}\max_I |\dot{c}_{I,k}| \label{est.c}\\
\ta_q^s\norm{D_{t,\ell}^s d_{m,k}}_N
&{\lec_{N,M}}\  \mu_q^{-N}\max_I |\dot{d}_{I,k}| \label{est.d}\\
\ta_q^s\norm{D_{t,\ell}^s e_{m,k}}_N
&{\lec_{N,M}}\  \mu_q^{-N}\max_I |\dot{b}_{I,k}|.\label{est.e}
\end{align}
{Moreover, for $N=0,1,2$,
\begin{align}\label{est.be.indM}
\norm{b_{m,k}}_N + \norm{e_{m,k}}_N
\lec \mu_q^{-N} \max_I |\dot{b}_{I,k}|,
\end{align}
where the implicit constant is independent of $M$ and $N$.}
\end{lem}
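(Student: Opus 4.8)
The plan is to reduce each of the four estimates to a bound on a single generic summand of \eqref{coef.def}, respectively \eqref{def.e}, and then to control that summand factor by factor. For fixed $(t,x)$ only $O(1)$ of the indices $I$ with $m_I=m$ contribute, uniformly in $(t,x)$: within each residue class $[n]$ there are only $|\mathcal{F}^{[n]}|=10$ directions, while the cutoffs $\chi_n^6(\mu_q^{-1}\cdot)$ have the bounded-overlap property. Hence it suffices to prove, uniformly in $I$, estimates of the shape $\ta_q^s\norm{D_{t,\ell}^s(B_{I,k}\td f_I)}_N\lec_{N,M}\mu_q^{-N}|\dot b_{I,k}|$ and the obvious analogues for the summands of $c_{m,k}$, $d_{m,k}$, $e_{m,k}$; since $\dot b_{I,k}$, $\dot c_{I,k}$, $\dot d_{I,k}$ are scalar constants they factor out and produce $\max_I|\dot b_{I,k}|$ (resp.\ $\max_I|\dot c_{I,k}|$, $\max_I|\dot d_{I,k}|$) in \eqref{est.b}--\eqref{est.e}.

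The building blocks split into three types. First, $\th_I$ depends on $t$ only, so $D_{t,\ell}\th_I=\pa_t\th_I$ and $\norm{\pa_t^s\th_I}_0\lec\ta_q^{-s}$. Second --- the key point --- since $\xi_I$ solves $\pa_t\xi_I+v_\ell\cdot\na\xi_I=0$, for any time-independent smooth $F$ one has $D_{t,\ell}(F(\xi_I))=0$; thus the transported cutoff $\chi_I(\xi_I)$ is \emph{annihilated} by $D_{t,\ell}$, while the chain rule together with Lemma \ref{lem:est.flow} (using $\norm{\na^j\xi_I}_0\lec_j\ell^{1-j}$ and $\mu_q^{-1}\gg\ell^{-1}$) gives $\norm{\chi_I(\xi_I)}_N\lec\mu_q^{-N}$. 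Third, the Jacobian factors $\td f_I=(\na\xi_I)^{-1}f_I$ and $(\na\xi_I)^{\top}$ are controlled directly by Lemma \ref{lem:est.flow}, which gives $\norm{D_{t,\ell}^s(\na\xi_I)^{\pm1}}_N\lec_{N,M}\ell^{-N}(\la_q\de_q^{\frac12})^s$; here one uses the scale comparisons $\ell^{-1}\ll\mu_q^{-1}$, $\la_q\lec\ell^{-1}$ and $\la_q\de_q^{\frac12}\lec\ta_q^{-1}$ (the last one because $\la_q^{\frac12}\de_q^{\frac14}=\la_q^{(1-\al)/2}$ is monotone in $q$).

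The crux is the weight estimate $\norm{D_{t,\ell}^s(\de_{q+1}^{-\frac12}\ga_I)}_N\lec_{N,M}\ta_q^{-s}\mu_q^{-N}$, together with (by the Leibniz rule) the same bound for $\de_{q+1}^{-1}\ga_I^2$ and $\de_{q+1}^{-\frac32}\ga_I^3$. For this I would use the explicit formulas of Section \ref{ss:weights}: for $I\in\mathscr{I}_\ph$, $\de_{q+1}^{-\frac12}\ga_I=\la_q^{-\ga}|\td f_I|^{-\frac23}\Ga_{f_I}^{1/3}\big(-2\la_q^{3\ga}\de_{q+1}^{-\frac32}(\na\xi_I)\ph_\ell\big)$, while for $I\in\mathscr{I}_{m,n,R}$, $\de_{q+1}^{-\frac12}\ga_I=\Ga_{f_I}(\I-\de_{q+1}^{-1}\mathcal{M}_I)$, where the $\Ga_{f_I}$ are the finitely many fixed smooth functions produced by Lemmas \ref{lem:geo1}--\ref{lem:geo2}. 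By the chain rule it then remains (a) to know that the arguments lie in the domains of the $\Ga_{f_I}$ --- this is exactly the smallness of $\norm{\de_{q+1}^{-1}\mathcal{M}_I}_{C^0(\supp(\th_I)\times\R^3)}$ and the $C^0$ bound on $\la_q^{3\ga}\de_{q+1}^{-\frac32}(\na\xi_I)\ph_\ell$ already established in Section \ref{ss:weights}, together with $|\td f_I|\geq\frac34$ on $\supp(\th_I)$ --- and (b) to estimate $D_{t,\ell}^s\na^N$ of the arguments. For (b) one combines Lemma \ref{lem:est.flow}, the mollified-error estimates \eqref{est.mR}--\eqref{est.mph}, and (inside $\mathcal{M}_I$, which also involves $\mathscr{I}_\ph$-weights) the already-proven $\ph$-weight bounds, and then checks that every resulting scale is $\lec\mu_q^{-N}$ (using $\ell^{-1}\ll\mu_q^{-1}$, $\la_q\lec\ell^{-1}$) and $\lec\ta_q^{-s}$ (using $\la_q\de_q^{\frac12}\lec\ta_q^{-1}$ and $\ell_t^{-1}\lec\ta_q^{-1}$, since $\ta_q^{-1}/\ell_t^{-1}\gtrsim\la_q^{3\ga}$); accordingly the $\mathscr{I}_\ph$-weights are estimated first and fed into the $\mathscr{I}_R$-weights.

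Finally I would assemble everything by Leibniz. A generic summand of $b_{m,k}$ reads $\th_I\,\chi_I(\xi_I)\,(\de_{q+1}^{-\frac12}\ga_I)\,\dot b_{I,k}\,(\na\xi_I)^{-1}f_I$; each spatial derivative costs at worst $\mu_q^{-1}$ (landing on $\chi_I(\xi_I)$) or $\ell^{-1}$ (on the weight or the Jacobian), so the spatial cost is $\lec\mu_q^{-N}$, while each advective derivative either annihilates $\chi_I(\xi_I)$ or costs $\lec\ta_q^{-1}$; this gives \eqref{est.b}, and \eqref{est.c}, \eqref{est.d} follow identically using also $|\td f_I|\lec1$ and the weight bounds for squares and cubes. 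For \eqref{est.e} the representation \eqref{def.e} shows that the extra prefactor $\mu_q$ precisely compensates the $\mu_q^{-1}$ from the single $\na$ already present inside, and the remaining derivatives and the factor $(\na\xi_I)^{\top}$ are treated as before. For the $M$-independent refinement \eqref{est.be.indM} --- which only concerns $s=0$, $N\le2$ --- one observes that $M$ enters all the estimates above solely through $\norm{\na v_\ell}_0\lec M\la_q\de_q^{\frac12}$, a quantity that does not occur when $s=0$: the $C^N$ flow bounds $\norm{\na\xi_I}_N+\norm{(\na\xi_I)^{-1}}_N\lec_N\ell^{-N}$ are $M$-free (eqn \eqref{est.flow.indM}), so are the $C^N$ bounds on $\ph_\ell$ and $R_\ell$, and these propagate through the chain rule to make the $C^N$ weight bounds $M$-free. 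The genuinely delicate point is step (b) of the weight estimate: chasing the $D_{t,\ell}^s\na^N$ bounds on the arguments of the geometric-lemma functions through the many interacting scales $\ell$, $\ell_t$, $\mu_q$, $\ta_q$, $\la_q$, $\la_{q+1}$, $\de_q$, $\de_{q+1}$ while verifying domain membership; the higher-order Leibniz bookkeeping elsewhere is routine in comparison.
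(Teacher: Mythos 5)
Your proposal follows the same route as the paper: first the elementary bounds $\|\partial_t^s\th_I\|_0\lec\tau_q^{-s}$, $D_{t,\ell}[\chi_I(\xi_I)]=0$ and $\|\chi_I(\xi_I)\|_N\lec\mu_q^{-N}$; then the $\varphi$-weight bound $\|D_{t,\ell}^s(\de_{q+1}^{-1/2}\ga_I)\|_N\lec_{N,M}\tau_q^{-s}\mu_q^{-N}$ via the chain rule applied to $\Gamma_{f_I}^{1/3}(-2\la_q^{3\ga}\de_{q+1}^{-3/2}(\na\xi_I)\ph_\ell)$ and $|\td f_I|^{-2/3}$, using Lemma \ref{lem:est.flow} and \eqref{est.mph}; then the $R$-weight bound by first estimating $\mathcal{M}_I$ (which consumes the $\varphi$-weight bound), and finally Leibniz assembly. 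Your identification of where the $M$-dependence enters (only through $\|\na v_\ell\|_0$, hence only for $s\geq 1$) and the role of \eqref{est.flow.indM} is also the paper's reasoning for \eqref{est.be.indM}, and your observation that the $\mu_q$ prefactor in $e_{m,k}$ cancels the extra $\mu_q^{-1}$ from the $\na$ is correct. No gaps; this is the paper's proof.
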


\begin{rem}\label{r:Fourier_coefficients}
Observe that, by the definition of the respective coefficients, the moduli $|\dot{b}_{I,k}|$,  $|\dot{c}_{I,k}|$ and $|\dot{d}_{I,k}|$ just depend on the third component of the index $I=(m,n,f)$, since they involve the functions $\psi_f$, but not the ``shifts'' $z_{m,n}$. In particular, the set of their possible values is a finite number, independent of  $q$ and just depending on the collection of the family of functions $\psi_f$ and on the frequency $k$.
\end{rem}

\begin{proof}
First of all, it is easy to see that for any $s\geq 0$ and $N\geq 0$, 
\begin{equation}\begin{split}
\label{est.th.chi}
\norm{D_{t,\ell}^s\th_I}_{C^0(\R)} 
&= \norm{\pa_t^s\th_I}_{C^0(\R )} \lec_s \tau_q^{-s}, \\ 
\norm{\chi_I(\xi_I)}_{C^0(\cal{I}_m; C^N(\R^3))} 
&\lec_N \mu_q^{-N}, \  D_{t,\ell}^s [\chi_I(\xi_I)] = 0,
\end{split}\end{equation}
where $\cal{I}_m = [t_m - \frac 12\tau_q, t_m + \frac 32 \tau_q]$. Indeed, the estimate of $\chi_I(\xi_I)$ follows from \eqref{est.flow2}, Lemma \ref{lem:est.com}, and $\ell^{-1}\leq \mu_q^{-1}$. We remark that the implicit constants are independent of $I$. 

Recall that when $f \in \mathscr{I}_\ph$, 
\[
\ga_I=\frac{\la_q^{-\ga}\de_{q+1}^\frac12\Ga_I}{|\tilde{f}_I|^{\frac{2}{3}}} =
\frac{\la_q^{-\ga}\de_{q+1}^\frac12\Ga_I}{|\nabla \xi_I^{-1} f_I|^{\frac{2}{3}}}
\] 
for 
\[
\Ga_I(x) 
=\Gamma_{f_I}^{\frac{1}{3}}
(-2 \la_q^{3\ga}\de_{q+1}^{-\frac 32}(\na\xi_I)\ph_\ell)
\]
where $\Gamma_f$'s are the functions given by Lemma \ref{lem:geo2}.  
First it is easy to see that \eqref{est.flow3} implies 
\begin{align}\label{est4}
\norm{D_{t,\ell}^s [(\na\xi_I)^{-1}f_I]}_{C^0(\cal{I}_m; C^N(\R^3))}
{\lec_{N,M}} (\la_q \de_q^\frac 12)^s \ell^{-N}.
\end{align}
Also, using \eqref{est.flow2} and \eqref{est.mph},  
\begin{align}\label{est5}
\norm{D_{t,\ell}^s(2\la_q^{3\ga} \de_{q+1}^{-\frac32}(\na \xi_I)\ph_\ell)}_{C^0(\cal{I}_m; C^N(\R^3))}
{\lec_{N,M}} (\ell_t^{-s} + (\la_q \de_q^\frac12)^s) \ell^{-N}\lec \ta_q^{-s}\ell^{-N}.
\end{align}
Next, for any smooth functions $\Ga=\Ga(x)$ and $g=g(t,x)$ we have
\begin{equation}\begin{split}\label{formula0}
&\norm{D_{t,\ell} \Ga(g)}_{C^N_x} \lec \sum_{N_1+N_2=N} \norm{D_{t,\ell} g}_{C^{N_1}_x} \norm{(\na\Ga)(g)}_{C^{N_2}_x},\\
&\norm{D_{t,\ell}^2 \Ga(g)}_{C^N_x} 
\lec \sum_{N_1+N_2=N} \norm{D_{t,\ell}^2 g}_{C^{N_1}_x} \norm{(\na\Ga)(g)}_{C^{N_2}_x}
+ \norm{D_{t,\ell} g\otimes D_{t,\ell} g}_{C^{N_1}_x} \norm{(\na^2\Ga)(g)}_{C^{N_2}_x},
\end{split}\end{equation}
and therefore we obtain by Lemma \ref{est.com}
\begin{align*}
&\norm{D_{t,\ell}^s |(\na\xi_I)^{-1}f_I|^{-\frac 23}}_{C^0(\cal{I}_m; C^N(\R^3))}
{\lec_{N,M}} (\la_q \de_q^\frac 12)^s \ell^{-N}\\
&\norm{D_{t,\ell}^s[ (\Ga_j^\ph)^\frac13(-2\la_q^{3\ga} \de_{q+1}^{-\frac32}(\na \xi_I)\ph_\ell)]}_{C^0(\cal{I}_m; C^N(\R^3))}
{\lec_{N,M}} \ta_q^{-s} \ell^{-N}.  
\end{align*}
Here, we used \eqref{est4} and \eqref{est5} which we can apply thanks to the fact that $|(\na \xi_I)^{-1} f_I| \geq  \frac 34$ and $\Ga_{f_I}\geq 3$ (according to our choice of $N_0$ in applying Lemma \ref{lem:geo2}). 
Also the implicit constant in the second inequality can be chosen to be independent of $I$ because of the finite cardinality of the functions $f_I$. {On the other hand, in the case of $s=0$ and $N=0,1,2$, because of \eqref{est.ph} and \eqref{est.flow.indM}, the implicit constants in both inequalities can be chosen to be independent of $N$ and $M$.}
Therefore, it follows that, when $I \in \mathscr{I}_\ph$, 
\begin{align}\label{est.Ga.ph}
\norm{D_{t,\ell}^s \de_{q+1}^{-\frac12}\ga_I}_N {\lec_{N,M}} \ta_q^{-s} \ell^{-N}.
\end{align}
{In particular, for $N=0,1,2$,
\begin{align*}
\norm{\de_{q+1}^{-\frac12}\ga_I}_N \lec \ell^{-N}.
\end{align*}}

On the other hand, when $I\in \mathscr{I}_R$, recall that $\de_{q+1}^{-\frac 12} \ga_I = \Ga_I = \Ga_{f_I} (\I-\de_{q+1}^{-1}\mathcal{M}_I)$ for a finite collection of smooth functions $f_I$ chosen through Lemma \ref{lem:geo1}. First we obtain the estimate for $\mathcal{M}_I$, 
\begin{align*}
&\norm{\mathcal{M}_I}_{C^0(\cal{I}_m; C^N(\R^3))} \\
&\lec_N\ \de_{q+1}  \norm{(\na \xi_I)(\na \xi_I)^{\top} - \I}_{C^0(\cal{I}_m; C^N(\R^3))}  \\
&\ + \sum_{N_1+N_2+N_3=N}\norm{\na \xi_I}_{C^0(\cal{I}_m; C^{N_1}_x)} \norm{R_\ell}_{N_2} \norm{\na \xi_I}_{C^0(\cal{I}_m; C^{N_3}_x)}\\
&\ + \sum_{N_1+N_2+N_3+N_4=N} \sum_{{J}: f\in \cF_{{J}, \ph}} \norm{\na \xi_I}_{C^0(\cal{I}_m; C^{N_1}_x)} \norm{\chi_{J}^2(\xi_{J})}_{C^0(\cal{I}_m; C^{N_2}_x)} \norm{\ga_{J}^2}_{C^0(\cal{I}_m; C^{N_3}_x)}  \norm{\na \xi_I}_{C^0(\cal{I}_m; C^{N_4}_x)}\\
&\lec \de_{q+1} \mu_q^{-N}, 
\end{align*}
using \eqref{est.flow1}, \eqref{est.flow2}, \eqref{est.mR}, \eqref{est.th.chi}, and \eqref{est.Ga.ph}, 
Similarly, we have 
\begin{align*}
\norm{D_{t,\ell}^s \mathcal{M}_I}_N \lec_{N,M} \de_{q+1}\ta_q^{-s} \mu_q^{-N},
\end{align*}
{but $\norm{ \mathcal{M}_I}_N \lec \de_{q+1} \mu_q^{-N}$ for $N=0,1,2$.}
Then, \eqref{formula0} and Lemma \ref{lem:est.com} imply that when $f_I \in \cF_{I, R}$, for $s=0,1,2$ and $N\geq 0$, 
\begin{align}\label{est.Ga.R}
\norm{D_{t,\ell}^s \de_{q+1}^{-\frac 12} \ga_I}_N = \norm{D_{t,\ell}^s(\Ga_{f_I} (\I-\de_{q+1}^{-1}\mathcal{M}_I))}_N
\lec_{N,M} \ta_q^{-s} \mu_q^{-N}.
\end{align}
{In particular, for $N=0,1,2$, the implicit constant can be chosen to be independent of $M$ and $N$;
\begin{align*}
\norm{\de_{q+1}^{-\frac 12} \ga_I}_N 
\lec  \mu_q^{-N}.
\end{align*}}
Finally, recall the definition of $b_{m,k}$, $c_{m,k}$, $d_{m,k}$, and $e_{m,k}$. Then, the estimates \eqref{est.b}-\eqref{est.be.indM} follows from \eqref{est.th.chi}, \eqref{est.Ga.ph}, and \eqref{est.Ga.R}.
\end{proof}

\begin{proof}[Proof of Proposition \ref{p:velocity_correction_estimates}]
Using \eqref{est.b}, \eqref{est.e}, \eqref{est.flow2}, and \eqref{est.k}, we easily have the estimates $\la_{q+1}^{-N}\norm{w_o}_{N} \lec_{N} \de_{q+1}^\frac12$ and  $\la_{q+1}^{-N}\norm{w_c}_N \lec_{N} (\la_{q+1}\mu_q)^{-1}\de_{q+1}^\frac12$, recalling 
Remark \ref{r:Fourier_coefficients}. On the other hand, we observe that $D_{t,\ell} e^{i\la_{q+1}k\cdot \xi_I} = 0$ because of $D_{t,\ell} \xi_I =0$. Hence the remaining inequalities in \eqref{est.W} and \eqref{est.Wc} are obtained in a similar fashion. Finally, \eqref{est.w} follows from \eqref{est.W} and \eqref{est.Wc}. Note that all estimates used in the proof have implicit constants independent of $q$. Moreover, the finite cardinalities of the range of $N$ and $s$ make it possible to choose the implicit constants in \eqref{est.W}, \eqref{est.Wc}, and \eqref{est.w} independent of $N$ and $s$ too. {Furthermore, when $s=0$, we can also make the implicit constants independent of $M$.}
\end{proof}

\section{A microlocal lemma} 

We will need in the sequel a suitable extension of \cite[Lemma 4.1]{IsVi2015}, where we will use the notation
\[
\crF[f] (k) = \dint_{\T^3} f(x) e^{-ix \cdot k} dx,\quad
f (x) = \sum_{k\in \Z^3}\crF[f](k)   e^{ik \cdot x} 
\]   
for the Fourier series of periodic functions.

\begin{lem}[Microlocal Lemma]\label{mic} Let $T$ be a Fourier multiplier defined on $C^\infty(\T^3)$ by
\[
\crF[Th](k) = \mathfrak{m}(k)\crF[h](k), \quad \forall k\in \Z^3
\]
for some $m$ which has an extension in $\cal{S}(\R^3)$ (which for convenience we keep denoting by $m$). Then, for any $n_0\in \N$, $\la>0$, and any scalar functions $a$ and $\xi$ in $C^\infty(\T^3)$, 
$T(a e^{i\la \xi})$ can be decomposed as
\begin{equation*}
\begin{split}
T(a e^{i\la \xi}) 
=&\   \left[a \mathfrak{m}(\la\na \xi)
+\sum_{k=1}^{2{n_0}} C_{k}^\la(\xi,a): (\na^k \mathfrak{m})(\la \na\xi)
+\e_{n_0}(\xi,a)\right]e^{i\la \xi}
\end{split}
\end{equation*}
for some tensor-valued coefficient $C_{k}^\la(\xi,a)$ and a remainder $\e_{n_0}(\xi, a)$ which is specified in the following formula: 
\begin{equation}\label{def.eN}\begin{split}
\e_{n_0}(\xi, a)(x)
&= 
\sum_{\substack{n_1+n_2\\=n_0+1}} \frac{(-1)^{n_1}c_{n_1,n_2}}{n_0!} \\
&\cdot\int_0^1\int_{\R^3} \widecheck{\mathfrak{m}}(y) e^{-i\la \na\xi(x)\cdot y} ((y\cdot\na)^{n_1}a)(x-ry) e^{i\la Z[\xi](r)}\be_{n_2} [\xi](r) (1-r)^{n_0} dydr,
\end{split}
\end{equation}
where $c_{n_1,n_2}$ is a constant depending only on $n_1$ and $n_2$, and the function $\be_n[\xi]$ is
\begin{align*}
\be_{n} [\xi](r)
&=B_{n} (i\la Z'(r), i\la Z''(r), \cdots, i\la Z^{(n)}(r)),\\
Z{[\xi]}(r) &=Z{[\xi]}_{x,y}(r) = r\int_0^1(1-s) (y\cdot\na )^2 \xi(x-rsy) ds,
\end{align*}
with $B_n$ denoting the $n$th complete exponential Bell polynomial (cf. \eqref{e:Bell} for its definition).
\end{lem}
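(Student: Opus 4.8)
The plan is to prove the Microlocal Lemma by a repeated Taylor expansion in the ``phase variable'' $y$ of the convolution kernel $\widecheck{\mathfrak{m}}$, closely following the strategy of \cite[Lemma 4.1]{IsVi2015} but pushing the expansion to arbitrary order $n_0$. First I would write, using the Poisson summation formula (legitimate since $\mathfrak{m}\in\mathcal{S}(\R^3)$), the identity
\[
T(ae^{i\la\xi})(x) = \int_{\R^3} \widecheck{\mathfrak{m}}(y)\, a(x-y)\, e^{i\la \xi(x-y)}\, dy\, .
\]
The key move is to Taylor-expand the phase $\xi(x-y)$ around $y=0$ to second order with integral remainder:
\[
\xi(x-y) = \xi(x) - \na\xi(x)\cdot y + Z[\xi]_{x,y}(1)\, ,
\]
where $Z[\xi]$ is exactly the quantity in the statement (the second-order remainder). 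Factoring out $e^{i\la\xi(x)}e^{-i\la\na\xi(x)\cdot y}$, the integral becomes $e^{i\la\xi(x)}\int \widecheck{\mathfrak{m}}(y) e^{-i\la\na\xi(x)\cdot y} a(x-y) e^{i\la Z[\xi]_{x,y}(1)}\,dy$. The factor $e^{-i\la\na\xi(x)\cdot y}$ is what produces derivatives of $\mathfrak{m}$ evaluated at $\la\na\xi(x)$: indeed $\int \widecheck{\mathfrak{m}}(y)e^{-i\la\na\xi\cdot y} g(y)\,dy$ is a superposition of derivatives of $\mathfrak{m}$ at $\la\na\xi$ when $g$ is expanded in a Taylor polynomial in $y$.

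Next I would introduce the one-parameter family: replace $a(x-y)e^{i\la Z[\xi]_{x,y}(1)}$ by the function $r\mapsto G(r) := a(x-ry) e^{i\la Z[\xi]_{x,y}(r)}$ and Taylor-expand $G$ at $r=0$ to order $n_0$ with integral remainder, $G(1) = \sum_{j=0}^{n_0}\frac{G^{(j)}(0)}{j!} + \frac{1}{n_0!}\int_0^1 (1-r)^{n_0} G^{(n_0+1)}(r)\,dr$. Each derivative $G^{(j)}(0)$ is, by the Leibniz rule and Fa\`a di Bruno's formula applied to $e^{i\la Z[\xi]}$, a polynomial in $y$ (with coefficients involving $\la$ and derivatives of $\xi$ and $a$ at $x$) times $e^{i\la Z[\xi]_{x,y}(0)} = 1$ (since $Z(0)=0$); crucially the Bell polynomials $B_n$ appear here from differentiating the exponential of $Z$. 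Plugging these polynomial-in-$y$ terms back into $\int \widecheck{\mathfrak{m}}(y) e^{-i\la\na\xi(x)\cdot y} (\,\cdot\,)\,dy$ and using $\int \widecheck{\mathfrak{m}}(y) y^\beta e^{-i\eta\cdot y}\,dy = (i\partial_\eta)^{\beta}\mathfrak{m}(\eta)$ (up to signs), the degree-zero term gives $a(x)\mathfrak{m}(\la\na\xi(x))$, and the terms of $y$-degree $k$ for $1\le k\le 2n_0$ assemble into $C_k^\la(\xi,a):(\na^k\mathfrak{m})(\la\na\xi)$ with tensor coefficients $C_k^\la$ built from $\na\xi,\dots,\na^{?}\xi$, $a,\dots,\na^{n_0}a$ and powers of $\la$. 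The leftover integral remainder, after again expanding $G^{(n_0+1)}(r)$ via Leibniz/Fa\`a di Bruno, produces precisely the stated formula \eqref{def.eN} for $\e_{n_0}(\xi,a)$, with the sum over $n_1+n_2 = n_0+1$ recording how many of the $n_0+1$ derivatives hit $a$ (giving $(y\cdot\na)^{n_1}a$) versus the exponential $e^{i\la Z}$ (giving $\be_{n_2}[\xi]$, a Bell polynomial in the derivatives of $\la Z$).

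The main obstacle is purely bookkeeping: tracking the combinatorics of repeatedly differentiating the composite function $e^{i\la Z[\xi]_{x,y}(r)}$ in $r$ (where $Z$ itself is an integral of second derivatives of $\xi$ along the segment), identifying the constants $c_{n_1,n_2}$, and verifying that all the ``finite'' terms really do collapse into the advertised form $\sum_{k=1}^{2n_0} C_k^\la(\xi,a):(\na^k\mathfrak{m})(\la\na\xi)$ rather than producing spurious extra structure — in particular checking that the $y$-degree of every finite-order term is at most $2n_0$ (each of the $n_0$ Taylor steps contributes at most two powers of $y$, one from $(y\cdot\na)a$ and one from the second-order remainder $Z$). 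The analytic content is light: everything is a finite sum of absolutely convergent integrals because $\widecheck{\mathfrak{m}}\in\mathcal{S}$ decays faster than any polynomial and $a,\xi$ are smooth and periodic. I would present the argument as: (i) Poisson summation to get the kernel representation; (ii) the second-order phase split producing $e^{-i\la\na\xi\cdot y}$ and $Z[\xi]$; (iii) the order-$n_0$ Taylor expansion in the auxiliary variable $r$ with Fa\`a di Bruno / Bell-polynomial identification of the derivatives; (iv) resummation of the finite terms into the $C_k^\la$ via the identity $\int \widecheck{\mathfrak{m}}(y)y^\beta e^{-i\eta\cdot y}\,dy = (i\na)^\beta\mathfrak{m}(\eta)$; and (v) reading off the remainder $\e_{n_0}$ directly as the integral form of the Taylor remainder, which matches \eqref{def.eN} verbatim.
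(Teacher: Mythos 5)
Your proposal is correct and follows essentially the same route as the paper's own proof: Poisson summation, the second-order phase split producing $e^{-i\la\na\xi\cdot y}$ and $Z[\xi]$, the order-$n_0$ Taylor expansion in the auxiliary variable $r$ of the function $a(x-ry)e^{i\la Z_{x,y}(r)}$ (called $H_{x,y}$ in the paper, $G$ in your write-up), Fa\`a di Bruno/Bell-polynomial identification of $\partial_r^n e^{i\la Z(r)}$, and resummation of the polynomial-in-$y$ pieces into derivatives of $\mathfrak{m}$ at $\la\na\xi$. The only cosmetic difference is that the paper peels off the $j=0$ Taylor term (which gives $a\,\mathfrak{m}(\la\na\xi)$, using $Z(0)=0$) before expanding $H(1)-H(0)$, whereas you include it in the sum; the content is identical.
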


Before coming to its proof, we collect an important consequence on the operator $\mathcal{R}$.

\begin{cor}\label{cor.mic2} Let $N=0,1,2$ and $F = \sum_{k\in \Z^3\setminus\{0\}}\sum_{m\in \Z} a_{m,k}e^{i\la_{q+1}k\cdot\xi_m}$. Assume that a function $a_{m,k}$ fullfills the following requirements. 
\begin{enumerate}[(i)]
\item The support of $a_{m,k}$ satisfies $\supp(a_{m,k}) \subset (t_m - \frac 12\tau_q, t_m + \frac 32\tau_q)\times \R^3$. In particular, 
for $m$ and $m'$ neither same nor adjacent, we have
\begin{equation}\label{dis.amk}
\supp(a_{m,k}) \cap \supp(a_{m',k'}) = \emptyset, \quad \forall k, k' \in \Z^3\setminus\{0\}.
\end{equation}
\item For any $j\geq 0$ and $(m,k)\in \Z\times \Z^3$,  
\[
\norm{a_{m,k}}_j+ (\la_{q+1}\de_{q+1}^\frac 12)^{-1}\norm{D_{t,\ell} a_{m,k}}_j\lec_j  \mu_q^{-j} |\dot{a}_k|, \quad
\sum_k |k|^{n_0+2} |\dot{a}_k| \leq  a_F, 
\]
for some $a_F>0$, where $ n_0 = {\ceil{\frac{2b(2+\al)}{(b-1)(1-\al)}}}$ and {$\norm{\cdot}_{j} = \norm{\cdot}_{C(\mathcal{I}; C^j(\T^3))}$ on some time interval $\mathcal{I}\subset \R$. }
\end{enumerate}
Then, {for any $b>1$, we can find $\La_0(b)$ such that for any $\la_0\geq \La_0(b)$,} $\cR F$ satisfies the following inequalities: 
\begin{align*}
\norm{\cR F}_{N} \lec \la_{q+1}^{N-1} a_F, \quad
\norm{\as D_t \cR F}_{N-1} \lec \la_{q+1}^{N-1}\de_{q+1}^\frac12 a_F\,
\end{align*}
upon setting $\as D_t = \pa_t + v_{q+1} \cdot \na $.
\end{cor}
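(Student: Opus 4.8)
The plan is to estimate $\mathcal{R}F$ by applying the Microlocal Lemma \ref{mic} to each building block $a_{m,k} e^{i\lambda_{q+1} k \cdot \xi_m}$, with $T = \mathcal{R}_{ijk}$ (componentwise, the multiplier is a smooth homogeneous-degree-$(-1)$ symbol away from the origin, which can be truncated near $\xi=0$ to obtain an extension in $\mathcal{S}(\R^3)$ — one should remark that since $k \neq 0$ and $\lambda_{q+1}\nabla\xi_m$ stays bounded away from $0$ thanks to \eqref{est.flow1}, the behaviour of the multiplier near the origin is irrelevant). First I would fix the number of terms in the expansion to be $n_0 = \lceil \frac{2b(2+\alpha)}{(b-1)(1-\alpha)}\rceil$, exactly so that the decay $\sum_k |k|^{n_0+2}|\dot a_k| \leq a_F$ from hypothesis (ii) controls all the terms that appear.

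The key steps, in order, are: (1) For the principal term $a_{m,k}\,\mathfrak{m}(\lambda_{q+1} k \cdot \nabla\xi_m)$, use that $|\mathfrak{m}(\lambda_{q+1} k \cdot \nabla\xi_m)| \lesssim (\lambda_{q+1}|k|)^{-1}$ (homogeneity of degree $-1$, together with $|\nabla\xi_m|\geq \tfrac12$ from \eqref{est.flow1}), and that spatial derivatives of $\mathfrak{m}(\lambda_{q+1}k\cdot\nabla\xi_m)$ cost factors of $\mu_q^{-1}$ by \eqref{est.flow2} (since $\ell^{-1}\leq\mu_q^{-1}$) and do not improve the $\lambda_{q+1}$ gain; combined with $\|a_{m,k}\|_j \lesssim \mu_q^{-j}|\dot a_k|$ and $\mu_q^{-1} \ll \lambda_{q+1}$, this gives the bound $\lambda_{q+1}^{N-1}|\dot a_k|\cdot|k|^{-1}$. (2) For the lower-order terms $C_k^{\lambda_{q+1}}(\xi_m, a_{m,k}):(\nabla^j\mathfrak{m})(\lambda_{q+1}k\cdot\nabla\xi_m)$, each derivative $(\nabla^j\mathfrak{m})$ is homogeneous of degree $-1-j$, contributing $(\lambda_{q+1}|k|)^{-1-j}$, while $C_k^{\lambda_{q+1}}$ is a polynomial in $\lambda_{q+1}$ (from the phase) and in derivatives of $\xi_m$ and $a_{m,k}$; a bookkeeping check shows the net power of $\lambda_{q+1}$ is $\leq -1$ and the net power of $|k|$ is $\leq -1$ (in fact better), so after summing over $k$ using $\sum_k|k|^{n_0+2}|\dot a_k|\leq a_F$ these are all subsumed. (3) For the remainder $\varepsilon_{n_0}(\xi_m, a_{m,k})$ from \eqref{def.eN}, bound the integral crudely: $|\widecheck{\mathfrak{m}}(y)|$ is integrable, the oscillatory factors are bounded in modulus by $1$, $(y\cdot\nabla)^{n_1}a_{m,k}$ costs $\mu_q^{-n_1}|\dot a_k|$ (up to powers of $|y|$ absorbed by the Schwartz decay of $\widecheck{\mathfrak{m}}$), and $\beta_{n_2}[\xi_m]$ is a Bell polynomial in $\lambda_{q+1}$ times second and higher derivatives of $\xi_m$ — whose $C^0$ norms are $\lesssim 1$ by \eqref{est.flow01} — so $\beta_{n_2}$ contributes at most $\lambda_{q+1}^{n_2}$. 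Since $n_1+n_2 = n_0+1$, the total is $\lesssim \lambda_{q+1}^{n_0+1}\mu_q^{-(n_0+1)}|\dot a_k|$ per mode, and then summability and the relation $\mu_q^{-1}\ll\lambda_{q+1}$ combined with the specific value of $n_0$ (chosen so that $(\lambda_{q+1}\mu_q^{-1}/\lambda_{q+1}^2)^{n_0+1}$ beats any fixed polynomial loss — this is where $\lambda_0\geq\Lambda_0(b)$ enters) force the remainder to be $\lesssim\lambda_{q+1}^{N-1}a_F$. (4) Finally, for the material derivative $\mathring D_t\mathcal{R}F = (\partial_t + v_{q+1}\cdot\nabla)\mathcal{R}F$, write $\mathring D_t = D_{t,\ell} + (v_{q+1}-v_\ell)\cdot\nabla$; the commutator $D_{t,\ell}$ passes onto the $a_{m,k}$ (gaining $\lambda_{q+1}\delta_{q+1}^{1/2}$ by hypothesis (ii), modulo the fact that $D_{t,\ell}e^{i\lambda_{q+1}k\cdot\xi_m}=0$ and that $D_{t,\ell}$ acting on $\nabla\xi_m$-dependent coefficients costs $\lambda_q\delta_q^{1/2}\lesssim\lambda_{q+1}\delta_{q+1}^{1/2}$ by \eqref{est.flow2}), while $(v_{q+1}-v_\ell)\cdot\nabla = (w + v_q - v_\ell)\cdot\nabla$ acts on $\mathcal{R}F$ and is bounded using $\|w\|_0 + \|v_q-v_\ell\|_0 \lesssim \delta_{q+1}^{1/2}$ (from \eqref{est.w} and \eqref{est.v.dif}) times $\|\mathcal{R}F\|_{N}\lesssim\lambda_{q+1}^{N-1}a_F$ — here one loses a $\lambda_{q+1}$ relative to $\|\mathcal{R}F\|_{N-1}$, which is exactly what the factor $\lambda_{q+1}$ in the claimed bound $\lambda_{q+1}^{N-1}\delta_{q+1}^{1/2}a_F$ accounts for.

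The main obstacle I expect is step (3): getting clean, $q$-uniform estimates on the remainder $\varepsilon_{n_0}$. The formula \eqref{def.eN} involves the Bell polynomials $\beta_{n_2}[\xi_m]$ which carry powers of $\lambda_{q+1}$ up to $n_2$, and one must verify that the choice $n_0 = \lceil\frac{2b(2+\alpha)}{(b-1)(1-\alpha)}\rceil$ is genuinely large enough that the gain $\mu_q\lambda_{q+1}$ (note $\mu_q^{-1}\sim\lambda_q^{1/2}\lambda_{q+1}^{1/2}(\delta_q/\delta_{q+1})^{1/4}$ and $\lambda_{q+1}=\lambda_q^b$, so $\mu_q^{-1}/\lambda_{q+1}\sim\lambda_q^{-(b-1)/2}\cdot(\ldots)$, a negative power of $\lambda_q$) raised to the power $n_0+1$ dominates the benign polynomial factors $\lambda_{q+1}^{O(1)}$ that the crude bounds introduce; this is a computation in the exponents of $\lambda_q$ using $\delta_q=\lambda_q^{-2\alpha}$, $\gamma=(b-1)^2$, and requires $\lambda_0$ large, explaining the dependence $\Lambda_0(b)$. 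A secondary subtlety is that one must carefully track how many spatial derivatives land on the Schwartz kernel $\widecheck{\mathfrak{m}}(y)$ versus on $a_{m,k}(x-ry)$ versus on the $\xi_m$-dependent phases, to be sure no uncontrolled power of $\lambda_{q+1}$ sneaks in; but this is routine once the exponent arithmetic in step (3) is settled. The support property \eqref{dis.amk} is used only to ensure the sum over $m$ in $F$ is locally finite so all estimates pass from the blocks to $F$ without a logarithmic loss.
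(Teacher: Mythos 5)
Your plan has a fatal structural flaw at the very first move: you propose to apply the Microlocal Lemma \ref{mic} with $T = \mathcal{R}$, noting that the multiplier of $\mathcal{R}$ is smooth and homogeneous of degree $-1$ away from the origin and can be ``truncated near $\xi=0$ to obtain an extension in $\mathcal{S}(\R^3)$.'' This is false. A symbol that is homogeneous of degree $-1$ for large $\xi$ decays only like $|\xi|^{-1}$, and its derivatives decay like $|\xi|^{-1-n}$; truncating near the origin does nothing to repair the slow decay at infinity, so the resulting function is \emph{not} Schwartz. The Microlocal Lemma genuinely needs $\mathfrak{m}\in\mathcal{S}(\R^3)$: its proof and the formula \eqref{def.eN} both hinge on $\widecheck{\mathfrak{m}}$ being integrable (indeed on $\|\,|y|^n\widecheck{\mathfrak{m}}\|_{L^1}<\infty$), which fails for a symbol with only $|\xi|^{-1}$ decay. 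So the expansion you build your four steps on is not available for $T=\mathcal{R}$, and steps (1)--(3) never get off the ground.

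The paper sidesteps this by never applying Lemma \ref{mic} to $\mathcal{R}$ at all. Instead it applies the lemma to Littlewood--Paley projections $P_{2^j}$ and $P_{\lambda_{q+1}\lesssim\cdot\leq 2^J}$, whose multipliers are compactly supported (hence trivially Schwartz). For each fixed $k$, choosing $2^{J_k}\geq \tfrac{5}{4}\lambda_{q+1}|k|$ and using the flow estimate $\tfrac34|k|\leq|\nabla(k\cdot\xi_m)|\leq\tfrac54|k|$, the bandpass multiplier is identically $1$ in a neighbourhood of $\lambda_{q+1}\nabla(k\cdot\xi_m)$, so in the microlocal expansion the principal term returns $a_{m,k}$ exactly, every intermediate term $C^\lambda_\ell:(\nabla^\ell\mathfrak{m})$ vanishes, and only the Bell-polynomial remainder $\e_{n_0,j}$ survives. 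Summing in $j$ yields the decomposition \eqref{dec.mic}: $F=\cP_{\gtrsim\lambda_{q+1}}F - \sum_{m,k}\e_{n_0}^{\lambda_{q+1}}(k\cdot\xi_m,a_{m,k})e^{i\lambda_{q+1}k\cdot\xi_m}$. The $\lambda_{q+1}^{-1}$ gain in $\|\mathcal{R}F\|_N$ then comes not from homogeneity of $\mathcal{R}$ evaluated at a phase, but from the elementary fact that $\mathcal{R}$ acting on a function with Fourier support at frequency $\gtrsim\lambda_{q+1}$ gains a factor $\lambda_{q+1}^{-1}$ (Bernstein), together with the smallness of the remainder $(\lambda_{q+1}\mu_q)^{-(n_0+1)}a_F$.

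Your step (4) is also substantially incomplete: you write $\mathring{D}_t=D_{t,\ell}+(v_{q+1}-v_\ell)\cdot\nabla$ and let $D_{t,\ell}$ ``pass onto'' the coefficients, but $D_{t,\ell}$ does not commute with $\mathcal{R}$. The commutator $[v_\ell\cdot\nabla,\mathcal{R}]$ is nontrivial and is one of the main technical points of the paper's Step 3, which handles it by Taylor-expanding $\crF[\mathcal{R}]$ to order $l_0$ and estimating the tail separately on the high- and low-frequency parts of $F$. Your exponent bookkeeping in (3) is a sensible sanity check, but it does not substitute for either of these two ingredients, which are precisely what make the corollary nontrivial.
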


\subsection{Proof of Lemma \ref{mic}}
Recall that $\widecheck{m}$ is the inverse Fourier transform of $m$ in $\R^3$. By the Poisson summation formula, we have 
\begin{equation}\label{Tf.exp}
\begin{split}
T(ae^{i\la \xi})(x) 
=&\  e^{i\la \xi(x)} \int_{\R^3} \widecheck{\mathfrak{m}}(y) a(x-y) e^{i\la[\xi(x-y)-\xi(x)]} dy\\
=&\ e^{i\la \xi(x)} \int_{\R^3} \widecheck{\mathfrak{m}}(y) a(x) e^{-i\la\na \xi(x)\cdot y} dy\\
&+ e^{i\la \xi(x)} \int_{\R^3} \widecheck{\mathfrak{m}}(y)e^{-i\la\na \xi(x)\cdot y} 
(H_{x,y}(1) - H_{x,y}(0)) dy
\end{split}\end{equation}
where 
\[
H_{x,y}(r) 
= a(x-ry)e^{i\la Z_{x,y}(r)},
\quad
Z[\xi]_{x,y}(r) = r\int_0^1(1-s) (y\cdot \na)^2 \xi(x-rsy) ds. 
\]
Indeed, it follows from
\[
\xi(x-y) - \xi(x) +y\cdot \na \xi(x) =\int_0^1(1-s) (y\cdot\na)^2 \xi(x-sy) ds.
\]
In order to avoid a cumbersome notation, from now we drop the index $x$,$y$ in $H$ and $Z$ and the dependence on $\xi$ in $Z$. The decomposition of $T(ae^{i\la\xi})$ follows from Taylor's theorem applied to $H$ at $r=0$: 
\begin{equation}\label{H.exp}
H(1) - H(0) 
= \sum_{n=1}^{n_0} \frac{H^{(n)}(0) }{n!} + \int_0^1 \frac{H^{(n_0+1)}(r) }{n_0!} (1-r)^{n_0} dr.
\end{equation}
The $n$th derivative of $H$ can be computed by the Fa\`{a} di Bruno's formula,
\begin{equation}\label{der.H}
\begin{split}
H^{(n)}(r) 
&= \sum_{n_1+n_2=n}c_{n_1,n_2} \pa_r^{n_1} (a(x-ry)) \pa_r^{n_2} e^{i\la Z(r)} \\
&=\sum_{n_1+n_2=n} c_{n_1,n_2}(-1)^{n_1}((y\cdot\na)^{n_1}a)(x-ry) e^{i\la Z(r)}B_{n_2} (i\la Z', i\la Z'', \cdots, i\la Z^{(n_2)}) 
\end{split}\end{equation}
where $Z^{(n)}$ is the $n$th derivative of $Z$, and $B_n$ is the $n$th complete exponential Bell polynomial given by 
\begin{align}\label{e:Bell}
B_n(x_1,\dots,x_n)
=\sum_{k=1}^n B_{n,k}(x_1,x_2,\dots,x_{n-k+1}),
\end{align} 
where
\begin{align*}
B_{n,k}(x_1,x_2,\dots,x_{n-k+1}) =
 \sum\frac{n!}{j_1!  j_2! \cdots j_{n-k+1}!}
\left(\frac{x_1}{1!}\right)^{j_1}
\left(\frac{x_2}{2!}\right)^{j_2}\cdots
\left(\frac{x_{n-k+1}}{(n-k+1)!}\right)^{j_{n-k+1}},
\end{align*}
and the summation is taken over $\{j_k\}\subset \N\cup\{0\}$ satisfying
\begin{align}\label{sum.bel}
j_1 + j_2 + \cdots + j_{n-k+1} = k, \quad
j_1 + 2 j_2 + 3 j_3 + \cdots + (n-k+1)j_{n-k+1} = n.
\end{align}
Observe that $Z$ has the form $Z(r) = rZ_0(r)$, which follows
\begin{align}\label{der.Z}
Z^{(n)} (r) = nZ_0^{(n-1)}(r) + rZ_0^{(n)} (r), \quad Z^{(n)} (0) = nZ_0^{(n-1)}(0).
\end{align}
The $n$th derivative of $Z_0$ is
\begin{align}\label{der.Z0}
Z_0^{(n)}(r) = \int_0^1 (1-s) (-s)^n ((y\cdot\na)^{n+2}\xi)(x-rsy) ds.
\end{align}
In particular, 
\begin{equation*}
Z_0^{(n)}(0) = \int_0^1 (1-s) (-s)^n (y\cdot\na)^{n+2}\xi(x) ds = \frac{(-1)^n}{(n+1)(n+2)}  (y\cdot\na)^{n+2}\xi(x).
\end{equation*}
Therefore, we obtain 
\begin{align*}
H^{(n)}(0) 
&=\sum_{n_1+n_2=n}c_{n_1,n_2} (-1)^{n_1}(y\cdot\na)^{n_1}a(x) B_{n_2} (i\la Z_0(0), \cdots, i\la n_2  Z_0^{(n_2-1)}(0)) \\
&=  \sum_{n_1+n_2 =n}\sum_{k=1}^{n_2}  c_{n_1,c_2} (-1)^{n_1} \la^k y^{\otimes n+k}: \na^{n_1}a(x) \otimes 
\td\be_{n_2,k}[\xi](x),
\end{align*}
for some function $\td\be_{n_2,k}[\xi](x)$, and hence
\begin{align*}
\sum_{n=1}^{n_0} \int_{\R^3}\frac {H^{(n)}_{x,y}(0) }{n!} \widecheck{\mathfrak{m}}(y) e^{-i\la\na \xi(x) \cdot y} dy
= \sum_{k=1}^{2n_0} C_{k}^\la(\xi,a): (\na^k \mathfrak{m})(\la \na\xi)
\end{align*}
for some tensor-valued coefficient $C_{k}^\la(\xi,a)$.
Indeed, the factor $y^{\otimes n+k}$ gives the $(n+k)$th derivatives of $\mathfrak{m}$.  

Considering the remainder, we use \eqref{der.H} to define $\e_{n_0}(\xi,a)$ by 
\begin{equation*}\begin{split}
&\e_{n_0}(\xi, a)(x)\\
&=\frac 1{n_0!}\int_0^1\int_{\R^3} \widecheck{\mathfrak{m}}(y) e^{-i\la \na\xi(x)\cdot y} (H_{x,y})^{(n_0+1)}(r) dy (1-r)^{n_0}dr\\
&= 
\sum_{n_1+n_2=n_0+1} \frac{(-1)^{n_1}c_{n_1,n_2}}{n_0!}\\
&\quad\qquad \cdot
\int_0^1\int_{\R^3} \widecheck{\mathfrak{m}}(y) e^{-i\la \na\xi(x)\cdot y} ((y\cdot\na)^{n_1}a)(x-ry) e^{i\la Z(r)}\be_{n_2} [\xi](r) (1-r)^{n_0} dydr
\end{split}
\end{equation*}
where in the second equality
\begin{align*}
\be_{n_2} [\xi](r)
&=B_{n_2} (i\la Z'(r), i\la Z''(r), \cdots, i\la Z^{(n_2)}(r))\\
&=\sum_{k=1}^n (i\la)^k B_{n,k}(Z'(r), \cdots, Z^{n_2-k+1}(r)).
\end{align*}
This completes the proof. 

\subsection{Proof of Corollary \ref{cor.mic2}}
\

\noindent\texttt{Step 1.} Decomposition of $F$.

We first claim that $F$ can be written as 
\begin{align}\label{dec.mic}
F 
= \cP_{\gtrsim \la_{q+1}} \left(\sum_{m,k} a_{m,k} e^{i\la_{q+1} k\cdot \xi_m} \right) - \sum_{m,k} \e_{n_0}^{\la_{q+1}}(k\cdot \xi_m, a_{m,k}) e^{i\la_{q+1} k\cdot \xi_m},
\end{align}
where $\cP_{\gtrsim \la_{q+1}}$ is defined by
\[
\cP_{\gtrsim \la_{q+1}} = \sum_{2^j \geq \frac 38\la_{q+1}} P_{2^j}
\]
and
\[
\e_{n_0}^{\la_{q+1}} (k\cdot \xi_m, a_{m,k})
 = \sum_{2^j\geq \frac 38\la_{q+1}} \e_{n_0, j} (k\cdot \xi_m, a_{m,k}).
\]
The remainder $ \e_{n_0, j}(\xi,a)$ is obtained by applying Lemma \ref{mic} to $P_{2^j}$ and $n_0= {\ceil{\frac{2b(2+\al)}{(b-1)(1-\al)}}}$. 
To prove the claim, we first decompose $\cP_{\gtrsim \la_{q+1}} $ into 
\begin{align*}
\cP_{\gtrsim \la_{q+1}} 
= \sum_{\frac 38\la_{q+1}\le 2^j \le 2^J } P_{2^j} + P_{>2^J}
=: P_{\la_{q+1} \lec \cdot \le 2^J} + P_{>2^J}.
\end{align*}
and denote the multipliers of $P_{\la_{q+1} \lec \cdot \le 2^J}$ and $P_{2^j}$ by $m_{\le J}$ and $m_{j}$.   Indeed, they satisfy
\begin{align*}
\supp(m_{\le J}) &\subset B(0, 2^{J+1}), \quad
m_{\le J} = 1 \text{ on  } \overline{B(0,2^J)}\setminus B(0, \frac 34\la_{q+1}) \\
&\supp(m_{j}) \subset B(0, 2^{j+1})\setminus B(0,2^{j-1}).
\end{align*}
Then, since for any $k\in \Z^3\setminus\{0\}$, 
\[
\frac 34 |k|
\leq |\na (k\cdot \xi_m)| 
\leq \frac54 |k|, \quad  \text{on }\  \cal{I}_m\times \R^3
\] 
by $\norm{\I - \na \xi_m}_{C^0(\cal{I}_m\times \R^3)} \leq \frac 14$, where $\cal{I}_m = (t_m - \frac12\tau_q, t_m + \frac 32\tau_q)\cap \mathcal{I}$, we have for any $2^{J_k}\geq \frac 54\la_{q+1} |k|$, 
\[
m_{\le J_k}(\la_{q+1} \na (k\cdot \xi_m)) = 1, \quad 
m_{j}(\la_{q+1} \na (k\cdot \xi_m)) = 0, \quad \forall j>J_k, 
\]
on the support of $a_{m,k}$
Therefore, applying Lemma \ref{mic} to $P_{\la_{q+1}\lec \cdot \le 2^{J_k}}$ and $P_{2^j}$ for $j>J_k$, we obtain
\begin{align*}
P_{\la_{q+1}\lec \cdot \le 2^{J_k}}(a_{m,k} e^{i\la_{q+1} k\cdot \xi_m} ) 
&=   a_{m,k} e^{i\la_{q+1} k\cdot \xi_m}+\sum_{\frac 38\la_{q+1} \leq 2^j \leq 2^{J_k}} \e_{n_0,j}(k\cdot \xi_m,a_{m,k} )e^{i\la_{q+1} k\cdot\xi_m}\\
{P}_{2^j}(a_{m,k} e^{i\la_{q+1} k\cdot \xi_m} ) 
&= \e_{n_0, j}(k\cdot \xi_m,a_{m,k} )e^{i\la_{q+1} k\cdot\xi_m}.
\end{align*} 
Indeed, the remainder in the first equality follows from $m_{\leq J} =\sum_{\frac 38\la_{q+1} \leq 2^j \leq 2^{J_k}} m_j$. Summing them up in $j$ and reorganizing the terms, we have 
\begin{align}\label{dec.comp}
a_{m,k} e^{i\la_{q+1} k\cdot \xi_m}
=\cP_{\gtrsim \la_{q+1}} \left(a_{m,k} e^{i\la_{q+1} k\cdot \xi_m} \right) -  \e_{n_0}^{\la_{q+1}}(k\cdot \xi_m, a_{m,k}) e^{i\la_{q+1} k\cdot \xi_m}.
\end{align}
Taking summation again in $m$ and $k$, the desired decomposition follows. 

\

\noindent\texttt{Step 2.} The estimates for the remainder. 

We aim to obtain the following estimates. 
\begin{align}
\norm{\sum_{m,k}\e_{n_0}^{\la_{q+1}}(k\cdot \xi_m, a_{m,k})}_0 
&\lec_{n_0} (\la_{q+1}\mu_q)^{-(n_0+1)} a_F, \label{est.eN}
\\
\norm{\sum_{m,k}D_{t,\ell} \e_{n_0}^{\la_{q+1}}(k\cdot \xi_m, a_{m,k})}_0 
&\lec_{n_0}   \la_{q+1}\de_{q+1}^\frac12 (\la_{q+1}\mu_q)^{-(n_0+1)} a_F. \label{est.DteN}
\end{align}
First, we remark the following relations between parameters;
\[
\la_q \leq \ell^{-1} \lec \mu_q^{-1}\lec \la_{q+1},\quad
\la_q\de_q^\frac12 \lec \la_{q+1}\de_{q+1}^\frac12.
\]
Recall the definition of $\e_{n_0,j}(\xi, a)$ from \eqref{def.eN}:
\begin{equation*}\begin{split}
&\e_{n_0,j}(k\cdot\xi_m, a_{m,k})(x)\\
&= 
\sum_{n_1+n_2=n_0+1} \frac{(-1)^{n_1}c_{n_1,n_2}}{n_0!}
\int_0^1\int_{\R^3} \widecheck{m_j}(y) e^{-i\la_{q+1} (y\cdot\na)(k\cdot\xi_m)(x)}(y\cdot\na)^{n_1}a_{m,k}(x-ry)  \\
&\qquad\qquad\qquad\qquad\qquad\qquad\qquad\qquad e^{i\la_{q+1} Z[k\cdot\xi_m](r)}\be_{n_2} [k\cdot\xi_m](r) (1-r)^{n_0} dydr.
\end{split}
\end{equation*}
It is obvious that
\begin{align*}
&|e^{-i\la_{q+1} (y\cdot\na)(k\cdot\xi_m)(x)}| \leq 1, \quad
|e^{i\la_{q+1} Z(r)} | \leq 1, \\
&|(y\cdot\na)^{n_1}a_{m,k}(x-ry)|\leq |y|^{n_1} \norm{a_{m,k}}_{n_1}\lec |y|^{n_1} \mu_q^{-n_1}|\dot{a}_k|.
\end{align*}
On the other hand, recall 
\begin{align}
\be_{n_2} [k\cdot\xi_m](r)
&=\sum_{l=1}^{n_2} (i\la_{q+1})^l B_{n,l}(Z'(r), \cdots, Z^{n_2-l+1}(r)), \nonumber\\
Z^{(n)} (r) &= Z[k\cdot\xi_m]^{(n)}(r) = nZ_0^{(n-1)}(r) + rZ_0^{(n)} (r), \nonumber\\
Z_0^{(n)}(r) &= \int_0^1 (1-s) (-s)^n (y\cdot\na)^{n+2}(k\cdot\xi_m)(x-rsy) ds. \label{defn.Z0n}
\end{align}
Using \eqref{est.flow2}, it can be easily seen that for $r\in [0,1]$ and $t\in \cal{I}_m$,
\begin{align*}
\norm{Z^{(n)}}_{C^0_x}
&\leq  \norm{nZ_0^{(n-1)}}_{C^0_x} + \norm{rZ_0^{(n)}}_{C^0_x}\\
&\lec_n |y|^{n+1}\norm{\na(k\cdot\xi_m)}_{{C^n_x}} 
+ |y|^{n+2}\norm{\na (k\cdot \xi_m)}_{{C^{n+1}_x}}
\lec |y|^{n+1}|k|\mu_q^{-n} ( 1+ |y|\mu_q^{-1}).
\end{align*}
Therefore, in the same range of $r$ and $t$, 
\begin{align*}
\norm{\be_{n_2}[k\cdot \xi_m](r)}_{C^0_{x}} 
&\lec_{ n_2} \sum \la_{q+1}^l \norm{ (Z')^{j_1} \cdots 
(Z^{(n_2-l+1)})^{j_{n_2-l+1}}}_{C^0_x}\\
&\lec_{ n_2} \sum_{l=1}^{n_2} \la_{q+1}^l |y|^l |k|^l (|y|\mu_q^{-1})^{n_2} \sum_{i=0}^l (|y|\mu_q^{-1})^i\\
&\lec_{ n_2} (|y|\mu_q^{-1})^{n_2} |k|^{n_0+1}\sum_{l=1}^{n_2} (\la_{q+1} |y|)^l \sum_{i=0}^l (|y|\mu_q^{-1})^i,
\end{align*}
where the summations in the first inequality is taken over 
\begin{align*}
&1\le l \le n_2, \quad  j_1+\cdots+j_{n_2-l+1}=l, \quad
j_1+\cdots +(n_2-l+1)j_{n_2-l+1}=n_2
\end{align*}
and the last inequality follows from $|k|^l\leq|k|^{n_2}\leq |k|^{n_0+1}$ for any $k\in \Z^3\setminus\{0\}$.   
Combining all estimates, the remainder $\e_{n_0,j}(k\cdot\xi_m, a_{m,k})$ satisfies
\begin{align*}
\norm{\e_{n_0,j}(k\cdot\xi_m, a_{m,k})}_{C^0(\cal{I}_m\times \R^3)}
&\lec_{n_0}  |k|^{n_0+1} |\dot{a}_k|
\sum_{l=1}^{n_0+1}\sum_{i=0}^l \la_{q+1}^l \mu_q^{-(i+n_0+1)} \norm{|y|^{l+i+n_0+1} \widecheck{m}_j}_{L^1_y(\R^3)},
\end{align*}
which implies that
\begin{align*}
\norm{\e_{n_0}^{\la_{q+1}}(k\cdot\xi_m, a_{m,k})}_{C^0(\cal{I}_m\times \R^3)}
&\lec \sum_{2^j\geq \frac 38 \la_{q+1}} 
\norm{\e_{n_0,j }(k\cdot\xi_m, a_{m,k})}_{C^0(\cal{I}_m\times \R^3)}\\
&\lec_{n_0} |k|^{n_0+1}|\dot{a}_k| (\la_{q+1}\mu_q)^{-(n_0+1)}.
\end{align*}
Therefore, {using $\supp(\e_{n_0}^{\la_{q+1}}(k\cdot\xi_m, a_{m,k}))\subset \supp(a_{m,k})$ and \eqref{dis.amk},} we obtain the first part of the claim:
\begin{align*}
\norm{\sum_{m,k}\e_{n_0}^{\la_{q+1}}(k\cdot\xi_m, a_{m,k})}_0
&\leq \sum_k\norm{ \sum_m \e_{n_0}^{\la_{q+1}}(k\cdot\xi_m, a_{m,k})}_{C^0}\\
&\leq {3}\sum_k \sup_m \norm{  \e_{n_0}^{\la_{q+1}}(k\cdot\xi_m, a_{m,k})}_{C^0(\cal{I}_m\times \R^3)}\\
&\lec_{n_0} \sum_{k\in \Z^3\setminus\{0\}} |k|^{n_0+1}|\dot{a}_k| (\la_{q+1}\mu_q)^{-(n_0+1)}
\lec (\la_{q+1}\mu_q)^{-(n_0+1)} a_F. 
\end{align*}
Indeed, we use $\norm{|y|^{n} \widecheck{m_j}}_{L^1_y(\R^3)}\lec (2^{-j})^n $ and $\mu_q^{-1}\leq \la_{q+1}$. 

To find the estimate for $D_{t,\ell} \e_{n_0}(k\cdot\xi_m,a_{m,k}) $, we compute the advective derivatives of each piece of the integrand of the integral in $\e_{n_0,j}(k\cdot\xi_m,a_{m,k})$ as follows;
\begin{align*}
D_{t,\ell} e^{-i\la_{q+1} (y\cdot\na) (k\cdot\xi_m)(x)} 
&= (\pa_t + v_\ell (x)\cdot \na_x)e^{-i\la_{q+1} \na(k \cdot\xi_m)(x)\cdot y} \\
&= -i\la_{q+1} D_{t,\ell}\na(k\cdot \xi_m)(x)\cdot y e^{-i\la_{q+1} \na (k\cdot\xi_m)(x)\cdot y},
\end{align*}
\begin{align*}
D_{t,\ell} [(y\cdot\na)^{n_1} a_{m,k}(x-ry) ]
=\ & (v_\ell(x)-v_\ell(x-ry))\cdot \na ((y\cdot\na)^{n_1} a_{m,k}(x-ry))\\
&+ (D_{t,\ell}(y\cdot\na)^{n_1} a_{m,k})(x-ry),
\end{align*}
and
\begin{align*}
&(D_{t,\ell}e^{i\la_{q+1} Z_{x,y}(r)} )
= i\la_{q+1} (\pa_t+v_\ell (x)\cdot \na_x)Z_{x,y}(r)e^{i\la_{q+1} Z_{x,y}(r)} \\
&\  = i\la_{q+1} r \int_0^1 (1-s) y\otimes y: (D_{t,\ell} \na^2(k\cdot \xi_m))(x-ry)ds e^{i\la_{q+1} Z_{x,y}(r)}\\
&\quad +
i\la_{q+1} r \int_0^1 (1-s) y\otimes y:  [(v_\ell(x)-v_\ell(x-y))\cdot\na] \na^2 (k\cdot\xi_m)(x-ry)ds e^{i\la_{q+1} Z_{x,y}(r)}
\end{align*}
For the last piece, we recall $\be_{n_2}[k\cdot \xi_m]$;
\begin{align*}
D_{t,\ell} \be_{n_2}[k\cdot \xi_m]
= D_{t,\ell}\left[ \sum_{l=1}^{n_2} (i\la_{q+1})^l \sum_{j_i} 
\frac{n_2!}{j_1!\cdots j_{n_2-l+1}!} \left(\frac{Z'_{x,y}(r)}{1!}\right)^{j_1}
\cdots 
\left(\frac{Z^{(n_2-k+1)}_{x,y}(r)}{(n_2-l+1)!}\right)^{j_{n_2-l+1}} \right]. 
\end{align*}
First, using \eqref{est.flow2}, we have on $\cal{I}_m$
\begin{align*}
&\norm{D_{t,\ell} \na (k\cdot\xi_m)}_{C^0_x} \lec \la_q\de_q^\frac12 |k| \\
&\norm{D_{t,\ell} \na^2(k\cdot \xi_m)}_{C^0_x}
\leq |k|\norm{\na D_{t,\ell} \na \xi_m}_{C^0_x}
+ |k|\norm{\na v}_0 \norm{\na^2 \xi_m}_{C^0_x}
\lec \la_q\de_q^\frac12 \mu_q^{-1}|k|,
\end{align*}
which follows that for $r\in [0,1]$ and $t\in \cal{I}_m$, 
\begin{equation}\begin{split}\label{est.6}
&\norm{D_{t,\ell} e^{-i \la_{q+1}(y\cdot\na)(k\cdot \xi_m)}}_{C^0_x}
\lec \la_{q+1}|y|\la_q\de_q^\frac12 |k|\\
&\norm{D_{t,\ell} e^{i\la_{q+1} Z_{x,y}(r)}}_{C^0_x}
\lec \la_{q+1}\mu_q^{-1}  \la_q\de_q^\frac12 |y|^2|k|(1+|y|\ell^{-1}).  
\end{split}\end{equation}
Also, for any smooth function $g$, we can write 
\begin{align*}
D_{t,\ell}(y\cdot \na)^{n_1} g = (y\cdot \na)^{n_1} (D_{t,\ell}g) + [D_{t,\ell}, (y\cdot \na)^{n_1}] g.
\end{align*}
Since the commutator term $[D_{t,\ell}, (y\cdot \na)^{n_1}]g$ has a representation,
\begin{align*}
[D_{t,\ell}, (y\cdot \na)^{n_1}]g = [v_\ell\cdot \na,  (y\cdot \na)^{n_1}]g 
= \sum_{\substack{ m_1+m_2=n_1\\ 1\le m_1\le n_1}} C_{m_1,m_2} [(y\cdot \na)^{m_1} v_\ell \cdot \na] (y\cdot\na)^{m_2} g,
\end{align*}
we have
\begin{align*}
\norm{[D_{t,\ell}, (y\cdot \na)^{n_1}]g}_0
&\lec |y|^{n_1} \sum_{\substack{ m_1+m_2=n_1\\ 1\le m_1\le n_1}}  \norm{\na^{m_1} v_\ell}_{0} \norm{\na^{m_2+1} g}_0,
\end{align*}
and hence
\begin{align}\label{est.Dtlna}
\norm{D_{t,\ell}(y\cdot \na)^{n_1}g}_0
\lec_{n_1} |y|^{n_1} \left[\norm{\na^{n_1} D_{t,\ell}g}_0
+ \sum_{\substack{ m_1+m_2=n_1\\ 1\le m_1\le n_1}}  \norm{\na^{m_1} v_\ell}_{0} \norm{\na^{m_2+1} g}_0\right].
\end{align}
In particular, we have 
\begin{align}
&\norm{D_{t,\ell}(y\cdot \na)^{n_1}a_{m,k}}_0\lec_{n_1}  \la_{q+1}\de_{q+1}^\frac12 |y|^{n_1} \mu_q^{-n_1}|\dot{a}_k| \label{est.7}\\
&\norm{D_{t,\ell}(y\cdot \na)^{n+2}(k\cdot\xi_m)}_{C^0(\cal{I}_m\times \R^3)} \lec |y|^{n+2}\la_q\de_q^\frac12 \mu_q^{-(n+1)} |k|.  \nonumber
\end{align}
Here, the second inequality uses $D_{t,\ell} \xi_m=0$.
This suggests that $Z_0$ in \eqref{defn.Z0n} satisfies for $r\in [0,1]$ and $t\in \cal{I}_m$
\begin{align*}
\norm{D_{t,\ell} Z_0^{(n)}}_{C^0_{x}} 
&\leq\Norm{\int_0^1(1-s)(-s)^{n} (D_{t,\ell})_x [(y\cdot\na)^{n+2} (k\cdot\xi_m)(x-rsy)] ds}_{C^0_{x}}\\
&\lec
 |k|\sup_{s\in [0,1]}\norm{v_\ell-v_\ell(\cdot-rsy)}_{C^0_{x}}\norm{\na (y\cdot\na)^{n+2} \xi_m}_{C^0_{x}} + |k| \norm{D_{t,\ell} (y\cdot\na)^{n+2} \xi_m}_{C^0_{x}}\\
 &\lec |k||y|^{n+3}\norm{\na v}_{C^0_{x}} \norm{\na^{n+3}\xi_m}_{C^0_{x}}  + |k| \norm{D_{t,\ell} (y\cdot\na)^{n+2} \xi_m}_{C^0_{x}}\\
 &\lec  |y|^{n+1} \la_q\de_q^\frac12 \mu_q^{-n} 
 (|y|^2\mu_q^{-2}+|y|\mu_q^{-1})|k|.
\end{align*}
Therefore, the advective derivative of $Z_{x,y}^{(n)}$ with  $k\cdot \xi_m$ can be estimated as
\begin{equation}\begin{split}\label{est.8}
\norm{D_{t,\ell} Z^{(n)}}_{C^0_x} 
&\lec n \norm{D_{t,\ell} Z_0^{(n-1)}}_{C^0_x}  + r\norm{D_{t,\ell} Z_0^{(n)}}_{C^0_x} \\
&\lec |y|^{n+1} \la_q\de_q^\frac12 \mu_q^{-n} (1+ |y|\mu_q^{-1}+ |y|^2\mu_q^{-2})|k|,
\end{split}\end{equation}
in the same range of $r$ and $t$. Combining the estimates \eqref{est.6}, \eqref{est.7}, and \eqref{est.8}, we can see that $D_{t,\ell}$ generates a factor whose value is bounded by $\la_{q+1}\de_{q+1}^\frac12|k| (1+(\la_{q+1}|y|)^3)$. More precisely, for any $r\in [0,1]$, if  $\norm{F}_{C^0(\cal{I}_m\times \R^3)} \lec b_F$, then $\norm{D_{t,\ell} F}_{C^0(\cal{I}_m\times \R^3)} \lec \la_{q+1}\de_{q+1}^\frac12 |k|(1+(\la_{q+1}|y|)^3) b_F$,  where the possible $F$ are $ e^{-i\la_{q+1}(y\cdot \na) (k\cdot\xi_m)(x)}$, $(y\cdot \na)^{n_1}a_{m,k}(x-ry)$, $e^{i\la_{q+1} Z_{x,y}(r)}$, $Z^{(n)}$, and $\be_{n_2}[k\cdot\xi_m]$. As a result,
\begin{align*}
\norm{\sum_{k,m}D_{t,\ell} \e_{n_0}^{\la_{q+1}}(k\cdot\xi_m,a_{m,k})}_0 
&\lec_{n_0} \la_{q+1}\de_{q+1}^\frac12\sum_{k\in \Z^3\setminus\{0\}}
|k|^{n_0+2} |\dot{a}_k| (\la_{q+1}\mu_q)^{-(n_0+1)} \\
&\lec  \la_{q+1}\de_{q+1}^\frac12 (\la_{q+1}\mu_q)^{-(n_0+1)} a_F.
\end{align*}

\

\noindent\texttt{Step 3.} The estimates for $\cR F$. 

We first note several things; The parameters satisfy the additional relations
\[
\ell\la_q \de_{q}^\frac12 \leq \de_{q+1}^\frac12,  \quad \la_{q+1}^{-1}\leq \de_{q+1}^{\frac 12},\quad
\la_{q+1}^2(\la_{q+1}\mu_q)^{-(n_0+1)}
\lec 1
\]
by the choice of $n_0$. {Furthermore, for any $b>1$, we can find $\La_0(b)$ such that for any $\la_0\geq \La_0(b)$, we have $\ell^{-1}\la_{q+1}^{-1}\leq \frac 3{64}$.} As a consequence of the decomposition \eqref{dec.mic}, we have the frequency localization of the remainder part of $F$, 
\begin{align}\label{rem.low}
\cP_{\lec \la_{q+1}} F := F - \cP_{\gtrsim \la_{q+1}}F
=- \sum_{k,m}\e_{n_0}^{\la_{q+1}}(k\cdot \xi_m, a_{m,k}) e^{i\la_{q+1} k\cdot \xi_m}.
\end{align}
Lastly, using the assumptions on $a_{m,k}$ and $\xi_m$, $F= \sum_{k\in \Z^3\setminus\{0\}} \sum_{m\in \Z} a_{m,k} e^{i\la_{q+1}k\cdot \xi_m}$ satisfies
\begin{align*}
\norm{F}_{N}  \lec \la_{q+1}^N a_F, \quad 
\norm{D_{t,\ell} F}_{N-1}\lec \la_{q+1}^{N} \de_{q+1}^\frac12 a_F,\quad  N=0,1,2.
\end{align*}
Then, we recall the decomposition \eqref{dec.mic} of $F$ and use the Bernstein inequality to get
\begin{align*}
\norm{\cal{R} F}_N
&\leq\norm{\cal{R} \cP_{\gtrsim \la_{q+1}} F}_N +  \norm{\cal{R}\sum_{k,m} \e_{n_0}^{\la_{q+1}}(k\cdot \xi_m, a_{m,k})e^{i\la_{q+1} k\cdot \xi_m}}_N\\
&\lec \frac 1{\la_{q+1}}\norm{ F}_N + {\la_{q+1}^N}\norm{\sum_{k,m} \e_{n_0}^{\la_{q+1}}(k\cdot \xi_m, a_{m,k})}_0
\lec_{n_0} \la_{q+1}^{N-1} a_F
\end{align*}
for $N=0,1,2$. Indeed, the second and third inequalities follows from a crude estimate $\norm{\cal{R} f}_0 \lec \norm{f}_0$ and \eqref{est.eN}. 

To estimate $\as D_t \cR F$, we use the decomposition
\begin{align*}
\as D_t \cR F 
=  \cR D_{t,\ell} F + [v_\ell\cdot \na, \cR]F + ((v_q-v_\ell)+ w)\cdot \na \cR F.
\end{align*}
{For the remaining part, we only consider $N=1,2$. }Since $D_{t,\ell} F = \sum_{k,m} D_{t,\ell}a_{m,k} e^{i\la_{q+1} k \cdot\xi_m}$, the first term can be estimated as above,
\begin{align*}
\norm{\cR D_{t,\ell} F}_{N-1} 
\lec \frac 1{\la_{q+1}}\norm{D_{t,\ell}F}_{N-1} + \la_{q+1}^{N-1} \norm{\sum_{m,k} \e_{n_0}^{\la_{q+1}}(k\cdot \xi_m, D_{t,\ell} a_{m,k})}_0 \lec \la_{q+1}^{N-1}\de_{q+1}^\frac 12 a_F.
\end{align*} 
Also, we recall $\norm{w}_{N-1} + \norm{v-v_\ell}_{N-1} \lec \la_{q+1}^{N-1} \de_{q+1}^\frac12$, so that
\begin{align*}
\norm{((v-v_\ell)+ w)\cdot \na \cR F}_{N-1}
&\lec \la_{q+1}^{N-1} \de_{q+1}^\frac 12 a_F.
\end{align*}
Regarding to the commutator term,  we plug the decomposition \eqref{dec.mic}, 
\begin{align*}
[v_\ell\cdot \na, \cR]F
=[v_\ell\cdot \na, \cR]{\cP}_{\gtrsim \la_{q+1}} F 
- [v_\ell\cdot \na, \cR] \sum_{m,k} \e_{n_0}^{\la_{q+1}}(k\cdot \xi_m, a_{m,k}) e^{i\la_{q+1}k\cdot \xi_m}. 
\end{align*} 
Since $v_\ell$ and $\cP_{\lec \la_{q+1}}F= \e_{n_0}^{\la_{q+1}}(k\cdot \xi_m, a_{m,k}) e^{i\la_{q+1}k\cdot \xi_m}$ have frequencies localized to $\lec \la_{q+1}$,  
\begin{align*}
\norm{[v_\ell\cdot \na, \cR]\cP_{\lec \la_{q+1}}F  }_{N-1}
&\lec \la_{q+1}^{N-1}\norm{[v_\ell\cdot \na, \cR]\cP_{\lec \la_{q+1}}F  }_{0}
\lec \la_{q+1}^{N-1}\norm{v_\ell}_0 \norm{\na \cP_{\lec \la_{q+1}}F  }_{0}\\
&\lec \la_{q+1}^{N}\norm{\sum_{m,k} \e_{n_0}^{\la_{q+1}}(k\cdot \xi_m, a_{m,k})}_0 \lec \la_{q+1}^{N-1}\de_{q+1}^\frac 12 a_F.
\end{align*}
Indeed, the second inequality follows from $\norm{\cR g}_0 \lec \norm{g}_0$. To estimate the other commutator term, we consider $F_j = P_{2^j}F$ for $2^j\geq \frac 38\la_{q+1}$, 
\begin{align}
-&[v_\ell\cdot \na , \cR] F_j (x)
= \sum_{k,\eta\in \Z^3} (\crF[\cR](k)-\crF[\cR](\eta)) i\eta \cdot \crF[v_\ell](k-\eta) \crF[F_j](\eta) e^{ik\cdot x} \nonumber\\
&= \sum_{k,\eta\in \Z^3} \sum_{l=1}^{l_0} \frac 1{l!} [(k-\eta)\cdot \na]^l \crF[\cR](\eta) i\eta \cdot \crF[{v_\ell}](k-\eta) \crF[F_j](\eta) e^{ik\cdot x} \label{eqn.com1}\\
&\ +\frac 1{l_0!}\int_0^1 [(k-\eta)\cdot \na]^{l_0+1} \crF[\cR](\eta+\sigma(k-\eta))(1-\sigma)^{l_0} d\sigma i\eta\crF[{v_\ell}](k-\eta)\crF[F_j](\eta) e^{ik\cdot x} \label{eqn.com2},
\end{align}
where $\crF[\cR f](k) = \crF[\cR ](k)\crF[f](k)$ and $l_0> 2$ is an integer satisfying $\la_{q+1}^3(\ell\la_{q+1})^{-l_0} \leq 1$. 
The estimate for \eqref{eqn.com1} follows from
\begin{align*}
\norm{\sum_{2^j\geq \frac 38\la_{q+1}}\eqref{eqn.com1}}_{N-1}
&\leq  \sum_{l=1}^{l_0} \frac 1{l!}\norm{\na^l v_\ell :\na \cR_l \cP_{\gtrsim \la_{q+1}}F }_{N-1}\\
&\lec \sum_{N_1+N_2=N-1}  \sum_{l=1}^{l_0}
\frac 1{l!}\norm{\na^l v_\ell}_{N_1} \norm{\na \cR_l \cP_{\gtrsim \la_{q+1}}F }_{N_2}\\
&\lec \sum_{N_1+N_2=N-1}  
 \ell \norm{\na v_\ell}_{N_1}\norm{F}_{N_2}
\lec \la_{q+1}^{N-1}  \de_{q+1}^\frac12  a_F,
\end{align*}
where the operator $\cR_l$ has a Fourier multiplier defined by $\crF[\cR_lg](\eta) = \na^{l}_\eta \crF[\cR](\eta)\crF[g](\eta)$ for any $\eta\in \Z^3$. 
To estimate \eqref{eqn.com2}, we use {$|k-\eta|\leq 2 l^{-1} \leq \frac 3{32} \la_{q+1} \leq\frac 12 |\eta|$} and hence $|k|\lec |\eta|$ to get
\begin{align*}
\norm{\na^{N-1}\sum_{2^j\geq \frac 38\la_{q+1}}\eqref{eqn.com2}}_0 
&\lec \sum_j \sum_{k,\eta} 
|k|^{N-1} \frac{|k-\eta|^{l_0+1}}{|\eta|^{l_0+1}}|\crF[ v_\ell](k-\eta)| |\crF[F_j](\eta)|\\
&\lec \sum_j \sum_{k,\eta}  |k-\eta|^{l_0}|\crF[\na v_\ell](k-\eta)| |\eta|^{N-2-l_0} {|\crF[F_j](\eta)|}\\
&\lec \sum_j \frac{\ell^{-l_0}}{(2^j)^{l_0+1}} \sum_{|k|\lec 2^j}  \norm{\na v}_{L^2(\T^3)} \norm{\na^{N-1}F_j}_{L^2(\T^3)}\\
&\lec \left(\frac {\ell^{-1}}{\la_{q+1}}\right)^{l_0} \la_{q+1}^2\norm{\na v}_0 \norm{F}_{N-1}
\lec \la_{q+1}^{N-1}  \de_{q+1}^\frac 12   a_F. 
\end{align*}
Indeed, the last inequality follows from the choice of $l_0$. To summarize, we obtain
\begin{align}\label{est.DtcR}
\norm{[v_\ell \cdot \na, \cal{R}]\cP_{\gtrsim \la_{q+1}} F}_{N-1}
\lec \la_{q+1}^{N-1} \de_{q+1}^\frac12 a_F,
\end{align}
and combine with all other estimates to get the desired one $\norm{\as D_t \cal{R} F}_{N-1}\lec \la_{q+1}^{N-1}\de_{q+1}^\frac12 a_F$ for $N=1,2$.

\section{Estimates on the Reynolds stress}

In this section, we obtain the relevant estimates for the new Reynolds stress and its new advective derivative $\as D_t R_{q+1} = \partial_t R_{q+1} + v_{q+1}\cdot \nabla R_{q+1}$, summarized in the following proposition. For technical reasons it is however preferable to estimate rather $R_{q+1} - \frac{2}{3} \varrho \I$, as indeed the estimates on the function $\varrho (t)$ are akin to those for the new current, which will be detailed in the next section. For the remaining sections, we set $\norm{\cdot}_N = \norm{\cdot}_{C^0([0,T]+\tau_q; C^N(\T^3))}$.

\begin{prop}\label{p:Reynolds}
{There exists  $\bar{b}(\al)>1$ with the following property. For any $1<b<\bar{b}(\al)$ we can find 
$\Lambda_0 = \Lambda_0 (\al,b,M)$} such that the following estimates hold for every  $\lambda_0 \geq \Lambda_0$: 
\begin{equation}\label{est.asR}\begin{split}
\norm{R_{q+1} - {\textstyle{\frac{2}{3}}} \varrho \I}_N &\leq C_M\la_{q+1}^N \cdot
 {\la_q^\frac12}\la_{q+1}^{-\frac12} \de_q^\frac14\de_{q+1}^\frac34 
 \leq \la_{q+1}^{N-3\ga} \de_{q+2}, 
 \ \ \qquad\qquad \forall N=0,1,2 \\
\norm{\as D_t (R_{q+1} - {\textstyle{\frac{2}{3}}} \varrho \I)}_{N-1} &\leq C_M  \la_{q+1}^N \de_{q+1}^\frac12\cdot
 {\la_q^\frac12}\la_{q+1}^{-\frac12} \de_q^\frac14\de_{q+1}^\frac34  \leq \la_{q+1}^{N-3\ga}\de_{q+1}^\frac12 \de_{q+2}, \quad \forall N=1,2\, .
\end{split}\end{equation}
{where $C_M$ depends only upon the $M>1$ of Proposition \ref{ind.hyp} and Proposition \ref{p:ind_technical}.}
\end{prop}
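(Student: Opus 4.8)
The plan is to estimate each of the pieces $\as R_T$, $\as R_N$, $\as R_{O1}$, $\as R_{O2}$, $\as R_M$ appearing in the decomposition \eqref{e:splitting_of_Reynolds} separately, together with their new advective derivatives $\as D_t(\cdot)$, and then sum. For $\as R_{O2}$ and $\as R_M$ one uses elementary product estimates: $\as R_{O2} = w_o\otimes w_c + w_c \otimes w_o + w_c \otimes w_c$ is handled directly from Proposition \ref{p:velocity_correction_estimates}, where the key gain is the factor $(\la_{q+1}\mu_q)^{-1}$ carried by $w_c$, which with the definition \eqref{mu.tau} of $\mu_q$ produces exactly a $\la_q^{1/2}\la_{q+1}^{-1/2}\de_q^{1/4}\de_{q+1}^{-1/4}$-type gain over $\de_{q+1}$; for the advective derivative one writes $\as D_t = D_{t,\ell} + (v_q - v_\ell)\cdot\na + w\cdot \na$ and uses \eqref{est.v.dif}, \eqref{est.w}, \eqref{est.W}, \eqref{est.Wc}. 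The term $\as R_M = (R_q - R_\ell) + (v_q-v_\ell)\otimes w + w\otimes(v_q-v_\ell)$ is controlled by \eqref{est.R.dif} and \eqref{est.v.dif} combined with \eqref{est.w}; note $\|v_q - v_\ell\|_0 \lec \ell^2\la_q^2\de_q^{1/2}$ and $\ell^2\la_q^2 \lec \la_q^{1/2}\la_{q+1}^{-1/2}(\de_{q+1}/\de_q)^{3/4}$ by the definition of $\ell$, which is precisely the required size.

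Next I would treat the three ``Calderón--Zygmund'' terms $\as R_{O1} = \mathcal R(\na\cdot(w_o\otimes w_o + R_\ell))$, $\as R_N = \mathcal R(w\cdot\na v_\ell)$, $\as R_T = \mathcal R(\pa_t w + v_\ell\cdot\na w)$. Here is where the microlocal machinery enters. For $\as R_{O1}$ one uses the cancellation \eqref{can.R}: $w_o\otimes w_o + R_\ell = \de_{q+1}\I + \sum_{m,k}\de_{q+1} c_{m,k}e^{i\la_{q+1}k\cdot\xi_I}$, so $\na\cdot(w_o\otimes w_o + R_\ell)$ is, up to harmless terms, a sum of the form $\sum a_{m,k}e^{i\la_{q+1}k\cdot\xi_m}$ to which Corollary \ref{cor.mic2} applies directly, with $a_F \sim \de_{q+1}\cdot(\text{something with $c$-coefficient decay})$; one must check hypothesis (ii) using Lemma \ref{lem:est.coe} (the bound $\|D_{t,\ell}c_{m,k}\|_N \lec \tau_q^{-1}\mu_q^{-N}|\dot c_{I,k}|$ gives the needed $(\la_{q+1}\de_{q+1}^{1/2})^{-1}\|D_{t,\ell}c_{m,k}\|$ bound because $\tau_q^{-1}\lec \la_{q+1}\de_{q+1}^{1/2}$). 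The outcome is $\|\as R_{O1}\|_N \lec \la_{q+1}^{N-1}\de_{q+1}$, which is far smaller than required. For $\as R_N$ one writes $w\cdot\na v_\ell = \sum_{m,k}\de_{q+1}^{1/2}(b_{m,k} + (\la_{q+1}\mu_q)^{-1}e_{m,k})\cdot\na v_\ell\, e^{i\la_{q+1}k\cdot\xi_I}$ using \eqref{def.w}, absorbs $\na v_\ell$ (bounded by $\la_q\de_q^{1/2}$, with $C^N_x$ bounds $\ell^{-N}$) into the amplitude, and applies Corollary \ref{cor.mic2} with $a_F \sim \de_{q+1}^{1/2}\la_q\de_q^{1/2}$, giving $\|\as R_N\|_N \lec \la_{q+1}^{N-1}\de_{q+1}^{1/2}\la_q\de_q^{1/2}$; using $\la_q\de_q^{1/2}\la_{q+1}^{-1} \lec \la_q^{1/2}\la_{q+1}^{-1/2}\de_q^{1/4}\de_{q+1}^{1/4}$ (which follows from $\la_q\de_q^{1/2}\lec \la_{q+1}\de_{q+1}^{1/2}$) this is again of the right size.

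The delicate term is $\as R_T = \mathcal R(\pa_t w + v_\ell\cdot\na w) = \mathcal R(D_{t,\ell}w)$. The point is that $D_{t,\ell}$ kills the fast phase, $D_{t,\ell}e^{i\la_{q+1}k\cdot\xi_I}=0$, so from \eqref{def.w}, $D_{t,\ell}w = \sum_{m,k}\de_{q+1}^{1/2}D_{t,\ell}(b_{m,k} + (\la_{q+1}\mu_q)^{-1}e_{m,k})e^{i\la_{q+1}k\cdot\xi_I}$; the amplitudes now carry a factor $\|D_{t,\ell}b_{m,k}\|_N \lec \tau_q^{-1}\mu_q^{-N}|\dot b_{I,k}|$ and similarly for $e$. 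Applying Corollary \ref{cor.mic2} with $a_F \sim \de_{q+1}^{1/2}\tau_q^{-1}$, one gets $\|\as R_T\|_N \lec \la_{q+1}^{N-1}\de_{q+1}^{1/2}\tau_q^{-1}$, and substituting $\tau_q^{-1} = 40\pi M\eta^{-1}\la_q^{1/2}\la_{q+1}^{1/2}\de_q^{1/4}\de_{q+1}^{1/4}$ produces exactly $C_M\la_{q+1}^{N-1}\la_q^{1/2}\la_{q+1}^{1/2}\de_q^{1/4}\de_{q+1}^{3/4} = C_M\la_{q+1}^N\la_q^{1/2}\la_{q+1}^{-1/2}\de_q^{1/4}\de_{q+1}^{3/4}$ — the sharp bound in \eqref{est.asR}, and this is the term that dictates the whole estimate and hence, after optimizing $b,\alpha$, the exponent $\tfrac17$. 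I expect verifying the hypotheses of Corollary \ref{cor.mic2} for $\as R_T$ — in particular that the coefficient of $D_{t,\ell}w$ still satisfies hypothesis (ii) with the extra $\tau_q^{-1}$ absorbed, i.e.\ that $\|D_{t,\ell}(D_{t,\ell}b_{m,k})\|_N \lec \tau_q^{-2}\mu_q^{-N}|\dot b_{I,k}|$ which requires the $s=2$ estimate in Lemma \ref{lem:est.coe} and the observation $\tau_q^{-1}\lec \la_{q+1}\de_{q+1}^{1/2}$ — to be the main technical obstacle; one also has to handle the disjointness of supports in $m$ (so that $a_F$ does not pick up a factor growing in the number of active indices) via \eqref{dis.amk}, which holds by the definition of $\th_I$. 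Finally I would convert the first ($C_M$) inequality in each line of \eqref{est.asR} into the second by checking $C_M\la_q^{1/2}\la_{q+1}^{-1/2}\de_q^{1/4}\de_{q+1}^{3/4}\le \la_{q+1}^{-3\ga}\de_{q+2}$, i.e.\ an inequality of the form $C_M\la_q^{1/2-\al/2}\la_{q+1}^{-1/2-3\al/4+3\ga} \le \la_{q+1}^{-\al b}$ (using $\de_j = \la_j^{-2\al}$, $\de_{q+2} = \la_{q+1}^{-2\al b}$), which with $\ga = (b-1)^2$ holds for $b$ sufficiently close to $1$, $\al < \tfrac17$, and $\la_0$ large — exactly the range in Proposition \ref{ind.hyp}, after possibly shrinking $\bar b(\al)$ and enlarging $\La_0(\al,b,M)$.
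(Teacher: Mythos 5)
Your proposal is correct and follows essentially the same route as the paper: same decomposition of $R_{q+1}$ into $\as R_T$, $\as R_N$, $\as R_{O1}$, $\as R_{O2}$, $\as R_M$, the same application of Corollary \ref{cor.mic2} (with the coefficient bounds of Lemma \ref{lem:est.coe}, including the $s=2$ estimate and the support-disjointness hypothesis) for the three terms involving $\mathcal{R}$, direct product estimates for $\as R_{O2}$ and $\as R_M$, and the same conversion of the $C_M$-bound into the $\la_{q+1}^{N-3\ga}\de_{q+2}$ bound via $\ga=(b-1)^2$ and $\la_0$ large. One small correction to your closing heuristic: with the choices \eqref{mu.tau} of $\mu_q$ and $\tau_q$, all five pieces are balanced to contribute at the same order $\la_{q+1}^N\la_q^{1/2}\la_{q+1}^{-1/2}\de_q^{1/4}\de_{q+1}^{3/4}$ (in particular $\as R_{O1}$ and $\as R_{O2}$ also saturate this, not just $\as R_T$), and the Reynolds stress estimate alone would allow $\al<\tfrac15$; the binding constraint $\al<\tfrac17$ actually comes from the \emph{current} estimates of Proposition \ref{p:current}, which must achieve the stronger gain $\de_{q+2}^{3/2}$.
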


Taking into account \eqref{e:splitting_of_Reynolds}, we will just estimate the separate terms $\as R_T$, $\as R_N$, $\as R_{O1}$, $\as R_{O2}$ and $\as R_M$.
For the errors $\asR_{O2}$ and $\asR_M$, we use a direct estimate, while the other errors, including the inverse divergence operator, are estimated by Corollary \ref{cor.mic2}. In the following subsections, we fix $n_0 = \ceil{\frac{2b(2+\al)}{(b-1)(1-\al)}}$ so that $\la_{q+1}^2({\la_{q+1}\mu_q})^{-(n_0+1)}\lec \de_{q+1}^\frac12$ for any $q$ {and allow the dependence on $M$ of the implicit constants in $\lec$.} Also, we remark that 
\begin{align}\label{rel.par}
\frac {1}{\la_{q+1}\tau_q} +\frac{\de_{q+1}^\frac12}{\la_{q+1}\mu_q} \lec_M  {\la_q^\frac12}{\la_{q+1}^{-\frac12}} \de_q^\frac14\de_{q+1}^\frac14.
\end{align}
For the convenience, we restrict the range of $N$ as in \eqref{est.asR} in this section, without mentioning it further.

\subsection{Transport stress error}
Recall that
\[
\asR_T = \idv{D_{t,\ell} w}.
\]
Since $D_{t,\ell} \xi_I =0$, we have  
\begin{align*}
D_{t,\ell} w 
&= D_{t,\ell} \left( \sum_{m\in\Z} \sum_{k\in \Z^3\setminus \{0\}} \de_{q+1}^\frac 12(b_{m,k} + (\la_{q+1}\mu_q)^{-1}e_{m,k}) e^{i\la_{q+1} k\cdot \xi_I}\right)\\
&=\sum_{m} \sum_{k} \de_{q+1}^\frac 12  D_{t,\ell} (b_{m,k} + (\la_{q+1}\mu_q)^{-1}e_{m,k}) e^{i\la_{q+1} k\cdot \xi_I}.
\end{align*}
Since $b_{m,k}$ and $e_{m,k}$ satisfy
$\supp(b_{m,k}), \supp(e_{m,k})\subset (t_m-\frac 12\tau_q, t_m + \frac 32\tau_q) \times \R^3$
and 
\begin{align*}
&\norm{D_{t,\ell} (b_{m,k} + (\la_{q+1}\mu_q)^{-1}e_{m,k})}_{\bar{N}} + (\la_{q+1}\de_{q+1}^\frac 12)^{-1}\norm{D_{t,\ell}^2 (b_{m,k} + (\la_{q+1}\mu_q)^{-1}e_{m,k})}_{\bar{N}}\\
&\qquad\qquad \lec_{\bar{N},M} \mu_q^{-\bar{N}} \frac {|\dot{b}_{I,k}|}{\tau_q},
\end{align*}
for any $\bar{N}\geq 0$ by \eqref{est.b} and \eqref{est.e}, we can apply Corollary \ref{cor.mic2} to get
\begin{align}\label{est.RT}
\norm{\asR_T}_N
\lec \la_{q+1}^N\frac {\de_{q+1}^\frac 12}{\la_{q+1} \tau_q} , \quad
\norm{\as D_t \asR_T}_{N-1}  \lec \la_{q+1}^N\de_{q+1}^\frac12 \frac {\de_{q+1}^\frac 12}{\la_{q+1} \tau_q}.
\end{align}
 
 \subsection{Nash stress error}
Set $\asR_N = \idv{(w \cdot \na) v_\ell}$ and observe that
\begin{align*}
(w \cdot \na) v_\ell 
&=  \sum_{m} \sum_{k\in \Z^3\setminus \{0\}} \de_{q+1}^\frac 12((b_{m,k} + (\la_{q+1}\mu_q)^{-1}e_{m,k})\cdot \na)v_\ell e^{i\la_{q+1} k\cdot \xi_I}.
\end{align*}
Since $b_{m,k}$ and $e_{m,k}$ satisfy
$\supp(b_{m,k}), \supp(e_{m,k})\subset (t_m-\frac 12\tau_q, t_m + \frac 32\tau_q) \times \R^3$ and
\begin{align*}
\norm{(b_{m,k} + (\la_{q+1}\mu_q)^{-1}e_{m,k})\cdot \na)v_\ell}_{\bar{N}}
&\lec_{\bar N} \mu_q^{-{\bar{N}}} |\dot{b}_{I,k}|\la_q\de_{q}^\frac12\\
\norm{D_{t,\ell}[(b_{m,k} + (\la_{q+1}\mu_q)^{-1}e_{m,k})\cdot \na)v_\ell]}_{\bar{N}}
&\lec_{\bar N} \la_{q+1}\de_{q+1}^\frac12 \mu_q^{-{\bar{N}}} |\dot{b}_{I,k}|\la_q\de_{q}^\frac12
\end{align*}
for any ${\bar{N}}\geq0$ by \eqref{est.vp}, \eqref{est.Dtvl}, \eqref{est.b}, and \eqref{est.e}, we apply Corollary \ref{cor.mic2} and obtain
\begin{align}\label{est.RN}
\norm{\asR_N}_N 
\lec \la_{q+1}^N \frac {\de_{q+1}^\frac 12}{\la_{q+1}\tau_q} , \quad 
\norm{\as D_t\asR_N}_{N-1}  \lec \la_{q+1}^N\de_{q+1}^\frac12 \frac {\de_{q+1}^\frac 12}{\la_{q+1}\tau_q}.
\end{align}

\subsubsection{Oscillation stress error}
Recall that $\asR_O = \asR_{O1} + \asR_{O2}$ where
\begin{align*}
\asR_{O1} 
&= \idv{\na\cdot ( w_o\otimes w_o + R_\ell) }\\
\asR_{O2} 
&=w_o \otimes w_c + w_c \otimes w_o + w_c\otimes w_c.
\end{align*}
We compute
\begin{align*}
\na\cdot ( w_o\otimes w_o + R_\ell)
&=\na\cdot ( w_o\otimes w_o - \de_{q+1}\I + R_\ell)
= \div \left[ \sum_{\substack{m\in \Z\\k\in \Z^3\setminus \{0\}}} \de_{q+1} c_{m,k} e^{ i\la_{q+1} k\cdot \xi_I} \right]
\\
&=\sum_{m,k} \de_{q+1} \div(c_{m,k}) e^{ i\la_{q+1} k\cdot \xi_I},
\end{align*}
because of $\dot{c}_{I,k} (f_I\cdot k) =0$. Also, since we have
\begin{align*}
D_{t,\ell} \div c_{m,k} = \div( D_{t,\ell} c_{m,k}) - (\na v)_{ij}\pa_i (c_{m,k})_{jl},
\end{align*}
it follows from \eqref{est.c} that $\norm{\div c_{m,k}}_{\bar{N}}+(\la_{q+1}\de_{q+1}^\frac12)^{-1}\norm{D_{t,\ell} \div c_{m,k}}_{\bar{N}} \lec_{\bar{N},M}  \mu_q^{-\bar{N}} \frac{|\dot{c}_{I,k}|}{\mu_q}$ for any $\bar{N}\geq 0$. Finally using $\supp(c_{m,k})\subset (t_m-\frac12\tau_q, t_m +\frac32\tau_q)\times \R^3$, we apply Corollary \ref{cor.mic2} to get
\begin{equation}\label{est.RO1}
\begin{split}
\norm{\asR_{O1}}_N
&\lec \la_{q+1}^N\cdot \frac {\de_{q+1}}{\la_{q+1}\mu_q}, \quad
\norm{\as D_t \asR_{O1}}_{N-1}\lec\la_{q+1}^N\de_{q+1}^\frac12\cdot \frac {\de_{q+1}}{\la_{q+1}\mu_q} . 
\end{split}
\end{equation}
On the other hand, we use \eqref{est.W}, \eqref{est.Wc}, \eqref{est.w}, and \eqref{est.v.dif} to estimate $\asR_{O2}$ as follows,
\begin{align*}
\norm{\asR_{O2}}_N
&\lec  \sum_{N_1+N_2=N} \norm{w_o}_{N_1}\norm{w_c}_{N_2} + \sum_{N_1+N_2=N}\norm{w_c}_{N_1}\norm{w_c}_{N_2}
\lec \la_{q+1}^N\cdot\frac{\de_{q+1}}{\la_{q+1}\mu_q} ,\\
\norm{\as D_t\asR_{O2}}_{N-1}
&\leq \norm{D_{t,\ell} \asR_{O2}}_{N-1} + \norm{(w+v-v_\ell)\cdot\na \asR_{O2}}_{N-1}\\
&\lec  \sum_{N_1+N_2=N-1} 
\norm{D_{t,\ell} w_o}_{N_1}\norm{w_c}_{N_2} 
+ \norm{ w_o}_{N_1}\norm{D_{t,\ell} w_c}_{N_2}
+\norm{D_{t,\ell}w_c}_{N_1}\norm{w_c}_{N_2}\\
&\quad+ \sum_{N_1+N_2=N-1} 
(\norm{w}_{N_1} + \norm{v-v_\ell}_{N_1})\norm{\as R_{O2}}_{N_2+1}
\lec \la_{q+1}^{N}\de_{q+1}^\frac12 \cdot\frac{ \de_{q+1}}{\la_{q+1}\mu_q}.
\end{align*}
Therefore, we have 
\begin{equation}\label{est.RO}
\norm{\asR_O}_N \lec \la_{q+1}^N \frac{\de_{q+1}}{\la_{q+1}\mu_q}, \quad
\norm{\as D_t \asR_O}_{N-1}   
\lec \la_{q+1}\de_{q+1}^{\frac12}
\cdot \la_{q+1}^N \frac{\de_{q+1}}{\la_{q+1}\mu_q}.
\end{equation} 
\subsection{Mediation stress error}
Recall that 
\[
\asR_M = R-R_\ell+(v-v_\ell)\otimes w + w\otimes (v-v_\ell).
\]
Using \eqref{est.R.dif}, \eqref{est.v.dif}, and \eqref{est.w}, we have
\begin{align*}
\norm{\asR_M}_{N} 
&\lec \norm{R-R_\ell}_N+\sum_{N_1+N_2=N}\norm{v-v_\ell}_{N_1}\norm{w}_{N_2}\\
&\lec \la_{q+1}^N \cdot ( {\la_q^\frac12}\la_{q+1}^{-\frac12} \de_q^\frac14\de_{q+1}^\frac34  + (\ell\la_q)^2\de_q^\frac12\de_{q+1}^\frac12)
\lec \la_{q+1}^N \cdot  {\la_q^\frac12}\la_{q+1}^{-\frac12} \de_q^\frac14\de_{q+1}^\frac34.
\end{align*}
To estimate $\as D_t  \asR_M$, we additionally use the
decomposition $\as D_t  \asR_M  = D_{t,\ell} \asR_M + ((v-v_\ell)+w)\cdot\na \asR_M $ to obtain
\begin{align*}
\norm{\as D_t \asR_M}_{N-1}
&\lec \norm{\as D_t (R-R_\ell)}_{N-1} 
+ \sum_{N_1+N_2=N-1} \norm{\as D_t (v-v_\ell)}_{N_1} \norm{w}_{N_2} + \norm{v-v_\ell}_{N_1} \norm{\as D_t w}_{N_2} \nonumber\\
&\lec \la_{q+1}^N \de_{q+1}^\frac12\cdot  {\la_q^\frac12}\la_{q+1}^{-\frac12} \de_q^\frac14\de_{q+1}^\frac34.
\end{align*} 
To summarize, we obtain 
\begin{align}\label{est.RM}
\norm{\asR_M}_{N}
\lec  \la_{q+1}^N  {\la_q^\frac12}\la_{q+1}^{-\frac12} \de_q^\frac14\de_{q+1}^\frac34, \quad
\norm{\as D_t \asR_M}_{N-1}
\lec \la_{q+1}^N \de_{q+1}^\frac12  {\la_q^\frac12}\la_{q+1}^{-\frac12} \de_q^\frac14\de_{q+1}^\frac34.
\end{align} 
 \
 
{Finally, Proposition \ref{p:Reynolds} follows from
\eqref{est.RT}-\eqref{est.RM} and \eqref{rel.par}.}

\section{Estimates for the new current}\label{sec.cur} 

In this section, we obtain the last needed estimates, on the new unsolved current $\ph_{q+1}$ and on the remaining part of the Reynolds stress $\frac{2}{3} \varrho\I$, which we
summarize in the following proposition.

\begin{prop}\label{p:current}
{There exists  $\bar{b}(\al)>1$ with the following property. For any $1<b<\bar{b}(\al)$ there is 
$\Lambda_0 = \Lambda_0 (\al,b,M)$ such that} the following estimates hold for $\lambda_0 \geq \Lambda_0 $:
\begin{align}\begin{split}
\label{est.asph}
\norm{\ph_{q+1}}_N
&\leq \la_{q+1}^{N-3\ga} \de_{q+2}^\frac32 , \qquad\quad \forall N=0,1,2,\\
\norm{\as D_t \ph_{q+1}}_{N-1} 
&\leq \la_{q+1}^{N-3\ga}\de_{q+1}^\frac12 \de_{q+2}^\frac32, \quad \forall N=1,2\, ,
\end{split}
\\
\norm{\varrho}_0 + \norm{\varrho'}_0
&\leq \la_{q+1}^{-3\ga} \de_{q+2}^\frac32\, .
\label{est.varho}
\end{align}
\end{prop}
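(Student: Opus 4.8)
The plan is to estimate separately each of the pieces $\as\ph_O, \as\ph_R, \as\ph_M, \as\ph_{T1}, \as\ph_{T2}, \as\ph_{H1}, \as\ph_{H2}$ appearing in the decomposition of the new current, as well as the three pieces $\varrho_0, \varrho_1, \varrho_2$ of the function $\varrho$, and then sum the resulting bounds. The recurring mechanism is the same as in the previous section: the terms built from a single family of fast oscillations $e^{i\la_{q+1}k\cdot\xi_m}$ composed with $\mathcal R$ will be handled by Corollary \ref{cor.mic2}, while the ``low-frequency'' or ``mollification-difference'' terms are estimated by a direct product rule using Proposition \ref{p:velocity_correction_estimates}, the flow estimates of Lemma \ref{lem:est.flow}, the mollification estimates \eqref{est.mR}--\eqref{est.ph.dif}, and the quadratic commutator estimate of Lemma \ref{lem:est.Qvv}. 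Throughout I would fix $n_0=\ceil{\frac{2b(2+\al)}{(b-1)(1-\al)}}$ exactly as in Section 8, so that $\la_{q+1}^2(\la_{q+1}\mu_q)^{-(n_0+1)}\lec\de_{q+1}^\frac12$, and I would freely allow all implicit constants to depend on $M$. The key scaling facts I would keep in mind are $\frac1{\la_{q+1}\tau_q}+\frac{\de_{q+1}^\frac12}{\la_{q+1}\mu_q}\lec_M \la_q^\frac12\la_{q+1}^{-\frac12}\de_q^\frac14\de_{q+1}^\frac14$ together with $\la_q^\frac12\la_{q+1}^{-\frac12}\de_q^\frac14\de_{q+1}^\frac14\lec 1$ and, crucially, $\de_{q+1}^\frac32\cdot\la_q^\frac12\la_{q+1}^{-\frac12}\de_q^\frac14\de_{q+1}^\frac14\leq\la_{q+1}^{-3\ga}\de_{q+2}^\frac32$ for $b$ close enough to $1$ and $\la_0$ large; this last inequality is what converts all the ``gain'' factors into the claimed right-hand side.

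Concretely: for $\as\ph_O=\as\ph_{O1}+\as\ph_{O2}$ I would use the Fourier representation \eqref{e:rep3} of $\frac12|w_o|^2w_o$ (whose low-frequency part cancels $\ph_\ell$, leaving only oscillatory terms $\de_{q+1}^\frac32 d_{m,k}e^{i\la_{q+1}k\cdot\xi_m}$ with $d_{m,k}$ controlled by Lemma \ref{lem:est.coe}), apply Corollary \ref{cor.mic2} to the $\mathcal R(\nabla\cdot(\dots))$ to gain $\frac1{\la_{q+1}\mu_q}$, and for $\as\ph_{O2}=\mathcal R(\nabla\cdot\tfrac12(|w|^2w-|w_o|^2w_o))$ just use $|w|^2w-|w_o|^2w_o=O(w_o^2 w_c + \dots)$ together with the $w_c$ estimate \eqref{est.Wc}, again gaining $(\la_{q+1}\mu_q)^{-1}$. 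For $\as\ph_R=(R_{q+1}-\tfrac23\varrho\I)w$ I would combine Proposition \ref{p:Reynolds} with \eqref{est.w}. For $\as\ph_M$ and $\as\ph_{T1}$ I would use the difference estimates \eqref{est.v.dif}--\eqref{est.ph.dif}, Proposition \ref{p:Reynolds}, and \eqref{est.w}, noting that each summand carries at least one factor of either $(v_q-v_\ell)$, $(p_q-p_\ell)$, $\ph_q-\ph_\ell$, or $R_\ell$, which supplies the required smallness. For $\as\ph_{T2}$, $\as\ph_{H1}$, $\as\ph_{H2}$ I would first bound the scalar ``source'' (using $D_{t,\ell}$-estimates on $w_o\otimes w_o$ and $R_\ell$ for $\as\ph_{T2}$; the quadratic commutator Lemma \ref{lem:est.Qvv} together with $\div P_{\le\ell^{-1}}R_q$ estimates for $\as\ph_{H1}$; and $\nabla v_\ell$ together with all the quadratic error terms for $\as\ph_{H2}$), observe that after subtracting its spatial average the source is a sum of $\mathcal R$ applied to suitable oscillatory expressions (for the $w_o\otimes w_o$ and the $(v_q-v_\ell)\otimes w$ pieces this is again of the form $\sum a_{m,k}e^{i\la_{q+1}k\cdot\xi_m}$, to which Corollary \ref{cor.mic2} applies), and bound $\mathcal R$ of the low-frequency remainder by the crude $\|\mathcal Rf\|_0\lec\|f\|_0$; for the $\varrho_i$ themselves, differentiating the integral formulas gives $\varrho_i'$ as the spatial average of exactly these sources, so $\|\varrho_i\|_0+\|\varrho_i'\|_0$ obeys the same bound. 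The advective derivative estimates $\|\as D_t(\cdot)\|_{N-1}$ follow in every case by the same decomposition $\as D_t=D_{t,\ell}+((v_q-v_\ell)+w)\cdot\nabla$ used in Section 8, where the second derivative estimates from Proposition \ref{p:velocity_correction_estimates} and Lemma \ref{lem:est.coe} (the $s=2$ cases) feed into Corollary \ref{cor.mic2}.

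The main obstacle I anticipate is bookkeeping rather than any single hard estimate: the current $\ph_{q+1}$ has many more constituent pieces than $R_{q+1}$, several of them (notably $\as\ph_{T1}$, $\as\ph_M$, $\as\ph_{H2}$) involving triple products and the $\varrho$-corrections, and one must check that every term genuinely carries enough powers of the small quantity $\la_q^\frac12\la_{q+1}^{-\frac12}\de_q^\frac14\de_{q+1}^\frac14$ (or $(\la_{q+1}\mu_q)^{-1}$, or $(\la_{q+1}\tau_q)^{-1}$, or $\ell\la_q$) to land below $\la_{q+1}^{-3\ga}\de_{q+2}^\frac32$, while simultaneously tracking that the extra $\de_{q+1}^\frac32$ weight (as opposed to the $\de_{q+1}$ weight of the Reynolds stress) is respected. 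The genuinely delicate point is the role of $\varrho_0$: it must be chosen so that $D_{t,\ell}\tfrac12\tr(w_o\otimes w_o-\de_{q+1}\I+R_\ell)$, which has nonzero spatial mean, is rendered mean-zero before applying $\mathcal R$ in $\as\ph_{T2}$; verifying that the resulting $\varrho_0'$ — an average of a material derivative of a quadratic expression in $w_o$ — still satisfies the $\la_{q+1}^{-3\ga}\de_{q+2}^\frac32$ bound requires using that $\tr(w_o\otimes w_o)-\de_{q+1}$ equals $-\ka_\ell$ plus purely oscillatory terms (cf. \eqref{e:rep4}), so that its $D_{t,\ell}$ is either controlled by $\ell_t^{-1}\la_q^{-3\ga}\de_{q+1}$ via \eqref{est.mR} or, for the oscillatory part, has small average after the microlocal cancellation. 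I would treat $\varrho_0,\varrho_1,\varrho_2$ last, once the corresponding $\as\ph_{T}, \as\ph_{H1}, \as\ph_{H2}$ are in hand, since their bounds are byproducts of those computations.
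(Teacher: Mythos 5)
Your proposal follows essentially the same approach as the paper: term-by-term estimates of $\as\ph_O,\as\ph_R,\as\ph_M,\as\ph_{T1},\as\ph_{T2},\as\ph_{H1},\as\ph_{H2}$ and $\varrho_0,\varrho_1,\varrho_2$, Corollary \ref{cor.mic2} for the oscillatory/$\mathcal R$ pieces, the commutator Lemma \ref{lem:est.Qvv} for $\as\ph_{H1}$, the frequency-splitting of $\asR_\triangle=\cR\cP_{\gtrsim\la_{q+1}}G_\triangle+\cR\cP_{\lec\la_{q+1}}G_\triangle$ against the low-frequency $\nabla v_\ell^\top$ for $\as\ph_{H2}$, the decomposition $\as D_t = D_{t,\ell}+((v_q-v_\ell)+w)\cdot\nabla$, and the observation that each $\varrho_i'$ is the spatial average of the corresponding source so Lemma \ref{phase} (for $\varrho_0$) or the same oscillatory-integral/smallness arguments already used for $\as\ph_{H1},\as\ph_{H2}$ (for $\varrho_1,\varrho_2$) close the estimate. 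The one small over-complication in your $\varrho_0$ discussion is that the identity \eqref{e:rep4} cancels the entire low-frequency part exactly, leaving $D_{t,\ell}\frac12\tr(w_o\otimes w_o-\de_{q+1}\I+R_\ell)=\frac12\sum_{m,k}\de_{q+1}\tr(D_{t,\ell}c_{m,k})e^{i\la_{q+1}k\cdot\xi_I}$; there is no residual $\ka_\ell$ contribution to be controlled via \eqref{est.mR}, and Lemma \ref{phase} applies directly, as you note.
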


Without mentioning, we assume that $N$ is in the range above and allow the dependence on $M$ of the implicit constants in $\lec$ in this section.
For convenience, we single out the following fact, which will be repeatedly used: note that there exists $\bar{b}(\al)>1$ such that for any $1<b<\bar{b}(\al)$ and a constant $C_M$ depending only on $M$, we can find $\La_0=\La_0(\al, b,M)$ which gives
\[
C_M\left[ \frac {\de_{q+1}}{\la_{q+1}\tau_q} +\frac{\de_{q+1}^\frac32}{\la_{q+1}\mu_q} + \frac {\la_q^\frac12}{\la_{q+1}^\frac12} \de_q^\frac14\de_{q+1}^\frac54\right]
\leq  \la_{q+1}^{-3\ga} \de_{q+2}^\frac32,
\] 
for any $\lambda_0 \geq \La_0$.

Another important remark which will be used in this section is that
\begin{equation}\label{e:average-free}
\mathcal{R} (g (t, \cdot)+ h(t)) = \mathcal{R} (g (t, \cdot))
\end{equation}
for every smooth periodic time-dependent vector field $g$ and for every $h$ which depends only on time. 

\subsection{High frequency current error}
We start by observing that $\as\ph_{H1}$ is
 \begin{equation}\label{e:formula_H1}
 \as\ph_{H1}= \idv{(\div P_{\le \ell^{-1}}R + Q(v_q,v_q) ) \cdot w}
 \end{equation}
 by \eqref{e:average-free}. We thus can apply Corollary \ref{cor.mic2} to
\begin{align*}
(\div P_{\le \ell^{-1}}&R_q + Q(v_q,v_q)) \cdot w\\
&= \sum_{m,k} (\div P_{\le \ell^{-1}}R_q + Q(v_q,v_q)) \de_{q+1}^\frac12(b_{m,k} + (\la_{q+1}\mu_q)^{-1}e_{m,k})e^{i\la_{q+1}k\cdot \xi_I}.
\end{align*}
Indeed, using \eqref{est.R}, \eqref{est.Qvv}, \eqref{est.b}, \eqref{est.e}, we obtain
\begin{align*}
&\norm{\as\ph_{H1} }_N
\lec \la_{q+1}^{N-1} \de_{q+1}^\frac12 (\la_q^{1-3\ga}\de_{q+1} + (\ell \la_q) \la_q\de_q) 
\lec \la_{q+1}^N \frac{\la_q^{1-3\ga}}{\la_{q+1}} \de_{q+1}^\frac32.
\end{align*}
Furthemore, \eqref{est.vp}, \eqref{est.R}, \eqref{est.v.dif}, and \eqref{est.com1} imply
\begin{align*}
\norm{D_{t,\ell} \div P_{\le \ell^{-1}} R_q}_{N-1}
&\leq \norm{\div P_{\le \ell^{-1}}  D_{t,\ell} R_q}_{N-1}
+ \norm{\div [v_\ell\cdot\na, P_{\le \ell^{-1}}] R_q}_{N-1}\\
&\quad+ \norm{(\na v_\ell)_{ki} \pa_k P_{\le l^{-1}} (R_q)_{ij}}_{N-1}\\
&\lec \la_{q+1}^{N-1} (\norm{D_{t,\ell} R_q}_{1}
+ \norm{ [v_\ell\cdot\na, P_{\le \ell^{-1}}] R_q}_{1}
+\norm{\na v_q}_0 \norm{ \na R_q}_0)\\
&\lec \la_{q+1}^{N}\de_{q+1}^\frac12 \la_q^{1-3\ga} \de_{q+1}.
\end{align*}
Then, it follows that
\begin{align*}
\norm{\as D_t \ph_{H1}}_{N-1}
\lec\la_{q+1}^N\de_{q+1}^\frac12\cdot \frac{\la_q^{1-3\ga}}{\la_{q+1}} \de_{q+1}^\frac32.
\end{align*}

In order to deal with $\as\ph_{H2}$, we use the definition of $R_{q+1}$ to get
\begin{equation}\begin{split}
\label{rep:low.freq.app}
w\otimes w&-\de_{q+1}\I+R_q -R_{q+1}\\
&=
(w_o\otimes w_o-\de_{q+1}\I+R_\ell )- \asR_{O1}  -\asR_T -\asR_N-\asR_{M2} - {\textstyle{\frac{2}{3}}}\varrho \I\, .
\end{split}\end{equation}
Using $\I : \nabla v_\ell^\top = \nabla \cdot v_\ell =0$ and \eqref{e:average-free}, we can then write
\begin{align}
\as\ph_{H2}
&= \idv{(w\otimes w-\de_{q+1}I+R_q-R_{q+1} + (v-v_\ell)\otimes w+ w\otimes (v-v_\ell)):\na v_\ell^\top}\label{e:formula_H2}\\
&= \idv{((w_o\otimes w_o-\de_{q+1}\I+R_\ell )- \asR_{O1}  -\asR_T -\asR_N):\na v_\ell^\top}.\nonumber
\end{align}
Apply Corollary \ref{cor.mic2} with \eqref{alg.eq}, \eqref{est.c}, and \eqref{est.vp}, we have
\begin{align*}
\norm{\idv{(w_o\otimes w_o-\de_{q+1}\I+R_\ell ):\na v_\ell^\top}}_N
&\lec \la_{q+1}^N \la_q \de_q^\frac12 \frac {\de_{q+1}}{\la_{q+1}},\\
\norm{\as D_t \idv{(w_o\otimes w_o-\de_{q+1}\I+R_\ell ):\na v_\ell^\top}}_{N-1}
& \lec  \la_{q+1}^N\de_{q+1}^\frac12 \la_q \de_q^\frac12 \frac {\de_{q+1}}{\la_{q+1}}.
\end{align*}
By \eqref{dec.mic} and \eqref{rem.low}, recall that the Reynolds stress errors $\asR_\tri$, which represents either $\asR_{O1}$, $\asR_T$, or $\asR_N$, can be written as $\asR_\tri  = \cR G_\tri$ satisfying 
\begin{align}
\norm{G_\tri}_N \lec \la_{q+1}^N \left(\frac{\de_{q+1}}{\mu_q} + \frac{\de_{q+1}^\frac12}{\tau_q} \right), 
\quad
&\norm{D_{t,\ell} G_\tri}_{N-1} \lec \la_{q+1}^N \de_{q+1}^\frac12
\left(\frac{\de_{q+1}}{\mu_q} + \frac{\de_{q+1}^\frac12}{\tau_q} \right), \label{est.Gtr} 
\end{align}
Furthermore, such $G_\tri$ has the form $\sum_{m,k}g_\tri^{m,k} e^{i\la_{q+1}k\cdot\xi_I}$ and has a decomposition 
\[
G_\tri = \cP_{\gtrsim \la_{q+1}} G_\tri+\cP_{\lec \la_{q+1}} G_\tri\, ,
\] 
as in \eqref{dec.mic} and \eqref{rem.low}, where $\cP_{\lec \la_{q+1}} G_\tri$ satisfies
\begin{align}
\norm{\cP_{\lec \la_{q+1}}G_\tri}_0 \lec \la_{q+1}^{-2}\de_{q+1}^\frac12 \left(\frac{\de_{q+1}}{\mu_q} + \frac{\de_{q+1}^\frac12}{\tau_q} \right),\ 
&\norm{D_{t,\ell}\cP_{\lec \la_{q+1}}G_\tri}_0 \lec \la_{q+1}^{-1}\de_{q+1} \left(\frac{\de_{q+1}}{\mu_q} + \frac{\de_{q+1}^\frac12}{\tau_q} \right). \label{est.DtGtr}
\end{align}
Indeed, they follow from \eqref{est.eN} and \eqref{est.DteN}.
Since $\na v_\ell$ has the frequency localized to $\lec \ell^{-1}$ and $\ell^{-1} \leq \frac 1{64}\la_{q+1}$ for sufficiently large $\la_0$, {$\cR \cP_{\gtrsim \la_{q+1}} G_\tri:(\na v_\ell)^\top$ has the frequency localized to $\gtrsim \la_{q+1}^{-1}$ and}
\begin{align*}
\norm{\idv{\cR \cP_{\gtrsim \la_{q+1}} G_\tri:(\na v_\ell)^\top}}_N
&\lec \frac 1{\la_{q+1}} \norm{\cR  \cP_{\gtrsim \la_{q+1}} G_\tri:(\na v_\ell)^\top}_N\\
&\lec \frac 1{\la_{q+1}^2} \sum_{N_1+N_2=N} \norm{ G_\tri}_{N_1} \norm{\na v_\ell}_{N_2}
\end{align*}
On the other hand, $\cR \cP_{\lec \la_{q+1}} G_\tri:(\na v_\ell)^\top$ has the frequency localized to $\lec \la_{q+1}^{-1}$, so that
\begin{align*}
\norm{\idv{\cR \cP_{\lec \la_{q+1}} G_\tri:(\na v_\ell)^\top}}_N
&\lec \la_{q+1}^N \norm{\idv{\cR \cP_{\lec \la_{q+1}} G_\tri:(\na v_\ell)^\top}}_0
\lec \la_{q+1}^N \norm{\cP_{\lec \la_{q+1}} G_\tri}_0\norm{\na v}_0.
\end{align*}
Therefore, using \eqref{est.Gtr} and \eqref{est.DtGtr}, we obtain
\begin{align*}
\norm{\cal{R}(\asR_{O1}:(\na v_\ell)^\top)}_N+\norm{\cal{R}(\asR_T:(\na v_\ell)^\top)}_N
&+\norm{\cal{R}(\asR_N:(\na v_\ell)^\top)}_N\\
&\lec\la_{q+1}^N
\left(\frac {\de_{q+1}^\frac12}{\la_{q+1}\tau_{q}}+ \frac {\de_{q+1}}{\la_{q+1}\mu_{q}}\right) \frac{\la_q\de_q^\frac12}{\la_{q+1}}.
\end{align*}

To estimate their advective derivatives, we use the decomposition
\begin{align*}
\as D_t \idv{\asR_{\triangle}:(\na v_\ell)^\top} 
= D_{t,\ell}\idv{\asR_{\triangle}:(\na v_\ell)^\top} + (w+ (v_q-v_\ell))\cdot \na(\asR_{\triangle}:(\na v_\ell)^\top). 
\end{align*}
We can easily see that 
\begin{align*}
\norm{(w+ (v-v_\ell))\cdot \na\cR(\asR_{\triangle}:(\na v_\ell)^\top)}_{N-1}
&\lec  \la_{q+1}^N\de_{q+1}^\frac12
\left(\frac {\de_{q+1}^\frac12}{\la_{q+1}\tau_{q}}+ \frac {\de_{q+1}}{\la_{q+1}\mu_{q}}\right) \frac{\la_q\de_q^\frac12}{\la_{q+1}}.
\end{align*}
For the estimate of the first term, we use again the decomposition $\asR_\tri = \cR \cP_{\gtrsim \la_{q+1}} G_\tri + \cR \cP_{\lec \la_{q+1}} G_\tri$ and 
\begin{align*}
\norm{D_{t,\ell}\cal{R}(\asR_{\triangle}:(\na v_\ell)^\top)}_{N-1}
&\leq 
\norm{D_{t,\ell}\cal{R}(\cR \cP_{\gtrsim \la_{q+1}} G_\tri :(\na v_\ell)^\top)}_{N-1}\\
&\quad+\norm{D_{t,\ell}\cal{R}( \cR \cP_{\lec \la_{q+1}} G_\tri:(\na v_\ell)^\top)}_{N-1}.
\end{align*}
In order to estimate the right hand side, consider the decomposition 
\begin{align*}
D_{t,\ell}\cR P_{\gtrsim \la_{q+1}} H
&=\cR \cP_{\gtrsim \la_{q+1}} D_{t,\ell}  H  
+\cR [v_\ell\cdot\na, P_{\gtrsim \la_{q+1}}]  H 
+ [v_\ell\cdot\na, \cR] P_{\gtrsim \la_{q+1}} H,
\end{align*}
for any smooth function $H$ and Littlewood-Paley operator $P_{\gtrsim\la_{q+1}}$ projecting to the frequency $\gtrsim \la_{q+1}$. Similar to the proof of Lemma \ref{lem:com2}, we have
\begin{align*}
\norm{[v_\ell\cdot\na,P_{\gtrsim \la_{q+1}}] H}_{N-1}
\lec \la_{q+1}^{N-2} \norm{\na v_\ell}_0\norm{\na H}_0 
\end{align*}
Also, similar to \eqref{est.DtcR}, we obtain
\begin{align}\label{est1}
\norm{[v_\ell\cdot\na, \cR]P_{\gtrsim \la_{q+1}} H}_{N-1} 
\lec \sum_{N_1+N_2=N-1} \ell \norm{\na v_\ell}_{N_1} \norm{H}_{N_2}. 
\end{align}
Since $P_{\gtrsim \la_{q+1}} D_{t,\ell}  H$ and $ [v_\ell\cdot\na, P_{\gtrsim \la_{q+1}}]  H$ have frequencies localized to $\gtrsim \la_{q+1}$, it follows that
\begin{equation}\begin{split}\label{est.H}
\norm{&D_{t,\ell}\cR P_{\gtrsim \la_{q+1}} H}_{N-1}\\
&\lec \norm{\cR P_{\gtrsim \la_{q+1}} D_{t,\ell}  H }_{N-1} 
+\norm{\cR [v_\ell\cdot\na, P_{\gtrsim \la_{q+1}}]  H }_{N-1}
+\norm{ [v_\ell\cdot\na, \cR] P_{\gtrsim\la_{q+1}} H}_{N-1}\\
&\lec 
\frac 1{\la_{q+1}} \norm{ P_{\gtrsim \la_{q+1}} D_{t,\ell}  H }_{N-1} 
+  \frac 1{\la_{q+1}}\norm{ [v_\ell\cdot\na, P_{\gtrsim \la_{q+1}}]  H }_{N-1}
+\norm{ [v_\ell\cdot\na, \cR] P_{\gtrsim \la_{q+1}} H}_{N-1}\\
&\lec \frac 1{\la_{q+1}} \norm{ D_{t,\ell}  H }_{N-1} 
+ \la_{q+1}^{N-3}\norm{\na v}_0 \norm{\na H}_0
+\sum_{N_1+N_2=N-1} \ell \norm{\na v_\ell}_{N_1}\norm{H}_{N_2}.
\end{split}\end{equation}
Now, we apply it to $H= \cR \cP_{\gtrsim \la_{q+1}} G_\tri:(\na v_\ell)^\top$. For such $H$, we have $H= P_{\geq \frac18\la_{q+1}}H$ for sufficiently large $\la_0$, so that
\begin{align*}
&\norm{D_{t,\ell}\cR(\cR  \cP_{\gtrsim \la_{q+1}}  G_\tri:(\na v_\ell)^\top)}_{N-1}\\
&\lec \frac 1{\la_{q+1}}\norm{D_{t,\ell}(\cR  \cP_{\gtrsim \la_{q+1}}  G_\tri: (\na v_\ell)^\top)}_{N-1} 
+ \la_{q+1}^{N-3}\norm{\na v_\ell}_0 
\norm{ \cR \cP_{\gtrsim \la_{q+1}} G_\tri:(\na v_\ell)^\top}_1\\
&\quad+\sum_{N_1+N_2=N-1} \ell\norm{\na v_\ell}_{N_1}\norm{\cR \cP_{\gtrsim \la_{q+1}} G_\tri:(\na v_\ell)^\top}_{N_2}\\
&\lec \la_{q+1}^{N-2} \de_{q+1}^\frac12
\left(\frac{\de_{q+1}}{\mu_q} + \frac{\de_{q+1}^\frac12}{\tau_q} \right) \la_q\de_q^\frac12.
\end{align*}
Indeed, the second inequality can be obtained by applying \eqref{est.H} again to $H= G_\tri$,
\begin{align*}
&\norm{D_{t,\ell}(\cR \cP_{\gtrsim \la_{q+1}} G_\tri: \na v_\ell)}_{N-1} \\
&\lec \sum_{N_1+N_2=N-1}
\norm{D_{t,\ell}\cR  \cP_{\gtrsim \la_{q+1}}  G_\tri}_{N_1} \norm{\na v_\ell}_{N_2}
+\norm{\cR  \cP_{\gtrsim \la_{q+1}}  G_\tri}_{N_1}\norm{ D_{t,\ell}\na v_\ell}_{N_2}\\
&\lec \sum_{N_1+N_2=N-1}
\left(\frac1{\la_{q+1}}\norm{D_{t,\ell}G_\tri}_{N_1}
+ \la_{q+1}^{N_1-2}\norm{\na v}_0\norm{\na G_\tri}_0 \right)\norm{\na v_\ell}_{N_2}\\
&\quad +\sum_{\substack{N_{11}+N_{12} = N_1\\ N_1+N_2=N-1}} \ell \norm{\na v_\ell}_{N_{11}}\norm{G_\tri}_{N_{12}}\norm{\na v_\ell}_{N_2}
+\sum_{N_1+N_2=N-1}\frac{\norm{ G_\tri}_{N_1}}{\la_{q+1}}\norm{ D_{t,\ell}\na v_\ell}_{N_2}\\
&\lec \la_{q+1}^{N-1} \de_{q+1}^\frac12
\left(\frac{\de_{q+1}}{\mu_q} + \frac{\de_{q+1}^\frac12}{\tau_q} \right) \la_q\de_q^\frac12,
\end{align*}
and
\begin{align*}
\norm{\cR \cP_{\gtrsim \la_{q+1}} G_\tri:(\na v_\ell)^\top}_{N-1}
&\lec \sum_{N_{1}+N_{2} = N-1}
\norm{\cR \cP_{\gtrsim \la_{q+1}} G_\tri}_{N_{1}} \norm{\na v_\ell}_{N_{2}}\\
&\lec \la_{q+1}^{N-2} \left(\frac{\de_{q+1}}{\mu_q} + \frac{\de_{q+1}^\frac12}{\tau_q} \right) \la_q\de_q^\frac12.
\end{align*}
To estimate the remaining term $\norm{D_{t,\ell}\cal{R}( \cR \cP_{\lec \la_{q+1}} G_\tri:(\na v_\ell)^\top)}_{N-1}$, we observe that the frequency of $D_{t,\ell} \cR(\cR \cP_{\lec \la_{q+1}} G_\tri:(\na v_\ell)^\top)$ is  localized to $\lec \la_{q+1}$, so that 
\begin{align*}
\norm{&D_{t,\ell} \cR(\cR \cP_{\lec \la_{q+1}} G_\tri:(\na v_\ell)^\top)}_{N-1}
\lec \la_{q+1}^{N-1} \norm{D_{t,\ell} \cR(\cR \cP_{\lec \la_{q+1}} G_\tri:(\na v_\ell)^\top)}_0.
\end{align*}
Then, we control the right hand side as 
\begin{align*}
\norm{D_{t,\ell} &\cR(\cR \cP_{\lec \la_{q+1}}G_\tri:(\na v_\ell)^\top)}_0
\\
&\lec\norm{ \cR D_{t,\ell}(\cR \cP_{\lec \la_{q+1}} G_\tri:(\na v_\ell)^\top)}_0
+
 \norm{[v_\ell\cdot\na ,\cR] (\cR \cP_{\lec \la_{q+1}} G_\tri:(\na v_\ell)^\top)}_0\\
&\lec  \norm{ D_{t,\ell}\cR \cP_{\lec \la_{q+1}} G_\tri}_0\norm{\na v_\ell}_0 + \norm{  \cP_{\lec \la_{q+1}}G_\tri}_0\norm{ D_{t,\ell}\na v_\ell}_0\\
&\qquad\qquad +\norm{v_\ell}_0\norm{\na(\cR \cP_{\lec \la_{q+1}} G_\tri:(\na v_\ell)^\top)}_0 \\
&\lec ( \norm{ D_{t,\ell}\cP_{\lec \la_{q+1}} G_\tri}_0
+ \norm{[v_\ell\cdot \na, \cR] \cP_{\lec \la_{q+1}} G_\tri}_0)\norm{\na v_\ell}_0\\
&\quad+
\norm{  \cP_{\lec \la_{q+1}} G_\tri}_0\norm{ D_{t,\ell}\na v_\ell}_0
+\la_{q+1}\norm{\cP_{\lec \la_{q+1}} G_\tri}_0\norm{\na v_\ell}_0\\
&\lec
\la_q\de_q^\frac12(\norm{ D_{t,\ell}\cP_{\lec \la_{q+1}} G_\tri}_0
+\la_{q+1}\norm{\cP_{\lec \la_{q+1}} G_\tri}_0)
\lec \la_{q+1}^{-1}\de_{q+1}^\frac12 \left(\frac {\de_{q+1}^\frac12}{\tau_{q}}+ \frac {\de_{q+1}}{\mu_{q}}\right) \la_q\de_q^\frac12.
\end{align*}
Here, we used $\norm{\cR g}_0 \lec \norm{g}_0$. As a result, we obtain
\begin{align*}
\norm{D_{t,\ell}\cR(\asR_\triangle:(\na v_\ell)^\top)}_{N-1}
\lec \la_{q+1}^N\de_{q+1}^\frac12
\left(\frac {\de_{q+1}^\frac12}{\la_{q+1}\tau_{q}}+ \frac {\de_{q+1}}{\la_{q+1}\mu_{q}}\right) \frac{\la_q\de_q^\frac12}{\la_{q+1}},
\end{align*}
and
\begin{align*}
\norm{\as D_t \cal{R}(\asR_\tri:(\na v_\ell)^\top)}_{N-1}
\lec
\la_{q+1}^N\de_{q+1}^\frac12
\left(\frac {\de_{q+1}^\frac12}{\la_{q+1}\tau_{q}}+ \frac {\de_{q+1}}{\la_{q+1}\mu_{q}}\right) \frac{\la_q\de_q^\frac12}{\la_{q+1}}.
\end{align*}
Therefore, the estimates for $\as\ph_{H2}$ follow, 
\begin{align*}
\norm{\as\ph_{H2}}_N 
\lec \la_{q+1}^N \la_q \de_q^\frac12 \frac {\de_{q+1}}{\la_{q+1}},  \quad
\norm{\as D_t\as\ph_{H2}}_{N-1} 
\lec\la_{q+1}^N\de_{q+1}^\frac12 \la_q \de_q^\frac12 \frac {\de_{q+1}}{\la_{q+1}}.
\end{align*}
To summarize, we get
\begin{align*}
\norm{\as\ph_{H}}_N 
\leq \frac 15\la_{q+1}^{N-3\ga} \de_{q+2}^\frac32 ,  \quad
\norm{\as D_t\as\ph_{H}}_{N-1} 
\leq \frac 15\la_{q+1}^{N-3\ga}\de_{q+1}^\frac12 \de_{q+2}^\frac32\, .
\end{align*}

\subsection{Estimates on $\varrho_1$ and $\varrho_2$} We will in fact show the stronger estimate
\begin{align}
\norm{\varrho_1'}_0 &\leq \frac{1}{5{(T+ \tau_0)}} \la_{q+1}^{-3\ga} \de_{q+2}^\frac32\label{e:est_rho_1}\\
\norm{\varrho_2'}_0 &\leq \frac{1}{5{(T+ \tau_0)}} \la_{q+1}^{-3\ga} \de_{q+2}^\frac32\label{e:est_rho_2}
\end{align}
from which the estimates follow by integration
\begin{align}
\norm{\varrho_1}_0 &\leq \frac{1}{5} \la_{q+1}^{-3\ga} \de_{q+2}^\frac32\\
\norm{\varrho_2}_0 &\leq \frac{1}{5} \la_{q+1}^{-3\ga} \de_{q+2}^\frac32\, .
\end{align}
Observe that, if we denote $G_1$ and $G_2$, respectively, the arguments of $\mathcal{R}$ in the formulas \eqref{e:formula_H1} and \eqref{e:formula_H2} we just have
\[
\varrho_i' (t) = \int_{\mathbb T^3} G_i (t,x)\, dx\, .
\]
We can then argue as we did in the previous section to estimate $\|\as R_{Hi}\|_0 = \|\mathcal{R} (G_i)\|_0$, taking advantage of Lemma \ref{phase} and the representations \eqref{rep.W}-\eqref{e:rep4}.

\subsection{Transport current error}
We use the definition of $\as \ph_T$ and recall its splitting into $\as\ph_{T1}+\as\ph_{T2}$. 
Since  we have $\norm{e^{ i\la_{q+1} k\cdot \xi_I}}_{N} \lec \la_{q+1}^N|k|^2$ for any $k\in \Z^3\setminus\{0\}$, $D_{t,\ell} e^{i\la_{q+1}k\cdot \xi_I} =0$, and almost disjoint support of $c_{m,k}$, \eqref{alg.eq} and \eqref{est.c} imply
\begin{align}\label{est.sym.h1}
\norm{w_o\otimes w_o -\de_{q+1}I+ R_\ell}_N
&\lec \sum_{k\in \Z^3\setminus \{0\}}\norm{\sum_{m\in \Z}   \de_{q+1} c_{m,k} e^{ i\la_{q+1} k\cdot \xi_I}}_{N} 
\lec \la_{q+1}^N \de_{q+1}, 
\end{align}
 \begin{equation}\begin{split}
\label{est.sym.h2}
\norm{\as D_t (w_o\otimes w_o -\de_{q+1}I+ R_\ell)}_{N-1}
&\lec \sum_{k\in \Z^3\setminus \{0\}}\norm{\sum_{m\in \Z}   \de_{q+1} (D_{t,\ell}c_{m,k}) e^{ i\la_{q+1} k\cdot \xi_I}}_{N-1}\\
&\quad +\norm{(w+(v-v_\ell)\cdot \na )(w_o\otimes w_o -\de_{q+1}I+ R_\ell)}_{N-1}\\
&\lec \la_{q+1}^N\de_{q+1}^\frac12 \cdot \de_{q+1}. 
\end{split}\end{equation}
We then can use \eqref{est.asR}, \eqref{est.w}, \eqref{est.sym.h1}, \eqref{est.sym.h2}, \eqref{est.v.dif}, and $\ka_{q+1} = \frac12 \tr(R_{q+1})$ to estimate
\begin{align*}
\norm{\as\ph_{T1}}_N 
&\lec  \sum_{N_1+N_2=N} \norm{R_{q+1} -{\textstyle{\frac 23}} \varrho\I}_{N_1} \norm{w}_{N_2}
+  \sum_{N_1+N_2=N}\norm{ w_o\otimes w_o -\de_{q+1}I+ R_\ell}_{N_1}\norm{(v_q-v_\ell)}_{N_2}\\
&\lec  \la_{q+1}^N\left( \la_q^\frac12\la_{q+1}^{-\frac12}\de_q^\frac14\de_{q+1}^\frac14 + \ell^2\la_q^2 \de_q^\frac12\right) \de_{q+1}
\lec \la_{q+1}^N 
 \la_q^\frac12\la_{q+1}^{-\frac12}\de_q^\frac14\de_{q+1}^\frac54,
\end{align*}
\begin{align*}
\norm{\as D_t \ph_{T1}}_{N-1}
&\lec
 \sum_{N_1+N_2=N-1} \norm{\as D_t (R_{q+1} -{\textstyle{\frac 23}}\varrho \I)}_{N_1}\norm{ w}_{N_2} + \norm{R_{q+1}-{\textstyle{\frac 23}}\varrho \I}_{N_1}\norm{ \as D_t w}_{N_2} \\
&\quad+ \sum_{N_1+N_2=N-1}  \norm{\as D_t (w_o\otimes w_o -\de_{q+1}I+ R_\ell)}_{N_1}\norm{v_q-v_\ell}_{N_2}\\
&\quad+ \sum_{N_1+N_2=N-1}   \norm{w_o\otimes w_o -\de_{q+1}I+ R_\ell}_{N_1}\norm{\as D_t (v_q-v_\ell)}_{N_2}\\
&\lec \la_{q+1}^N \de_{q+1}^\frac12
 \la_q^\frac12\la_{q+1}^{-\frac12}\de_q^\frac14\de_{q+1}^\frac54.
 \end{align*}
As for $\as\ph_{T2}$, by \eqref{alg.eq},  
$\as\ph_{T2} 
=\frac 12\idv{\sum_{m} \sum_{k\in \Z^3\setminus \{0\}}  \de_{q+1} \tr(D_{t,\ell} c_{m,k}) e^{ i\la_{q+1} k\cdot \xi_I}} $
and estimate it using Corollary \ref{cor.mic2} with \eqref{est.c}
as follows
\begin{align*}
\norm{\as\ph_{T2}}_N
\lec  \la_{q+1}^N\cdot
\frac {\de_{q+1}}{\la_{q+1} \tau_q},\quad
\norm{\as D_t\as\ph_{T2}}_{N-1} \lec \la_{q+1}^N\de_{q+1}^\frac12 \cdot
\frac {\de_{q+1}}{\la_{q+1} \tau_q}. 
\end{align*}
To summarize, we have
\begin{align*}
\norm{\as\ph_{T}}_N\leq \frac 1{5}  \la_{q+1}^{N-3\ga}\de_{q+2}^\frac32,\quad
\norm{\as D_t\as\ph_{T}}_{N-1}\leq \frac 1{5}  \la_{q+1}^{N-3\ga}\de_{q+1}^\frac12\de_{q+2}^\frac32.
\end{align*}
by a suitable choice of $b$ and $\la_0$.

\subsection{Estimates on $\varrho_0$} We next observe that we have
\[
(2\pi)^3 \varrho_0' = \int \sum_{m} \sum_{k\in \Z^3\setminus \{0\}} \frac{ \de_{q+1} }2\tr(D_{t,\ell} c_{m,k}) e^{ i\la_{q+1} k\cdot \xi_I}\, dx
\]
and it thus suffices to use Lemma \ref{phase} to estimate
\[
\norm{\varrho_0'}_0 \leq \frac{1}{5{(T+ \tau_0)}} \la_{q+1}^{-3\ga}\de_{q+2}^\frac32\, .
\]
The estimate for $\norm{\varrho_0}_0$ follows thus from integrating the latter in time.

\subsection{Oscillation current error} 
Recall that $\as\ph_{O1}
=\idv{\div\left(\frac 12|w_o|^2 w_o + \ph_\ell \right)}$. We remark that \eqref{e:rep4} gives
\begin{align*}
\div\left(\frac 12|w_o|^2 w_o + \ph_\ell \right)
&= \div\left(\sum_{m\in \Z} \sum_{k\in \Z^3\setminus \{0\}} \de_{q+1}^\frac 32d_{m,k} e^{ i\la_{q+1} k\cdot \xi_I}\right)
= \sum_{m,k}  \de_{q+1}^\frac 32 \div(d_{m,k})e^{ i\la_{q+1} k\cdot \xi_I},
\end{align*}
because of $\dot{d}_{I,k} (f_I\cdot k) =0$. Also, we have
\begin{align*}
\norm{D_{t,\ell} \div d_{m,k}}_{\bar N}
\lec \norm{D_{t,\ell} d_{m,k}}_{\bar N+1} + \norm{(\na v_l)^{\top}:\na d_{m,k}}_{\bar N }
{\lec_{M,\bar N}\la_{q+1}^{\bar N} \frac{\la_{q+1}\de_{q+1}^\frac12}{\mu_q}}, \quad \forall \bar N\geq 0.
\end{align*}
Therefore, using $\supp(d_{m,k}) \subset (t_m -\frac 12\tau_q, t_m + \frac 32\tau_q) \times \R^3$, it follows from Corollary \ref{cor.mic2} with \eqref{est.d} that
\begin{align*}
\norm{\as\ph_{O1}}_N
\lec \la_{q+1}^N \cdot \frac {\de_{q+1}^\frac 32}{\la_{q+1}\mu_q}, \quad
\norm{\as D_t\as\ph_{O1}}_{N-1} 
\lec \la_{q+1}^N \de_{q+1}^\frac12\cdot \frac {\de_{q+1}^\frac 32}{\la_{q+1}\mu_q} .
\end{align*}
Next recall that $\as\ph_{O2} = \frac 12(|w|^2w - |w_o|^2w_o) $. Then, \eqref{est.W}-\eqref{est.w} imply
\begin{align*}
\norm{\as\ph_{O2}}_N
&\lec 
\norm{(w\cdot w_c) w}_N 
+ \norm{|w_c|^2 w}_N + \norm{|w_o|^2 w_c}_N
\lec \la_{q+1}^N \cdot \frac {\de_{q+1}^\frac32}{\la_{q+1}\mu_q}\\
\norm{\as D_t\as\ph_{O2}}_N
&\lec \norm{\as D_t (|w_o|^2 w_c)}_N
+ \norm{\as D_t [(2w_o\cdot w_c + |w_c|^2)w]}_N
 \lec \la_{q+1}^N\de_{q+1}^\frac12 \cdot \frac {\de_{q+1}^\frac32}{\la_{q+1}\mu_q}.
\end{align*}
Therefore, combining the estimates, we get
\begin{align*}
\norm{\as\ph_{O}}_N\leq \frac 1{5} \la_{q+1}^{N-3\ga} \de_{q+2}^\frac32, \quad 
\norm{\as D_t\as\ph_{O}}_{N-1}\leq \frac 1{5} \la_{q+1}^{N-3\ga} \de_{q+1}^\frac12\de_{q+2}^\frac32,
\end{align*}
for sufficiently small $b-1>0$ and large $\la_0$. 

\subsection{Reynolds current error} 
Recall that $\as\ph_R = (R_{q+1}-{\textstyle{\frac23}}\varrho \I) w$. Similar to the estimate for $\as\ka w$ in $\as\ph_{T1}$, we have
\begin{align*}
\norm{\as\ph_R}_N 
&\lec \la_{q+1}^N 
\la_q^\frac12\la_{q+1}^{-\frac12} \de_q^\frac14\de_{q+1}^\frac54
\leq \frac 15 \la_{q+1}^{N-3\ga} \de_{q+2}^\frac32,\\
\norm{\as D_t \as\ph_R}_{N-1}
&\lec \la_{q+1}^N\de_{q+1}^\frac12 \la_q^\frac12\la_{q+1}^{-\frac12} \de_q^\frac14\de_{q+1}^\frac54
\leq \frac 15 \la_{q+1}^{N-3\ga}\de_{q+1}^\frac12 \de_{q+2}^\frac32
\end{align*}
for sufficiently small $b-1>0$ and large $\la_0$. 

\subsubsection{Mediation current error} 
Recall that
\begin{align*}
\as\ph_M &= \left( \frac12|v_q-v_\ell|^2+ (p_q-p_\ell)\right)w+ (\ph_q-\ph_\ell) \\
&\qquad\quad
+ (w\otimes w-\de_{q+1}\I+R_q -R_{q+1}+{\textstyle{\frac23}}\varrho \I) (v_q-v_\ell).
\end{align*}
Then we recall that
\begin{align}\label{rep:low.freq.app1}
w\otimes w-\de_{q+1}\I+R_q -R_{q+1}+ {\textstyle{\frac23}}\varrho \I=
(w_o\otimes w_o-\de_{q+1}\I+R_\ell )- \asR_{O1}  -\asR_T -\asR_N-\asR_{M2},
\end{align}
so that it can be controlled in a similar way with $\as\ph_{T1}$,
\begin{align*}
\norm{(w\otimes w-\de_{q+1}\I+R_q -R_{q+1} +{\textstyle{\frac23}} \varrho \I) (v_q-v_\ell)}_{N}
&\leq \la_{q+1}^N\la_q^\frac12\la_{q+1}^{-\frac12} \de_q^\frac14\de_{q+1}^\frac54,\\
\norm{\as D_t[(w\otimes w-\de_{q+1}\I+R_q -R_{q+1} + {\textstyle{\frac23}}\varrho \I) (v_q - v_\ell)]}_{N-1}
&\leq_M \la_{q+1}^N\de_{q+1}^\frac12\la_q^\frac12\la_{q+1}^{-\frac12} \de_q^\frac14\de_{q+1}^\frac54
\end{align*}
For the remaining terms, we use \eqref{est.v.dif}, \eqref{est.p.dif}, \eqref{est.ph.dif}, and \eqref{est.w} to get 
\begin{align*}
\norm{\as\ph_M}_N \leq \frac 15 \la_{q+1}^{N-3\ga} \de_{q+2}^\frac32, \quad 
\norm{\as D_t  \as\ph_M}_{N-1} \leq \frac 15 \la_{q+1}^{N-3\ga}\de_{q+1}^\frac12 \de_{q+2}^\frac32.
\end{align*}

\section{Proofs of the key inductive propositions}

\subsection{Proof of Proposition \ref{ind.hyp}}
For a given dissipative Euler-Reynolds flow $(v_q,p_q,R_q,\ka_q,\ph_q)$ on the time interval $[0,T]+\tau_{q-1}$, we recall the construction of the corrected one: $v_{q+1} = v_q + w_{q+1}$ and $p_{q+1} = p_q + q_{q+1}$, where $w_{q+1}$ is defined by \eqref{def.w} and $q_{q+1} = 0$ on $[0,T]+\tau_{q}$. Furthermore, we find a new Reynolds stress $R_{q+1}$ and an unsolved flux current $\ph_{q+1}$ which solve \eqref{app.eq} together with $v_{q+1}$, $p_{q+1}$, and $\ka_{q+1} = \frac 12\tr(R_{q+1})$ and satisfy \eqref{est.asR}, \eqref{est.asph} and \eqref{est.varho} for sufficiently small $b-1>0$ and large $\la_0$. In other words, $(v_{q+1},p_{q+1},R_{q+1},\ka_{q+1},\ph_{q+1})$ is a dissipative Euler-Reynolds flow {for the energy loss $E$} and the error $(R_{q+1}, \ph_{q+1}, \ka_{q+1})$ satisfies \eqref{ka}-\eqref{est.ph} for $q+1$ as desired. Now, denote the absolute implicit constant in the estimate {\eqref{est.w.indM}} for $w$ by $M_0$ and define $M = 2M_0$. Then, one can easily see that
\begin{align*}
\norm{v_{q+1}- v_q}_0
+ \frac 1{\la_{q+1}} \norm{v_{q+1} - v_q}_1 
= \norm{w_{q+1}}_0 
+\frac 1{\la_{q+1}} \norm{w_{q+1}}_1 
\leq 2M_0 \de_{q+1}^\frac12  = M \de_{q+1}^\frac12. 
\end{align*}
Also, using \eqref{est.vp} and \eqref{est.w}, we have
\begin{align*}
\norm{v_{q+1}}_0
&\leq \norm{v_q}_0 + \norm{w_{q+1}}_0
\leq 1-\de_q^\frac12 + M_0\de_{q+1}^\frac12 
\leq 1- \de_{q+1}^\frac12,\\
\norm{v_{q+1}}_N
&\leq \norm{v_q}_N + \norm{w_{q+1}}_N
\leq M \la_q^N\de_q^\frac12 + \frac12 M \la_{q+1}^N \de_{q+1}^\frac12
\leq M\la_{q+1}^N \de_{q+1}^\frac12,\\
\norm{p_{q+1}}_N
&\leq \norm{p_q}_{N} + \norm{q_{q+1}}_N 
= \norm{p_q}_N \leq \la_q^N \de_q \leq \la_{q+1}^N \de_{q+1},\\
\norm{(\partial_t +v_{q+1}\cdot \nabla) p_{q+1}}_{N-1}&\leq \norm{(\partial_t +v_q\cdot \nabla) p_q}_{N_1} + \|w\cdot \nabla p_q\|_{N-1}\nonumber\\
& \leq \delta_q^{\frac{3}{2}}
\lambda_q^N + M \delta_{q+1}^{\frac{1}{2}} \lambda_{q+1}^{N-1} \delta_q \lambda_q \leq \delta_{q+1}^{\frac{3}{2}} \lambda_{q+1}^N,
\end{align*}
for $N=1,2$, provided that $\la_0$ is sufficiently large. Therefore, we construct a desired corrected flow $(v_{q+1},p_{q+1},R_{q+1},\ka_{q+1},\ph_{q+1})$. 

\subsection{Proof of Proposition \ref{p:ind_technical}}
Consider a given time interval $\cal I \subset (0,T)$ with $|\cal I| \geq 3\tau_q$. Then, we can always find $m_0$ such that $\supp(\th_{m_0}(\tau_q^{-1}\cdot))\subset  \cal I$. Now, if $I = (m_0, n, f)$ belongs to $\mathscr{S}_R$, we replace $\ga_{I}$ in $w_{q+1}$ by $\td \ga_{I} =-\ga_{I}$. In other words, we replace $\Ga_{I}$ by $\td \Ga_{I} = -\Ga_{I}$. Otherwise, we keep the same $\ga_{I}$. Note that $\td\Ga_{I}$ still solves \eqref{eq.Ga1} and hence $\td \ga_{I}$ satisfies \eqref{eq.Ga}. Also, we set $\td p_{q+1} = p_{q+1}$. Since the replacement does not change the estimates for $\Ga_{I}$ used in the proof of Lemma \ref{lem:est.coe}, the corresponding coefficients $\td b_{m,k}$, $\td c_{m,k}$, $\td d_{m,k}$, and $\td e_{m,k}$ satisfy \eqref{est.b}-\eqref{est.e}, and $\td w = \td w_o$, $\td w_c$, and $\td w_{q+1}$, generated by them, also fullfill \eqref{est.W}-\eqref{est.w}. As a result, the corrected dissipative Euler-Reynolds flow $(\td v_{q+1}, \td p_{q+1}, \td R_{q+1}, \td \ka_{q+1}, \td\ph_{q+1})$ satisfies \eqref{est.vp}-\eqref{est.ph} for $q+1$ and \eqref{cauchy} as desired. On the other hand, by the construction, the correction $\td w_{q+1}$ differs from $w_{q+1}$ on the support of $\th_{m_0}(\tau_q^{-1}\cdot)$. Therefore, we can easily see 
\[
\supp_t(v_{q+1} - \td v_{q+1})
=\supp_t(w_{q+1} - \td w_{q+1})
\subset \cal I. 
\] 
Furthermore, by \eqref{eq.Ga} and \eqref{eq.Ga1}, we have 
\begin{align*}
\sum_{I\in \mathscr{I}_R}\ga_I^2 |(\na \xi_I)^{-1}f_I|^2
&=\tr\left(
(\na \xi_I)^{-1}  \sum_{f\in \cF_{I,R}} \ga_I^2 f_I\otimes f_I [(\na \xi_I)^{-1}]^\top
\right)\\
&= \tr (\de_{q+1}\I - (R_q)_l - \widetilde{M})
\end{align*} 
where $\widetilde{M} = \sum_{m',n'}\th_{I'}^2\chi_{I'}^2(\xi_{I'})\sum_{f'\in \mathscr{I}_\ph}\ga_{I'}^2\left(\dint_{\T^3}\psi_{I'}^2dx \right) (\na \xi_{I'})^{-1} f' \otimes  (\na \xi_{I'})^{-1} f' $.
In particular 
\[
\norm{\widetilde{M}}_0 \lec \la_q^{-2\ga} \de_{q+1}
\] 
(see the proof in section \ref{subsec:R}). In this proof, $\norm{\cdot}_N$ denotes $\norm{\cdot}_{C([0,T];C^N(\T^3))}$.
Then, it follows that
\begin{align*}
|w_o - \td w_o |^2
&=  \sum_{I\in \mathscr{I}_R: m_I = m_0}
4\th_I^2(t) \chi_I^2(\xi_I) \ga_I^2 |(\na \xi_I)^{-1} f_I|^2 (1+(\psi_I^2(\la_{q+1} \xi_I)-1))\\
&= \sum_{I\in \mathscr{I}_R: m_I = m_0}
4\th_I^2\chi_I^2(\xi_I)  (3\de_{q+1} - \tr(R_\ell) - \tr(\widetilde{M}))\\&\quad+
\sum_{k\in \Z^3\setminus\{0\}}\sum_{I\in \mathscr{I}_R: m_I = m_0}
4\th_I^2\chi_I^2(\xi_I) \ga_I^2 |(\na \xi_I)^{-1} f_I|^2
\dot{c}_{I,k} e^{i\la_{q+1}k\cdot \xi_I}\\
&= 4\th_{m_0}^6(\tau_q^{-1}t) (3\de_{q+1}- \tr R_\ell - \tr {\widetilde{M} }) + \sum_{k\in \Z^3\setminus\{0\}} 4\de_{q+1}\tr(\td c_{m_0,k}^R)e^{i\la_{q+1}k\cdot \xi_I},
\end{align*}
where 
\[
\tr(\td c_{m_0,k}^R) = \sum_{I\in \mathscr{I}_R: m_I = m_0}
\th_I^2(t) \chi_I^2(\xi_I) \de_{q+1}^{-1}\ga_I^2\dot{c}_{I,k} |(\na \xi_I)^{-1} f_I|^2.
\]
Since we can obtain $\norm{\tr(\td c_{m_0,k}^R)}_N \lec \mu_q^{-N} |\dot{c}_{I,k}|$ for $N=0,1,2$ in the same way used to get the estimate \eqref{est.c} for $c_{m_0,k}$, we conclude
\begin{align*}
\norm{w_o - \td  w_o}_{C^0([0,T]; L^2(\T^3))}^2
&\geq 4(2\pi)^3(3\de_{q+1} - \norm{R_\ell}_0 - \norm{\tr(\widetilde{M})}_0) \\
&\quad- \sum_{k\in \Z^3}4 
\left| 
\de_{q+1} \int\tr(\td c_{m_0,k}^R)e^{i\la_{q+1}k\cdot \xi_I} dx
\right|\\
&\geq 12\de_{q+1} - C\de_{q+1}(\la_q^{-3\ga} + \la_q^{-2\ga} + (\la_{q+1} \mu_q)^{-2})\\
&\geq 4\de_{q+1}
\end{align*}
for sufficiently large $\la_0$. Indeed, in the second inequality, we used Lemma \ref{phase} to get
\begin{align*}
\sum_k\left| \int \tr(\td c_{m_0,k}^R) e^{\la_{q+1} k\cdot \xi_I} dx\right|
&\lec \sum_k\frac{\norm{\tr(\td c_{m_0,k}^R)}_2 + \norm{\tr(\td c_{m_0,k}^R)}_0\norm{{\na} \xi_I}_{C^0([t_{m_0}-\frac12\tau_q, t_{m_0}+\frac32\tau_q];C^2( \T^3))}}{\la_{q+1}^2 |k|^2}
\\
&\lec (\la_{q+1}\mu_q)^{-2} \sum_k\frac{|\dot{c}_{I,k}|}{|k|^2}
\lec (\la_{q+1}\mu_q)^{-2} 
\left(\sum_k |\dot{c}_{I,k}|^2\right)^\frac12
\left(\sum_k \frac{1}{|k|^4}\right)^\frac12.
\end{align*}
Therefore, we obtain
\begin{align*}
\norm{v_{q+1} - \td v_{q+1} }_{C^0([0,T]; L^2(\T^3))}
&=\norm{w_{q+1}- \td w_{q+1}}_{C^0([0,T]; L^2(\T^3))}\\
&\geq \norm{w_o - \td  w_o}_{C^0([0,T]; L^2(\T^3))}
-(2\pi)^\frac32(\norm{w_c}_0 + \norm{\td w_c}_0)\\
&\geq  2\de_{q+1}^\frac12 -\frac{(2\pi)^\frac32 2M_0}{\la_{q+1}\mu_q} \de_{q+1}^\frac12 \geq \de_{q+1}^\frac12
\end{align*}
for sufficiently large $\la_0$. 

Lastly, we suppose that a dissipative Euler-Reynolds flow $(\td v_q, \td p_q, \td R_q, \td \ka_q, \td \ph_q)$ satisfies \eqref{est.vp}-\eqref{est.ph} and
\[
\supp_t(v_q-\td v_q, p_q-\td p_q, R_q-\td R_q, \ka_q-\td \ka_q, \ph_q-\td \ph_q)\subset \cal{J} 
\]
for some time interval $\cal{J}$. Proceed to construct the regularized flow, $\td R_\ell$ and $\td \ph_\ell$ as we did for  $R_\ell$ and $\ph_\ell$ and note that they differ only in $\cal J + l_t\subset\cal J + (\la_q\de_q^\frac12)^{-1}$. Consequently, $w_{q+1}$ differ from $\td w_{q+1}$ only in $\cal J + (\la_q\de_q^\frac12)^{-1}$ and hence the corrected dissipative Euler-Reynolds flows $({v}_{q+1},  p_{q+1},  R_{q+1}, \ka_{q+1}, \ph_{q+1})$ and $(\td v_{q+1},  \td p_{q+1}, \td R_{q+1}, \td\ka_{q+1}, \td\ph_{q+1})$ satisfy 
\[
\supp_t(v_{q+1}-\td v_{q+1}, p_{q+1}-\td p_{q+1}, R_{q+1}-\td R_{q+1}, \ka_{q+1}-\td \ka_{q+1}, \ph_{q+1}-\td \ph_{q+1})\subset \cal{J} + (\la_q\de_q^\frac12)^{-1}.
\] 

 \appendix
 \section{Some technical lemmas}
The proof of the following two lemmas can be found in \cite[Appendix]{BDLSV2020}.
 \begin{lem}[H\"{o}lder norm of compositions] \label{lem:est.com} Suppose $F:\Omega \to \R$ and $\Psi: \R^n \to \Omega$ are smooth functions for some $\Omega\subset \R^m$. Then, for each $N\in \N$, we have
\begin{align}
&\norm{\na^N (F\circ \Psi)}_0
\lec \norm{\na F}_0 \norm{\na\Psi}_{N-1} + \norm{\na F}_{N-1} \norm{\Psi}_0^{N-1}\norm{\Psi}_N \nonumber\\
&\norm{\na^N (F\circ \Psi)}_0
\lec \norm{\na F}_0 \norm{\na\Psi}_{N-1} + \norm{\na F}_{N-1} \norm{\na\Psi}_0^{N} \label{chain},
\end{align}
where the implicit constant in the inequalities depends only on $n$, $m$, and $N$.
 \end{lem}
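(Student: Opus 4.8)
To establish the two composition estimates of Lemma~\ref{lem:est.com}, the plan is to reduce both inequalities to the Fa\`{a} di Bruno formula combined with the classical Gagliardo--Nirenberg interpolation inequalities for sup-norms. The first step is to write, for the composition,
\[
\nabla^N (F\circ \Psi) = \sum_{k=1}^{N}\ \sum_{\substack{j_1+\cdots+j_k=N\\ j_i\ge 1}} c_{j_1,\dots,j_k}\,(\nabla^k F)(\Psi)\bigl[\nabla^{j_1}\Psi,\dots,\nabla^{j_k}\Psi\bigr],
\]
a finite sum whose combinatorial constants depend only on $n,m,N$. The $k=1$ terms are exactly $(\nabla F)(\Psi)\,\nabla^N\Psi$ and are bounded by $\|\nabla F\|_0\|\nabla^N\Psi\|_0\le\|\nabla F\|_0\|\nabla\Psi\|_{N-1}$, which already accounts for the first summand on the right-hand side of both estimates; so everything reduces to controlling a generic term with $2\le k\le N$ (in particular $N\ge2$), of which there are only finitely many.

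For such a term I would bound the $F$-factor by $\|(\nabla^kF)(\Psi)\|_0\le\|\nabla^kF\|_0$ and then interpolate both the $F$-derivative and the $\Psi$-derivatives at intermediate order. With $a=\tfrac{k-1}{N-1}$, the inequality $\|\nabla^kF\|_0\lec\|\nabla F\|_0^{1-a}\|\nabla^NF\|_0^{a}$ together with $\|\nabla^{j_i}\Psi\|_0\lec\|\nabla\Psi\|_0^{1-\frac{j_i-1}{N-1}}\|\nabla^N\Psi\|_0^{\frac{j_i-1}{N-1}}$ makes the product of $\Psi$-factors collapse: since $\sum_i(j_i-1)=N-k$, the total exponent of $\|\nabla^N\Psi\|_0$ becomes $\tfrac{N-k}{N-1}=1-a$ and that of $\|\nabla\Psi\|_0$ equals $k-\tfrac{N-k}{N-1}=Na$. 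Hence the term is dominated by $\bigl(\|\nabla^NF\|_0\|\nabla\Psi\|_0^N\bigr)^{a}\bigl(\|\nabla F\|_0\|\nabla^N\Psi\|_0\bigr)^{1-a}$, and Young's inequality (legitimate since the two exponents sum to $1$) turns it into $\|\nabla^NF\|_0\|\nabla\Psi\|_0^N+\|\nabla F\|_0\|\nabla^N\Psi\|_0$. Summing the finitely many terms and bounding $\|\nabla^NF\|_0\le\|\nabla F\|_{N-1}$ and $\|\nabla^N\Psi\|_0\le\|\nabla\Psi\|_{N-1}$ gives the second estimate.

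For the first estimate I would rerun the argument, but interpolate the $\Psi$-derivatives between $\|\Psi\|_0$ (order zero) and $\|\nabla^N\Psi\|_0$, i.e. $\|\nabla^{j_i}\Psi\|_0\lec\|\Psi\|_0^{1-j_i/N}\|\nabla^N\Psi\|_0^{j_i/N}$. Because $\sum_i j_i=N$ this makes the product of $\Psi$-factors equal to $\|\Psi\|_0^{k-1}\|\nabla^N\Psi\|_0$ up to a constant; pairing the factor $\|\Psi\|_0^{k-1}$ with the interpolated $F$-factor (using $(k-1)/a=N-1$, valid since $k\ge2$) and applying Young once more produces $\|\nabla F\|_0\|\nabla^N\Psi\|_0+\|\nabla^NF\|_0\|\Psi\|_0^{N-1}\|\nabla^N\Psi\|_0$. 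The claim follows after bounding $\|\nabla^NF\|_0\le\|\nabla F\|_{N-1}$ and $\|\nabla^N\Psi\|_0$ by the relevant $C^{N-1}$- resp. $C^N$-norms appearing on the right-hand side.

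The delicate points are entirely bookkeeping. One must verify the exponent identities $a+(1-a)=1$, $k-\tfrac{N-k}{N-1}=Na$ and $(k-1)/a=N-1$, since a slip there breaks the Young step; the interpolation inequalities should be invoked in their boundary-free form on $\mathbb{R}^n$ (or $\mathbb{T}^n$), which is standard and can be cited or bootstrapped from $\|\nabla g\|_0^2\lec\|g\|_0\|\nabla^2g\|_0$; and the borderline value $N=1$, where $a=\tfrac{k-1}{N-1}$ is undefined, is already covered by the $k=1$ computation, which is the only term present. I expect the main obstacle to be nothing deeper than organizing the Fa\`{a} di Bruno sum so that the two interpolation regimes---$\|\Psi\|_0$ versus $\|\nabla\Psi\|_0$ as the low endpoint---are carried through uniformly in $k$; everything else is routine.
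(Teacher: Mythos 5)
Your proof is correct and follows the same standard route as the cited reference (the paper itself defers to \cite[Appendix]{BDLSV2020}, which uses exactly this combination of the Fa\`a di Bruno formula, Gagliardo--Nirenberg sup-norm interpolation, and Young's inequality); your exponent bookkeeping checks out in all cases, including the degenerate $N=1$, $k=1$ case.
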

 
 \begin{lem}\label{phase}
 Let $N\geq 1$. Suppose that $a\in C^\infty(\T^3)$ and $\xi\in C^\infty(\T^3;\R^3)$ satisfies
 \[
 \frac 1{C} \leq |\na \xi| \leq C
 \]
 for some constant $C>1$. Then, we have
 \[
 \left| \int_{\T^3} a(x) e^{ik\cdot \xi} dx \right|
 \lec \frac{\norm{a}_N + \norm{a}_0\norm{{\na} \xi}_{N}}{|k|^N} ,
 \]
 where the implicit constant in the inequality is depending on $C$ and $N$, but independent of $k$. 
 \end{lem}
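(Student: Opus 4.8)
The plan is to prove this by a \emph{non-stationary phase} (repeated integration by parts) argument, so the main work is to build the right first order operator and control its iterates. We may assume $k\neq 0$ (the case $k=0$ being trivial, with the right-hand side read as $+\infty$), and we interpret the hypothesis $\frac1C\le |\na\xi|\le C$ in the standard way: $\na\xi(x)$ is invertible at every point with $\norm{\na\xi(x)}\le C$ and $\norm{(\na\xi(x))^{-1}}\le C$, so that the phase gradient $\na(k\cdot\xi)(x)=(\na\xi(x))^\top k$ obeys $C^{-1}|k|\le|\na(k\cdot\xi)(x)|\le C|k|$. The starting point is the identity
\[
A\cdot\na e^{ik\cdot\xi}=e^{ik\cdot\xi}\,,\qquad A:=\frac1i\,\frac{\na(k\cdot\xi)}{|\na(k\cdot\xi)|^2}\,,
\]
which follows from $\na e^{ik\cdot\xi}=i\,\na(k\cdot\xi)\,e^{ik\cdot\xi}$. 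I would then introduce the operator $Lg:=-\div(gA)=-A\cdot\na g-(\div A)g$ and integrate by parts on $\T^3$ (no boundary terms) to get $\int_{\T^3}a\,e^{ik\cdot\xi}\,dx=\int_{\T^3}(La)\,e^{ik\cdot\xi}\,dx$; iterating $N$ times,
\[
\Bigl|\int_{\T^3}a\,e^{ik\cdot\xi}\,dx\Bigr|=\Bigl|\int_{\T^3}(L^Na)\,e^{ik\cdot\xi}\,dx\Bigr|\le(2\pi)^3\norm{L^Na}_0\,,
\]
so everything reduces to proving $\norm{L^Na}_0\lec_{C,N}|k|^{-N}\bigl(\norm{a}_N+\norm{a}_0\norm{\na\xi}_N\bigr)$.

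Next I would record bounds on the coefficients. Writing $\hat k=k/|k|$ one has $A=|k|^{-1}\hat A$ with $\hat A=\frac1i\,(\na\xi)^\top\hat k\,/\,\bigl|(\na\xi)^\top\hat k\bigr|^2$; since the denominator is bounded below by $C^{-1}$, the quotient rule and Lemma \ref{lem:est.com} give, uniformly over the unit vectors $\hat k$,
\[
\norm{\hat A}_j+\norm{\div\hat A}_j\lec_{C,j}1+\norm{\na\xi}_{j+1}\,,\qquad j\ge0\,.
\]
In particular $A$ and $\div A$ are positively homogeneous of degree $-1$ in $k$.

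I would then expand $L^N=\bigl(-A\cdot\na-(\div A)\bigr)^N$ by the Leibniz rule: this produces a bounded number (depending only on $N$) of terms of the form $c\cdot\na^m a$ with $0\le m\le N$, where each coefficient $c$ is a product of exactly $N$ factors, each being $A$, $\div A$, or an $x$-derivative of one of these. Such a $c$ is therefore homogeneous of degree $-N$ in $k$, and tracking how the $\na$'s are distributed shows that at most $N-m$ derivatives can fall on the $\na\xi$-factors; combining this with the coefficient bounds and the multiplicative interpolation inequality for $C^j$-norms yields $\norm{c}_0\lec_{C,N}|k|^{-N}\bigl(1+\norm{\na\xi}_{N-m}\bigr)$, hence
\[
\norm{L^Na}_0\lec_{C,N}|k|^{-N}\sum_{m=0}^{N}\bigl(1+\norm{\na\xi}_{N-m}\bigr)\norm{a}_m\,.
\]
For $1\le m\le N-1$, log-convexity of the $C^j$-norms gives $\norm{a}_m\lec_N\norm{a}_0^{1-m/N}\norm{a}_N^{m/N}$ and $\norm{\na\xi}_{N-m}\lec_N\norm{\na\xi}_0^{m/N}\norm{\na\xi}_N^{1-m/N}\lec_C\norm{\na\xi}_N^{1-m/N}$, so Young's inequality gives $\norm{\na\xi}_{N-m}\norm{a}_m\lec_{C,N}\norm{a}_N+\norm{a}_0\norm{\na\xi}_N$; the extreme cases $m=0,N$ and the pure $\norm{a}_m$-terms are handled identically, using also $\norm{\na\xi}_0\ge C^{-1}$ to absorb $\norm{a}_0$ into $\norm{a}_0\norm{\na\xi}_N$. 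This completes the estimate.

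The genuinely delicate point is the bookkeeping in the previous paragraph: one must check that each application of $L$ contributes the full gain $|k|^{-1}$ with no surviving positive power of $|k|$, and that a term carrying $\na^m a$ never carries more than $N-m$ derivatives on $\na\xi$, so that the final bound features exactly $\norm{\na\xi}_N$ and not a higher-order norm. Both facts follow from the degree-$(-1)$ homogeneity of $A$ and $\div A$ and from propagating the invariant "(derivatives on $a$) $+$ (derivatives on $\na\xi$) $=N$" through the Leibniz expansion; the remainder is the classical non-stationary phase computation together with interpolation.
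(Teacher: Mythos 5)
Your proposal is correct and takes essentially the same route as the proof the paper delegates to \cite[Appendix]{BDLSV2020}: a non-stationary phase argument, integrating by parts $N$ times against a first-order operator built from the phase gradient, then using the degree-$(-1)$ $k$-homogeneity of the coefficients and log-convexity/interpolation of H\"older norms to close the estimate. The derivative-counting invariant and the interpolation/Young step at the end are both handled correctly.
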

 
 \begin{lem}[Commutator estimate] Let $f$ and $g$ be in $C^\infty([0,T]\times \T^3)$ and set $f_\ell = P_{\leq \ell^{-1} } f$, $g_\ell = P_{\leq \ell^{-1}} g$ and $(fg)_\ell =
 P_{\leq \ell^{-1}} (fg)$. Then, for each $N \geq 0$, the following holds,
\begin{align}
\norm{f_\ell g_\ell  - (fg)_\ell }_N \lec_N  \ell^{2-N} \norm{f}_1\norm{g}_1. \label{est.com} 
\end{align}
 \end{lem}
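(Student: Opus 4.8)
The final statement to prove is the commutator estimate \eqref{est.com}: for smooth $f,g$ on $[0,T]\times\T^3$, setting $f_\ell=P_{\le\ell^{-1}}f$, $g_\ell=P_{\le\ell^{-1}}g$, $(fg)_\ell=P_{\le\ell^{-1}}(fg)$, one has $\norm{f_\ell g_\ell-(fg)_\ell}_N\lec_N\ell^{2-N}\norm{f}_1\norm{g}_1$.

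\medskip

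\noindent\textbf{Plan.} The statement is a standard constant-coefficient commutator estimate of the type pioneered by Constantin–E–Titi, and the cleanest route is via the convolution representation of the Littlewood–Paley projector. First I would write $P_{\le\ell^{-1}}h(x)=\int_{\R^3}h(x-y)\,K_\ell(y)\,dy$, where $K_\ell(y)=2^{3J}\widecheck m(2^Jy)$ with $2^J\le\ell^{-1}<2^{J+1}$, so that $K_\ell$ is a fixed Schwartz kernel rescaled: $\int K_\ell=1$, $\int yK_\ell(y)\,dy=0$ (since $\widecheck m$ is even, being the inverse transform of the radial $m$), and $\norm{\,|y|^j K_\ell}_{L^1}\lec_j\ell^{j}$ for all $j\ge0$. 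The mean-zero first moment is the source of the gain of two powers of $\ell$.

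\medskip

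\noindent\textbf{Key steps.} The main identity is the pointwise formula
\[
(fg)_\ell(x)-f_\ell(x)g_\ell(x)=\int\!\!\int \big(f(x-y)-f(x-z)\big)\big(g(x-y)-g(x-z)\big)K_\ell(y)K_\ell(z)\,dy\,dz - \text{(a vanishing term)},
\]
or more simply one uses the single-convolution version: $(fg)_\ell(x)-f_\ell(x)g_\ell(x)=\int(f(x-y)-f_\ell(x))(g(x-y)-g_\ell(x))K_\ell(y)\,dy - \big(\text{terms that are products of }P_{\le\ell^{-1}}\text{-localized factors, handled directly}\big)$. The cleanest is the symmetric two-kernel form above, which is an exact algebraic identity once one checks the cross terms cancel using $\int K_\ell=1$. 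Then I estimate $|f(x-y)-f(x-z)|\le(|y|+|z|)\norm{f}_1$ and likewise for $g$, so the $C^0$ bound is
\[
\norm{(fg)_\ell-f_\ell g_\ell}_0\lec \norm{f}_1\norm{g}_1\Big(\int(|y|+|z|)^2|K_\ell(y)||K_\ell(z)|\,dy\,dz\Big)\lec\ell^2\norm{f}_1\norm{g}_1,
\]
using $\norm{\,|y|K_\ell}_{L^1}\lec\ell$ and $\norm{K_\ell}_{L^1}\lec1$. For $N\ge1$, observe that $f_\ell g_\ell-(fg)_\ell$ has spatial Fourier support contained in $\{|\xi|\le C\ell^{-1}\}$ (it is a sum/difference of functions each frequency-localized to $\lesssim\ell^{-1}$), so Bernstein's inequality gives $\norm{f_\ell g_\ell-(fg)_\ell}_N\lec\ell^{-N}\norm{f_\ell g_\ell-(fg)_\ell}_0\lec\ell^{2-N}\norm{f}_1\norm{g}_1$. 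This reduces all higher norms to the $C^0$ bound.

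\medskip

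\noindent\textbf{Main obstacle.} There is no serious obstacle; the only care needed is (i) verifying the exact algebraic identity for the commutator, i.e.\ that the cross terms $\int(f(x-y)-f(x-z))\,(\cdot)$ which are linear in one factor integrate to zero after using $\int K_\ell=1$ — this is where one must be slightly careful about which representation to use, and the symmetric double-convolution form is the one that makes the cancellation manifest; and (ii) recording the elementary bounds $\norm{\,|y|^j K_\ell}_{L^1(\R^3)}\lec_j\ell^j$, which follow from $K_\ell(y)=2^{3J}\widecheck m(2^Jy)$, the Schwartz decay of $\widecheck m$, and $2^{-J}\sim\ell$. The frequency-localization claim in step for $N\ge1$ should be stated explicitly: since $m$ is supported in $B(0,2)$, each of $f_\ell$, $g_\ell$ has frequencies in $B(0,2\ell^{-1})$, so $f_\ell g_\ell$ has frequencies in $B(0,4\ell^{-1})$ and $(fg)_\ell$ in $B(0,2\ell^{-1})$; hence their difference is supported in $B(0,4\ell^{-1})$ and Bernstein applies with constants depending only on $N$. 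Since this lemma is cited as already appearing in \cite[Appendix]{BDLSV2020}, one may alternatively simply invoke that reference, but the short proof above is self-contained.
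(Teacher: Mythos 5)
Your proof is correct and takes essentially the same route as the paper: reduce to the $C^0$ bound by Bernstein's inequality (the commutator is frequency-localized to $\{|\xi|\lesssim\ell^{-1}\}$), then represent the commutator as a bilinear convolution in which each factor is a difference such as $f(x-y)-f(x-z)$ (the Constantin--E--Titi identity, which the paper also invokes via \cite{CoETi1994}), and estimate each difference by $(|y|+|z|)\|\nabla f\|_0$ together with $\||y|^jK_\ell\|_{L^1}\lesssim\ell^j$. One small remark: the two powers of $\ell$ come entirely from the two difference factors, not from the vanishing first moment of $K_\ell$ as your preamble suggests; the latter is relevant for linear commutators but is not needed here.
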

 \begin{proof}
 Since the expression that we need to estimate is localized in frequency, by Bernstein's inequality it suffices to prove the case $N=0$. Recall now the function $m$ used to define the Littlewood-Paley operators and the number $J$, which is the maximal natural number such that $2^J \leq \ell^{-1}$. Denoting by $\widecheck{m}$ the inverse Fourier transform and by $\widecheck{m}_\ell$ the function $\widecheck{m}_\ell (x) =
2^{3J} \widecheck{m} (2^{J} x)$,
 a simple computation (see for instance \cite{CoETi1994}) gives
\begin{align*}
(f_\ell g_\ell  - (fg)_\ell) (x) &= \frac{1}{2} \int\int (f(x)- f(x-y)) (g(x)-g(x-z)) \widecheck{m}_\ell (y) \widecheck{m}_\ell (z)\, dy\, dz\, \\
&= \frac{1}{2} P_{> \ell^{-1}} f (x) P_{>\ell^{-1}} f (y)
\end{align*}
and the claim follows at once from Bernstein's inequality.
 \end{proof}

\begin{lem}\label{lem:com2} For any $N\geq 0$, we have
\begin{align}
&\norm{ [v_\ell\cdot \na, {P}_{\le \ell^{-1}}]F}_N \lec \ell^{1-N}\norm{\na v}_0\norm{\na F}_0\label{est.com1}\\
&\norm{[v_\ell\cdot\na, {P}_{> \ell^{-1}}] F}_{N}\lec \ell^{1-N}\norm{\na v}_0 \norm{\na F}_0.  \label{est.com2}
\end{align} 
\end{lem}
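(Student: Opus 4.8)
\textbf{Proof plan for Lemma \ref{lem:com2}.} The plan is to prove both estimates \eqref{est.com1} and \eqref{est.com2} by a single argument based on the kernel representation of the Littlewood–Paley projection, exactly as in the proof of \eqref{est.com} above, exploiting that the only difference between $P_{\le \ell^{-1}}$ and $P_{>\ell^{-1}}$ is a sign (since $P_{>\ell^{-1}} = \I - P_{\le \ell^{-1}}$), so that $[v_\ell \cdot \na, P_{>\ell^{-1}}] = -[v_\ell\cdot\na, P_{\le \ell^{-1}}]$ and the two bounds are literally the same estimate. Since the commutator $[v_\ell\cdot\na, P_{\le\ell^{-1}}]F$ is frequency-localized to scale $\lesssim \ell^{-1}$ (both $v_\ell$ and $P_{\le\ell^{-1}}F$ are, and the product of two such functions has frequencies $\lesssim \ell^{-1}$, as does $v_\ell \cdot \na P_{\le \ell^{-1}}F$ and $P_{\le \ell^{-1}}(v_\ell \cdot \na F)$... though one has to be slightly careful: $v_\ell\cdot\na F$ is not frequency-localized, but $P_{\le \ell^{-1}}$ of it is, and $v_\ell\cdot\na P_{\le\ell^{-1}}F$ has frequencies $\lesssim \ell^{-1}$). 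Hence by Bernstein's inequality it suffices to treat the case $N=0$ and then gain the factor $\ell^{-N}$ for higher $N$ for free.

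\textbf{Key steps.} First I would write, with $\widecheck{m}_\ell(y) = 2^{3J}\widecheck m(2^J y)$ the kernel of $P_{\le \ell^{-1}}$ (where $J$ is maximal with $2^J\le\ell^{-1}$), the pointwise identity
\[
[v_\ell\cdot\na, P_{\le\ell^{-1}}]F(x)
= \int \big(v_\ell(x) - v_\ell(x-y)\big)\cdot (\na F)(x-y)\,\widecheck m_\ell(y)\,dy\,,
\]
which follows from $v_\ell(x)\cdot\na_x\!\int F(x-y)\widecheck m_\ell(y)\,dy = \int v_\ell(x)\cdot(\na F)(x-y)\widecheck m_\ell(y)\,dy$ and $P_{\le\ell^{-1}}(v_\ell\cdot\na F)(x) = \int v_\ell(x-y)\cdot(\na F)(x-y)\widecheck m_\ell(y)\,dy$. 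Second, I would estimate $|v_\ell(x)-v_\ell(x-y)|\le |y|\,\|\na v_\ell\|_0 \le |y|\,\|\na v\|_0$ (using that $P_{\le\ell^{-1}}$ is bounded on $C^1$ with a constant independent of $\ell$, so $\|\na v_\ell\|_0 \lesssim \|\na v\|_0$), together with $|(\na F)(x-y)|\le\|\na F\|_0$, to get
\[
\big\|[v_\ell\cdot\na, P_{\le\ell^{-1}}]F\big\|_0
\lesssim \|\na v\|_0\,\|\na F\|_0 \int |y|\,|\widecheck m_\ell(y)|\,dy
\lesssim \ell\,\|\na v\|_0\,\|\na F\|_0\,,
\]
since $\||y|\,\widecheck m_\ell\|_{L^1} = 2^{-J}\||y|\,\widecheck m\|_{L^1} \lesssim \ell$. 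Third, for $N\ge 1$, since the expression is frequency-localized at scale $\lesssim\ell^{-1}$, Bernstein gives $\|\cdot\|_N \lesssim \ell^{-N}\|\cdot\|_0 \lesssim \ell^{1-N}\|\na v\|_0\|\na F\|_0$, which is \eqref{est.com1}. Finally, \eqref{est.com2} is immediate from $[v_\ell\cdot\na, P_{>\ell^{-1}}]F = -[v_\ell\cdot\na, P_{\le\ell^{-1}}]F$.

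\textbf{Main obstacle.} There is no serious obstacle here; this is a standard Constantin–E–Titi-type commutator bound and the proof is a few lines. The only point requiring a modicum of care is the frequency-localization claim justifying the reduction to $N=0$: one must observe that $[v_\ell\cdot\na, P_{\le\ell^{-1}}]F = v_\ell\cdot\na P_{\le\ell^{-1}}F - P_{\le\ell^{-1}}(v_\ell\cdot\na F)$ is a sum of a term with Fourier support in a ball of radius $\lesssim \ell^{-1}$ (the product $v_\ell\cdot\na P_{\le\ell^{-1}}F$ of two functions each supported in such a ball) and a term that is literally $P_{\le\ell^{-1}}$ of something (hence supported in $|\xi|\lesssim\ell^{-1}$), so the commutator itself is supported in a ball of radius $\lesssim\ell^{-1}$ and Bernstein's inequality $\|g\|_N\lesssim\ell^{-N}\|g\|_0$ applies; alternatively one can simply redo the kernel computation with $N$ derivatives distributed by Leibniz and bound $\||y|^{1}\na^N\!\big(v_\ell(x)-v_\ell(x-y)\big)\|$-type terms directly, which also works since $\|\na^{N+1} v_\ell\|_0 \lesssim \ell^{-N}\|\na v\|_0$.
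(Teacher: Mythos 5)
Your proof is correct and follows essentially the same route as the paper's: rewrite $[v_\ell\cdot\na,P_{>\ell^{-1}}]$ as $-[v_\ell\cdot\na,P_{\le\ell^{-1}}]$, represent the remaining commutator via the kernel $\widecheck m_\ell$, bound the difference $|v_\ell(x)-v_\ell(x-y)|$ by $|y|\,\|\na v\|_0$ and use $\||y|\widecheck m_\ell\|_{L^1}\lesssim\ell$, then gain $\ell^{-N}$ by Bernstein after noting the expression is frequency-localized at scale $\lesssim\ell^{-1}$. The only difference is that you spell out the frequency-localization justification in more detail than the paper does; otherwise the two arguments coincide.
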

\begin{proof} First, we observe that  
\begin{align*}
[v_\ell\cdot\na, {P}_{> \ell^{-1}}] F (x)
&= v_\ell\cdot \na  ({P}_{> \ell^{-1}}F -F)+(v_\ell\cdot \na F) -  {P}_{> \ell^{-1}}(v_\ell\cdot \na F)    \\
&= -v_\ell \cdot \na {P}_{\le \ell^{-1}} F + {P}_{\le \ell^{-1}} (v_\ell \cdot \na F) = - [v_\ell\cdot\na, {P}_{\le \ell^{-1}}] F. 
\end{align*}
First of all it suffices to consider the case $N=0$, as the expression that we want to estimate is localized in frequency. 
Next, using the functions $\widecheck{m}_\ell$ introduced in the proof of the previous lemma, we can compute at once
\begin{align*}
|[v_\ell\cdot\na, {P}_{\le \ell^{-1}}] F (x)|
&\le \int_{\R^3} |v_\ell (x) - v_\ell (y)||\widecheck{m}_{\ell}(x-y) ||\na F(y)| dy \\
&\le \norm{|x||\widecheck{m}_{\ell}|}_{L^1} \norm{\na v_\ell}_0\norm{\na F}_0 \lec \ell\norm{\na v}_0\norm{\na F}_0\, .
\end{align*}
Observe now that
\[
\norm{|x||\widecheck{m}_{\ell}|}_{L^1} = 2^{-J} \int |x| \widecheck{m} (x)\, dx \le C \ell 
\]
to conclude the proof.
\end{proof}

\subsection*{Acknowledgments}
The first author has been supported by the National Science Foundation under Grant No. DMS-1946175. The second author has been supported by the NSF under Grant No. DMS-1638352.

\bibliographystyle{abbrv}

\end{document}